\pgfplotsset{compat=1.17}
\DeclareTextFontCommand{\emph}{\bfseries\em}
\definecolor{refkey}{rgb}{0.9451,0.2706,0.4941}
\definecolor{labelkey}{rgb}{0.9451,0.2706,0.4941}
\definecolor{mygreen}{rgb}{0,0.7,0.3}
\definecolor{myblue}{rgb}{0,0.50,1.20}
\definecolor{myorange}{rgb}{1,0.5,0.1}
\definecolor{rbA}{rgb}{0,0.7,0.3}
\definecolor{rbB}{rgb}{0,1,0}
\numberwithin{equation}{section}
\crefname{thm}{Theorem}{Theorems}
\crefname{cor}{Corollary}{Corollaries}
\crefname{lem}{Lemma}{Lemmas}
\crefname{lemdef}{Lemma-Definition}{Lemma-Definitions}
\crefname{prop}{Proposition}{Propositions}
\crefname{dfn}{Definition}{Definitions}
\crefname{defi}{Definition}{Definitions}
\crefname{ex}{Example}{Examples}
\crefname{claim}{Claim}{Claims}
\crefname{conj}{Conjecture}{Conjectures}
\crefname{conv}{Notation}{Notations}
\crefname{rem}{Remark}{Remarks}
\crefname{rmk}{Remark}{Remarks}
\crefname{prob}{Problem}{Problems}
\crefname{figure}{Figure}{Figures}
\crefname{table}{Table}{Tables}
\crefname{section}{Section}{Sections}
\crefname{subsection}{Section}{Sections}
\crefname{appendix}{Appendix}{Appendices}
\crefname{introthm}{Theorem}{Theorems}
\crefname{introcor}{Corollary}{Corollaries}
\crefname{introconj}{Conjecture}{Conjectures}
\newtheorem{thm}{Theorem}[section]
\newtheorem{prop}[thm]{Proposition}
\newtheorem{cor}[thm]{Corollary}
\newtheorem{lem}[thm]{Lemma}
\newtheorem{introthm}{Theorem}
\theoremstyle{definition}
\newtheorem{dfn}[thm]{Definition}
\newtheorem{ex}[thm]{Example}
\theoremstyle{remark}
\newtheorem{rem}[thm]{Remark}
\newcommand{\bZ}{\mathbb{Z}}
\newcommand{\bQ}{\mathbb{Q}}
\newcommand{\bR}{\mathbb{R}}
\newcommand{\bC}{\mathbb{C}}
\newcommand{\bM}{\mathbb{M}}
\newcommand{\bD}{{\boldsymbol{\Delta}}}
\newcommand{\bP}{\mathbb{M}_\circ}
\newcommand{\bA}{\mathbb{A}}
\newcommand{\bB}{\mathbb{B}}
\newcommand{\bs}{{\boldsymbol{s}}}
\newcommand{\bi}{{\boldsymbol{i}}}
\newcommand{\bSigma}{{\boldsymbol{\Sigma}}}
\newcommand{\A}{\mathcal{A}}
\newcommand{\cC}{\mathcal{C}}
\newcommand{\cF}{\mathcal{F}}
\newcommand{\cG}{\mathcal{G}}
\newcommand{\cL}{\mathcal{L}}
\newcommand{\cO}{\mathcal{O}}
\def\P{{\mathcal{P}}}
\newcommand{\cR}{\mathcal{R}}
\newcommand{\X}{\mathcal{X}}
\newcommand{\sfa}{\mathsf{a}}
\newcommand{\bsfa}{\boldsymbol{\mathsf{a}}}
\newcommand{\sfs}{{\mathsf{s}}}
\newcommand{\sft}{\mathsf{t}}
\newcommand{\sfT}{\mathsf{T}}
\newcommand{\sfx}{\mathsf{x}}
\newcommand{\sfr}{\mathsf{r}}
\newcommand{\Hom}{\mathrm{Hom}}
\newcommand{\tri}{\triangle}
\newcommand{\sgn}{\mathrm{sgn}}
\newcommand{\Exch}{\mathrm{Exch}}
\newcommand{\uf}{\mathrm{uf}}
\newcommand{\f}{\mathrm{f}}
\newcommand{\Teich}{Teichm\"uller}
\newcommand{\fsl}{\mathfrak{sl}}
\newcommand{\fsp}{\mathfrak{sp}}
\newcommand{\fso}{\mathfrak{so}}
\renewcommand{\cL}{\mathcal{L}_{\fsp_4}}
\newcommand{\ve}{\varepsilon}
\newcommand{\pot}{\mathsf{w}}
\newcommand{\congr}{\mathrm{cong}}
\newcommand{\vect}{\mathcal{D}}
\newcommand{\splittri}{\triangle^{\mathrm{split}}}
\newcommand{\Bweb}{\mathsf{BWeb}}
\newcommand{\diag}{\mathsf{Diag}}
\newcommand{\Bdiag}{\mathsf{BDiag}}
\newcommand{\Blad}{\mathsf{BLad}}
\newcommand{\Skein}[2]{\mathscr{S}_{#1,#2}}
\newcommand{\greq}{\operatorname*{=}_{\mathrm{gr}}}
\newcommand{\diagshift}{\tilde{\sigma}}
\newcommand{\ladshift}{\sigma}
\DeclareMathOperator{\intersec}{\mathbin{\pitchfork}}
\DeclareMathOperator{\interior}{\mathrm{int}}
\newcommand{\oset}[3][0ex]{%
  \mathrel{\mathop{#3}\limits^{
    \vbox to#1{\kern-2\ex@
    \hbox{$\scriptstyle#2$}\vss}}}}
\newcommand{\osetnear}[3][0ex]{%
  \mathrel{\mathop{#3}\limits^{
    \vbox to#1{\kern-.3\ex@
    \hbox{$\scriptstyle#2$}\vss}}}}
\newcommand\dnode[2]{\filldraw[draw=#2,fill=#2!15,even odd rule](#1) circle(1.7pt) circle(3pt)}
\newcommand\qarrow[2]{\draw[-latex,shorten >=2pt,shorten <=2pt] (#1) -- (#2) [thick];} 
\newcommand\qsarrow[2]{\draw[-latex,shorten >=3pt,shorten <=3pt] (#1) -- (#2) [thick];} 
\newcommand\qsharrow[2]{\draw[-latex,shorten >=4pt,shorten <=2pt] (#1) -- (#2) [thick];} 
\newcommand\qstarrow[2]{\draw[-latex,shorten >=2pt,shorten <=4pt] (#1) -- (#2) [thick];} 
\newcommand{\uniarrow}[3]{\draw[-latex,thick,#3] (#1) to (#2);}
\newcommand\qshdarrow[2]{\draw[->,dashed,shorten >=4pt,shorten <=2pt] (#1) -- (#2) [thick];} 
\newcommand\qstdarrow[2]{\draw[->,dashed,shorten >=2pt,shorten <=4pt] (#1) -- (#2) [thick];} 
\def\centerarc(#1)(#2:#3:#4)
\tikzset{
  mid arrow/.style={postaction={decorate,decoration={
        markings,
        mark=at position .5 with {\arrow[#1]{stealth}}
      }}},
}
\newcommand{\markedpt}[1]{
\node[fill,circle,inner sep=1.2pt] at(#1) {};
}
\tikzset{
    partial ellipse/.style args={#1:#2:#3}{
        insert path={+ (#1:#3) arc (#1:#2:#3)}
    }
}
\newcommand{\quiverplusC}[3]{
    \begin{scope}[>=latex]
    {\color{mygreen}
    \path(#1) coordinate(x1);
    \path(#2) coordinate(x2);
    \path(#3) coordinate(x3);
    \foreach \j in {1,2,3}
    {
        \foreach \k in {1,2,3}
        {
            \foreach \l in {1,2}
            {
            \path($(x\j)!0.333*\l!(x\k)$) coordinate(x\j\k\l);
            }
        }
    }
    \dnode{x121}{mygreen};
    \draw(x122) circle(2pt);
    \dnode{x312}{mygreen};
    \draw(x311) circle(2pt);
    {\color{myblue}
        \draw(x231) circle(2pt);
        \dnode{x232}{myblue};
    }
    \dnode{$(x121)!0.5!(x312)$}{mygreen} coordinate(G2);
    \draw($(x122)!0.5!(x311)$) circle(2pt) coordinate(G1);
    \qsarrow{G2}{x121}
    \qsarrow{x312}{G2}
    \qarrow{G1}{x122}
    \qarrow{x311}{G1}
    \qstarrow{x121}{G1}
    \qsharrow{G1}{G2}
    \qstarrow{G2}{x311}
    {\color{myblue}
    \qarrow{x122}{x231}
    \qarrow{x231}{G1}
    \qsarrow{G2}{x232}
    \qarrow{x232}{x312}
    }
    }
    \draw($(x2)!0.5!(x3)$) coordinate(h);
    \draw($(x1)!0.1!(h)$) node{$\ast$};
    \end{scope}
}
\newcommand{\quiverminusC}[3]{
    \begin{scope}[>=latex]
    {\color{mygreen}
    \path(#1) coordinate(x1);
    \path(#2) coordinate(x2);
    \path(#3) coordinate(x3);
    \foreach \j in {1,2,3}
    {
        \foreach \k in {1,2,3}
        {
            \foreach \l in {1,2}
            {
            \path($(x\j)!0.333*\l!(x\k)$) coordinate(x\j\k\l);
            }
        }
    }
    \dnode{x121}{mygreen};
    \draw(x122) circle(2pt);
    \dnode{x312}{mygreen};
    \draw(x311) circle(2pt);
    {\color{myblue}
        \draw(x232) circle(2pt);
        \dnode{x231}{myblue};
    }
    \dnode{$(x121)!0.5!(x312)$}{mygreen} coordinate(G2);
    \draw($(x122)!0.5!(x311)$) circle(2pt) coordinate(G1);
    \qsarrow{G2}{x121}
    \qsarrow{x312}{G2}
    \qarrow{G1}{x122}
    \qarrow{x311}{G1}
    \qsharrow{x122}{G2}
    \qstarrow{G2}{G1}
    \qsharrow{G1}{x312}
    {\color{myblue}
    \qsarrow{x121}{x231}
    \qsarrow{x231}{G2}
    \qarrow{G1}{x232}
    \qarrow{x232}{x311}
    }
    }
    \draw($(x2)!0.5!(x3)$) coordinate(h);
    \end{scope}
}
\newcommand{\quiververtexC}[3]{
    \begin{scope}[>=latex]
    {\color{mygreen}
    \path(#1) coordinate(x1);
    \path(#2) coordinate(x2);
    \path(#3) coordinate(x3);
    \foreach \j in {1,2,3}
    {
        \foreach \k in {1,2,3}
        {
            \foreach \l in {1,2}
            {
            \path($(x\j)!0.333*\l!(x\k)$) coordinate(x\j\k\l);
            }
        }
    }
    \dnode{x121}{mygreen};
    \draw(x122) circle(2pt);
    \dnode{x312}{mygreen};
    \draw(x311) circle(2pt);
    {\color{myblue}
        \draw(x231) circle(2pt);
        \dnode{x232}{myblue};
    }
    \dnode{$(x121)!0.5!(x312)$}{mygreen} coordinate(G2);
    \draw($(x122)!0.5!(x311)$) circle(2pt) coordinate(G1);
    }
    \draw($(x2)!0.5!(x3)$) coordinate(h);
    \end{scope}
}
\newcommand{\quiverplus}[3]{
    \begin{scope}[>=latex]
    {\color{mygreen}
    \path(#1) coordinate(x1);
    \path(#2) coordinate(x2);
    \path(#3) coordinate(x3);
    \foreach \j in {1,2,3}
    {
        \foreach \k in {1,2,3}
        {
            \foreach \l in {1,2}
            {
            \path($(x\j)!0.333*\l!(x\k)$) coordinate(x\j\k\l);
            }
        }
    }
    \dnode{x121}{mygreen};
    \draw(x122) circle(2pt);
    \dnode{x312}{mygreen};
    \draw(x311) circle(2pt);
    {\color{mygreen}
        \draw(x231) circle(2pt);
        \dnode{x232}{mygreen};
    }
    \dnode{$(x121)!0.5!(x312)$}{mygreen} coordinate(G2);
    \draw($(x122)!0.5!(x311)$) circle(2pt) coordinate(G1);
    \qsarrow{G2}{x121}
    \qsarrow{x312}{G2}
    \qarrow{G1}{x122}
    \qarrow{x311}{G1}
    \qstarrow{x121}{G1}
    \qsharrow{G1}{G2}
    \qstarrow{G2}{x311}
    {\color{mygreen}
    \qarrow{x122}{x231}
    \qarrow{x231}{G1}
    \qarrow{G2}{x232}
    \qarrow{x232}{x312}
    }
    }
    \draw($(x2)!0.5!(x3)$) coordinate(h);
    \draw($(x1)!0.1!(h)$) node{$\ast$};
    \end{scope}
}
\newcommand{\quiverminus}[3]{
    \begin{scope}[>=latex]
    {\color{mygreen}
    \path(#1) coordinate(x1);
    \path(#2) coordinate(x2);
    \path(#3) coordinate(x3);
    \foreach \j in {1,2,3}
    {
        \foreach \k in {1,2,3}
        {
            \foreach \l in {1,2}
            {
            \path($(x\j)!0.333*\l!(x\k)$) coordinate(x\j\k\l);
            }
        }
    }
    \dnode{x121}{mygreen};
    \draw(x122) circle(2pt);
    \dnode{x312}{mygreen};
    \draw(x311) circle(2pt);
    {\color{mygreen}
        \draw(x232) circle(2pt);
        \dnode{x231}{mygreen};
    }
    \dnode{$(x121)!0.5!(x312)$}{mygreen} coordinate(G2);
    \draw($(x122)!0.5!(x311)$) circle(2pt) coordinate(G1);
    \qsarrow{G2}{x121}
    \qsarrow{x312}{G2}
    \qarrow{G1}{x122}
    \qarrow{x311}{G1}
    \qsharrow{x122}{G2}
    \qstarrow{G2}{G1}
    \qsharrow{G1}{x312}
    {\color{mygreen}
    \qarrow{x121}{x231}
    \qarrow{x231}{G2}
    \qarrow{G1}{x232}
    \qarrow{x232}{x311}
    }
    }
    \draw($(x2)!0.5!(x3)$) coordinate(h);
    \draw($(x1)!0.1!(h)$) node{$\ast$};
    \end{scope}
}
\newcommand{\quiversquare}[4]{
\begin{scope}[>=latex]
    {\color{mygreen}
    \path(#1) coordinate(x1);
    \path(#2) coordinate(x2);
    \path(#3) coordinate(x3);
				\path(#4) coordinate(x4);
				\foreach \i in {1,2}
				{
    \path($(x1)!\i/3!(x4)$) coordinate(x14\i);
				\path($(x2)!\i/3!(x3)$) coordinate(x23\i);
				}
				\foreach \j in {0,1,2,3,4}
				{
				\draw($(x141)!\j/4!(x231)$) circle(2pt) coordinate(v1\j);
				\draw($(x142)!\j/4!(x232)$) coordinate(v2\j);
				\dnode{v2\j}{mygreen};
				}
				\draw[myblue]($(x1)!1/4!(x2)$) circle(2pt) coordinate(yl);
				\draw[myblue]($(x1)!3/4!(x2)$) circle(2pt) coordinate(yr);
				\draw($(x4)!1/4!(x3)$) coordinate(zl);
				\dnode{zl}{myblue};
				\draw($(x4)!3/4!(x3)$) coordinate(zr);
				\dnode{zr}{myblue};
				}
\end{scope}
}
\newcommand{\CoG}[3]{
    \path(#1) coordinate(x1);
    \path(#2) coordinate(x2);
    \path(#3) coordinate(x3);
    \path($(x1)!0.5!(x2)$) coordinate(H);
    \path($(x3)!0.667!(H)$) circle(2pt) coordinate(G);}
\definecolor{basecolor}{gray}{1}
\tikzset{
    ->-/.style 2 args={
        postaction={decorate},
        decoration={markings, mark=at position #1 with {\arrow[thick, #2]{>}}}
    },
    ->-/.default={0.5}{}
}
\tikzset{
    -<-/.style 2 args={
        postaction={decorate},
        decoration={markings, mark=at position #1 with {\arrow[thick, #2]{<}}} 
    },
    -<-/.default={0.5}{}
}
\tikzset{
    uarc/.style={
        red, ultra thick
    }
}
\tikzset{
    oarc/.style={
        basecolor, double=red, double distance=1.6pt, line width=2.4pt
    }
}
\tikzset{
    uwarc/.style={
        line width=1.6pt, red!15, preaction={draw, line width=2.8pt, red}
    }
}
\tikzset{
    owarc/.style={
        line width=1.6pt, red!15, preaction={preaction={draw, line width=6.8pt, basecolor},draw, line width=2.8pt, red}
    }
}
\tikzset{
    oarcblack/.style={
        basecolor, double=black, double distance=1.6pt, line width=2.4pt
    }
}
\tikzset{
	uarcblack/.style={
		black, ultra thick
	}
}
\tikzset{
    uwarcblack/.style={
        line width=1.6pt, black!15, preaction={draw, line width=2.8pt}
    }
}
\tikzset{
	uarcgreen/.style={
		mygreen, ultra thick
	}
}
\tikzset{
    wuarcgreen/.style={
        line width=1.6pt, green!15, preaction={draw, line width=2.8pt, green}
    }
}
\newcommand{\Oweb}[1]{
    \mbox{
        \tikz[baseline=-.6ex, scale=.06]{
            \coordinate (N) at (0,10);
            \coordinate (S) at (0,-10);
            \coordinate (W) at (-10,0);
            \coordinate (E) at (10,0);
            \coordinate (NE) at (45:10);
            \coordinate (NW) at (135:10);
            \coordinate (SE) at (-45:10);
            \coordinate (SW) at (-135:10);
            \coordinate (C) at (0,0);
            \coordinate (CN) at (0,5);
            \coordinate (CS) at (0,-5);
            \coordinate (CW) at (-5,0);
            \coordinate (CE) at (5,0);
            \draw[dashed, fill=basecolor] (C) circle [radius=10cm];
            \draw[#1] (C) circle [radius=5cm];
        }
    }
}
\newcommand{\monoweb}[3][]{
    \mbox{
        \tikz[baseline=-.6ex, scale=.06,#1]{
            \coordinate (N) at (0,10);
            \coordinate (S) at (0,-10);
            \coordinate (W) at (-10,0);
            \coordinate (E) at (10,0);
            \coordinate (NE) at (45:10);
            \coordinate (NW) at (135:10);
            \coordinate (SE) at (-45:10);
            \coordinate (SW) at (-135:10);
            \coordinate (C) at (0,0);
            \coordinate (CN) at (0,5);
            \coordinate (CS) at (0,-5);
            \coordinate (CW) at (-5,0);
            \coordinate (CE) at (5,0);
            \draw[dashed, fill=basecolor] (C) circle [radius=10cm];
            \draw[#2] (CS) -- (S);
            \draw[#3] (C) circle [radius=5cm];
        }
    }
}
\newcommand{\biweb}[4]{
    \mbox{
        \tikz[baseline=-.6ex, scale=.06]{
            \coordinate (N) at (0,10);
            \coordinate (S) at (0,-10);
            \coordinate (W) at (-10,0);
            \coordinate (E) at (10,0);
            \coordinate (NE) at (45:10);
            \coordinate (NW) at (135:10);
            \coordinate (SE) at (-45:10);
            \coordinate (SW) at (-135:10);
            \coordinate (C) at (0,0);
            \coordinate (CN) at (0,5);
            \coordinate (CS) at (0,-5);
            \coordinate (CW) at (-5,0);
            \coordinate (CE) at (5,0);
            \draw[dashed, fill=basecolor] (C) circle [radius=10cm];
            \draw[#4] (CN) -- (N);
            \draw[#1] (CS) -- (S);
            \draw[#2] (CS) arc (-90:90:5cm);
            \draw[#3] (CN) arc (90:270:5cm);
        }
    }
}
\newcommand{\triweb}{
    \mbox{
        \tikz[baseline=-.6ex, scale=.06]{
            \coordinate (N) at (0,10);
            \coordinate (S) at (0,-10);
            \coordinate (W) at (-10,0);
            \coordinate (E) at (10,0);
            \coordinate (NE) at (45:10);
            \coordinate (NW) at (135:10);
            \coordinate (SE) at (-45:10);
            \coordinate (SW) at (-135:10);
            \coordinate (C) at (0,0);
            \coordinate (CN) at (0,5);
            \coordinate (CS) at (0,-5);
            \coordinate (CW) at (-5,0);
            \coordinate (CE) at (5,0);
            \draw[dashed, fill=basecolor] (C) circle [radius=10cm];
            \draw[uwarc] (CN) -- (N);
            \draw[uwarc] (210:5) -- (210:10);
            \draw[uwarc] (330:5) -- (330:10);
            \draw[uarc] (CN) -- (210:5) -- (330:5) -- cycle;
        }
    }
}
\newcommand{\cross}{
    \mbox{
        \tikz[baseline=-.6ex, scale=.06]{
            \draw[fill=red!30, thick] (0,0) circle [radius=30pt];
        }
    }
}
\newcommand{\noweb}{
    \mbox{
        \tikz[baseline=-.6ex, scale=.06]{
            \coordinate (N) at (0,10);
            \coordinate (S) at (0,-10);
            \coordinate (W) at (-10,0);
            \coordinate (E) at (10,0);
            \coordinate (NE) at (45:10);
            \coordinate (NW) at (135:10);
            \coordinate (SE) at (-45:10);
            \coordinate (SW) at (-135:10);
            \coordinate (C) at (0,0);
            \coordinate (CN) at (0,5);
            \coordinate (CS) at (0,-5);
            \coordinate (CW) at (-5,0);
            \coordinate (CE) at (5,0);
            \draw[dashed, fill=basecolor] (C) circle [radius=10cm];
        }
    }
}
\newcommand{\Xpos}[3][]{
    \mbox{
        \tikz[baseline=-.6ex, scale=.06,#1]{
            \coordinate (N) at (0,10);
            \coordinate (S) at (0,-10);
            \coordinate (W) at (-10,0);
            \coordinate (E) at (10,0);
            \coordinate (NE) at (45:10);
            \coordinate (NW) at (135:10);
            \coordinate (SE) at (-45:10);
            \coordinate (SW) at (-135:10);
            \coordinate (C) at (0,0);
            \coordinate (CN) at (0,5);
            \coordinate (CS) at (0,-5);
            \coordinate (CW) at (-5,0);
            \coordinate (CE) at (5,0);
            \draw[dashed, fill=basecolor] (C) circle [radius=10cm];
            \draw[#3] (SE) -- (NW);
            \draw[#2] (SW) -- (NE);
        }
    }
}
\newcommand{\Xneg}[3][]{
    \mbox{
        \tikz[baseline=-.6ex, scale=.06, #1]{
            \coordinate (N) at (0,10);
            \coordinate (S) at (0,-10);
            \coordinate (W) at (-10,0);
            \coordinate (E) at (10,0);
            \coordinate (NE) at (45:10);
            \coordinate (NW) at (135:10);
            \coordinate (SE) at (-45:10);
            \coordinate (SW) at (-135:10);
            \coordinate (C) at (0,0);
            \coordinate (CN) at (0,5);
            \coordinate (CS) at (0,-5);
            \coordinate (CW) at (-5,0);
            \coordinate (CE) at (5,0);
            \draw[dashed, fill=basecolor] (C) circle [radius=10cm];
            \draw[#3] (SW) -- (NE);
            \draw[#2] (SE) -- (NW);
        }
    }
}
\newcommand{\Xcross}{
    \mbox{
        \tikz[baseline=-.6ex, scale=.06]{
            \coordinate (N) at (0,10);
            \coordinate (S) at (0,-10);
            \coordinate (W) at (-10,0);
            \coordinate (E) at (10,0);
            \coordinate (NE) at (45:10);
            \coordinate (NW) at (135:10);
            \coordinate (SE) at (-45:10);
            \coordinate (SW) at (-135:10);
            \coordinate (C) at (0,0);
            \coordinate (CN) at (0,5);
            \coordinate (CS) at (0,-5);
            \coordinate (CW) at (-5,0);
            \coordinate (CE) at (5,0);
            \draw[dashed, fill=basecolor] (C) circle [radius=10cm];
            \draw[uarc] (SE) -- (NW);
            \draw[uarc] (SW) -- (NE);
            \node at (C) {\cross};
        }
    }
}
\newcommand{\XJcross}{
    \mbox{
        \tikz[baseline=-.6ex, scale=.06]{
            \coordinate (N) at (0,10);
            \coordinate (S) at (0,-10);
            \coordinate (W) at (-10,0);
            \coordinate (E) at (10,0);
            \coordinate (NE) at (45:10);
            \coordinate (NW) at (135:10);
            \coordinate (SE) at (-45:10);
            \coordinate (SW) at (-135:10);
            \coordinate (C) at (0,0);
            \coordinate (CN) at (0,5);
            \coordinate (CS) at (0,-5);
            \coordinate (CW) at (-5,0);
            \coordinate (CE) at (5,0);
            \draw[dashed, fill=basecolor] (C) circle [radius=10cm];
            \draw[uwarc] (NE) -- ($(NE)!.5!(C)$);
            \draw[uwarc] (NW) -- ($(NW)!.5!(C)$);
            \draw[uwarc] (SE) -- ($(SE)!.5!(C)$);
            \draw[uwarc] (SW) -- ($(SW)!.5!(C)$);
            \draw[uarc] ($(NE)!.5!(C)$) -- ($(NW)!.5!(C)$) -- ($(SW)!.5!(C)$) -- ($(SE)!.5!(C)$) -- cycle;
        }
    }
}
\newcommand{\Iweb}[2][]{
    \mbox{
        \tikz[baseline=-.6ex, scale=.06, #1]{
            \coordinate (N) at (0,10);
            \coordinate (S) at (0,-10);
            \coordinate (W) at (-10,0);
            \coordinate (E) at (10,0);
            \coordinate (NE) at (45:10);
            \coordinate (NW) at (135:10);
            \coordinate (SE) at (-45:10);
            \coordinate (SW) at (-135:10);
            \coordinate (C) at (0,0);
            \coordinate (CN) at (0,5);
            \coordinate (CS) at (0,-5);
            \coordinate (CW) at (-5,0);
            \coordinate (CE) at (5,0);
            \draw[dashed, fill=basecolor] (C) circle [radius=10cm];
            \draw[#2] (S) -- (N);
        }
    }
}
\newcommand{\IIweb}[3][]{
    \mbox{
        \tikz[baseline=-.6ex, scale=.06, #1]{
            \coordinate (N) at (0,10);
            \coordinate (S) at (0,-10);
            \coordinate (W) at (-10,0);
            \coordinate (E) at (10,0);
            \coordinate (NE) at (45:10);
            \coordinate (NW) at (135:10);
            \coordinate (SE) at (-45:10);
            \coordinate (SW) at (-135:10);
            \coordinate (C) at (0,0);
            \coordinate (CN) at (0,5);
            \coordinate (CS) at (0,-5);
            \coordinate (CW) at (-5,0);
            \coordinate (CE) at (5,0);
            \draw[dashed, fill=basecolor] (C) circle [radius=10cm];
            \draw[#2] (SW) to[bend right=60] (NW);
            \draw[#3] (SE) to[bend left=60] (NE);
        }
    }
}
\newcommand{\myHweb}[6][]{
    \mbox{
        \tikz[baseline=-.6ex, scale=.06,#1]{
            \coordinate (N) at (0,10);
            \coordinate (S) at (0,-10);
            \coordinate (W) at (-10,0);
            \coordinate (E) at (10,0);
            \coordinate (NE) at (45:10);
            \coordinate (NW) at (135:10);
            \coordinate (SE) at (-45:10);
            \coordinate (SW) at (-135:10);
            \coordinate (C) at (0,0);
            \coordinate (CN) at (0,5);
            \coordinate (CS) at (0,-5);
            \coordinate (CW) at (-5,0);
            \coordinate (CE) at (5,0);
            \draw[dashed, fill=basecolor] (C) circle [radius=10cm];
            \draw[#4] (CW) -- (CE);
            \draw[#2] (SW) -- (CW);
            \draw[#5] (CW) -- (NW);
            \draw[#3] (SE) -- (CE);
            \draw[#6] (CE) -- (NE);
        }
    }
}
\tikzset{
	overarc/.style={
		white, double=red, double distance=1.2pt, line width=2.4pt
	}
}
\tikzset{
  symbol/.style={
    draw=none,
    every to/.append style={
      edge node={node [sloped, allow upside down, auto=false]{$#1$}}}
  }
}
\newcommand{\bline}[3]{
    \path (#1)++(0,-#3) coordinate(m1);
    \path (#2)++(0,-#3) coordinate(m2);
    \filldraw[gray!30] (m1) -- (#1) -- (#2) -- (m2) --cycle;
    \draw[thick] (#1) -- (#2);
}
\tikzset{
    wline/.style={
        shorten >=0.3pt,shorten <=0.3pt,line width=1.2pt, red!15, preaction={draw, shorten >=0.3pt,shorten <=0.3pt,line width=2.5pt, red}
    }
}
\tikzset{
    wlineblack/.style={
        shorten >=0.3pt,shorten <=0.3pt,line width=1.2pt, black!15, preaction={draw, shorten >=0.3pt,shorten <=0.3pt,line width=2.5pt}
    }
}
\tikzset{
    wlinebdy/.style={
        shorten >=0.3pt,shorten <=0.3pt,line width=1.2pt, myblue!15, preaction={draw, shorten >=0.3pt,shorten <=0.3pt,line width=2.5pt, myblue}
    }
}
\tikzset{
	webline/.style={
		red, very thick
	}
}
\newcommand{\triv}[3]{
    \CoG{#1}{#2}{#3}
    \draw[wline] (#1) -- (G);
    \draw[webline] (#2) -- (G);
    \draw[webline] (#3) -- (G);
}
\tikzset{
    wlined/.style={
        shorten >=0.3pt,shorten <=0.3pt,line width=1.2pt, red!15, preaction={draw, shorten >=0.3pt,shorten <=0.3pt,line width=2.5pt, mygreen}
    }
}
\tikzset{
	weblined/.style={
		mygreen, very thick
	}
}
\newcommand{\trivd}[3]{
    \CoG{#1}{#2}{#3}
    \draw[weblined] (#1) -- (G);
    \draw[wlined] (#2) -- (G);
    \draw[wlined] (#3) -- (G);
}
\newcommand{\tribox}{
    \mbox{
        \tikz[baseline=-.6ex, scale=.1]{
            \fill[red!10] (0:5) -- (90:5) -- (180:5) -- cycle;
            \draw[wlineblack] (180:5) -- (0:5);
            \draw[thick] (0:5) -- (90:5) -- (180:5);
        }
    }
}
\newcommand{\sqbox}{
    \mbox{
        \tikz[baseline=-.6ex, scale=.1]{
            \fill[red!10] (45:5) -- (135:5) -- (225:5) -- (315:5) -- cycle;
            \draw[thick] (45:5) -- (135:5) -- (225:5) -- (315:5) -- cycle;
        }
    }
}
\newcommand{\clbox}{
    \mbox{
        \tikz[baseline=-.6ex, scale=.08]{
            \draw[fill=white] (-4,-1) rectangle (4,1);
        }
    }
}
\newcommand{\crossroad}{
    \tikz[baseline=-.6ex, scale=.1]{
        \draw[fill=pink, thick] (-5,0) circle [radius=20pt];
    }
}
\title[Bounded $\mathfrak{sp}_4$-laminations and their intersection coordinates]{Bounded $\mathfrak{sp}_4$-laminations and their intersection coordinates}
\author[Tsukasa Ishibashi]{Tsukasa Ishibashi}
\address{Tsukasa Ishibashi, Mathematical Institute, Tohoku University, 
6-3 Aoba, Aramaki, Aoba-ku, Sendai, Miyagi 980-8578, Japan.}
\email{tsukasa.ishibashi.a6@tohoku.ac.jp}
\author[Zhe Sun]{Zhe Sun}
\address{School of Mathematical Sciences, University of Science and Technology of China, 96 Jinzhai Road, 230026, Hefei, China}
\email{sunz@ustc.edu.cn}
\author[Wataru Yuasa]{Wataru Yuasa}
\address{International Institute for Sustainability with Knotted Chiral Meta Matter\\
Hiroshima University\\
2-313 Kagamiyama, Higashi-Hiroshima City, Hiroshima, 739-0046, Japan}
\email{wyuasa@hiroshima-u.ac.jp}
\urladdr{https://wataruyuasa.github.io/math/} 
\date{\today}
\begin{document}

\begin{abstract}
We introduce rational bounded $\mathfrak{sp}_4$-laminations on a marked surface $\boldsymbol{\Sigma}$ as a proposed topological model for the rational tropical points $\mathcal{A}_{Sp_4,\boldsymbol{\Sigma}}(\mathbb{Q}^{\mathsf{T}})$ of the Fock--Goncharov moduli space \cite{FG06}. Our space consists of certain equivalence classes of $\mathfrak{sp}_4$-webs introduced by Kuperberg \cite{Kuperberg}, together with rational measures. We define tropical coordinate systems using the $\mathfrak{sp}_4$-case of the intersection number of Shen--Sun--Weng \cite{SSW25}, and establish a bijection using the framework of the graded $\mathfrak{sp}_4$-skein algebra. This provides a topological perspective for Fock--Goncharov duality for $\mathfrak{sp}_4$. 
\end{abstract}

\maketitle

\setcounter{tocdepth}{1}
\tableofcontents

\section{Introduction}

\subsection{Measured geodesic laminations and tropical points of cluster varieties}
W. P. Thurston \cite{Thu} introduced the concept of measured geodesic laminations as a continuous generalization (or certain limits) of curves on a hyperbolic surface $\Sigma$. Since then, the measured geodesic laminations played significant roles in hyperbolic geometry and related subjects. Among others, their typical roles are:
 \begin{description}
    \item[(A) Boundary data at infinity] The points in the Thurston boundary of the \Teich\ space $\mathcal{T}(\Sigma)$ are represented by projective measured geodesic laminations. They encode the degenerations of hyperbolic structures.
    \item[(B) Trace function basis] For an integral lamination ({\it i.e.} a weighted multicurve), the associated trace function of monodromy defines a regular function on the $SL_2$-character variety $\mathrm{Ch}_{SL_2,\Sigma}=\Hom(\pi_1(\Sigma),SL_2)\sslash SL_2$. These functions form the basis of the coordinate ring $\cO(\mathrm{Ch}_{SL_2,\Sigma})$.
    \item[(C) Basis of the skein algebra] The coordinate ring has the Atiyah--Bott--Goldman Poisson structure, and its deformation quantization is given by the Kauffman bracket skein algebra \cite{PS00}. The integral laminations also provide a basis for this skein algebra. 
\end{description}
The measured geodesic laminations, together with these perspectives, also provide an effective tool for the study of mapping class groups. 

\paragraph{\textbf{Connection to the cluster theory}}
Investigation of measured geodesic laminations via certain coordinate systems \cite{FG07} has led to a fruitful connection to the theory of cluster algebras/cluster varieties. 
For a marked surface $\bSigma=(\Sigma,\bM)$, associated is the cluster ensemble $(\A_\bSigma,\X_\bSigma)$ \cite{FG09}, which is birationally isomorphic to the pair $(\A_{SL_2,\bSigma},\X_{PGL_2,\bSigma})$ of moduli spaces introduced in \cite{FG06}. These moduli spaces are two extensions of the $SL_2$-character variety $\mathrm{Ch}_{SL_2,\Sigma}$ with different decorations at the marked points.
The sets $(\A_\bSigma(\bR^\sfT),\X_\bSigma(\bR^\sfT))$ of real tropical points of the same cluster ensemble (which is Langlands self-dual in this case) are identified with extended spaces of measured geodesic laminations \cite{FG07} via certain coordinates associated with ideal triangulations. The weighted multicurves corresponds to the integral tropical points. See \cite{PP93,FG07} for more details. 

\paragraph{\textbf{Duality maps}}
In their seminal work \cite{FG09}, Fock--Goncharov formulated the \textit{(quantum) duality conjecture} for any cluster ensembles $(\A,\X)$. It asks for a parametrization
\begin{align}
    &\mathbb{I}_\A: \A^\vee(\bZ^\sfT) \to \cO(\X), \label{eq:duality_map_A} \\
    &\mathbb{I}_\X: \X^\vee(\bZ^\sfT) \to \cO(\A) \label{eq:duality_map_X}
\end{align}
of bases of regular function rings $(\cO(\A),\cO(\X))$ of cluster varieties by the sets $(\A^\vee(\bZ^\sfT)$, $\X^\vee(\bZ^\sfT))$ of integral tropical points of Langlands dual cluster varieties, together with certain good properties such as structure constant positivity. There is also the quantum version of conjecture, where we replace the target with $\cO_q(\X)$ or the quantum upper cluster algebra $\cO_q(\A)$ of full rank. A seminal work of Gross--Hacking--Keel--Kontsevich \cite{GHKK} establishes a general construction of duality maps under mild conditions. A quantum version is given by Davison--Mandel \cite{DM} in the skew-symmetric case, while the skew-symmetrizable case is not established so far.  

As the initial observation of Fock--Goncharov suggests, the integral laminations plays a key role in a topological construction of (quantum) duality maps for the cluster ensemble  $(\A_\bSigma,\X_\bSigma)$. Several works have been conducted around this, including \cite{FG06,MSW,Thu14,Muller16,AK,MQ,IKar}. These constructions use some extensions of the trace function basis and their quantization via skein algebras, based on the perspectives (B) and (C) above. 

\paragraph{\textbf{Tropical compactification}}
The classical perspective (A) is also generalized in the cluster theory, where we have the tropical compactification of the spaces $(\A(\bR_{>0}),\X(\bR_{>0}))$ of positive real points for any cluster variety \cite{FG16,Le16}, where the boundary at infinity is given by the projectivizations $(\mathbb{P} \A(\bR^\sfT), \mathbb{P} \X(\bR^\sfT))$. From this point of view, some studies for the mapping class groups are generalized to those for cluster modular groups \cite{Ish19,IK19,IK20}.

\subsection{Search for higher rank generalizations}
For any complex semisimple Lie algebra $\mathfrak{g}$ and a marked surface $\bSigma$, there is the cluster ensemble $(\A_{\mathfrak{g},\bSigma},\X_{\mathfrak{g},\bSigma})$ \cite{FG06,Le19,GS19}, which is birationally isomorphic to the Fock--Goncharov's moduli spaces $(\A_{G,\bSigma},\X_{G',\bSigma})$ with $G$ the simply-connected group, $G'$ the adjoint group having the Lie algebra $\mathfrak{g}$. The previous example corresponds to the rank one case $\mathfrak{g}=\fsl_2$. 

Based on the successful connection in the rank one case, search for a topological realization of real tropical points $(\A_{\mathfrak{g},\bSigma}(\bR^\sfT),\X_{\mathfrak{g},\bSigma}(\bR^\sfT))$ -- higher rank versions of measured geodesic laminations -- has been an active topic of research. It is expected that such a topological model is given by $\mathfrak{g}$-diagrams on $\bSigma$, certain graphs with edge coloring by representations of $\mathfrak{g}$. 
Let us call them bounded (resp. unbounded) if they end at the boundary intervals (resp. marked points). For later convenience, we distinguish them from the corresponding elements in $\mathfrak{g}$-skein algebras, latter being called the $\mathfrak{g}$-webs. 

\paragraph{\textbf{The case $\mathfrak{g}=\fsl_3$.}} In this case, a topological model for the tropical points $\A^+_{\fsl_3,\bSigma}(\bQ^\sfT)$ subordinate to certain inequalities is firstly given by Douglas and the second author \cite{DS20I,DS20II} by using the Kuperberg's $\fsl_3$-webs \cite{Kuperberg} ($\fsl_3$-diagrams in our terminology). 
%
%
They constructed certain coordinate systems for the space of bounded $\fsl_3$-diagrams associated with ideal triangulations of $\bSigma$, and proved that the coordinate transformations under the change of triangulations agrees with the tropical cluster transformations. These coordinate systems provide an equivariant identification of this space of bounded $\fsl_3$-diagrams with $\A^+_{\fsl_3,\bSigma}(\bZ^\sfT)$ under the natural mapping class group actions. The extension to the rational case is straightforward. Then Kim \cite{Kim21} extended such identification to the identification of the space of bounded rational $\fsl_3$-laminations (which allow peripheral loops/arcs with negative weights) with $\A_{\fsl_3,\bSigma}(\bQ^\sfT)$.
Based on this identification, Kim \cite{Kim21} constructed a candidate quantum duality map in the direction \eqref{eq:duality_map_A}, relating the target algebra $\cO_q(\X_{\fsl_3,\bSigma})$ to the stated $\fsl_3$-skein algebra via quantum trace maps. 

The tropical space $\X_{\fsl_3,\bSigma}(\bQ^\sfT)$ is also identified with a space of ``signed'' unbounded $\fsl_3$-laminations by the first author and Kano \cite{IK22,IK25}, where they have different behavior at punctures from the bounded case. When there are no punctures, 
Frohman--Sikora \cite{FS22} defined an $\fsl_3$-skein algebra, which can be viewed as the $\fsl_3$-version of the Muller skein algebra \cite{Muller16}. The first author and the third author gave a (partially conjectural) identification between this skein algebra with the quantum (upper) cluster algebra $\cO_q(\A_{\fsl_3,\bSigma})$ \cite{IYsl3}. Based on this identification, a candidate quantum duality map in the direction \eqref{eq:duality_map_X} is proposed in \cite{IK22}. When there are punctures, a version of $\fsl_3$-skein algebra is investigated in \cite{SSWa}.

\subsection{Main statement}
In this paper, we investigate the case $\mathfrak{g}=\fsp_4$ (Type $C_2$). This is the first non-trivial case of non-symmetric type. We use the Kuperberg's $\fsp_4$-webs \cite{Kuperberg} ($\fsp_4$-diagrams in our terminology), whose local pictures are

\begin{align}\label{eq:intro_local_sp4}
\begin{tikzpicture}[scale=0.8]
\draw[dashed] (0,0) circle(1cm);
\draw[webline] (-1,0) -- (1,0);
\node at (0,-1.5) {Type~1 edge};
\begin{scope}[xshift=5cm]
\draw[dashed] (0,0) circle(1cm);
\draw[wline] (-1,0) -- (1,0);
\node at (0,-1.5) {Type~2 edge};
\end{scope}
\begin{scope}[xshift=10cm]
\draw[dashed] (0,0) circle(1cm);
{\color{red} \triv{-90:1}{30:1}{150:1};}
\node at (0,-1.5) {Trivalent vertex};
\end{scope}
\begin{scope}[xshift=15cm]
\draw[dashed] (0,0) circle(1cm);
\draw[webline] (45:1) -- (-135:1);
\draw[webline] (-45:1) -- (135:1);
\draw[fill=pink,thick] (0,0) circle(3pt);
\node at (0,-1.5) {Crossroad};
\end{scope}
\end{tikzpicture}\ .
\end{align}

A version of $\fsp_4$-skein algebra, which is similar to Muller's \cite{Muller16} and Frohman--Sikora's \cite{FS22} ones, has been investigated by the first author and the third author \cite{IYsp4}, whose elements are represented by unbounded $\fsp_4$-diagrams. Let us call it the \emph{clasped $\fsp_4$-skein algebra}, as the marked points play the role of external clasps. They provided a (partially conjectural) identification between this skein algebra with the quantum (upper) cluster algebra $\cO_q(\A_{\fsp_4,\bSigma})$ for the unpunctured surface. 
In particular, they gave a natural basis $\Bweb_{\fsp_4,\bSigma}$; based on the perspective (C), the integral $\fsp_4$-laminations should be in bijection with the set $\Bweb_{\fsp_4,\bSigma}$. 

The elements of $\Bweb_{\fsp_4,\bSigma}$ are typically represented by diagrams with crossroads, the right-most 4-valent vertex in \eqref{eq:intro_local_sp4}. While this is a kind of canonical representative, we find that it is better to allow the \textit{ladder-resolutions} 
\begin{align*}
\begin{tikzpicture}[scale=0.8]
\draw[dashed] (0,0) circle(1cm);
\draw[webline] (45:1) -- (0,0.5);
\draw[webline] (135:1) -- (0,0.5);
\draw[webline] (-45:1) -- (0,-0.5);
\draw[webline] (-135:1) -- (0,-0.5);
\draw[wline] (0,0.5) -- (0,-0.5);
\begin{scope}[xshift=4cm]
\draw[dashed] (0,0) circle(1cm);
\draw[webline] (45:1) -- (-135:1);
\draw[webline] (-45:1) -- (135:1);
\draw[fill=pink,thick] (0,0) circle(3pt);
\end{scope}
\draw[thick,->] (2.5,0) -- (1.5,0);
\draw[thick,->] (5.5,0) -- (6.5,0);
\begin{scope}[xshift=8cm]
\draw[dashed] (0,0) circle(1cm);
\draw[webline] (45:1) -- (0.5,0);
\draw[webline] (-45:1) -- (0.5,0);
\draw[webline] (135:1) -- (-0.5,0);
\draw[webline] (-135:1) -- (-0.5,0);
\draw[wline] (0.5,0) -- (-0.5,0);
\end{scope}
\end{tikzpicture}
\end{align*}
in considering their coordinates. Thus we introduce the set $\Blad_{\fsp_4,\bSigma}$ of \textit{ladder-equivalence classes} of $\fsp_4$-diagrams, including the above move as an equivalence relation, and define the \textit{rational bounded $\fsp_4$-laminations} to be the elements of $\Blad_{\fsp_4,\bSigma}$ equipped with rational measures (\cref{def:lamination}). Let $\cL^a(\bSigma,\bQ)$ be the set formed by them. Note the following distinction:
\begin{align*}
\begin{tikzpicture}
    \node (A) at (0,0) {\{bounded $\fsp_4$-diagrams\}};
    \node (B) at (0,-2) {$\Blad_{\fsp_4,\bSigma}$};
    \node (C) at (6,-2) {$\Bweb_{\fsp_4,\bSigma}$};
    \node (A') at (6,0) {\{unbounded $\fsp_4$-diagrams\}};
    \draw[->] (A.south) --node[midway,left]{ladder-equiv.} (B.north);
    \draw[->] (A'.south) --node[midway,right]{skein rel.} (C.north);
    \draw[->] (A.east) --node[midway,above]{shift} (A'.west);
    \node[below] at (B.south) {(Lamination)};
    \node[below] at (C.south) {(Skein)};
\end{tikzpicture}
\end{align*}
Here the shift will be explained in \cref{dfn:shift}. 

In \cite{IYsp4}, the notion of \textit{web clusters} is introduced, which is expected to correspond to quantum clusters in the $\fsp_4$-skein algebra.\footnote{More precisely, we should consider web clusters formed by tree-type elementary webs, following \cite[Conjecture 4]{IYsp4}.} We can dualize this notion to be $\fso_5$-web clusters, where $\fso_5$ is the Langlands dual Lie algebra (Type $B_2$).\footnote{Here we remark that $\fsp_4 \cong \fso_5$ as Lie algebras, and the corresponding quantum cluster algebras are isomorphic via a certain permutation of quantum cluster variables. The skein algebras are isomorphic as well; only the graphical presentation is dualized. However, it is natural to consider the intersection pairing in view of $\fsp_4/\fso_5$ duality rather than working only with $\fsp_4$.}  
We introduce \textit{intersection coordinates} (see \cref{subsub:intersection} below) 
\begin{align}\label{eq:intro_coord_system}
    \bsfa^\cC: \cL^a(\bSigma,\bQ) \to \bQ^I
\end{align}
associated with an $\fso_5$-web cluster $\cC=\{V_i \}_{i \in I}$. In particular, for any decorated triangulation $\bD=(\tri,m_\tri,\bs_\tri)$ of $\bSigma$, there is an associated web cluster $\cC_\bD$ (corresponding to a quantum $\fso_5$-cluster) and the coordinate map $\bsfa^{\bD}:=\bsfa^{\cC_\bD}$. Our main theorem is the bijectivity of this map:

\begin{introthm}[\cref{thm:bijection}]\label{introthm:bijection}
For any decorated triangulation $\bD=(\tri,m_\tri,\bs_\tri)$ of $\bSigma$, the coordinate system 
\begin{align*}
    \bsfa^{\bD}: \cL^a
    (\bSigma,\bQ) \xrightarrow{\sim} \bQ^{I(\bD)}
\end{align*}
gives a bijection. 
\end{introthm}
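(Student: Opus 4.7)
The plan is to prove bijectivity by cutting $\boldsymbol{\Sigma}$ along the edges of $\triangle$ and analyzing the restriction of a bounded $\fsp_4$-lamination to each triangle $T$. The coordinates $\bsfa^{\bD}$ split naturally into \emph{edge coordinates}, which record the numbers and types of strand-endpoints of the lamination on each edge $E$, and \emph{triangle coordinates}, which encode how these endpoints are connected by the $\fsp_4$-web inside each triangle. First I would verify that $\bsfa^{\bD}$ descends to ladder-equivalence classes: since the intersection numbers are defined by transverse pairings with $\fso_5$-webs from $\cC_\bD$, a direct local count at the crossroad $\leftrightarrow$ ladder-resolution replacement shows invariance, and a similar check handles the trivalent vertex moves and the $\fsp_4$-web relations already respected by the intersection pairing of \cite{SSW25}.

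\textbf{Injectivity via a normal form.} Using the ladder-equivalence I would put every bounded $\fsp_4$-lamination into a normal form in which a small collar of each edge $E$ contains only parallel type-1 and type-2 strands with all crossroads resolved and pushed into the interiors of triangles. The edge coordinates then pin down this collar pattern uniquely. Fixing boundary data on $\partial T$, the next step is to classify the possible bounded $\fsp_4$-diagrams on $T$ modulo ladder-equivalence and Kuperberg's $\fsp_4$-web relations, and to show that the triangle coordinates $\mathsf{a}_i^{\bD}$ for $i$ indexing the part of $\cC_\bD$ supported in $T$ give an affine bijection onto the corresponding rational coordinate space. The elementary local pieces indexed by the restriction of $\cC_\bD$ to $T$ should serve as a $\bQ$-basis of the ladder-classes with fixed boundary data. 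For surjectivity, given $\bsfa \in \bQ^{I(\bD)}$ I would construct a lamination realizing it by drawing the prescribed strands in each collar and the prescribed combination of elementary pieces in each triangle; gluing across $E$ is automatic because the endpoint patterns on the two sides of $E$ are determined by the same edge coordinates.

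\textbf{Main obstacle.} The hardest step is the per-triangle classification modulo ladder-equivalence. The combination of two strand types, crossroads, and Kuperberg's full set of $\fsp_4$-web relations (bigon, square, H/I relations) creates genuine combinatorial intricacy; verifying that the triangle coordinates are both \emph{complete} (distinguishing all ladder-classes with given boundary data) and \emph{free} (attaining every rational tuple with no hidden relations imposed by web moves) is where I expect to lean most heavily on the graded $\fsp_4$-skein algebra framework and the basis $\Bweb_{\fsp_4,\bSigma}$ developed in \cite{IYsp4}. Once this local bijectivity is in hand, assembling the global result is a matter of matching edge coordinates across shared edges of $\triangle$.
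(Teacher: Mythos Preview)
Your high-level strategy---cut along $\tri$, classify pieces on each triangle, then glue---matches the paper's. But you have misidentified the main obstacle, and the step you call ``automatic'' is precisely where the real work lies.

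The per-triangle classification is \emph{not} the hard part: it is already established in \cite{IYsp4} (quoted here as \cref{lem:triangle_blad}), and the paper handles the triangle-level bijectivity by direct combinatorics with the Hilbert basis (\cref{subsec:triangle_case}), not via the graded skein algebra. What the graded skein algebra is actually used for is the \emph{gluing} step, which you dismiss as automatic. The paper's introduction flags this explicitly: given the coordinate-determined piece on each triangle, the endpoint counts on the two sides of a shared edge do match, but ``typically we have many ways of doing this, and a direct approach to find a correct way to obtain a non-elliptic diagram yields a combinatorial complication.'' Your normal-form-in-a-collar picture does not resolve this: two laminations with identical coordinates will restrict to the same triangle pieces (up to corner-arc flips), yet may be connected across edges by different flat braids, and you must show these are ladder-equivalent.

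The paper's mechanism is that in the graded quotient $\cG^{\tri}\Skein{\fsp_4}{\bSigma}^q$, the flat Reidemeister II and III moves hold (\cref{rem:flat-braid}), so different braid gluings represent the same graded element; since both laminations give basis webs in $\Bweb_{\fsp_4,\bSigma}$ and the symbol map is injective on this basis (\cref{lem:gr-basis}), they must coincide. This also relies on a substantial minimal-position theory (\cref{sec:minimal}) to get representatives into good position relative to the split triangulation in the first place---your proposal gestures at a normal form but does not supply this. Without the graded-algebra argument for gluing uniqueness, your injectivity proof has a genuine gap.
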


In our sequel paper \cite{ISY}, we prove that the tropical seeds $(\ve^{\bD},\bsfa^{\bD})$ associated with any decorated triangulations $\bD$ are mutation-equivalent to each other. In particular, they combine to give a mapping class group equivariant PL isomorphism
\begin{align}\label{introeq:tropical_variety}
        \bsfa: \cL^a(\bSigma,\bQ) \xrightarrow{\sim} \A_{\fsp_4,\bSigma}(\bQ^\sfT). 
\end{align} 

Thus our space $\cL^a(\bSigma,\bQ)$ provides a topological model for the rational tropical points $\A_{\fsp_4,\bSigma}(\bQ^\sfT)$. We also introduce the subset $\cL^a(\bSigma,\bZ)_\congr \subset \cL^a(\bSigma,\bQ)$ of \emph{congruent laminations} (\cref{def:congruent}), which is identified with the set $\A_{\fsp_4,\bSigma}(\bZ^\sfT)$ of integral tropical points. 

\subsection{A recipe for quantum Fock--Goncharov duality}
The identification \eqref{introeq:tropical_variety} would lead to a topological construction of quantum Fock--Goncharov duality map
\begin{align*}
    \mathbb{I}_\A: \A_{\fsp_4,\bSigma}(\bZ^\sfT) \to \cO_q(\P_{PSO_5,\bSigma}),
\end{align*}
as follows. It will be discussed in detail in our sequel paper \cite{ISY}. 

\paragraph{\textbf{Step 1: Skein realization.}}
Given an integral bounded $\fsp_4$-lamination $L \in \cL^a(\bSigma,\bZ)$, take its representive with measure $\pm 1$ on each component.\footnote{We are going to give an $\fsp_4$-version of the Bangle basis for the $\fsl_2$-case rather than the Bracelet basis, as we do not know a correct analogue of the latter.} We define the corresponding element $S_\A(L)$ in the \emph{reduced stated $\fsp_4$-skein algebra} $\overline{\mathscr{S}}^q_{\!\fsp_4,\bSigma}(\bB)$ by taking the constant-elevation lift of each component, and assigning the lowest (resp. highest) state to each end on the boundary interval if the component has measure $+1$ (resp. $-1$). Then $S_\A(L)$ is defined to be the product of these elements, where the elevations are chosen appropriately and the Weyl normalization (cf. \cite[Definition 5.2]{IKar}) is applied. 
\paragraph{\textbf{Step 2: Splitting map}}
For any ideal triangulation $\tri$ of $\bSigma$, we define the splitting map, which injectively maps $\overline{\mathscr{S}}^q_{\!\fsp_4,\bSigma}(\bB)$ into the tensor product of the reduced stated skein algebras of the triangles $T$ of $\tri$. 

\paragraph{\textbf{Step 3: Dual quantum trace map}}
Given a decorated triangulation $\bD$, we define an embedding of $\overline{\mathscr{S}}^q_{\!\fsp_4,\bSigma}(\bB)$ into a quantum torus $\mathcal{Z}_{\bD}^\vee$ associated with the corresponding $\fso_5$-cluster, which we call the \emph{square-root $\fso_5$-Goncharov--Shen algebra}. It is constructed via splitting map, assigning each piece in the triangle with an appropriate quantum Laurent polynomial. 
In particular, each stated diagram with underlying diagram in \cref{fig:coordinate_corners,fig:coordinates_elementary} is send to a quantum Laurent monomial. 

These constructions are summarized in the diagram
\begin{align*}
    \mathbb{I}_{\A}^{\bD}: \A_{\fsp_4,\bSigma}(\bZ^\sfT)\subset \cL^a(\bSigma,\bZ) \xrightarrow{S_\A} \overline{\mathscr{S}}^q_{\!\fsp_4,\bSigma}(\bB) \xrightarrow{\mathrm{Tr}_{\bD}} \mathcal{Z}_{\bD}^\vee.
\end{align*}
We show that the image of $\mathbb{I}_{\A}^{\bD}$ lies in the $\fso_5$-Goncharov--Shen algebra $\X_\bD^\vee$ quantizing the Goncharov--Shen coordinates \cite{GS19}, and that the composite $\mathbb{I}_\A^{\bD'} \circ (\mathbb{I}_\A^{\bD})^{-1}$ coincides with the quantum cluster Poisson transformation for any decorated triangulations $\bD,\bD'$. Thus we get a candidate quantum duality map
\begin{align*}
    \widetilde{\mathbb{I}}_\A: \A_{\fsp_4,\bSigma}(\bZ^\sfT)\subset \cL^a(\bSigma,\bZ) \to \cO_q(\widetilde{\P}_{PSO_5,\bSigma}) := \bigcap_{\bD} \X_\bD^\vee,
\end{align*}
where $\bD$ runs over all the decorated triangulations.
We show that the lowest term of the quantum Laurent polynomial $\mathbb{I}_\A^{\bD}(L)$ coincides with the tropical coordinate $\bsfa^\bD(L)$. The mutation-equivalence of the coordinates $\bsfa^\bD$ is proved as a consequence. 

\subsection{Tools and techniques}
Here are our essential tools and techniques for the construction and the proof of \cref{introthm:bijection}. 

\subsubsection{Intersection pairing between $\fsp_4$- and $\fso_5$-diagrams}\label{subsub:intersection}
In \cite[Remark 2.11]{SSW25}, Shen, Sun and Weng introduced the \textit{intersection pairing} between a local $\mathfrak{g}$- and $\mathfrak{g}^\vee$-diagrams, 
which is given by the matrix entry $C^{st}$ of the inverse of Cartan matrix of $\mathfrak{g}$ when two local diagrams carry the colors $s$ and $t$, respectively. It is expected to give a topological description of the \textit{tropical pairing}
of \cite[Conjecture 4.3]{FG09}.

In the present work, we adopt this definition to our global diagrams, and define the intersection pairing $\bi_\bSigma(W,V) \in \frac 1 2 \bZ_{\geq 0}$ between a bounded $\fsp_4$-diagram $W$ and an unbounded $\fso_5$-diagram $V$ (\cref{sec:pairing}) as the weighted sum of local contributions at their intersection points. The local rule is the following:
\begin{align*}
\begin{tikzpicture}[scale=0.8]\draw[dashed] (0,0) circle(1cm);
\draw[weblined] (0,-1) -- (0,1);
\draw[webline] (-1,0) -- (1,0);
\node at (0,-1.5) {$\varepsilon_p(W,V)=1$};
\node[red] at (-1.4,0) {$W$};
\node[mygreen] at (0,1.4) {$V$};
\begin{scope}[xshift=4cm]
\draw[dashed] (0,0) circle(1cm);
\draw[wlined] (0,-1) -- (0,1);
\draw[webline] (-1,0) -- (1,0);
\node at (0,-1.5) {$\varepsilon_p(W,V)=1/2$};
\node[red] at (-1.4,0) {$W$};
\node[mygreen] at (0,1.4) {$V$};
\end{scope}
\begin{scope}[xshift=8cm]
\draw[dashed] (0,0) circle(1cm);
\draw[weblined] (0,-1) -- (0,1);
\draw[wline] (-1,0) -- (1,0);
\node at (0,-1.5) {$\varepsilon_p(W,V)=1$};
\node[red] at (-1.4,0) {$W$};
\node[mygreen] at (0,1.4) {$V$};
\end{scope}
\begin{scope}[xshift=12cm]
\draw[dashed] (0,0) circle(1cm);
\draw[wlined] (0,-1) -- (0,1);
\draw[wline] (-1,0) -- (1,0);
\node at (0,-1.5) {$\varepsilon_p(W,V)=1$};
\node[red] at (-1.4,0) {$W$};
\node[mygreen] at (0,1.4) {$V$};
\end{scope}
\end{tikzpicture}
\end{align*}
We can upgrade the intersection pairing to a map
\begin{align*}
    \bi_\bSigma: \cL^a(\bSigma,\bQ) \times \Blad_{\fso_5,\Sigma}^\infty \to \bQ,
\end{align*}
where we take the minimum of intersection numbers among the representatives of the ladder-equivalence classes, and then consider the weight sum with respect to the rational measure of a given $\fsp_4$-lamination. 

For a given $\fso_5$-web cluster $\cC=\{V_i\}_{i \in I}$, we define the coordinate system \eqref{eq:intro_coord_system} by $\bsfa^\cC(L):=(\bi_\bSigma(L,V_i))_{i \in I}$ for $L \in \cL^a(\bSigma,\bQ)$. We expect that any such web cluster provides a bijection $\bsfa^\cC: \cL^a(\bSigma,\bQ) \to \bQ^I$.
The statement of \cref{introthm:bijection} is for the special case where $\cC$ is associated with a decorated triangulation $\bD$. 

\subsubsection{Hilbert basis and $\fsp_4$-diagrams on a triangle}
The triangle case $\bSigma=T$ is already non-trivial in the case of $\fsp_4$, since it should be equipped with a non-trivial cluster structure of finite type $C_2$. Indeed, we have $6=3 \times 2$ choices of decorations of the unique triangulation of $T$, namely the choices of a corner and a sign (which corresponds to a choice of reduced word, $1212$ or $2121$ of $w_0$). See the left picture in \cref{fig:intro_hexagon}. 
For each decorated triangle $\bD$, we have an associated quiver $Q_\bD$ and a tropical coordinate system $\bsfa_\bD$ (consisting of 8 vertices/coordinates). 

On the other hand, we have a classification of bounded $\fsl_4$-diagrams on a triangle up to symmetry of order 6, which involves 8 `geometric' parameters $(k_1,k_2,k_3,l_1,l_2,l_3,n_1,n_2)$ (\cref{lem:triangle_blad}):
\begin{align*}
\begin{tikzpicture}[scale=.12]
    \coordinate (N) at (90:20);
    \coordinate (S) at (-90:20);
    \coordinate (W) at (180:20);
    \coordinate (E) at (0:20);
    \coordinate (NE) at (45:20);
    \coordinate (NW) at (135:20);
    \coordinate (SE) at (-45:20);
    \coordinate (SW) at (-135:20);
    \coordinate (C) at (0,0);
    \coordinate (CN) at ($(C)!.5!(N)$);
    \coordinate (CS) at ($(C)!.5!(S)$);
    \coordinate (CW) at ($(C)!.5!(W)$);
    \coordinate (CE) at ($(C)!.5!(E)$);
    \coordinate (P1) at (90:20);
    \coordinate (P2) at (210:20);
    \coordinate (P3) at (-30:20);
    \draw[blue] (P1) -- (P2) -- (P3)--cycle;
    \foreach \i in {-30,210,90} \fill(\i:20) circle(15pt);
    \begin{scope}
        \clip (P1) -- (P2) -- (P3)--cycle;
        \draw[webline] ($(C)!.4!(SE)$) --+ (45:10);
        \draw[webline] ($(C)!.3!(SE)$) to[out=west, in=south east] ($(C)!.3!(CN)$);
        \draw[webline] ($(C)!.3!(SE)+(0,-2)$) -- ($(90:20)!.6!(210:20)$);
        \draw[wline] ($(C)!.3!(SE)$) -- +(0,-10);
        \draw[webline] ($(C)!.5!(CN)$) --+ (45:10);
        \draw[wline] ($(C)!.5!(CN)$) to[out=west, in=south east] ($(90:20)!.4!(210:20)$);
        \node at ($(C)!.3!(SE)$) [scale=1.2]{$\tribox$};
        \node at ($(C)!.5!(CN)$) [rotate=-90]{$\tribox$};
        \foreach \i in {1,2,3}{
            \draw[webline] (P\i) circle [radius=3cm];
            \draw[wline] (P\i) circle [radius=6cm];
        }
    \end{scope}
    \node at ($(P1)!.4!(P2)$) [above left=-.1cm]{\scriptsize $n_1$};
    \node at ($(P1)!.6!(P2)$) [above left=-.1cm]{\scriptsize $n_2$};
    \node at ($(C)!.3!(SE)$) [below=.65cm]{\scriptsize $n_1+n_2$};
    \node at ($(P1)!.6!(P3)$) [above right=-.1cm]{\scriptsize $n_1+n_2$};
    \node at ($(P1)!.35!(P3)$) [above right=-.1cm]{\scriptsize $n_1$};
    \node at (C) [yshift=.1cm, xshift=.4cm]{\scriptsize $n_1$};
    \node at ($(P1)+(-120:3)$) [above left=-.1cm]{\scriptsize $k_1$};
    \node at ($(P1)+(-120:6)$) [above left=-.1cm]{\scriptsize $l_1$};
    \node at ($(P2)+(0:3)$) [below]{\scriptsize $k_2$};
    \node at ($(P2)+(0:6)$) [below]{\scriptsize $l_2$};
    \node at ($(P3)+(0:-3)$) [below]{\scriptsize $k_3$};
    \node at ($(P3)+(0:-6)$) [below]{\scriptsize $l_3$};
\end{tikzpicture}
\end{align*}
These parameters are non-negative and would span a non-trivial cone in the coordinate space. According to the 3 `directions' (corresponding to the choice of corners) and the 2 `chiralities' (corresponding to the choice of sign), we would obtain 6 cones. In \cref{subsec:triangle_case}, we investigate a coordinate change between the cluster coordinates and these geometric parameters. As a result, we establish the bijectivity of the coordinate system in the triangle case. 

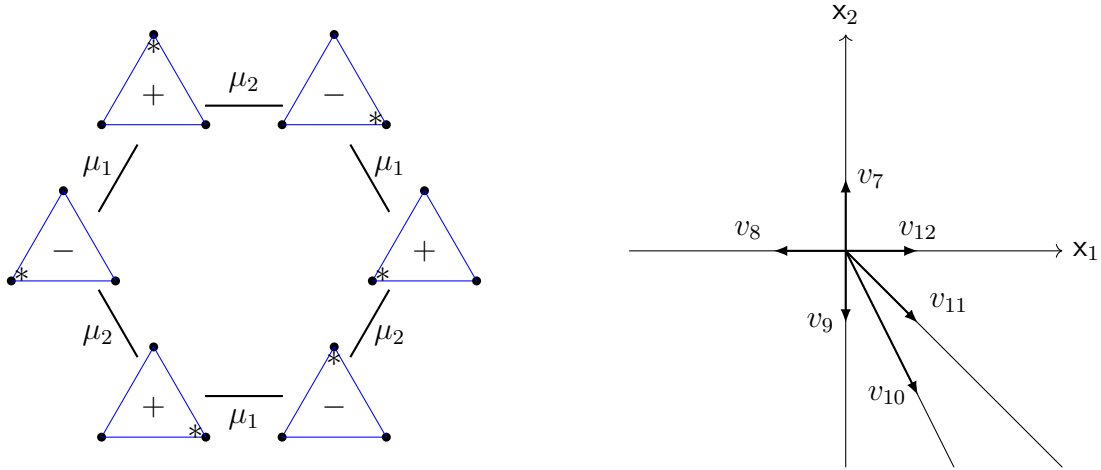
\begin{figure}[ht]
    \centering
\begin{tikzpicture}[scale=0.8]
\begin{scope}[xshift=3cm]
    \foreach \i in {90,210,330}
    {
    \markedpt{\i:1};
    \draw[blue] (\i:1) -- (\i+120:1);
    }
\node at (0,0) {$+$};
\node at (210:0.8) {$\ast$};
\end{scope}

\begin{scope}[xshift=1.5cm, yshift=1.5*1.732cm]
    \foreach \i in {90,210,330}
    {
    \markedpt{\i:1};
    \draw[blue] (\i:1) -- (\i+120:1);
    }
\node at (0,0) {$-$};
\node at (330:0.8) {$\ast$};
\end{scope}

\begin{scope}[xshift=-1.5cm, yshift=1.5*1.732cm]
    \foreach \i in {90,210,330}
    {
    \markedpt{\i:1};
    \draw[blue] (\i:1) -- (\i+120:1);
    }
\node at (0,0) {$+$};
\node at (90:0.8) {$\ast$};
\end{scope}

\begin{scope}[xshift=-3cm]
    \foreach \i in {90,210,330}
    {
    \markedpt{\i:1};
    \draw[blue] (\i:1) -- (\i+120:1);
    }
\node at (0,0) {$-$};
\node at (210:0.8) {$\ast$};
\end{scope}

\begin{scope}[xshift=-1.5cm, yshift=-1.5*1.732cm]
    \foreach \i in {90,210,330}
    {
    \markedpt{\i:1};
    \draw[blue] (\i:1) -- (\i+120:1);
    }
\node at (0,0) {$+$};
\node at (330:0.8) {$\ast$};
\end{scope}

\begin{scope}[xshift=1.5cm, yshift=-1.5*1.732cm]
    \foreach \i in {90,210,330}
    {
    \markedpt{\i:1};
    \draw[blue] (\i:1) -- (\i+120:1);
    }
\node at (0,0) {$-$};
\node at (90:0.8) {$\ast$};
\end{scope}

\foreach \k in {0,60,120,180,240,300}
\draw[thick] (\k+15:2.5) -- (\k+45:2.5);
\foreach \k in {30,150,270}
\node at (\k:2.8) {$\mu_1$};
\foreach \k in {90,210,-30}
\node at (\k:2.8) {$\mu_2$};






\begin{scope}[xshift=10cm,scale=1.2]
\draw[->](-3,0) -- (3,0) node[right]{$\sfx_1$};
\draw[->](0,-3) -- (0,3) node[above]{$\sfx_2$};
\draw (0,0) -- (3,-3);
\draw (0,0) -- (1.5,-3);
{\begin{scope}[>=latex]
    \draw[->,thick] (0,0) -- (0,1) node[right]{$v_7$};
    \draw[->,thick] (0,0) -- (-1,0) node[above left]{$v_8$};
    \draw[->,thick] (0,0) -- (0,-1) node[left]{$v_9$};
    \draw[->,thick] (0,0) -- (1,-2) node[left]{$v_{10}$};
    \draw[->,thick] (0,0) -- (1,-1) node[above right]{$v_{11}$};
    \draw[->,thick] (0,0) -- (1,0) node[above]{$v_{12}$};
\end{scope}}    
\end{scope}
\end{tikzpicture}
    \caption{The cluster structure of a triangle (Left) and the corresponding fan in the tropical $\X$-plane (Right).}
    \label{fig:intro_hexagon}
\end{figure}

The structure of constituted by the 6 cones is best viewed in the plane of tropical $\X$-coordinates $(\sfx_1,\sfx_2)$, shown in the right picture in \cref{fig:intro_hexagon}. The generators of the 6 rays separating the cones, together with the 6 corner arcs form the \emph{Hilbert basis}. The corresponding diagrams are shown in \cref{fig:coordinate_corners,fig:coordinates_elementary}. It turns out that these elements satisfy a \emph{sign-coherence} for the coordinates (\ref{lem:sign-coherence}). This fact significantly simplifies the verification of the mutation-equivalence between the coordinate systems, since it ensures that the piecewise-linear coordinate transformations (=tropical cluster transformations) are linear on each of the cones. 

\subsubsection{Graded $\fsp_4$-skein algebra}
In order to discuss the `gluing' of pieces obtained on each triangle, the \emph{graded $\fsp_4$-skein algebra} is useful. We introduce a filtration on the $\fsp_4$-skein algebra with respect to a disjoint collection of ideal arcs $\tri$, and consider the associated graded skein algebra (\cref{dfn:filtration}). 

Given a coordinate vector $\bsfa=(\sfa_i) \in \bQ^I$, we know a corresponding piece of $\fsp_4$-diagram on each triangle $T$ from the discussion in the previous subsection. These pieces certainly have the same number of ends on both sides of each edge shared by two triangles, and we can connect them to obtain an $\fsp_4$-diagram. However, typically we have many ways of doing this, and a direct approach to find a correct way to obtain a non-elliptic diagram yields a combinatorial complication, which is elaborated in \cite{DS20I} in the $\fsl_3$. We avoid this complication by using the graded skein algebra, basically following the approach of \cite{FS22}.


Indeed, such a way of connection is unique in the graded skein algebra.
For example, the following relations hold (\cref{lem:equality-in-gr} and \cref{rem:flat-braid}):
\begin{align*}
    q^{-1}
    \mbox{
    \tikz[baseline=-.6ex, scale=.1]{
        \draw[dashed, fill=white] (0,0) circle [radius=6];
        \coordinate (U) at ($(150:6)!.2!(30:6)$);
        \coordinate (D) at ($(-150:6)!.2!(-30:6)$);
        \coordinate (U2) at ($(150:6)!.6!(30:6)$);
        \coordinate (D2) at ($(-150:6)!.6!(-30:6)$);
        \draw[oarc, rounded corners] (-150:6) -- (U2) -- (30:6);
        \draw[oarc, rounded corners] (150:6) -- (D2) -- (-30:6);
        \draw[blue] (60:6) -- (-60:6);
    }
    }
    \greq
    \mbox{
    \tikz[baseline=-.6ex, scale=.1]{
        \draw[dashed, fill=white] (0,0) circle [radius=6];
        \coordinate (U) at ($(150:6)!.2!(30:6)$);
        \coordinate (D) at ($(-150:6)!.2!(-30:6)$);
        \coordinate (U2) at ($(150:6)!.6!(30:6)$);
        \coordinate (D2) at ($(-150:6)!.6!(-30:6)$);
        \draw[webline] (150:6) -- (30:6);
        \draw[webline] (-150:6) -- (-30:6);
        \draw[blue] (60:6) -- (-60:6);
    }
    },\quad
    q^{-2}
    \mbox{
    \tikz[baseline=-.6ex, scale=.1]{
        \draw[dashed, fill=white] (0,0) circle [radius=6];
        \coordinate (U) at ($(150:6)!.2!(30:6)$);
        \coordinate (D) at ($(-150:6)!.2!(-30:6)$);
        \coordinate (U2) at ($(150:6)!.6!(30:6)$);
        \coordinate (D2) at ($(-150:6)!.6!(-30:6)$);
        \draw[owarc, rounded corners] (-150:6) -- (U2) -- (30:6);
        \draw[owarc, rounded corners] (150:6) -- (D2) -- (-30:6);
        \draw[blue] (60:6) -- (-60:6);
    }
    }
    \greq
    \mbox{
    \tikz[baseline=-.6ex, scale=.1]{
        \draw[dashed, fill=white] (0,0) circle [radius=6];
        \coordinate (U) at ($(150:6)!.2!(30:6)$);
        \coordinate (D) at ($(-150:6)!.2!(-30:6)$);
        \coordinate (U2) at ($(150:6)!.6!(30:6)$);
        \coordinate (D2) at ($(-150:6)!.6!(-30:6)$);
        \draw[wline] (150:6) -- (30:6);
        \draw[wline] (-150:6) -- (-30:6);
        \draw[blue] (60:6) -- (-60:6);
    }
    },\quad
    q^{-2}
    \mbox{
    \tikz[baseline=-.6ex, scale=.1]{
        \draw[dashed, fill=white] (0,0) circle [radius=6];
        \coordinate (U) at ($(150:6)!.2!(30:6)$);
        \coordinate (D) at ($(-150:6)!.2!(-30:6)$);
        \coordinate (U2) at ($(150:6)!.6!(30:6)$);
        \coordinate (D2) at ($(-150:6)!.6!(-30:6)$);
        \draw[wline, rounded corners] (-150:6) to[in=west, out=east] (U2);
        \draw[oarc, rounded corners] (D2) -- ++(.2,0) to[in=west, out=north east] (30:6);
        \draw[oarc, rounded corners] (150:6) to[in=west, out=east] (D2);
        \draw[owarc, rounded corners] (U2) -- ++(.2,0) to[in=west, out=south east] (-30:6);
        \draw[blue] (80:6) -- (-80:6);
    }
    }
    \greq
    \mbox{
    \tikz[baseline=-.6ex, scale=.1]{
        \draw[dashed, fill=white] (0,0) circle [radius=6];
        \coordinate (U) at ($(150:6)!.2!(30:6)$);
        \coordinate (D) at ($(-150:6)!.2!(-30:6)$);
        \coordinate (U2) at ($(150:6)!.6!(30:6)$);
        \coordinate (D2) at ($(-150:6)!.6!(-30:6)$);
        \draw[webline] (150:6) -- (U);
        \draw[wline] (U) -- (U2);
        \draw[webline] (U2) -- (30:6);
        \draw[wline] (-150:6) -- (D);
        \draw[webline] (D) -- (D2);
        \draw[wline] (D2) -- (-30:6);
        \draw[webline] (U) -- (D);
        \draw[webline] (U2) -- (D2);
        \draw[blue] (60:6) -- (-60:6);
    }
    }
    \greq
    \mbox{
    \tikz[baseline=-.6ex, scale=.1]{
        \draw[dashed, fill=white] (0,0) circle [radius=6];
        \coordinate (U) at ($(150:6)!.2!(30:6)$);
        \coordinate (D) at ($(-150:6)!.2!(-30:6)$);
        \coordinate (U2) at ($(150:6)!.6!(30:6)$);
        \coordinate (D2) at ($(-150:6)!.6!(-30:6)$);
        \draw[webline] (150:6) -- (30:6);
        \draw[wline] (-150:6) -- (-30:6);
        \draw[blue] (60:6) -- (-60:6);
    }
    }.
\end{align*}
Here, the vertical blue line is an arc in $\tri$, and the symbol $\greq$ stands for an equality in the graded skein algebra. 
Using these relations, we can prove that the differences arising from deformations involving the birth-death of a bigon and the jumping over a double point have lower degrees (\cref{rem:flat-braid}). 
Then we can pick up the leading term corresponding to a minimal representative of the class in the graded skein algebra. The minimality of the degree implies that this representative is non-elliptic.

\smallskip

As the above discussion suggests, the elements of the graded skein algebra are very close to the integral $\fsp_4$-laminations. Morally, we would like to regard the graded skein algebra as a ``tropicalization'' of the skein algebra. Such a slogan seems to be also illustrated in the work of Farajzadeh-Tehrani and Frohman \cite{FTF}, where they study a graded version of Kauffman bracket skein algebra in relation with a projective compactification of the character variety.




\subsection*{Acknowledgements}
T. Ishibashi is supported by JSPS KAKENHI Grant Numbers JP20K22304 and 24K16914. 
Z. Sun is supported by the NSFC grant 12471068.
W. Yuasa is supported by JSPS KAKENHI Grant Numbers JP19J00252, JP19K14528, and the World Premier International Research Center Initiative Program, International Institute for Sustainability with Knotted Chiral Meta Matter (WPI-SKCM${}^2$).
\section{Bounded \texorpdfstring{$\fsp_4$}{sp(4)}-laminations}

\subsection{Marked surfaces and their triangulations}\label{subsec:notation_marked_surface}
A marked surface $\bSigma=(\Sigma,\bM)$ is a compact oriented surface $\Sigma$ together with a fixed non-empty finite set $\bM \subset \Sigma$ of \emph{marked points}. 
A marked point is called a \emph{puncture} if it lies in the interior of $\Sigma$, and a \emph{special point} otherwise. 
Let $\bP$ (resp. $\bM_\partial$) denote the set of punctures (resp. special points), so that $\bM=\bP \sqcup \bM_\partial$. 
Let $\Sigma^*:=\Sigma \setminus \bP$. 
We always assume the following conditions:
\begin{enumerate}
    \item[(S1)] Each boundary component (if exists) has at least one marked point.
    \item[(S2)] $-2\chi(\Sigma^*)+|\bM_\partial| >0$.
    \item[(S3)] $\bSigma$ is not a once-punctured disk with a single special point on the boundary.
\end{enumerate}
We call a connected component of the punctured boundary $\partial \Sigma^\ast=\partial\Sigma\setminus \bM_\partial$ a \emph{boundary interval}. The set of boundary intervals is denoted by $\bB$. 
We have $|\bM_\partial|=|\bB|$. 

Unless otherwise stated, an \emph{isotopy} in a marked surface $\bSigma$ means an ambient isotopy in $\Sigma$ relative to $\bM$, which preserves each boundary interval setwisely. 
An \emph{ideal arc} in $\bSigma$ is the isotopy class of an immersed arc in $\Sigma$ with endpoints in $\bM$ having no self-intersections except for its endpoints, and not contractible in $\Sigma^\ast$. 
An \emph{ideal triangulation} is a triangulation $\tri$ of $\Sigma$ whose set of $0$-cells (vertices) coincides with $\bM$, $1$-cells (edges) being ideal arcs. 
In this paper, we always consider an ideal triangulation without \emph{self-folded triangles} where two of its sides are identified. 
The conditions (S1)--(S3) ensure the existence of such an ideal triangulation. See, for instance, \cite[Lemma 2.13]{FST}. 
For an ideal triangulation $\tri$, denote the set of edges (resp. interior edges, triangles) of $\tri$ by $e(\tri)$ (resp. $e_{\interior}(\tri)$, $t(\tri)$). Since the boundary intervals belong to any ideal triangulation, $e(\tri)=e_{\interior}(\tri) \sqcup \bB$. By a computation on the Euler characteristics, we get
\begin{align*}
    &|e(\tri)|=-3\chi(\Sigma^*)+2|\bM_\partial|, \quad |e_{\interior}(\tri)|=-3\chi(\Sigma^*)+|\bM_\partial|, \\
    &|t(\tri)|=-2\chi(\Sigma^*)+|\bM_\partial|.
\end{align*}

\subsection{Spaces of \texorpdfstring{$\fsp_4/\fso_5$-}{sp(4)/so(5)-}diagrams}\label{subsec:diagram-definition}

We consider a \emph{1,3,4-valent graph}, which is a (possibly disconnected) graph whose vertices have valency either one, three or four. It is allowed to have a loop component ({\it i.e.}, a connected component without vertices). An
\emph{$\fsp_4$-coloring} of such a 1,3,4-valent graph is an assignment of a color on each edge, called \emph{type~1} and \emph{type~2}, such that
\begin{itemize}
    \item any 3-valent vertex is a joint of a single type~1 edge and two type~2 edges;
    \item any 4-valent vertex is a joint of four type~1 edges. 
\end{itemize}
In figures, we show a type~1 (resp. type~2) edge by a single (resp. double) line. 
When we show a portion of graph with an arbitrary color, it is shown in black. A $4$-valent vertex is called a \emph{crossroad}, and denoted by the symbol $\begin{tikzpicture}[scale=.6,baseline=-0.6ex]
		\draw[dashed, fill=white] (0,0) circle [radius=0.7];
		\draw[webline] (45:0.7) -- (-135:0.7);
		\draw[webline] (-45:0.7) -- (135:0.7);
		\draw[fill=pink,thick] (0,0) circle(3pt);
\end{tikzpicture}$\ .
So the local pictures of $\fsp_4$-colored 1,3,4-valent graphs are
\begin{align*}
\begin{tikzpicture}[scale=0.8]
\node at (-3,0) {$(\fsp_4)$:};
\draw[dashed] (0,0) circle(1cm);
\draw[webline] (-1,0) -- (1,0);
\node at (0,-1.5) {Type~1 edge};
\begin{scope}[xshift=5cm]
\draw[dashed] (0,0) circle(1cm);
\draw[wline] (-1,0) -- (1,0);
\node at (0,-1.5) {Type~2 edge};
\end{scope}
\begin{scope}[xshift=10cm]
\draw[dashed] (0,0) circle(1cm);
{\color{red} \triv{-90:1}{30:1}{150:1};}
\node at (0,-1.5) {Trivalent vertex};
\end{scope}
\begin{scope}[xshift=15cm]
\draw[dashed] (0,0) circle(1cm);
\draw[webline] (45:1) -- (-135:1);
\draw[webline] (-45:1) -- (135:1);
\draw[fill=pink,thick] (0,0) circle(3pt);
\node at (0,-1.5) {Crossroad};
\end{scope}
\end{tikzpicture}\ .
\end{align*}
A type~2 edge whose endpoints are both incident to trivalent vertices is called a \emph{rung}. 

As a dual notion, an \emph{$\fso_5$-coloring} is defined by exchanging the type~1 and type~2. Their local pictures are:
\begin{align*}
\begin{tikzpicture}[scale=0.8]
\node at (-3,0) {($\fso_5$):};
\draw[dashed] (0,0) circle(1cm);
\draw[wlined] (-1,0) -- (1,0);
\node at (0,-1.5) {Type~1 edge};
\begin{scope}[xshift=5cm]
\draw[dashed] (0,0) circle(1cm);
\draw[weblined] (-1,0) -- (1,0);
\node at (0,-1.5) {Type~2 edge};
\end{scope}
\begin{scope}[xshift=10cm]
\draw[dashed] (0,0) circle(1cm);
{\color{red} \trivd{-90:1}{30:1}{150:1};}
\node at (0,-1.5) {Trivalent vertex};
\end{scope}
\begin{scope}[xshift=15cm]
\draw[dashed] (0,0) circle(1cm);
\draw[wlined] (45:1) -- (-135:1);
\draw[wlined] (-45:1) -- (135:1);
\draw[fill=green!30,thick] (0,0) circle(3pt);
\node at (0,-1.5) {Crossroad};
\end{scope}
\end{tikzpicture}
\end{align*}

\begin{dfn}Let $\mathfrak{g}$ be either $\fsp_4$ or $\fso_5$. 
\begin{enumerate}
    \item 
    \begin{itemize}
        \item A \emph{bounded $\mathfrak{g}$-diagram} on a marked surface $\bSigma$ is an immersed $\mathfrak{g}$-colored 1,3,4-valent graph on $\Sigma$ such that each 1-valent vertex lie in $\partial \Sigma^\ast$, and the other part is embedded into $\interior \Sigma^\ast$. A \emph{bounded crossroad $\mathfrak{g}$-diagram} $D$ is a bounded $\mathfrak{g}$-diagram with no rungs.
        \item For a bounded crossroad $\mathfrak{g}$-diagram $D$, an \emph{elliptic interior face} of $D$ is a $0,1,2,3$-gon of $\Sigma^\ast\setminus D$ containing no boundary intervals. An \emph{elliptic bordered face} is a face shown in \cref{fig:elliptic}.
        \item A bounded crossroad $\mathfrak{g}$-diagram $D$ is said to be \emph{non-elliptic} if it has none of the elliptic bordered and elliptic interior faces. 
        If moreover $D$ has no \emph{bordered $H$-faces} in \cref{fig:H-face}, it is said to be \emph{reduced}.
        \item Let $\diag_{\mathfrak{g},\bSigma}$ denote the set of all bounded $\mathfrak{g}$-diagrams on $\bSigma$. We will denote the subset of reduced bounded $\mathfrak{g}$-diagrams by $\Bdiag_{\mathfrak{g},\bSigma}$. We regard the \emph{empty diagram $\varnothing$} as an element of $\Bdiag_{\mathfrak{g},\bSigma}$.
    \end{itemize}
    \item An \emph{unbounded $\mathfrak{g}$-diagram} on $\bSigma$ is an immersed $\mathfrak{g}$-colored 1,3,4-valent graph $W$ on $\Sigma$ such that each 1-valent vertex lies in $\bM$, and the other part is embedded into $\interior \Sigma^\ast$. 
    Let $\diag^\infty_{\mathfrak{g},\bSigma}$ denote the set of all unbounded $\mathfrak{g}$-diagrams on $\bSigma$, and $\Bdiag_{\mathfrak{g},\bSigma}^\infty$ the subset of reduced unbounded crossroad $\mathfrak{g}$-diagrams. 
\end{enumerate}
\end{dfn}

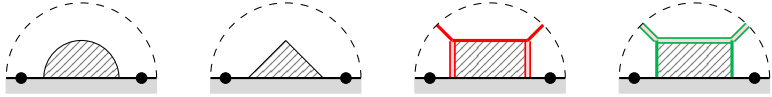
\begin{figure}[ht]
    \centering
\begin{tikzpicture}[scale=.1]
    \coordinate (N) at (0,10);
    \coordinate (S) at (0,-10);
    \coordinate (W) at (-10,0);
    \coordinate (E) at (10,0);
    \coordinate (NE) at (45:10);
    \coordinate (NW) at (135:10);
    \coordinate (SE) at (-45:10);
    \coordinate (SW) at (-135:10);
    \coordinate (C) at (0,0);
    \coordinate (CN) at (0,5);
    \coordinate (CS) at (0,-5);
    \coordinate (CW) at (-5,0);
    \coordinate (CE) at (5,0);
    \begin{scope}
        \clip (E) arc (0:180:10);
        \fill[pattern=north east lines, pattern color=gray] (CE) arc (0:180:5) -- cycle;
        \draw (CE) arc (0:180:5);
    \end{scope}
    \draw[dashed] (E) arc (0:180:10);
    \bline{W}{E}{2}
    \draw[fill=black] ($(W)+(2,0)$) circle (20pt);
    \draw[fill=black] ($(E)+(-2,0)$) circle (20pt);
\end{tikzpicture}
\hspace{1em}
\begin{tikzpicture}[scale=.1]
    \coordinate (N) at (0,10);
    \coordinate (S) at (0,-10);
    \coordinate (W) at (-10,0);
    \coordinate (E) at (10,0);
    \coordinate (NE) at (45:10);
    \coordinate (NW) at (135:10);
    \coordinate (SE) at (-45:10);
    \coordinate (SW) at (-135:10);
    \coordinate (C) at (0,0);
    \coordinate (CN) at (0,5);
    \coordinate (CS) at (0,-5);
    \coordinate (CW) at (-5,0);
    \coordinate (CE) at (5,0);
    \begin{scope}
        \clip (E) arc (0:180:10);
        \fill[pattern=north east lines, pattern color=gray] (CW) -- (CN) -- (CE) -- cycle;
        \draw (CW) -- (CN);
        \draw (CE) -- (CN);
    \end{scope}
    \draw[dashed] (E) arc (0:180:10);
    \bline{W}{E}{2}
    \draw[fill=black] ($(W)+(2,0)$) circle (20pt);
    \draw[fill=black] ($(E)+(-2,0)$) circle (20pt);
\end{tikzpicture}
\hspace{1em}
\begin{tikzpicture}[scale=.1]
    \coordinate (N) at (0,10);
    \coordinate (S) at (0,-10);
    \coordinate (W) at (-10,0);
    \coordinate (E) at (10,0);
    \coordinate (NE) at (45:10);
    \coordinate (NW) at (135:10);
    \coordinate (SE) at (-45:10);
    \coordinate (SW) at (-135:10);
    \coordinate (C) at (0,0);
    \coordinate (CN) at (0,5);
    \coordinate (CS) at (0,-5);
    \coordinate (CW) at (-5,0);
    \coordinate (CE) at (5,0);
    \begin{scope}
        \clip (E) arc (0:180:10);
        \fill[pattern=north east lines, pattern color=gray] (CW)--($(CW)+(0,5)$)--($(CE)+(0,5)$)--(CE)--cycle;
        \draw[wline] (CW)--($(CW)+(0,5)$);
        \draw[wline] (CE)--($(CE)+(0,5)$);
        \draw[webline] ($(CW)+(0,5)$)-- ($(CE)+(0,5)$);
        \draw[webline] ($(CW)+(0,5)$) -- +(135:5);
        \draw[webline] ($(CE)+(0,5)$) -- +(45:5);
    \end{scope}
    \draw[dashed] (E) arc (0:180:10);
    \bline{W}{E}{2}
    \draw[fill=black] ($(W)+(2,0)$) circle (20pt);
    \draw[fill=black] ($(E)+(-2,0)$) circle (20pt);
\end{tikzpicture}
\hspace{1em}
\begin{tikzpicture}[scale=.1]
    \coordinate (N) at (0,10);
    \coordinate (S) at (0,-10);
    \coordinate (W) at (-10,0);
    \coordinate (E) at (10,0);
    \coordinate (NE) at (45:10);
    \coordinate (NW) at (135:10);
    \coordinate (SE) at (-45:10);
    \coordinate (SW) at (-135:10);
    \coordinate (C) at (0,0);
    \coordinate (CN) at (0,5);
    \coordinate (CS) at (0,-5);
    \coordinate (CW) at (-5,0);
    \coordinate (CE) at (5,0);
    \begin{scope}
        \clip (E) arc (0:180:10);
        \fill[pattern=north east lines, pattern color=gray] (CW)--($(CW)+(0,5)$)--($(CE)+(0,5)$)--(CE)--cycle;
        \draw[wlined] ($(CW)+(0,5)$)-- ($(CE)+(0,5)$);
        \draw[wlined] ($(CW)+(0,5)$) -- +(135:5);
        \draw[wlined] ($(CE)+(0,5)$) -- +(45:5);
        \draw[weblined] (CW)--($(CW)+(0,5)$);
        \draw[weblined] (CE)--($(CE)+(0,5)$);
    \end{scope}
    \draw[dashed] (E) arc (0:180:10);
    \bline{W}{E}{2}
    \draw[fill=black] ($(W)+(2,0)$) circle (20pt);
    \draw[fill=black] ($(E)+(-2,0)$) circle (20pt);
\end{tikzpicture}
    \caption{The shaded faces are elliptic faces adjacent to a boundary interval. Trivalent vertices or crossroads are arranged at the corners of the faces.}
    \label{fig:elliptic}
\end{figure}

\begin{figure}[ht]
    \centering
\begin{tikzpicture}[scale=.1]
    \coordinate (N) at (0,10);
    \coordinate (S) at (0,-10);
    \coordinate (W) at (-10,0);
    \coordinate (E) at (10,0);
    \coordinate (NE) at (45:10);
    \coordinate (NW) at (135:10);
    \coordinate (SE) at (-45:10);
    \coordinate (SW) at (-135:10);
    \coordinate (C) at (0,0);
    \coordinate (CN) at (0,5);
    \coordinate (CS) at (0,-5);
    \coordinate (CW) at (-5,0);
    \coordinate (CE) at (5,0);
    \begin{scope}
        \clip (E) arc (0:180:10);
        \fill[pattern=north east lines, pattern color=gray] (CW)--($(CW)+(0,5)$)--($(CE)+(0,5)$)--(CE)--cycle;
        \draw[webline] (CW)--($(CW)+(0,5)$);
        \draw[wline] (CE)--($(CE)+(0,5)$);
        \draw[webline] ($(CW)+(0,5)$)-- ($(CE)+(0,5)$);
        \draw[wline] ($(CW)+(0,5)$) -- +(135:5);
        \draw[webline] ($(CE)+(0,5)$) -- +(45:5);
    \end{scope}
    \draw[dashed] (E) arc (0:180:10);
    \bline{W}{E}{2}
    \draw[fill=black] ($(W)+(2,0)$) circle (20pt);
    \draw[fill=black] ($(E)+(-2,0)$) circle (20pt);
\end{tikzpicture}
\hspace{1em}
\begin{tikzpicture}[scale=.1]
    \coordinate (N) at (0,10);
    \coordinate (S) at (0,-10);
    \coordinate (W) at (-10,0);
    \coordinate (E) at (10,0);
    \coordinate (NE) at (45:10);
    \coordinate (NW) at (135:10);
    \coordinate (SE) at (-45:10);
    \coordinate (SW) at (-135:10);
    \coordinate (C) at (0,0);
    \coordinate (CN) at (0,5);
    \coordinate (CS) at (0,-5);
    \coordinate (CW) at (-5,0);
    \coordinate (CE) at (5,0);
    \begin{scope}
        \clip (E) arc (0:180:10);
        \fill[pattern=north east lines, pattern color=gray] (CW)--($(CW)+(0,5)$)--($(CE)+(0,5)$)--(CE)--cycle;
        \draw[wline] (CW)--($(CW)+(0,5)$);
        \draw[webline] (CE)--($(CE)+(0,5)$);
        \draw[webline] ($(CW)+(0,5)$)-- ($(CE)+(0,5)$);
        \draw[webline] ($(CW)+(0,5)$) -- +(135:5);
        \draw[wline] ($(CE)+(0,5)$) -- +(45:5);
    \end{scope}
    \draw[dashed] (E) arc (0:180:10);
    \bline{W}{E}{2}
    \draw[fill=black] ($(W)+(2,0)$) circle (20pt);
    \draw[fill=black] ($(E)+(-2,0)$) circle (20pt);
\end{tikzpicture}
\hspace{1em}
\begin{tikzpicture}[scale=.1]
    \coordinate (N) at (0,10);
    \coordinate (S) at (0,-10);
    \coordinate (W) at (-10,0);
    \coordinate (E) at (10,0);
    \coordinate (NE) at (45:10);
    \coordinate (NW) at (135:10);
    \coordinate (SE) at (-45:10);
    \coordinate (SW) at (-135:10);
    \coordinate (C) at (0,0);
    \coordinate (CN) at (0,5);
    \coordinate (CS) at (0,-5);
    \coordinate (CW) at (-5,0);
    \coordinate (CE) at (5,0);
    \begin{scope}
        \clip (E) arc (0:180:10);
        \fill[pattern=north east lines, pattern color=gray] (CW)--($(CW)+(0,5)$)--($(CE)+(0,5)$)--(CE)--cycle;
        \draw[wline] (CW)--($(CW)+(0,5)$);
        \draw[webline] (CE)--($(CE)+(0,10)$);
        \draw[webline] ($(CW)+(0,5)$)-- ($(CE)+(5,5)$);
        \draw[webline] ($(CW)+(0,5)$) -- +(135:5);
	\draw[fill=pink,thick] ($(CE)+(0,5)$) circle(20pt);
    \end{scope}
    \draw[dashed] (E) arc (0:180:10);
    \bline{W}{E}{2}
    \draw[fill=black] ($(W)+(2,0)$) circle (20pt);
    \draw[fill=black] ($(E)+(-2,0)$) circle (20pt);
\end{tikzpicture}
\hspace{1em}
\begin{tikzpicture}[scale=.1, xscale=-1]
    \coordinate (N) at (0,10);
    \coordinate (S) at (0,-10);
    \coordinate (W) at (-10,0);
    \coordinate (E) at (10,0);
    \coordinate (NE) at (45:10);
    \coordinate (NW) at (135:10);
    \coordinate (SE) at (-45:10);
    \coordinate (SW) at (-135:10);
    \coordinate (C) at (0,0);
    \coordinate (CN) at (0,5);
    \coordinate (CS) at (0,-5);
    \coordinate (CW) at (-5,0);
    \coordinate (CE) at (5,0);
    \begin{scope}
        \clip (E) arc (0:180:10);
        \fill[pattern=north east lines, pattern color=gray] (CW)--($(CW)+(0,5)$)--($(CE)+(0,5)$)--(CE)--cycle;
        \draw[wline] (CW)--($(CW)+(0,5)$);
        \draw[webline] (CE)--($(CE)+(0,10)$);
        \draw[webline] ($(CW)+(0,5)$)-- ($(CE)+(5,5)$);
        \draw[webline] ($(CW)+(0,5)$) -- +(135:5);
	\draw[fill=pink,thick] ($(CE)+(0,5)$) circle(20pt);
    \end{scope}
    \draw[dashed] (E) arc (0:180:10);
    \bline{W}{E}{2}
    \draw[fill=black] ($(W)+(2,0)$) circle (20pt);
    \draw[fill=black] ($(E)+(-2,0)$) circle (20pt);
\end{tikzpicture}
    \caption{These shaded faces are the list of bordered $H$-faces for a boundary $\fsp_4$-diagram. The bordered $H$-faces for $\fso_5$ are defined similarly.}
    \label{fig:H-face}
\end{figure}
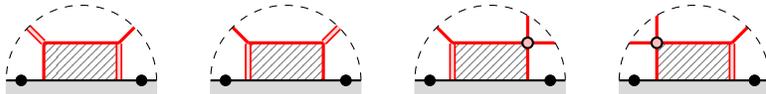

Observe that a bounded $\mathfrak{g}$-diagram $W$ is away from punctures (therefore the term `bounded'). 

\begin{dfn}\label{def:peripheral}
\begin{enumerate}
    \item A loop component in a bounded $\mathfrak{g}$-diagram is said to be \emph{peripheral} if it is contractible after removing a single puncture.
    \item An arc component in a bounded $\mathfrak{g}$-diagram is called a \emph{corner arc} if it connects to consecutive boundary intervals and cuts out a contractible region.  
\end{enumerate}
These components are collectively called the \emph{peripheral components}. 
\end{dfn}

While we are going to study bounded $\fsp_4$-diagrams as underlying data of bounded $\fsp_4$-laminations, the unbounded $\fso_5$-diagrams will be used as `test diagrams' for which we consider intersection coordinates of $\fsp_4$-laminations. 

\begin{rem}
The correspondence of the terminologies with related literature for the $\fsl_3$-case is shown in \cref{tab:dictionary}. 
\end{rem}

\begin{table}[ht]
    \centering
    \begin{tabular}{|r:ccc:l|}\hline
        Frohman--Sikora \cite{FS22} & elliptic face& $\leftrightarrow$& elliptic interior face& Our paper \\
        & reduced& $\leftrightarrow$& reduced& \\ \hline
        Douglas--Sun \cite{DS20I} & elliptic face& $\leftrightarrow$& elliptic interior face& Our paper\\ 
        & taut& $\leftrightarrow$& without elliptic bordered faces& \\
        & essential& $\leftrightarrow$& non-elliptic& \\
        & rungless& $\leftrightarrow$& reduced& \\ \hline
        Ishibashi--Kano \cite{IK22} & elliptic face& $\leftrightarrow$& elliptic interior or bordered faces& Our paper \\
        & reduced& $\leftrightarrow$& reduced& \\ \hline
        Shen--Sun--Weng \cite{SSW25} & $\A$-web & $\leftrightarrow$& bounded diagram & Our paper \\
         & $\X$-web & $\leftrightarrow$& unbounded diagram & \\
         \hline
    \end{tabular}
    \caption{The correspondence of terminologies}
    \label{tab:dictionary}
\end{table}

\paragraph{\textbf{Elementary moves.}} 
The following moves can be applied for bounded $\fsp_4$-diagrams on $\bSigma$:
\begin{enumerate}
\item[(D1)] \emph{Loop parallel-move} (a.~k.~a.~\emph{flip move} \cite{FS22} or \emph{global parallel move} \cite{DS20I}):  
\begin{align}\label{eq:loop_parallel-move}
\begin{tikzpicture}[scale=1]
\draw[webline] (0.7,0.75) [partial ellipse=90:-90:0.2cm and 0.75cm];
\draw[webline,dashed] (0.7,0.75) [partial ellipse=-90:-270:0.2cm and 0.75cm];
\draw[wline] (1.3,0.75) [partial ellipse=90:-90:0.2cm and 0.75cm];
\draw[wline] (1.3,0.75) [partial ellipse=-90:-270:0.2cm and 0.75cm];
\draw[line width=5pt,white,dashed] (1.3,0.75) [partial ellipse=-90:-270:0.2cm and 0.75cm];
\draw(0,0) -- (2,0);
\draw(0,1.5) -- (2,1.5);
\draw[<->] (2.5,0.75) -- (3.5,0.75);
\begin{scope}[xshift=4cm]
\draw[wline] (0.7,0.75) [partial ellipse=90:-90:0.2cm and 0.75cm];
\draw[wline] (0.7,0.75) [partial ellipse=-90:-270:0.2cm and 0.75cm];
\draw[line width=5pt,white,dashed] (0.7,0.75) [partial ellipse=-90:-270:0.2cm and 0.75cm];
\draw[webline] (1.3,0.75) [partial ellipse=90:-90:0.2cm and 0.75cm];
\draw[webline,dashed] (1.3,0.75) [partial ellipse=-90:-270:0.2cm and 0.75cm];
\draw(0,0) -- (2,0);
\draw(0,1.5) -- (2,1.5);
\end{scope}
\end{tikzpicture}
\end{align}

\item[(D2)] \emph{Arc parallel-move}:
\begin{align}\label{eq:arc_parallel-move}
    \mbox{
        \tikz[baseline=-.6ex, scale=.1]{
            \draw[dashed, fill=white] (-5,-7) rectangle (5,7);
            \coordinate (N1) at (2,7);
            \coordinate (S1) at (2,-7);
            \coordinate (N2) at (-2,7);
            \coordinate (S2) at (-2,-7);
            \draw[webline] (S1) -- (N1);
            \draw[wline] (S2) -- (N2);
            \bline{-5,-7}{5,-7}{2}
            \bline{-5,7}{5,7}{-2}
        }
    }
    \mbox{
        \tikz[baseline=-.6ex, scale=.1]{
        \draw[<->] (-4,0) -- (4,0);
        }
    }
    \mbox{
        \tikz[baseline=-.6ex, scale=.1]{
            \draw[dashed, fill=white] (-5,-7) rectangle (5,7);
            \coordinate (N1) at (2,7);
            \coordinate (S1) at (2,-7);
            \coordinate (N2) at (-2,7);
            \coordinate (S2) at (-2,-7);
            \draw[wline] (S1) -- (N1);
            \draw[webline] (S2) -- (N2);
            \bline{-5,-7}{5,-7}{2}
            \bline{-5,7}{5,7}{-2}
        }
    }
\end{align}

\item[(D3)] \emph{Boundary H-move}:
\begin{align}\label{eq:boundary_H-move}
\begin{tikzpicture}
\draw[webline] (0.4,0) -- (0.4,0.4);
\draw[wline] (-0.4,0) -- (-0.4,0.4);
\draw[wline] (0.4,0.4) -- (0.4,0.917);
\draw[webline] (-0.4,0.4) -- (-0.4,0.917);
\draw[webline] (0.4,0.4) -- (-0.4,0.4);
\draw[dashed] (1,0) arc (0:180:1cm);
\bline{-1,0}{1,0}{0.2}
\draw[<->] (1.5,.5) -- (2.5,.5);
\begin{scope}[xshift=4cm]
\draw[wline] (0.4,0) -- (0.4,0.917);
\draw[webline] (-0.4,0) -- (-0.4,0.917);
\draw[dashed] (1,0) arc (0:180:1cm);
\bline{-1,0}{1,0}{0.2}
\end{scope}
\end{tikzpicture}\ , \qquad
\begin{tikzpicture}
\draw[wline] (0.4,0) -- (0.4,0.4);
\draw[webline] (-0.4,0) -- (-0.4,0.4);
\draw[webline] (0.4,0.4) -- (0.4,0.917);
\draw[wline] (-0.4,0.4) -- (-0.4,0.917);
\draw[webline] (0.4,0.4) -- (-0.4,0.4);
\draw[dashed] (1,0) arc (0:180:1cm);
\bline{-1,0}{1,0}{0.2}
\draw[<->] (1.5,.5) -- (2.5,.5);
\begin{scope}[xshift=4cm]
\draw[webline] (0.4,0) -- (0.4,0.917);
\draw[wline] (-0.4,0) -- (-0.4,0.917);
\draw[dashed] (1,0) arc (0:180:1cm);
\bline{-1,0}{1,0}{0.2}
\end{scope}
\end{tikzpicture}
\end{align}

\item[(D4)] \emph{Ladder resolutions} (and their inverses): 
\begin{align}\label{eq:ladder-resolution}
\begin{tikzpicture}[scale=0.8]
\draw[dashed] (0,0) circle(1cm);
\draw[webline] (45:1) -- (0,0.5);
\draw[webline] (135:1) -- (0,0.5);
\draw[webline] (-45:1) -- (0,-0.5);
\draw[webline] (-135:1) -- (0,-0.5);
\draw[wline] (0,0.5) -- (0,-0.5);
\begin{scope}[xshift=4cm]
\draw[dashed] (0,0) circle(1cm);
\draw[webline] (45:1) -- (-135:1);
\draw[webline] (-45:1) -- (135:1);
\draw[fill=pink,thick] (0,0) circle(3pt);
\end{scope}
\draw[thick,->] (2.5,0) -- (1.5,0);
\draw[thick,->] (5.5,0) -- (6.5,0);
\begin{scope}[xshift=8cm]
\draw[dashed] (0,0) circle(1cm);
\draw[webline] (45:1) -- (0.5,0);
\draw[webline] (-45:1) -- (0.5,0);
\draw[webline] (135:1) -- (-0.5,0);
\draw[webline] (-135:1) -- (-0.5,0);
\draw[wline] (0.5,0) -- (-0.5,0);
\end{scope}
\end{tikzpicture}
\end{align}
\end{enumerate}

\begin{rem}\label{rem:arc-parallel}
    The boundary $H$-move implies the arc-parallel move. However, we need the arc parallel-move when we consider an equivalence relation among reduced $\fsp_4$-diagrams.
\end{rem}
\begin{rem}
In the $\fsp_4$-skein algebra, crossroads are defined by the relation
\begin{align*}
    \mbox{
        \ \tikz[baseline=-.6ex, scale=.08, rotate=90]{
            \draw[dashed, fill=white] (0,0) circle [radius=7];
            \draw[webline] (45:7) -- (-135:7);
            \draw[webline] (135:7) -- (-45:7);
            \draw[fill=pink, thick] (0,0) circle [radius=30pt];
        }
    \ }
    :=
    \mbox{
        \ \tikz[baseline=-.6ex, scale=.08]{
            \draw[dashed, fill=white] (0,0) circle [radius=7];
            \draw[webline] (45:7) -- (90:3);
            \draw[webline] (135:7) -- (90:3);
            \draw[webline] (225:7) -- (-90:3);
            \draw[webline] (315:7) -- (-90:3);
            \draw[wline] (90:3) -- (-90:3);
            }
    \ }
    -\frac{1}{[2]}
    \mbox{
        \ \tikz[baseline=-.6ex, scale=.08, rotate=90]{
            \draw[dashed, fill=white] (0,0) circle [radius=7];
            \draw[webline] (-45:7) to[out=north west, in=south] (3,0) to[out=north, in=south west] (45:7);
            \draw[webline] (-135:7) to[out=north east, in=south] (-3,0) to[out=north, in=south east] (135:7);
        }
    \ }
    =
    \mbox{
        \ \tikz[baseline=-.6ex, scale=.08, rotate=90]{
            \draw[dashed, fill=white] (0,0) circle [radius=7];
            \draw[webline] (45:7) -- (90:3);
            \draw[webline] (135:7) -- (90:3);
            \draw[webline] (225:7) -- (-90:3);
            \draw[webline] (315:7) -- (-90:3);
            \draw[wline] (90:3) -- (-90:3);
            }
    \ }
    -\frac{1}{[2]}
    \mbox{
        \ \tikz[baseline=-.6ex, scale=.08]{
            \draw[dashed, fill=white] (0,0) circle [radius=7];
            \draw[webline] (-45:7) to[out=north west, in=south] (3,0) to[out=north, in=south west] (45:7);
            \draw[webline] (-135:7) to[out=north east, in=south] (-3,0) to[out=north, in=south east] (135:7);
        }
    \ }.
\end{align*}
We consider \eqref{eq:ladder-resolution} to be an equivalence relation, as the $\fsp_4$-laminations should parametrize ``highest terms'' of $\fsp_4$-webs in some sense; see \cref{lem:equality-in-gr}.
\end{rem}

\begin{dfn}\label{def:transverse}
Let $S$ be a finite collection of ideal arcs that are mutually disjoint except for the endpoints. 
\begin{itemize}
    \item An $\fsp_4$-diagram $D$ is said to be \emph{$S$-transverse} if $D \cap \gamma$ is finite and consisting only of transverse double points for any $\gamma \in S$. 
    \item Two $S$-transverse $\fsp_4$-diagrams $D_1,D_2$ are said to be \emph{ladder-equivalent} rel.~to $S$ and denoted by $D_1\approx_{S} D_2$ if $D_1$ is related to $D_2$ by an isotopy rel.~to $S$ and a sequence of elementary moves (D1)--(D4) outside $S$. When $S=\emptyset$, we simply denote $\approx_{\emptyset}$ by $\approx$. 
\end{itemize}
Let $\Blad_{\fsp_4,\bSigma}$ be the set of ladder-equivalence classes of reduced bounded $\fsp_4$-diagrams on $\bSigma$. When $S=\emptyset$, observe that the above definitions work as well for unbounded diagrams. So we define $\Blad_{\fso_5,\bSigma}^\infty$ in a similar way. 
\end{dfn}

\begin{rem}
    We consider the empty diagram $\varnothing$ is $S$-transverse for any $S$, and it defines the unique elements in $\Blad_{\fsp_4,\bSigma}$ or $\Blad_{\fso_5,\bSigma}^{\infty}$.
\end{rem}

Here, the general equivalence relation $\approx_S$ is introduced for a later discussion on minimal position with respect to an ideal triangulation.


The following is a consequence of confluence theory for non-elliptic $\fsp_4$-diagrams in \cite[Lemma~2.13]{IYsp4}:

\begin{lem}\label{lem:equivalence-generator}
$\Blad_{\fsp_4,\bSigma}$ is identified with the set $\Bdiag_{\fsp_4,\Sigma}$ modulo the equivalence relation generated by (D1), (D2), (D4); see also \cref{rem:arc-parallel}.
\end{lem}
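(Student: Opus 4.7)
The plan is to bootstrap off the confluence theorem \cite[Lemma~2.13]{IYsp4} for non-elliptic $\fsp_4$-diagrams, which is the main technical input. Let $\sim'$ denote the equivalence on $\Bdiag_{\fsp_4,\bSigma}$ generated by (D1), (D2), (D4), and let $\approx$ denote the ladder-equivalence using all four moves. The natural quotient map
\begin{equation*}
\Bdiag_{\fsp_4,\bSigma}/\sim'\ \twoheadrightarrow\ \Blad_{\fsp_4,\bSigma}
\end{equation*}
induced by the inclusion $\Bdiag_{\fsp_4,\bSigma}\hookrightarrow \diag_{\fsp_4,\bSigma}$ is clearly surjective since every ladder-equivalence class has, by definition, a reduced representative. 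The task is injectivity: any two reduced diagrams $D_1,D_2$ that are ladder-equivalent via a sequence of (D1)--(D4) possibly passing through non-reduced intermediates must already be related by (D1), (D2), (D4) alone.

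First I would introduce the \emph{reduction map} $\mathrm{red}\colon \diag_{\fsp_4,\bSigma}\to \Bdiag_{\fsp_4,\bSigma}/\sim'$, defined by iteratively applying elliptic-face-removing moves together with the bordered-$H$-face reduction (D3) until a reduced diagram is obtained. The fact that $\mathrm{red}$ is well-defined independently of the order of the reduction steps is exactly the content of the confluence theorem \cite[Lemma~2.13]{IYsp4}: overlapping rewrite patterns converge up to the three remaining moves. Next I would verify that each of the four elementary moves is compatible with $\mathrm{red}$. For (D3) this is tautological. For (D1), (D2), (D4) the claim is that these moves can be commuted past the reduction steps modulo $\sim'$; in particular, the interaction of a ladder resolution (D4) with a nearby bordered $H$-face is the one genuinely new case, and here the arc-parallel move (D2) plays the crucial role anticipated in \cref{rem:arc-parallel}, serving as the ``residue'' of (D3) that survives after reducing. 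Once these compatibilities are established, the ladder-equivalence $\approx$ descends through $\mathrm{red}$ to a single $\sim'$-class, so that $\mathrm{red}$ supplies the required inverse on reduced diagrams (where it acts as the identity up to $\sim'$).

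The principal obstacle is the case analysis in the second step: cataloguing how (D4), a local move creating or destroying a type~2 rung, can affect a bordered $H$-face that happens to sit on the same boundary arc. One has to track the resulting change of colors and verify that it is always implementable by a sequence of (D1), (D2), (D4) on the reduced representative, with (D2) supplying the color swap on boundary arcs in the situations where (D3) is no longer available. The analogous case analysis in the interior was already carried out in \cite[Lemma~2.13]{IYsp4}, so the work here is one of carefully translating that argument into the boundary setting and verifying that the arc-parallel move is precisely sufficient to absorb the discrepancy.
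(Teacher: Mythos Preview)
Your approach is correct and is exactly the one the paper takes: the paper's entire proof is the single sentence ``The following is a consequence of confluence theory for non-elliptic $\fsp_4$-diagrams in \cite[Lemma~2.13]{IYsp4},'' and your proposal simply unpacks what that citation means—defining a reduction map, invoking confluence for well-definedness, and checking compatibility of the elementary moves. Your identification of the (D4)/bordered-$H$-face interaction as the one substantive case, with (D2) absorbing the residue, is the right reading of \cref{rem:arc-parallel}.
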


Let us introduce a useful description of certain $\fsp_4$-diagrams.
\begin{dfn}\label{dfn:box-notation}
    For any positive integer $n$ and $m$, an \emph{$\fsp_4$-tetrapod} for the $\fsp_4$-diagram is defined by
    \begin{align*}
    \mbox{
        \tikz[baseline=-.6ex, scale=.1]{
            \coordinate (N) at (0,10);
            \coordinate (S) at (0,-10);
            \coordinate (W) at (-10,0);
            \coordinate (E) at (10,0);
            \coordinate (NE) at (45:10);
            \coordinate (NW) at (135:10);
            \coordinate (SE) at (-45:10);
            \coordinate (SW) at (-135:10);
            \coordinate (C) at (0,0);
            \coordinate (CN) at (0,5);
            \coordinate (CS) at (0,-5);
            \coordinate (CW) at (-5,0);
            \coordinate (CE) at (5,0);
            \begin{scope}
                \clip (C) circle [radius=10cm];
                \draw[webline] (W) -- (E);
                \draw[webline] (S) -- (N);
                \node at ($(C)+(1,0)$) [xscale=.5]{$\cdots$};
                \node at (C) {\sqbox};
            \end{scope}
            \draw[dashed] (C) circle [radius=10cm];
            \node at (W) [left]{\scriptsize $1$};
            \node at (E) [right]{\scriptsize $1$};
            \node at (N) [above]{\scriptsize $n$};
            \node at (S) [below]{\scriptsize $n$};
        }
    }
    \coloneqq
    \mbox{
        \tikz[baseline=-.6ex, scale=.1]{
            \coordinate (N) at (0,10);
            \coordinate (S) at (0,-10);
            \coordinate (W) at (-10,0);
            \coordinate (E) at (10,0);
            \coordinate (NE) at (45:10);
            \coordinate (NW) at (135:10);
            \coordinate (SE) at (-45:10);
            \coordinate (SW) at (-135:10);
            \coordinate (C) at (0,0);
            \coordinate (CN) at (0,5);
            \coordinate (CS) at (0,-5);
            \coordinate (CW) at (-5,0);
            \coordinate (CE) at (5,0);
            \begin{scope}
                \clip (C) circle [radius=10cm];
                \draw[webline] (W) -- (E);
                \draw[webline] (-5,10) -- (-5,-10);
                \draw[webline] (-3,10) -- (-3,-10);
                \draw[webline] (5,10) -- (5,-10);
                \node at ($(CN)+(1,0)$){$\cdots$};
                \node at ($(CS)+(1,0)$){$\cdots$};
                \draw[fill=pink, thick] (-5,0) circle [radius=20pt];
                \draw[fill=pink, thick] (-3,0) circle [radius=20pt];
                \draw[fill=pink, thick] (5,0) circle [radius=20pt];
            \end{scope}
            \draw[dashed] (C) circle [radius=10cm];
            \node at ($(N)+(0,5)$) {\scriptsize $n$ edges};
            \node at ($(N)+(0,2)$) [rotate=-90, yscale=3]{\scriptsize $\{$};
        }
    },\quad
    \mbox{
        \tikz[baseline=-.6ex, scale=.1]{
            \coordinate (N) at (0,10);
            \coordinate (S) at (0,-10);
            \coordinate (W) at (-10,0);
            \coordinate (E) at (10,0);
            \coordinate (NE) at (45:10);
            \coordinate (NW) at (135:10);
            \coordinate (SE) at (-45:10);
            \coordinate (SW) at (-135:10);
            \coordinate (C) at (0,0);
            \coordinate (CN) at (0,5);
            \coordinate (CS) at (0,-5);
            \coordinate (CW) at (-5,0);
            \coordinate (CE) at (5,0);
            \begin{scope}
                \clip (C) circle [radius=10cm];
                \draw[webline] (W) -- (E);
                \draw[webline] (S) -- (N);
                \node at ($(C)+(1,0)$) [xscale=.5]{$\cdots$};
                \node at (C) {\sqbox};
            \end{scope}
            \draw[dashed] (C) circle [radius=10cm];
            \node at (W) [left]{\scriptsize $m$};
            \node at (E) [right]{\scriptsize $m$};
            \node at (N) [above]{\scriptsize $n$};
            \node at (S) [below]{\scriptsize $n$};
        }
    }
    \coloneqq
    \mbox{
        \tikz[baseline=-.6ex, scale=.1]{
            \coordinate (N) at (0,10);
            \coordinate (S) at (0,-10);
            \coordinate (W) at (-10,0);
            \coordinate (E) at (10,0);
            \coordinate (NE) at (45:10);
            \coordinate (NW) at (135:10);
            \coordinate (SE) at (-45:10);
            \coordinate (SW) at (-135:10);
            \coordinate (C) at (0,0);
            \coordinate (CN) at (0,5);
            \coordinate (CS) at (0,-5);
            \coordinate (CW) at (-5,0);
            \coordinate (CE) at (5,0);
            \begin{scope}
                \clip (C) circle [radius=10cm];
                \draw[webline] (S) -- (N);
                \draw[webline] ($(W)+(0,-4)$) -- ($(E)+(0,-4)$);
                \draw[webline] ($(W)+(0,4)$) -- ($(E)+(0,4)$);
                \node at ($(C)+(0,4)$) [yscale=.8]{\sqbox};
                \node at ($(C)+(0,-4)$) [yscale=.8]{\sqbox};
            \end{scope}
            \draw[dashed] (C) circle [radius=10cm];
            \node at (N) [above]{\scriptsize $n$};
            \node at (S) [below]{\scriptsize $n$};
            \node at ($(W)+(0,4)$) [left]{\scriptsize $1$};
            \node at ($(W)+(0,-4)$) [left]{\scriptsize $m-1$};
            \node at ($(E)+(0,4)$) [right]{\scriptsize $1$};
            \node at ($(E)+(0,-4)$) [right]{\scriptsize $m-1$};
        }
    },
    \end{align*}
    where an edge labeled by a positive integer $n$ means its \emph{$n$-cabling}.
    For any positive integer $n$, an \emph{$\fsp_4$-tripod of degree $n$} is defined by
    \begin{align*}
    \mbox{
        \tikz[baseline=-.6ex, scale=.1]{
            \coordinate (N) at (0,10);
            \coordinate (S) at (0,-10);
            \coordinate (W) at (-10,0);
            \coordinate (E) at (10,0);
            \coordinate (NE) at (45:10);
            \coordinate (NW) at (135:10);
            \coordinate (SE) at (-45:10);
            \coordinate (SW) at (-135:10);
            \coordinate (C) at (0,0);
            \coordinate (CN) at (0,5);
            \coordinate (CS) at (0,-5);
            \coordinate (CW) at (-5,0);
            \coordinate (CE) at (5,0);
            \begin{scope}
                \clip (C) circle [radius=10cm];
                \draw[wline] (S) -- (C);
                \draw[webline] (NE) -- (C);
                \draw[webline] (NW) -- (C);
                \node at (C) [scale=1.4]{\tribox};
            \end{scope}
            \draw[dashed] (C) circle [radius=10cm];
            \node at (S) [below]{\scriptsize $1$};
            \node at (NE) [above right]{\scriptsize $1$};
            \node at (NW) [above left]{\scriptsize $1$};
        }
    }
    \coloneqq
    \mbox{
        \tikz[baseline=-.6ex, scale=.1]{
            \coordinate (N) at (0,10);
            \coordinate (S) at (0,-10);
            \coordinate (W) at (-10,0);
            \coordinate (E) at (10,0);
            \coordinate (NE) at (45:10);
            \coordinate (NW) at (135:10);
            \coordinate (SE) at (-45:10);
            \coordinate (SW) at (-135:10);
            \coordinate (C) at (0,0);
            \coordinate (CN) at (0,5);
            \coordinate (CS) at (0,-5);
            \coordinate (CW) at (-5,0);
            \coordinate (CE) at (5,0);
            \begin{scope}
                \clip (C) circle [radius=10cm];
                \draw[wline] (S) -- (C);
                \draw[webline] (NE) -- (C);
                \draw[webline] (NW) -- (C);
            \end{scope}
            \draw[dashed] (C) circle [radius=10cm];
        }
    }, \quad
    \mbox{
        \tikz[baseline=-.6ex, scale=.1]{
            \coordinate (N) at (0,10);
            \coordinate (S) at (0,-10);
            \coordinate (W) at (-10,0);
            \coordinate (E) at (10,0);
            \coordinate (NE) at (45:10);
            \coordinate (NW) at (135:10);
            \coordinate (SE) at (-45:10);
            \coordinate (SW) at (-135:10);
            \coordinate (C) at (0,0);
            \coordinate (CN) at (0,5);
            \coordinate (CS) at (0,-5);
            \coordinate (CW) at (-5,0);
            \coordinate (CE) at (5,0);
            \begin{scope}
                \clip (C) circle [radius=10cm];
                \draw[wline] (S) -- (C);
                \draw[webline] (NE) -- (C);
                \draw[webline] (NW) -- (C);
                \node at (C) [scale=1.4]{\tribox};
            \end{scope}
            \draw[dashed] (C) circle [radius=10cm];
            \node at (S) [below]{\scriptsize $n$};
            \node at (NE) [above right]{\scriptsize $n$};
            \node at (NW) [above left]{\scriptsize $n$};
        }
    }
    \coloneqq
    \mbox{
        \tikz[baseline=-.6ex, scale=.1]{
            \coordinate (N) at (0,10);
            \coordinate (S) at (0,-10);
            \coordinate (W) at (-10,0);
            \coordinate (E) at (10,0);
            \coordinate (NE) at (45:10);
            \coordinate (NW) at (135:10);
            \coordinate (SE) at (-45:10);
            \coordinate (SW) at (-135:10);
            \coordinate (C) at (0,0);
            \coordinate (CN) at (0,5);
            \coordinate (CS) at (0,-5);
            \coordinate (CW) at (-5,0);
            \coordinate (CE) at (5,0);
            \begin{scope}
                \clip (C) circle [radius=10cm];
                \draw[wline] ($(S)-(2,0)$) -- ($(C)-(2,0)$);
                \draw[webline, shorten >= -.3cm] ($(C)-(2,0)$) -- ($(NE)-(2,0)$);
                \draw[webline] ($(NW)-(2,0)$) -- ($(C)-(2,0)$);
                \node at ($(C)-(2,0)$) [scale=1.4]{\tribox};
                \draw[wline] ($(S)-(-6,0)$) -- ($(C)-(-6,0)$);
                \draw[webline] ($(C)-(-6,0)$) -- ($(NE)-(-6,0)$);
                \draw[webline, shorten <= -1cm] ($(C)!.6!(NE)-(2,0)$) -- ($(C)-(-6,0)$);
            \end{scope}
            \draw[dashed] (C) circle [radius=10cm];
            \draw[thick, fill=red!10] ($(C)!.6!(NE)-(2,2)$)--($(C)!.6!(NE)-(0,0)$)--($(C)!.6!(NE)-(2,-2)$)--($(C)!.6!(NE)-(4,0)$)--cycle;
            \node at ($(S)-(2,0)$) [below]{\scriptsize $n-1$};
            \node at ($(NE)-(2,0)$) [above right]{\scriptsize $n-1$};
            \node at ($(NW)-(2,0)$) [left]{\scriptsize $n-1$};
            \node at ($(S)+(6,0)$) {\scriptsize $1$};
            \node at ($(NE)+(2,-2)$) [right]{\scriptsize $1$};
            \node at ($(N)+(-3,2)$) {\scriptsize $1$};
        }
    }.
    \end{align*}
\end{dfn}
    For example, it is easily seen that the ladder equivalence splits $\fsp_4$-tetrapod into two $\fsp_4$-tripods:
    \begin{align*}
    \mbox{
        \tikz[baseline=-.6ex, scale=.1, xscale=-1]{
            \coordinate (N) at (0,10);
            \coordinate (S) at (0,-10);
            \coordinate (W) at (-10,0);
            \coordinate (E) at (10,0);
            \coordinate (NE) at (45:10);
            \coordinate (NW) at (135:10);
            \coordinate (SE) at (-45:10);
            \coordinate (SW) at (-135:10);
            \coordinate (C) at (0,0);
            \coordinate (CN) at (0,5);
            \coordinate (CS) at (0,-5);
            \coordinate (CW) at (-5,0);
            \coordinate (CE) at (5,0);
            \begin{scope}
                \clip (C) circle [radius=10cm];
                \draw[webline] (N) to[out=south, in=north] (CW);
                \draw[webline] (S) to[out=north, in=south] (CE);
                \draw[webline] (W) -- ($(C)+(-3,0)$);
                \draw[webline] (E) -- ($(C)+(3,0)$);
                \draw[wline] (CW) to[out=south east, in=north west] (CE);
                \node at (CW) [xscale=-1, scale=.8, rotate=45]{\tribox};
                \node at (CE) [xscale=-1, scale=.8, rotate=-135]{\tribox};
            \end{scope}
            \draw[dashed] (C) circle [radius=10cm];
            \node at (N) [above]{\scriptsize $n$};
            \node at (S) [below]{\scriptsize $n$};
            \node at (W) [right]{\scriptsize $n$};
            \node at (E) [left]{\scriptsize $n$};
        }
    }
    \approx
    \mbox{
        \tikz[baseline=-.6ex, scale=.1]{
            \coordinate (N) at (0,10);
            \coordinate (S) at (0,-10);
            \coordinate (W) at (-10,0);
            \coordinate (E) at (10,0);
            \coordinate (NE) at (45:10);
            \coordinate (NW) at (135:10);
            \coordinate (SE) at (-45:10);
            \coordinate (SW) at (-135:10);
            \coordinate (C) at (0,0);
            \coordinate (CN) at (0,5);
            \coordinate (CS) at (0,-5);
            \coordinate (CW) at (-5,0);
            \coordinate (CE) at (5,0);
            \begin{scope}
                \clip (C) circle [radius=10cm];
                \draw[webline] (W) -- (E);
                \draw[webline] (S) -- (N);
                \node at ($(C)+(1,0)$) [xscale=.5]{$\cdots$};
                \node at (C) {\sqbox};
            \end{scope}
            \draw[dashed] (C) circle [radius=10cm];
            \node at (W) [left]{\scriptsize $n$};
            \node at (E) [right]{\scriptsize $n$};
            \node at (N) [above]{\scriptsize $n$};
            \node at (S) [below]{\scriptsize $n$};
        }
    }
    \approx
    \mbox{
        \tikz[baseline=-.6ex, scale=.1]{
            \coordinate (N) at (0,10);
            \coordinate (S) at (0,-10);
            \coordinate (W) at (-10,0);
            \coordinate (E) at (10,0);
            \coordinate (NE) at (45:10);
            \coordinate (NW) at (135:10);
            \coordinate (SE) at (-45:10);
            \coordinate (SW) at (-135:10);
            \coordinate (C) at (0,0);
            \coordinate (CN) at (0,5);
            \coordinate (CS) at (0,-5);
            \coordinate (CW) at (-5,0);
            \coordinate (CE) at (5,0);
            \begin{scope}
                \clip (C) circle [radius=10cm];
                \draw[webline] (N) to[out=south, in=north] (CW);
                \draw[webline] (S) to[out=north, in=south] (CE);
                \draw[webline] (W) -- ($(C)+(-3,0)$);
                \draw[webline] (E) -- ($(C)+(3,0)$);
                \draw[wline] (CW) to[out=south east, in=north west] (CE);
                \node at (CW) [scale=.8, rotate=45]{\tribox};
                \node at (CE) [scale=.8, rotate=-135]{\tribox};
            \end{scope}
            \draw[dashed] (C) circle [radius=10cm];
            \node at (N) [above]{\scriptsize $n$};
            \node at (S) [below]{\scriptsize $n$};
            \node at (W) [left]{\scriptsize $n$};
            \node at (E) [right]{\scriptsize $n$};
        }
    }.
    \end{align*}



\subsection{Non-elliptic \texorpdfstring{$\fsp_4$}{sp(4)}-diagrams in biangles and triangles}
Kuperberg discussed non-elliptic crossroad $\fsp_4$-diagrams with no crossings in a biangle and obtained a lemma below by computing the ``angular defect'' of $\fsp_4$-diagrams. We will give a proof of it in \cref{sec:minimal}.
\begin{lem}[{\cite[Lemma~6.5]{Kuperberg}}]\label{lem:biangle_blad}
    Let us assume that there exists a crossroad $\fsp_4$-diagram $D$ in a biangle such that one of the boundary intervals has $n_1$ type~$1$ edges and $m_1$ type~$2$ edges and another $n_2$ type~$1$ edges and $m_2$ type~$2$ edges.
    \begin{itemize}
        \item If $(n_1,m_1)\neq (n_2,m_2)$, then $D$ has at least one elliptic face.
        \item if $D$ is non-elliptic and $(n_1,m_1)=(n_2,m_2)$, then $D$ can be deformed to parallel strands by a sequence of ladder-resolutions and boundary $H$-moves.
    \end{itemize}
\end{lem}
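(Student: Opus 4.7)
The approach is a combinatorial Gauss--Bonnet argument in the spirit of Kuperberg's angular-defect method for rank 2 spiders \cite{Kuperberg}. First I would apply the ladder-resolution move (D4) to every crossroad of $D$; since the claim permits (D4) as a deformation move, this reduces to the case when $D$ has no crossroads but may have rungs, i.e.\ $D$ becomes a trivalent $\fsp_4$-colored graph $D'$ embedded in the biangle with the same boundary data $(n_j,m_j)$. Then, following Kuperberg's prescription for the $B_2/C_2$ spider, I would fix a specific angle at each interior corner of each trivalent vertex (different values depending on whether the corner sits between two type~2 edges or between a type~1 edge and a type~2 edge), normalized so that the three angles around any vertex sum to $2\pi$.

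Second, I would compute the angular defect of each face of $D'$: for a face with $k$ sides, the defect is $2\pi$ minus the sum of interior corner angles along its boundary, minus $\pi(k-2)$. The key combinatorial check is that, for the chosen angles, every interior face which is not an elliptic interior face (i.e.\ not a $0,1,2,3$-gon) has non-positive defect; and that every bordered face not appearing in \cref{fig:elliptic} also has non-positive defect after incorporating the right-angle contributions along the boundary interval. The only strictly positive contributions to the Gauss--Bonnet sum then come from the two corner points of the biangle (each fixed) and from the boundary endpoints of $D'$ (contributing amounts linear in $n_j$ and $m_j$, with different coefficients for type~1 and type~2 endpoints).

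Third, I would apply the Gauss--Bonnet identity $\sum_F \mathrm{defect}(F) = 2\pi\chi(\text{biangle}) = 2\pi$. Under the non-ellipticity hypothesis every face defect is non-positive, so the identity collapses to an equation involving only the corner and boundary-endpoint contributions; a direct computation shows that this equation forces $(n_1,m_1) = (n_2,m_2)$, which proves the first assertion by contraposition. Moreover, equality in Gauss--Bonnet forces every interior face of $D'$ to be a $4$-gon with alternating type~1 and type~2 sides and zero defect, and every bordered face to be a bigon whose two sides are parallel strands of the same type. This rigid picture is precisely a disjoint collection of parallel strands, which, after boundary $H$-moves (to align the endpoints on the boundary intervals) and further ladder-resolutions if any were reversed, establishes the second assertion.

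The main obstacle will be the case analysis required to verify the per-face defect inequalities. Because $\fsp_4$ has two edge colors, each face admits many cyclic patterns of edge types around its boundary, and one must check for each pattern that the sum of interior angles does not exceed the bound set by the chosen angles, with equality only for the minimal alternating square. A more subtle point is to verify that the list in \cref{fig:elliptic} is exactly the right one: excluding strictly fewer bordered face types would leave a bordered face with positive defect (breaking the argument), while excluding strictly more would rule out configurations that are themselves already deformable to parallel strands, and therefore should not count as obstructions.
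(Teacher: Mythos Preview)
Your Gauss--Bonnet strategy is the right one and matches what both Kuperberg and the paper do, but the one-shot structure has a real gap at your ``key combinatorial check''. With the natural angle assignment ($\pi/2$ between two type-1 edges, $3\pi/4$ between a type-1 and a type-2 edge at a trivalent vertex, and $\pi/2$ at each boundary incidence), the bordered $H$-faces of \cref{fig:H-face} carry defect $+\pi/4$, yet they are \emph{not} elliptic. So the claim ``every bordered face not in \cref{fig:elliptic} has non-positive defect'' is false, and Step~3 does not follow. Tweaking the angles does not rescue the one-shot argument either: any choice making the $H$-faces have defect $\le 0$ still leaves, in the Gauss--Bonnet equality case, arbitrarily many $H$-faces of defect exactly $0$, from which you cannot read off parallel strands. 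Step~4 is independently too optimistic: the zero-defect interior faces are exactly those with four type-1 edges (and any number of rungs), whereas an alternating $4$-gon with two type-1 and two type-2 sides has positive defect and is in fact elliptic.

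The paper's proof (in \cref{sec:minimal}, carried out on the dual graph with crossroads kept rather than resolved) runs the argument \emph{iteratively}. Gauss--Bonnet forces some bordered face of positive defect; since the only such faces are the elliptic ones of \cref{fig:elliptic} and the $H$-faces of \cref{fig:H-face}, non-ellipticity guarantees an $H$-face. A boundary $H$-move (preceded by a ladder-resolution when a crossroad is involved) removes it and strictly decreases the number of interior vertices. Repeating, one reaches a diagram with no interior vertices, hence a collection of arcs; non-ellipticity then forbids arcs returning to the same boundary interval, so the arcs are parallel strands and $(n_1,m_1)=(n_2,m_2)$, giving both bullets at once. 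Your primal-graph setup after resolving crossroads would support exactly this iterative argument verbatim, once you add the $H$-faces to the list of positive-defect bordered faces.
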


For a given arrangement of type~$1$ and type~$2$ edges on the boundary intervals of a biangle, we can easily construct a non-elliptic $\fsp_4$-diagram by a procedure in \cref{fig:ladder-web}. We sometimes call such a non-elliptic $\fsp_4$-diagram a flat braid diagram by identifying (B) and (D) in \cref{fig:ladder-web}.

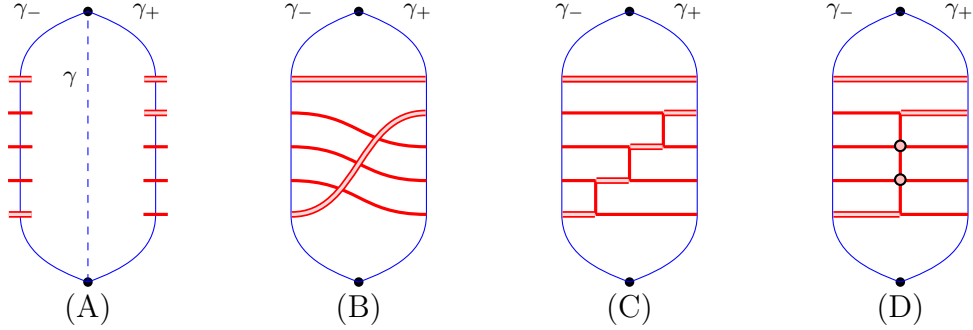
\begin{figure}[ht]
\centering
\begin{tikzpicture}[scale=0.9]
\begin{scope}[rotate=90]
\fill(2,0) circle(2pt);
\fill(-2,0) circle(2pt);
\draw[blue](-2,0) to[out=70,in=180] (-1,1) -- (1,1) to[out=0,in=110] (2,0);
\draw[blue](-2,0) to[out=-70,in=180] (-1,-1) -- (1,-1) to[out=0,in=-110] (2,0);
\draw[blue, dashed](-2,0) -- (2,0);
\foreach \i in {-1,-0.5,0}
\draw[webline] (\i,-1.18)--++(90:0.36);
\foreach \i in {0.5,1}
\draw[wline] (\i,-1+0.18)--++(-90:0.36);
\foreach \i in {-1,1}
\draw[wline] (\i,1.18)--++(-90:0.36);
\foreach \i in {-0.5,0,0.5}
\draw[webline] (\i,1-0.18)--++(90:0.36);
\node at (2,.5) [left]{\scriptsize $\gamma_{-}$};
\node at (2,-.5) [right]{\scriptsize $\gamma_{+}$};
\node at (1,0) [left]{\scriptsize $\gamma$};
\node at (-2,0) [below]{(A)};
\end{scope}

\begin{scope}[xshift=4cm,rotate=90]
\fill(2,0) circle(2pt);
\fill(-2,0) circle(2pt);
\draw[blue](-2,0) to[out=70,in=180] (-1,1) -- (1,1) to[out=0,in=110] (2,0);
\draw[blue](-2,0) to[out=-70,in=180] (-1,-1) -- (1,-1) to[out=0,in=-110] (2,0);
\foreach \i in {-1,-0.5,0}
\draw[webline] (\i,-1) ..controls (\i,0) and (\i+0.5,0).. (\i+0.5,1);
\draw[wline] (-1,1) ..controls (-1,0) and (0.5,0).. (0.5,-1);
\draw[wline] (1,1) -- (1,-1);
\node at (2,.5) [left]{\scriptsize $\gamma_{-}$};
\node at (2,-.5) [right]{\scriptsize $\gamma_{+}$};
\node at (-2,0) [below]{(B)};
\end{scope}

\begin{scope}[xshift=8cm,rotate=90]
\fill(2,0) circle(2pt);
\fill(-2,0) circle(2pt);
\draw[blue](-2,0) to[out=70,in=180] (-1,1) -- (1,1) to[out=0,in=110] (2,0);
\draw[blue](-2,0) to[out=-70,in=180] (-1,-1) -- (1,-1) to[out=0,in=-110] (2,0);
\foreach \i in {-1,-0.5,0}
{
\draw[webline] (\i,-\i-0.5) --++(0.5,0);
}
\draw[wline] (0.5,-1) -- (0.5,-0.5);
\draw[webline] (0.5,1) -- (0.5,-0.5);
\draw[webline] (0,-1) -- (0,-0.5);
\draw[wline] (0,0) -- (0,-0.5);
\draw[webline] (0,0) -- (0,1);
\draw[wline] (1,1) -- (1,-1);
\draw[webline] (-0.5,-1) -- (-0.5,0);
\draw[wline] (-0.5,0) -- (-0.5,0.5);
\draw[webline] (-0.5,0.5) -- (-0.5,1);
\draw[webline] (-1,-1) -- (-1,0.5);
\draw[wline] (-1,0.5) -- (-1,1);
\draw[wline] (1,1) -- (1,-1);
\node at (2,.5) [left]{\scriptsize $\gamma_{-}$};
\node at (2,-.5) [right]{\scriptsize $\gamma_{+}$};
\node at (-2,0) [below]{(C)};
\end{scope}

\begin{scope}[xshift=12cm,rotate=90]
\fill(2,0) circle(2pt);
\fill(-2,0) circle(2pt);
\draw[blue](-2,0) to[out=70,in=180] (-1,1) -- (1,1) to[out=0,in=110] (2,0);
\draw[blue](-2,0) to[out=-70,in=180] (-1,-1) -- (1,-1) to[out=0,in=-110] (2,0);
\draw[webline] (0.5,0) -- (-1,0);
\draw[wline] (0.5,-1) -- (0.5,0);
\draw[webline] (0.5,1) -- (0.5,0);
\draw[webline] (0,1) -- (0,-1);
\draw[webline] (-0.5,1) -- (-0.5,-1);
\draw[webline] (-1,-1) -- (-1,0);
\draw[wline] (-1,0) -- (-1,1);
\draw[wline] (1,1) -- (1,-1);
\node at (0,0) {$\crossroad$};
\node at (-.5,0) {$\crossroad$};
\node at (2,.5) [left]{\scriptsize $\gamma_{-}$};
\node at (2,-.5) [right]{\scriptsize $\gamma_{+}$};
\node at (-2,0) [below]{(D)};
\end{scope}

 \end{tikzpicture}
    \caption{Construction of the non-elliptic crossroad $\fsp_4$-diagram.
    (A) Arrangement of type $1$ and type $2$ edges on $\gamma_{-}$ and $\gamma_{+}$ so that the number of type $1$ (resp.~type $2$) edges on $\gamma_{-}$ coincides with that on $\gamma_{+}$. 
    (B) The flat braid diagram 
    with minimal intersection which represents the matching between left and right edges.
    (C) The associated $\fsp_4$-diagram with rungs.
    (D) The associated non-elliptic crossroad $\fsp_4$-diagram corresponding the flat braid diagram.
    }
    \label{fig:ladder-web}
\end{figure}

The first and third authors classified any non-elliptic crossroad $\fsp_4$-diagram in a triangle by using Kuperberg's argument. It implies the following classification.

\begin{lem}[{\cite[Proposition~2.24]{IYsp4}}]\label{lem:triangle_blad}
    When $\bSigma=T$ is a triangle, any ladder equivalence class of non-elliptic $\fsp_4$-diagrams has a unique reduced representative, up to arc parallel-moves, such as in \cref{fig:pyramid} with non-negative integers $k_i,l_i,n_1,n_2$ ($i=1,2,3$).
\end{lem}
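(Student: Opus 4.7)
The plan is to prove both existence of a pyramid-form representative and uniqueness of the parameters $(k_1,k_2,k_3,l_1,l_2,l_3,n_1,n_2)$. First I identify each parameter with a topological invariant of the ladder-equivalence class: $k_i$ (resp.\ $l_i$) counts the number of type~1 (resp.\ type~2) corner arcs at the vertex $v_i$, while $n_1,n_2$ count the type~1 strands traversing the two ``sides'' of the central $\fsp_4$-tetrapod (equivalently, two linear combinations of edge-endpoint counts extracted from the core). The invariance of the $k_i,l_i$ under (D1)--(D4) follows from the locality of the elementary moves: none can create, destroy or change the type of a peripheral corner arc.

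For existence, I would proceed by peeling and standardization. First, isotope all corner arcs towards their respective vertices and separate them from a ``core'' sub-diagram $D_0$ containing no corner arcs. Next, in a bicollar neighborhood of each edge of $T$, use \cref{lem:biangle_blad} to standardize the diagram into parallel strands organized so that all type~1 endpoints and all type~2 endpoints appear in prescribed groupings on the edge. The non-elliptic condition forces $D_0$ to have a very restricted combinatorial shape: any interior trivalent vertex must either cluster with others into an $\fsp_4$-tripod anchored at a corner of $T$, or contribute to a central $\fsp_4$-tetrapod through which the remaining type~1 strands pass. The choice of the pyramid form of \cref{fig:pyramid} is then a canonical way of arranging these constituents.

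For uniqueness, I write down the map from $(k_i,l_i,n_1,n_2)\in\mathbb{Z}_{\geq 0}^8$ to the tuple of boundary data (the number of type~1 and type~2 endpoints on each edge, six numbers in total) together with two additional invariants of the core, such as the total number of crossroads and the total number of type~2 arcs crossing into the central region. The resulting linear system is determined and invertible over $\bZ$, so distinct parameter values give distinct boundary/interior invariants and therefore distinct ladder-equivalence classes. Combined with the existence step, this yields the stated bijective parametrization up to arc parallel-moves, which only permute type~1 and type~2 strands lying in parallel position (and hence do not alter any of the eight parameters).

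The main obstacle will be the isotopy argument in the existence step: while \cref{lem:biangle_blad} handles biangle neighborhoods of each edge locally, one must globally standardize around all three edges of $T$ simultaneously without breaking the non-elliptic condition. The key input, in the spirit of Kuperberg's angular-defect argument, is that any trivalent vertex of $D_0$ which does not conform to the pyramid pattern can be migrated toward a corner of $T$ by a finite sequence of ladder resolutions and boundary $H$-moves, with termination guaranteed by an Euler-characteristic / defect count that strictly decreases under each non-trivial step. Carrying out this termination argument carefully, and ruling out the appearance of elliptic faces during the standardization, is where the bulk of the work will lie.
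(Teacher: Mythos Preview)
The paper does not give its own proof of this lemma; it is imported from \cite[Proposition~2.24]{IYsp4}, with the remark that the classification there uses ``Kuperberg's argument'', namely the angular-defect / Gauss--Bonnet count on the dual graph of the diagram. (The biangle analogue of that argument is written out in the proof of \cref{lem:homotopy-move} in \cref{sec:minimal}, and the paper says the triangle case is handled the same way.) Your route is different, and it has genuine gaps.

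On existence: you cannot invoke \cref{lem:biangle_blad} on a bicollar of each edge of $T$, since that lemma classifies diagrams lying entirely in a biangle, whereas in your situation strands pass through the bicollar into the interior of $T$; the hypothesis simply does not apply. The substantive claim---that after stripping corner arcs the core must organize into the tripod pattern---is exactly what requires the angular-defect inequality to rule out longer ladders, larger interior polygons, and other non-elliptic configurations. You invoke that count only as a termination measure for an unspecified ``migration'' rewriting process, but in the actual argument it is used directly to constrain the face structure, not to drive a confluent reduction. (Incidentally, the standard form in \cref{fig:pyramid} is two $\fsp_4$-\emph{tripods}, per the caption, not a single central tetrapod.)

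On uniqueness: your proposed invariants are not well-defined on ladder classes. The total number of crossroads changes under (D4), and ``two linear combinations of edge-endpoint counts extracted from the core'' is never specified, so there is no $8\times 8$ system to check for invertibility. Once existence is established, uniqueness follows more cleanly from the confluence statement \cite[Lemma~2.13]{IYsp4} underlying \cref{lem:equivalence-generator}: reduced representatives of a ladder class differ only by (D1), (D2), (D4), and in a triangle with the tripod form fixed this leaves only arc parallel-moves.
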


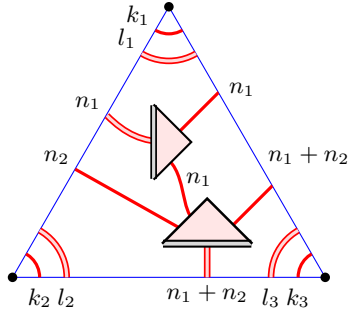
\begin{figure}[ht]
\begin{tikzpicture}[scale=.12]
    \coordinate (N) at (90:20);
    \coordinate (S) at (-90:20);
    \coordinate (W) at (180:20);
    \coordinate (E) at (0:20);
    \coordinate (NE) at (45:20);
    \coordinate (NW) at (135:20);
    \coordinate (SE) at (-45:20);
    \coordinate (SW) at (-135:20);
    \coordinate (C) at (0,0);
    \coordinate (CN) at ($(C)!.5!(N)$);
    \coordinate (CS) at ($(C)!.5!(S)$);
    \coordinate (CW) at ($(C)!.5!(W)$);
    \coordinate (CE) at ($(C)!.5!(E)$);
    \coordinate (P1) at (90:20);
    \coordinate (P2) at (210:20);
    \coordinate (P3) at (-30:20);
    \draw[blue] (P1) -- (P2) -- (P3)--cycle;
    \foreach \i in {-30,210,90} \fill(\i:20) circle(15pt);
    \begin{scope}
        \clip (P1) -- (P2) -- (P3)--cycle;
        \draw[webline] ($(C)!.4!(SE)$) --+ (45:10);
        \draw[webline] ($(C)!.3!(SE)$) to[out=west, in=south east] ($(C)!.3!(CN)$);
        \draw[webline] ($(C)!.3!(SE)+(0,-2)$) -- ($(90:20)!.6!(210:20)$);
        \draw[wline] ($(C)!.3!(SE)$) -- +(0,-10);
        \draw[webline] ($(C)!.5!(CN)$) --+ (45:10);
        \draw[wline] ($(C)!.5!(CN)$) to[out=west, in=south east] ($(90:20)!.4!(210:20)$);
        \node at ($(C)!.3!(SE)$) [scale=1.2]{$\tribox$};
        \node at ($(C)!.5!(CN)$) [rotate=-90]{$\tribox$};
        \foreach \i in {1,2,3}{
            \draw[webline] (P\i) circle [radius=3cm];
            \draw[wline] (P\i) circle [radius=6cm];
        }
    \end{scope}
    \node at ($(P1)!.4!(P2)$) [above left=-.1cm]{\scriptsize $n_1$};
    \node at ($(P1)!.6!(P2)$) [above left=-.1cm]{\scriptsize $n_2$};
    \node at ($(C)!.3!(SE)$) [below=.65cm]{\scriptsize $n_1+n_2$};
    \node at ($(P1)!.6!(P3)$) [above right=-.1cm]{\scriptsize $n_1+n_2$};
    \node at ($(P1)!.35!(P3)$) [above right=-.1cm]{\scriptsize $n_1$};
    \node at (C) [yshift=.1cm, xshift=.4cm]{\scriptsize $n_1$};
    \node at ($(P1)+(-120:3)$) [above left=-.1cm]{\scriptsize $k_1$};
    \node at ($(P1)+(-120:6)$) [above left=-.1cm]{\scriptsize $l_1$};
    \node at ($(P2)+(0:3)$) [below]{\scriptsize $k_2$};
    \node at ($(P2)+(0:6)$) [below]{\scriptsize $l_2$};
    \node at ($(P3)+(0:-3)$) [below]{\scriptsize $k_3$};
    \node at ($(P3)+(0:-6)$) [below]{\scriptsize $l_3$};
\end{tikzpicture}
    \caption{A reduced $\fsp_4$-diagram in a triangle consists of two $\fsp_4$-tripods and arbitrary number of corner arcs. We also have its reflection/rotations. Any reduced $\fsp_4$-diagram is uniquely represented by this diagram up to flips of corner arcs.} 
    \label{fig:pyramid}
\end{figure}

\subsection{Bounded \texorpdfstring{$\fsp_4$}{sp(4)}-laminations}\label{sec:lamination}

\begin{dfn}[Bounded $\fsp_4$-laminations]\label{def:lamination}
A \emph{rational bounded $\fsp_4$-lamination} on $\bSigma$ is a reduced ladder-equivalence class $L \in \Blad_{\fsp_4,\bSigma}$ of $\fsp_4$-diagrams on $\bSigma$ equipped with a rational number (called a \emph{measure}) on each component such that they are positive except for peripheral components. 
It is considered modulo the following ``cabling'' operations: 
\begin{enumerate}
    \item[(L1)] Combine a pair of isotopic loops with the same color with measures $u$ and $v$ into a single loop with the measure $u+v$. Similarly combine a pair of isotopic arcs with the same color into a single one by adding their measures. 
    \item[(L2)] For a positive integer $n \in \bZ_{>0}$ and a rational number $u \in \bQ$, replace a component with measure $nu$ with its $n$-cabling with measure $u$, which locally looks like
    \begin{align*}
        \begin{tikzpicture}[scale=1.2]
        \draw[dashed] (0,0) circle(0.76cm);
        \foreach \i in {30,150}
        \draw[webline] (0,0) -- (\i:0.76);
        \draw[wline] (0,0) -- (270:0.76);
        \node[red] at (0,0.3) {$nu$};
        \node at (1.5,0) {\scalebox{1.2}{$\sim$}}; 
        \begin{scope}[xshift=3cm]
        \draw[webline] ($(-30:0.4)!1/2!(90:0.4)$) --++(30:0.5) node[right,black]{\scriptsize $n$};
        \draw[webline] ($(90:0.4)!1/2!(210:0.4)$) --++(150:0.5) node[left,black]{\scriptsize $n$};
        \draw[wline] ($(210:0.4)!1/2!(-30:0.4)$) --++(-90:0.5) node[below,black]{\scriptsize $n$};
        \node at (0,0) [yscale=1.5]{$\tribox$};
        \node[red] at (0,0.6) {$u$};
        \draw[dashed] (0,0) circle(0.76cm);
        \end{scope}
        \begin{scope}[xshift=6cm]
        \draw[dashed] (0,0) circle(0.76cm);
        \node at (1.5,0) {\scalebox{1.2}{$\sim$}};
        \clip (0,0) circle(0.76cm);
        \draw[very thick] (-1,0) --node[midway,above,red]{$nu$} (1,0);
        \end{scope}
        \begin{scope}[xshift=9cm]
        \draw[dashed] (0,0) circle(0.76cm);
        \draw[decorate,decoration={brace,amplitude=3pt,raise=2pt}] (0.76,0.2) --node[midway,right=0.3em]{$n$} (0.76,-0.2);
        \clip (0,0) circle(0.76cm);
        \draw[very thick] (-1,0.2) --node[above,red]{$u$} (1,0.2);
        \draw[very thick,dotted] (0,0.2) -- (0,-0.2);
        \draw[very thick] (-1,-0.2) --node[below,red]{$u$} (1,-0.2);
        \end{scope}
        \begin{scope}[yshift=-3cm]
        \draw[dashed] (0,0) circle(0.76cm);
        \foreach \i in {0,90,180,270}
        \draw[webline] (0,0) -- (\i:0.76);
        \draw[thick,fill=red!10] (0,0) circle(3pt);
        \node[red] at (0.3,0.3) {$nu$};
        \node at (1.5,0) {\scalebox{1.2}{$\sim$}};         
        \end{scope}
        \begin{scope}[xshift=3cm,yshift=-3cm]
        \draw[webline] (-0.76,0) -- (0.76,0);
        \draw[webline] (0,-0.76) -- (0,0.76);
        \node at (0,0) {\sqbox};
        \node[red] at (0.4,0.4) {$u$};
        \draw[dashed] (0,0) circle(0.76cm);
        \end{scope}
        \end{tikzpicture}
    \end{align*}
    Here the weight $u$ is positive unless the local picture belongs to a peripheral component. 
    For a loop or arc component, this operation is just the $n-1$ times applications of the operation (L1). One can also verify that the cabling operation is associative in the sense that the $n$-cabling followed by the $m$-cabling agrees with the $nm$-cabling. 
\end{enumerate}
\end{dfn}
Let $\cL^a(\bSigma,\bQ)$ denote the set of equivalence classes of the rational bounded $\fsp_4$-laminations. We have a natural $\bQ_{>0}$-action on $\cL^a(\bSigma,\bQ)$ that simultaneously rescales the measures. 
A rational bounded $\fsp_4$-lamination is said to be \emph{integral} if all of the measures are integers. The subset of integral bounded $\fsp_4$-laminations is denoted by $\cL^a(\bSigma,\bZ) \subset \cL^a(\bSigma,\bQ)$.

We have the following structures on the space $\cL^a(\bSigma,\bQ)$.

\subsubsection{Potential functions}

\begin{dfn}[potential functions]
For a marked point $m \in \bM$ and $s \in \{1,2\}$, the \emph{potential function of type $s$ at $m$} is the function 
\begin{align*}
    \pot_{m}^s: \cL^a(\bSigma,\bQ) \to \bQ
\end{align*}
that counts the total measure of the peripheral components of type $s$ around $m$. Let $\cL^a(\bSigma,\bZ) \subset \cL^a(\bSigma,\bQ)$ denote the subspace of integral $\fsp_4$-laminations. 
\end{dfn}

\begin{lem}\label{lem:lamination_Blad}
We have a canonical injective map 
\begin{align}\label{eq:lamination_Blad}
    \Blad_{\fsp_4,\bSigma} \to \cL^a(\bSigma,\bZ),
\end{align}
whose image is characterized by the \emph{potential condition} $\mathsf{w}_{m}^s(L) \geq 0$ for any marked point $m \in \bM$ and $s\in \{1,2\}$.
\end{lem}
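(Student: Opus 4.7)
The plan is to describe the map explicitly and then use a canonical-form argument both to characterize its image and to establish injectivity.

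First, I define the map \eqref{eq:lamination_Blad} by sending $[D]\in\Blad_{\fsp_4,\bSigma}$ to the equivalence class, modulo (L1) and (L2), of the lamination $([D], \mathbf{1})$, where $\mathbf{1}$ assigns measure $1$ to every connected component of $D$. Well-definedness is immediate: by \cref{def:lamination} the underlying combinatorial datum of a lamination is already a ladder-equivalence class, so there is no ambiguity in the choice of representative inside $[D]$.

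For the image characterization, the key observation is that each potential $\pot_m^s$ is invariant under both (L1) and (L2): (L1) replaces measures $u,v$ on two isotopic components of the same color by a single component of measure $u+v$, while (L2) replaces a measure-$nu$ component by its $n$-cabling, whose $n$ local strands each contribute $u$, still totalling $nu$. Hence if $L$ is in the image then every peripheral component carries measure $+1$, so $\pot_m^s(L)$ equals the number of peripheral components of type $s$ at $m$ and is non-negative. Conversely, given $L\in\cL^a(\bSigma,\bZ)$ with all $\pot_m^s(L)\geq 0$, I would take any representative $(D,w)$ and first apply (L1) to combine all parallel isotopic components of the same color. The hypothesis forces all resulting measures to be non-negative integers: positive for non-peripheral components by the sign condition, and equal to $\pot_m^s(L)\geq 0$ for each peripheral class. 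After discarding weight-$0$ components, I would apply the inverse of (L1) (or (L2), for components carrying internal vertex structure) to split each weight-$n$ component into $n$ parallel unit-measure copies, yielding a weight-$\mathbf{1}$ representative that exhibits $L$ in the image.

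For injectivity, suppose $([D_1],\mathbf{1})$ and $([D_2],\mathbf{1})$ represent the same lamination. Running the normalization above on both yields the same (L1)-combined representative, which uniquely records each primitive isotopy class of component together with its total measure. Reversing the normalization by the unit-splitting step then reconstructs the same weight-$\mathbf{1}$ diagram from either starting point, so the multi-sets of primitive component isotopy classes (with multiplicities) of $D_1$ and $D_2$ agree. Since a reduced bounded $\fsp_4$-diagram is determined by this multi-set up to ambient isotopy, hence up to the ladder equivalence generated by (D1), (D2), (D4) via \cref{lem:equivalence-generator}, we conclude $[D_1]=[D_2]$ in $\Blad_{\fsp_4,\bSigma}$.

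The step I expect to be the main obstacle is the interaction between (L1) and (L2) for components carrying non-trivial graph structure: one must show that the ``canonical combined form'' is independent of the order in which (L1) and the inverse of (L2) are applied, and that the output of the unit-splitting step indeed lies in $\Blad_{\fsp_4,\bSigma}$. The former reduces to a confluence check on the explicit local cabling pictures of \cref{dfn:box-notation}, while the latter follows from the fact that an $n$-cabling of a reduced primitive component remains reduced up to ladder equivalence.
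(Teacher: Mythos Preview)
Your proposal is correct and takes essentially the same approach as the paper: assign measure $1$ to each component for the forward map, and for the inverse direction normalize via (L1)/(L2) to a measure-$\mathbf{1}$ representative, with the potential condition guaranteeing that no negative peripheral weights obstruct this. The paper's proof is much terser (four sentences), simply asserting that ``the ladder-equivalence class of such a diagram is unique''; the confluence issue you flag at the end is exactly the associativity of cabling already recorded (without proof) immediately after the statement of (L2), so your level of rigor matches the paper's.
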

Henceforth we will regard $\Blad_{\fsp_4,\bSigma}$ as a subset of $\cL^a(\bSigma,\bZ)$ via the map \eqref{eq:lamination_Blad}. 

\begin{proof}
An element of $\Blad_{\fsp_4,\bSigma}$ gives an integral $\fsp_4$-lamination, where all the components have measure $1$. Conversely, given any integral $\fsp_4$-lamination, we can represent it by a diagram with measure $1$ by applying the operations (L1) and (L2). The ladder-equivalence class of such a diagram is unique. 
The potential condition ensures that there are no components of negative measures. Thus it gives an element of $\Blad_{\fsp_4,\bSigma}$.
\end{proof}

Fix an ideal triangulation $\tri$ of $\bSigma$. 
For a marked point $m$ and a triangle $T \in t(\tri)$ having $m$ as a vertex, define the `local' potential  
\begin{align*}
    \pot_{m;T}^s: \cL^a(\bSigma,\bQ) \to \bQ
\end{align*}
to be the function that counts the total measure of the components of type $s$ whose restrictions to $T$ give corner arcs around $m$.

\begin{lem}\label{lem:potential_local}
The potential function $\pot_{m}^s$ is given in terms of local potentials by
\begin{align*}
    \pot_{m}^s = \min \{\pot^s_{m;T}\mid \text{$T \in t(\tri)$ have $m$ as a vertex} \}.
\end{align*}
\end{lem}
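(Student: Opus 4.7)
The proof splits into two inequalities, each handled by choosing a representative of the lamination in minimal position with respect to $\tri$.

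For the direction $\pot_m^s \leq \pot_{m;T}^s$ for every triangle $T$ around $m$: every peripheral component of type $s$ around $m$ admits, within its ladder-equivalence class, a representative supported in an arbitrarily small neighborhood of $m$. In such a representative, the component restricts to a corner arc at $m$ of type $s$ in every triangle around $m$. Summing over all peripheral components of type $s$ around $m$, they contribute their full measure $\pot_m^s$ to $\pot_{m;T}^s$, so $\pot_{m;T}^s \geq \pot_m^s$ for every such $T$. Taking the minimum over $T$ yields one direction.

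For the reverse inequality, I would fix a representative $D$ of $L$ in minimal position with respect to $\tri$ and aim to produce a single triangle $T^*$ around $m$ with $\pot_{m;T^*}^s = \pot_m^s$. The guiding observation is that a component $C$ of $D$ whose restriction is a corner arc at $m$ of type $s$ in \emph{every} triangle around $m$ must itself be peripheral: the adjacent corner arcs share endpoints on the common edge incident to $m$ and glue up into a closed loop (if $m$ is a puncture) or a corner arc (if $m\in\bM_\partial$) around $m$, which is peripheral by definition. Consequently, every non-peripheral component of type $s$ fails to give a corner arc at $m$ in at least one triangle around $m$.

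The main obstacle is to produce a \emph{single} triangle $T^*$ avoided by all non-peripheral components simultaneously; a priori, different non-peripheral components could avoid different triangles and collectively cover every triangle around $m$. I would resolve this by exploiting the stratification of corner arcs at $m$ in the minimal representative $D$: within each triangle around $m$, the corner arcs at $m$ are linearly ordered by their distance from $m$, and this ordering is compatible across adjacent triangles since the components are connected and in minimal position. Peripheral components occupy the innermost strata, while non-peripheral components form outer strata whose sets of covered triangles are nested and strictly shrink moving outward. The outermost non-peripheral stratum therefore covers only a proper subset of triangles around $m$, and any triangle outside this subset serves as $T^*$, giving $\pot_{m;T^*}^s = \pot_m^s$ and completing the proof.
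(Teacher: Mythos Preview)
Your identification of the central difficulty---that different non-peripheral components might a priori escape in different triangles, so one must exhibit a \emph{single} triangle $T^*$ avoided by all of them---is exactly right, and is a point the paper's short component-by-component argument glosses over. Your nesting idea is also the correct mechanism for resolving it.

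However, your final step is in error. You take $T^*$ to be any triangle outside the \emph{outermost} non-peripheral stratum. Since the covered sets $A_i=\{T: \text{position }i\text{ is a corner arc in }T\}$ are nested and \emph{shrink} as $i$ grows, the outermost stratum has the \emph{smallest} covered set; a triangle outside this smallest set can still lie in some larger $A_i$ with $\min_j n_j < i < \max_j n_j$, and hence still carry a non-peripheral corner arc. Concretely: with three triangles and corner-arc counts $(n_1,n_2,n_3)=(3,2,1)$, the outermost stratum (position $3$) covers only $\{T_1\}$, yet $T_2$ lies outside it while still containing the non-peripheral corner arc at position $2$.

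The fix is immediate: take $T^*$ to be a triangle with the \emph{minimum} number $n_{j^*}=\min_j n_j$ of corner arcs at $m$ (equivalently, a triangle outside the \emph{innermost} non-peripheral stratum $A_{\min_j n_j+1}$, which is the largest of the non-peripheral covered sets). In $T^*$ every corner arc sits at some position $i\le \min_j n_j$, hence extends to a corner arc at position $i$ in \emph{every} triangle around $m$; these concatenate into a simple closed loop (or simple corner arc if $m\in\bM_\partial$) of a single type with no trivalent vertices, so it is an entire connected component and is peripheral. Thus every corner arc in $T^*$ belongs to a peripheral component, giving $\pot_{m;T^*}^s=\pot_m^s$ for each $s$ simultaneously.
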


\begin{proof}
Clearly, the peripheral components around $m$ equally contribute to the both sides. Consider a non-peripheral component $c$. It does not contribute to the left-hand side. There must be at least one triangle $T$ incident to $m$ such that $c\cap T$ is not a corner arc around $m$, hence $\pot^s_{m;T}(c)=0$. Since $c$ has a positive measure, the other terms are non-negative. Hence the contribution to the right-hand side is also zero.  
\end{proof}

\subsubsection{Peripheral actions}\label{subsec:peripheral_action}
For each $m \in \bM$ and $s=1,2$, consider the $\bQ$-action on $\cL^a(\bSigma,\bQ)$, where $u \in \bQ$ adds a peripheral component of type $s$ with measure $u$ around $m$ (either a peripheral loop or a corner arc).
They form an action $\alpha_\bM: (\bQ^2)^{\bM} \times \cL^a(\bSigma,\bQ) \to \cL^a(\bSigma,\bQ)$. The following is clear from the definitions:

\begin{lem}\label{lem:action_potential}
For $u=(u_m^s)\in (\bQ^2)^{\bM}$ and $L \in \cL^a(\bSigma,\bQ)$, we have
\begin{align*}
    \pot_m^s (\alpha_\bM(u,L)) = \pot_m^s(L) + u_m^s.
\end{align*}
\end{lem}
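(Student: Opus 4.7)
The statement is essentially immediate from the definitions, so my plan is to reduce it to a bookkeeping argument, with the only nontrivial point being well-definedness under the equivalence relations (L1) and (L2).

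First, I would observe that the potential function $\pot_m^s$, as the total measure over peripheral components of type $s$ around $m$, is well-defined on the quotient by (L1) and (L2): combining two isotopic peripheral components of the same color and type into one with added measures clearly preserves this sum, and the $n$-cabling operation replaces a peripheral component of measure $nu$ by $n$ parallel peripheral components of measure $u$, contributing the same total. Similarly, the ladder-equivalence moves (D1)--(D4) do not alter peripheral combinatorics around marked points, since they are supported in embedded disks away from $\bM$, hence neither create nor destroy peripheral loops or corner arcs.

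Next, by the definition of the peripheral action $\alpha_\bM$, the representative $\alpha_\bM(u,L)$ is obtained from a representative of $L$ by adjoining, for each pair $(m,s)$, a single peripheral component of type $s$ around $m$ carrying measure $u_m^s$. Then the computation is a direct additivity check:
\begin{align*}
    \pot_m^s\bigl(\alpha_\bM(u,L)\bigr)
    = \sum_{\substack{c\ \text{perip. comp.}\\ \text{of type }s\text{ at }m}} \mathrm{meas}(c) \;+\; u_m^s
    = \pot_m^s(L) + u_m^s,
\end{align*}
where the added component may or may not be isotopic to a pre-existing peripheral component; in the former case, invoking (L1) merges the two with their measures added, and in either case the sum increases by exactly $u_m^s$.

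The only step that requires a moment of care is independence of the order in which one performs the action at distinct pairs $(m,s)$ versus $(m',s')$ and independence from the chosen peripheral representative (loop around a puncture or corner arc around a special point). Commutativity follows because the added components are supported in disjoint neighborhoods of the marked points, and independence from the choice of loop vs.\ corner arc follows because both types are counted by $\pot_m^s$ on equal footing. I do not foresee a genuine obstacle; the lemma is a direct consequence of the sum-of-measures definition of $\pot_m^s$ together with the additivity built into the equivalence relations (L1), (L2).
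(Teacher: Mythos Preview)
Your proposal is correct and matches the paper's approach: the paper simply states that the lemma ``is clear from the definitions'' and gives no further argument, so your unpacking of the additivity of $\pot_m^s$ under adjoining a peripheral component of measure $u_m^s$ is exactly the content the paper leaves implicit.
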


\begin{rem}\label{rem:Cartan_action}
In order to identify the action $\alpha_\bM$ with the tropical analogue of the action 
\begin{align}\label{eq:Cartan_action}
    H^\bM \times \A_{Sp_4,\bSigma} \to \A_{Sp_4,\bSigma}
\end{align}
of the Cartan subgroup $H \subset Sp_4$, let us embed the coroot lattice $\mathsf{Q}^\vee= \bZ \alpha_1^\vee \oplus \bZ \alpha_2^\vee$ of $\fsp_4$ into $\bQ^2$ as $\alpha_1^\vee=(-2,2)$ $\alpha_2^\vee=(2,-1)$. Then the restricted action $\alpha_\bM: (\mathsf{Q}^\vee)^{\bM} \times \times \cL^a(\bSigma,\bQ) \to \cL^a(\bSigma,\bQ)$ preserves the subset $\cL^a(\bSigma,\bZ)_\congr$ (see \cref{def:congruent} and \cref{rem:potential_vector}), and gives the integral tropical analogue of \eqref{eq:Cartan_action}. See \cite{IK25} for a similar statement for the $\fsl_3$-case. 
\end{rem}

\subsection{Clasped \texorpdfstring{$\fsp_4$}{sp(4)}-skein algebra}\label{sec:sp-skein}
We briefly introduce the clasped $\fsp_4$-skein algebra. See \cite{IYsp4} for its basic properties. 
The clasped $\fso_5$-skein algebra is defined as a dual of it. See \cref{sec:skein} for details.

Let $\bSigma$ be a marked surface equipped with the set of marked points $\bM=\bP\sqcup\bM_\partial$.
\begin{dfn}
    A \emph{tangled $\fsp_4$-diagram} $W$ is an immersion of $\fsp_4$-colored $1,3,4$-valent graph $G$ into $\Sigma$ satisfying the following.
    \begin{itemize}
        \item $W(v)\in\bM_{\partial}$ for every $1$-valent vertices $p$ of $G$ and $W(x)\in\Sigma^{\ast}$ if $x\in G$ is not a $1$-valent vertices.
        \item Each intersection point $p$ in $\Sigma^{\ast}$ is a transverse double point and $W^{-1}(p)$ lies in interior of edges of $G$.
        \item $\mathrm{Arc}_{p}(G)$ is assigned a map $\phi_p\colon \mathrm{Arc}_{p}(G)\to \bR$ such that $\phi_p$ is injective if $p$ is a transverse double point in $\Sigma^{\ast}$, where
        \begin{itemize}
            \item $\mathrm{Arc}_{p}(G)$ be the set of connected components of $W^{-1}(N_p\cap W(G))$ for each intersection point $p$ of $W(G)$ and some small neighborhood $N_p$ of $\Sigma$ at $p$, and
            \item two maps $\phi_p$ and $\psi_p$ from $\mathrm{Arc}_{p}(G)$ to $\bR$ are equivalent if $\phi_p=f\circ\psi_p$ holds for some orientation preserving self-homeomorphism $f\colon \bR\to \bR$.
        \end{itemize} 
    \end{itemize}
    We call an intersection point $p$ in $\Sigma^{\ast}$ (resp. $\bM_{\partial}$) with the assignment $\phi_p$ an \emph{internal crossing} (resp. \emph{boundary crossing}). 
    The assignment defines over/under information of arcs: \emph{$\alpha$ passes over $\beta$} if $\phi_p(\alpha)>\phi_p(\beta)$ for $\alpha,\beta\in\mathrm{Arc}_{p}(G)$.
    Particularly, the assignment $\phi_p$ is called the \emph{elevation} for $p\in\bM_{\partial}$, and $\alpha$ and $\beta$ in $\mathrm{Arc}_{p}(G)$ has a \emph{simultaneous crossing} if $\phi_p(\alpha)=\phi_p(\beta)$. 
    A crossing is described as 
    $\mbox{
        \ \tikz[baseline=-.6ex, scale=.08]{
            \draw[dashed, fill=white] (0,0) circle [radius=7];
            \draw[webline] (225:7) -- (45:7);
            \draw[overarc] (-45:7) -- (135:7);
        }
    \ }$,
    $\mbox{
        \ \tikz[baseline=-.6ex, scale=.08]{
            \draw[dashed, fill=white] (0,0) circle [radius=7];
            \draw[wline] (225:7) -- (45:7);
            \draw[owarc] (-45:7) -- (135:7);
        }
    \ }$,
    $\mbox{
        \ \tikz[baseline=-.6ex, scale=.08]{
            \draw[dashed, fill=white] (0,0) circle [radius=7];
            \draw[wline] (225:7) -- (45:7);
            \draw[overarc] (-45:7) -- (135:7);
        }
    \ }$, or
    $\mbox{
        \ \tikz[baseline=-.6ex, scale=.08]{
            \draw[dashed, fill=white] (0,0) circle [radius=7];
            \draw[webline] (225:7) -- (45:7);
            \draw[owarc] (-45:7) -- (135:7);
        }
    \ }$
    and the elevation is indicated by the distance from $p$ as \cref{fig:elevation}.
\end{dfn}

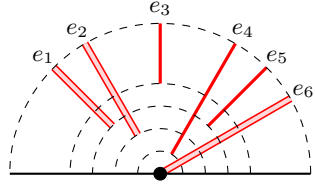
\begin{figure}
    \begin{tikzpicture}[scale=.2]
        \coordinate (P) at (0,0);
        \draw[webline, shorten <= .3cm] (P) -- (60:10);
        \draw[uwarc, shorten <= .6cm] (P) -- (120:10);
        \draw[webline, shorten <= 1.2cm] (P) -- (90:10);
        \draw[webline, shorten <= 0.9cm] (P) -- (45:10);
        \draw[uwarc, shorten <= 0.9cm] (P) -- (135:10);
        \draw[uwarc] (P) -- (30:10);
        \draw[dashed] (10,0) arc (0:180:10cm);
        \draw[dashed] (6,0) arc (0:180:6cm);
        \draw[dashed] (4.5,0) arc (0:180:4.5cm);
        \draw[dashed] (3,0) arc (0:180:3cm);
        \draw[dashed] (1.5,0) arc (0:180:1.5cm);
        \draw[fill=black] (P) circle [radius=12pt];
        \draw[thick] (-10,0) -- (10,0);
        \node at (30:11) {\scriptsize $e_6$};
        \node at (45:11) {\scriptsize $e_5$};
        \node at (60:11) {\scriptsize $e_4$};
        \node at (90:11) {\scriptsize $e_3$};
        \node at (120:11) {\scriptsize $e_2$};
        \node at (135:11) {\scriptsize $e_1$};
    \end{tikzpicture}
    \caption{$\mathrm{Arc}_p(G)=\{e_1,e_2,\dots,e_6\}$ is the set of half-arcs incident to $p$ for $p\in\bM_{\partial}$. The elevation $\phi_p\colon \mathrm{Arc}_{p}(G)\to\bR$ satisfies $\phi_p(e_3)<\phi_p(e_1)=\phi_p(e_5)<\phi_p(e_2)<\phi_p(e_4)<\phi_p(e_6)$. $e_1$ and $e_5$ has a simultaneous crossing.}
    \label{fig:elevation}
\end{figure}

\begin{dfn}\label{def:skein-rel}
    Let $[n]\coloneqq (q^{n}-q^{-n})/(q-q^{-1})$ for any non-negative integer $n$.
    The \emph{clasped $\fsp_4$-skein algebra $\Skein{\fsp_4}{\bSigma}^q$} of the marked surface $\bSigma$ is $\bZ[q^{\pm 1/2},1/[2]]$-linear combination of isotopy classes of tangled $\fsp_4$-diagrams with the \emph{$\fsp_4$-skein relation};
    \begin{align*}
        &\Oweb{uarc}=-\frac{[2][6]}{[3]}\noweb,\quad \Oweb{uwarc}=\frac{[5][6]}{[2][3]}\noweb\\
        &\monoweb{uwarc}{uarc}=0,\quad \biweb{uwarc}{uarc}{uarc}{uwarc}=-[2]\Iweb{uarc},\quad \triweb=0\\
        &\Xcross=\myHweb[rotate=90]{uarc}{uarc}{uwarc}{uarc}{uarc}-\frac{1}{[2]}\IIweb[rotate=90]{uarc}{uarc}=\myHweb{uarc}{uarc}{uwarc}{uarc}{uarc}-\frac{1}{[2]}\IIweb{uarc}{uarc}\\
        &\Xpos{oarc}{oarc}=q^{}\IIweb{uarc}{uarc}+q^{-1}\IIweb[rotate=90]{uarc}{uarc}+\Xcross\\
        &\Xpos{oarc}{uwarc}=q^{}\myHweb{uarc}{uwarc}{uarc}{uwarc}{uarc}+q^{-1}\myHweb[rotate=90]{uwarc}{uarc}{uarc}{uarc}{uwarc},\quad 
        \Xneg{owarc}{uarc}=q^{}\myHweb[rotate=90]{uwarc}{uarc}{uarc}{uarc}{uwarc}+q^{-1}\myHweb{uarc}{uwarc}{uarc}{uwarc}{uarc}\\
        &\Xpos{owarc}{owarc}=q^{2}\IIweb{uwarc}{uwarc}+q^{-2}\IIweb[rotate=90]{uwarc}{uwarc}+\XJcross
    \end{align*}
    and the \emph{clasped $\fsp_4$-skein relation};
    \begin{align*}
        q^{-\frac{1}{2}}\mbox{
            \tikz[baseline=-.6ex, scale=.08, yshift=-4cm]{
                \coordinate (P) at (0,0);
                \draw[webline, shorten <=.2cm] ($(P)$) -- (135:10);
                \draw[webline] (P) -- (45:10);
                \draw[dashed] (10,0) arc (0:180:10cm);
                \draw[very thick] (-10,0) -- (10,0);
                \draw[fill=black] (P) circle [radius=20pt];
            }
        }
        &=
        \mbox{
            \tikz[baseline=-.6ex, scale=.08, yshift=-4cm]{
                \coordinate (P) at (0,0);
                \draw[webline] (P) -- (45:10);
                \draw[webline] (P) -- (135:10);
                \draw[dashed] (10,0) arc (0:180:10cm);
                \draw[very thick] (-10,0) -- (10,0);
                \draw[fill=black] (P) circle [radius=20pt];
            }
        }
        =q^{\frac{1}{2}}
        \mbox{
            \tikz[baseline=-.6ex, scale=.08, yshift=-4cm]{
                \coordinate (P) at (0,0);
                \draw[webline, shorten <=.2cm] (P) -- (45:10);
                \draw[webline] (P) -- (135:10);
                \draw[dashed] (10,0) arc (0:180:10cm);
                \draw[very thick] (-10,0) -- (10,0);
                \draw[fill=black] (P) circle [radius=20pt];
            }
        },\\
        q^{-1}\mbox{
            \tikz[baseline=-.6ex, scale=.08, yshift=-4cm]{
                \coordinate (P) at (0,0);
                \draw[uwarc, shorten <=.2cm] (P) -- (135:10);
                \draw[uwarc] (P) -- (45:10);
                \draw[dashed] (10,0) arc (0:180:10cm);
                \draw[very thick] (-10,0) -- (10,0);
                \draw[fill=black] (P) circle [radius=20pt];
            }
        }
        &=
        \mbox{
            \tikz[baseline=-.6ex, scale=.08, yshift=-4cm]{
                \coordinate (P) at (0,0);
                \draw[uwarc] (P) -- (135:10);
                \draw[uwarc] (P) -- (45:10);
                \draw[dashed] (10,0) arc (0:180:10cm);
                \draw[very thick] (-10,0) -- (10,0);
                \draw[fill=black] (P) circle [radius=20pt];
            }
        }
        =q
        \mbox{
            \tikz[baseline=-.6ex, scale=.08, yshift=-4cm]{
                \coordinate (P) at (0,0);
                \draw[uwarc, shorten <=.2cm] (P) -- (45:10);
                \draw[uwarc] (P) -- (135:10);
                \draw[dashed] (10,0) arc (0:180:10cm);
                \draw[very thick] (-10,0) -- (10,0);
                \draw[fill=black] (P) circle [radius=20pt];
            }
        },\\
        q^{-\frac{1}{2}}
        \mbox{
            \tikz[baseline=-.6ex, scale=.08, yshift=-4cm]{
                \coordinate (P) at (0,0);
                \draw[uwarc, shorten <=.2cm] ($(P)$) -- (135:10);
                \draw[webline] (P) -- (45:10);
                \draw[dashed] (10,0) arc (0:180:10cm);
                \draw[very thick] (-10,0) -- (10,0);
                \draw[fill=black] (P) circle [radius=20pt];
            }
        }
        &=
        \mbox{
            \tikz[baseline=-.6ex, scale=.08, yshift=-4cm]{
                \coordinate (P) at (0,0);
                \draw[uwarc] ($(P)$) -- (135:10);
                \draw[webline] (P) -- (45:10);
                \draw[dashed] (10,0) arc (0:180:10cm);
                \draw[very thick] (-10,0) -- (10,0);
                \draw[fill=black] (P) circle [radius=20pt];
            }
        }
        =q^{\frac{1}{2}}
        \mbox{
            \tikz[baseline=-.6ex, scale=.08, yshift=-4cm]{
                \coordinate (P) at (0,0);
                \draw[webline, shorten <=.2cm] (P) -- (45:10);
                \draw[uwarc] (P) -- (135:10);
                \draw[dashed] (10,0) arc (0:180:10cm);
                \draw[very thick] (-10,0) -- (10,0);
                \draw[fill=black] (P) circle [radius=20pt];
            }
        },\\
        q^{-\frac{1}{2}}
        \mbox{
            \tikz[baseline=-.6ex, scale=.08, yshift=-4cm]{
                \coordinate (P) at (0,0);
                \draw[webline, shorten <=.2cm] ($(P)$) -- (135:10);
                \draw[uwarc] (P) -- (45:10);
                \draw[dashed] (10,0) arc (0:180:10cm);
                \draw[very thick] (-10,0) -- (10,0);
                \draw[fill=black] (P) circle [radius=20pt];
            }
        }
        &=
        \mbox{
            \tikz[baseline=-.6ex, scale=.08, yshift=-4cm]{
                \coordinate (P) at (0,0);
                \draw[webline, shorten <=.2cm] ($(P)$) -- (135:10);
                \draw[uwarc] (P) -- (45:10);
                \draw[dashed] (10,0) arc (0:180:10cm);
                \draw[very thick] (-10,0) -- (10,0);
                \draw[fill=black] (P) circle [radius=20pt];
            }
        }
        =q^{\frac{1}{2}}
        \mbox{
            \tikz[baseline=-.6ex, scale=.08, yshift=-4cm]{
                \coordinate (P) at (0,0);
                \draw[uwarc, shorten <=.2cm] (P) -- (45:10);
                \draw[webline] (P) -- (135:10);
                \draw[dashed] (10,0) arc (0:180:10cm);
                \draw[very thick] (-10,0) -- (10,0);
                \draw[fill=black] (P) circle [radius=20pt];
            }
        },\\
        \mbox{
            \tikz[baseline=-.6ex, scale=.08, yshift=-4cm]{
                \coordinate (P) at (0,0);
                \coordinate (R) at (45:7) {};
                \coordinate (L) at (135:7) {};
                \draw[webline] (L) -- (R);
                \draw[uwarc] (P) -- (L);
                \draw[uwarc] (R) -- (45:10);
                \draw[webline] (L) -- (135:10);
                \draw[webline] (P) -- (R);
                \draw[dashed] (10,0) arc (0:180:10cm);
                \draw[very thick] (-10,0) -- (10,0);
                \draw[fill=black] (P) circle [radius=20pt];
            }
        }
        &=
        \mbox{
            \tikz[baseline=-.6ex, scale=.08, yshift=-4cm]{
                \coordinate (P) at (0,0) {};
                \draw[webline] (P) -- (135:10);
                \draw[uwarc] (P) -- (45:10);
                \draw[dashed] (10,0) arc (0:180:10cm);
                \draw[very thick] (-10,0) -- (10,0);
                \draw[fill=black] (P) circle [radius=20pt];
            }
        },\quad
        \mbox{
            \tikz[baseline=-.6ex, scale=.08, yshift=-4cm]{
                \coordinate (P) at (0,0);
                \coordinate (R) at (45:7) {};
                \coordinate (L) at (135:7) {};
                \draw[webline] (L) -- (R);
                \draw[webline] (P) -- (L);
                \draw[webline] (R) -- (45:10);
                \draw[uwarc] (L) -- (135:10);
                \draw[uwarc] (P) -- (R);
                \draw[dashed] (10,0) arc (0:180:10cm);
                \draw[very thick] (-10,0) -- (10,0);
                \draw[fill=black] (P) circle [radius=20pt];
            }
        }
        =
        \mbox{
            \tikz[baseline=-.6ex, scale=.08, yshift=-4cm]{
                \coordinate (P) at (0,0) {};
                \draw[uwarc] (P) -- (135:10);
                \draw[webline] (P) -- (45:10);
                \draw[dashed] (10,0) arc (0:180:10cm);
                \draw[very thick] (-10,0) -- (10,0);
                \draw[fill=black] (P) circle [radius=20pt];
            }
        },\\
        \mbox{
            \tikz[baseline=-.6ex, scale=.08, yshift=-4cm]{
                \coordinate (P) at (0,0);
                \coordinate (R) at (45:7) {};
                \coordinate (L) at (135:7) {};
                \draw[uwarc] (L) -- (R);
                \draw[webline] (P) -- (L);
                \draw[webline] (R) -- (45:10);
                \draw[webline] (L) -- (135:10);
                \draw[webline] (P) -- (R);
                \draw[dashed] (10,0) arc (0:180:10cm);
                \draw[very thick] (-10,0) -- (10,0);
                \draw[fill=black] (P) circle [radius=20pt];
            }
        }
        &=\frac{1}{[2]}
        \mbox{
            \tikz[baseline=-.6ex, scale=.08, yshift=-4cm]{
                \coordinate (P) at (0,0) {};
                \draw[webline] (P) -- (135:10);
                \draw[webline] (P) -- (45:10);
                \draw[dashed] (10,0) arc (0:180:10cm);
                \draw[very thick] (-10,0) -- (10,0);
                \draw[fill=black] (P) circle [radius=20pt];
            }
        },\quad
        \mbox{
            \tikz[baseline=-.6ex, scale=.08, yshift=-4cm]{
                \coordinate (P) at (0,0);
                \coordinate (R) at (45:7) {};
                \coordinate (L) at (135:7) {};
                \draw[webline] (L) -- (R);
                \draw[uwarc] (P) -- (L);
                \draw[webline] (R) -- (45:10);
                \draw[webline] (L) -- (135:10);
                \draw[uwarc] (P) -- (R);
                \draw[dashed] (10,0) arc (0:180:10cm);
                \draw[very thick] (-10,0) -- (10,0);
                \draw[fill=black] (P) circle [radius=20pt];
            }
        }
        =0,\\
        \mbox{
            \tikz[baseline=-.6ex, scale=.08, yshift=-4cm]{
                \coordinate (P) at (0,0);
                \coordinate (C) at (90:7) {};
                \draw[webline] (P) to[out=north west, in=west] (C);
                \draw[webline] (P) to[out=north east, in=east] (C);
                \draw[uwarc] (C) -- (90:10);
                \draw[dashed] (10,0) arc (0:180:10cm);
                \draw[very thick] (-10,0) -- (10,0);
                \draw[fill=black] (P) circle [radius=20pt];
            }
        }
        &=0,\quad
        \mbox{
            \tikz[baseline=-.6ex, scale=.08, yshift=-4cm]{
                \coordinate (P) at (0,0);
                \coordinate (C) at (90:7) {};
                \draw[uwarc] (P) to[out=north west, in=west] (C);
                \draw[webline] (P) to[out=north east, in=east] (C);
                \draw[webline] (C) -- (90:10);
                \draw[dashed] (10,0) arc (0:180:10cm);
                \draw[very thick] (-10,0) -- (10,0);
                \draw[fill=black] (P) circle [radius=20pt];
            }
        }
        =0,\quad
        \mbox{
            \tikz[baseline=-.6ex, scale=.08, yshift=-4cm]{
                \coordinate (P) at (0,0);
                \coordinate (C) at (90:7) {};
                \draw[webline] (P) to[out=north west, in=west] (C);
                \draw[uwarc] (P) to[out=north east, in=east] (C);
                \draw[webline] (C) -- (90:10);
                \draw[dashed] (10,0) arc (0:180:10cm);
                \draw[very thick] (-10,0) -- (10,0);
                \draw[fill=black] (P) circle [radius=20pt];
            }
        }
        =0,\\
        \mbox{
            \tikz[baseline=-.6ex, scale=.08, yshift=-4cm]{
                \coordinate (P) at (0,0);
                \coordinate (C) at (90:7);
                \draw[webline] (P) to[out=north west, in=west] (C);
                \draw[webline] (P) to[out=north east, in=east] (C);
                \draw[dashed] (10,0) arc (0:180:10cm);
                \draw[very thick] (-10,0) -- (10,0);
                \draw[fill=black] (P) circle [radius=20pt];
            }
        }
        &=0,\quad
        \mbox{
            \tikz[baseline=-.6ex, scale=.08, yshift=-4cm]{
                \coordinate (P) at (0,0) {};
                \coordinate (C) at (90:7);
                \draw[uwarc] (P) to[out=north west, in=west] (C);
                \draw[uwarc] (P) to[out=north east, in=east] (C);
                \draw[dashed] (10,0) arc (0:180:10cm);
                \draw[very thick] (-10,0) -- (10,0);
                \draw[fill=black] (P) circle [radius=20pt];
            }
        }
        =0.
\end{align*}
    The multiplication of $\Skein{\fsp_4}{\bSigma}^q$ is defined by the superposition of their diagrams. We call a tangled $\fsp_4$-diagram in $\Skein{\fsp_4}{\bSigma}^q$ an \emph{$\fsp_4$-web}.
\end{dfn}

The above $\fsp_4$-skein relation implies the invariance of $\fsp_4$-webs under the Reidemeister moves. See \cite[Lemma~2.5]{IYsp4} for details.

We will discuss the graded version of the clasped $\fsp_4$-skein algebra in \cref{sec:reconstruction}.

\subsection{Clasped \texorpdfstring{$\fso_5$}{so(5)}-skein algebra}\label{sec:skein}
For the reader's convenience, we present here the defining relations of the \emph{clasped $\fso_5$-skein algebra} $\mathscr{S}_{\fso_5,\bSigma}^v$, which is the dual version of the skein algebra studied in \cite{IYsp4}. It is actually isomorphic to $\mathscr{S}_{\fsp_4,\bSigma}^v$, while only the diagram presentation is different. 
Let $\cR_v:=\bZ[v^{\pm 1/2},1/[2]]$ be the coefficient ring, where $[2]=[2]_v:=(v^2-v^{-2})/(v-v^{-1})$. 

The $\cR$-algebra $\mathscr{S}_{\fso_5,\bSigma}^v$ is spanned by tangled $\fso_5$-graph diagrams on $\bSigma$ defined similarly to the tangled $\fsp_4$-graph diagrams \cite[Definition 2.1]{IYsp4} but with $\fso_5$-colorings, modulo the $\fso_5$-skein relations obtained by interchanging the type~1/type~2 edges in \cite[Definitions 2.2 and 2.3]{IYsp4}. In particular, the set $\mathsf{Web}^\infty_{\fso_5,\bSigma}$ of unbounded $\fso_5$-webs is regarded as a subset of $\mathscr{S}_{\fso_5,\bSigma}^v$. By dualizing the argument in \cite[Section 2.2]{IYsp4}, we obtain an $\cR_v$-basis $\mathsf{BWeb}_{\fso_5,\bSigma}$ consisting of flat crossroad $\fso_5$-webs without elliptic faces, where the $\fso_5$-crossroad is introduced as
\begin{align}\label{eq:so5_crossroad}
    \mathord{
        \ \tikz[baseline=-.6ex, scale=.08, rotate=90]{
            \draw[dashed, fill=white] (0,0) circle [radius=7];
            \draw[wlined] (45:7) -- (-135:7);
            \draw[wlined] (135:7) -- (-45:7);
            \draw[fill=mygreen!50, thick] (0,0) circle [radius=30pt];
        }
    \ }
    :=
    \mathord{
        \ \tikz[baseline=-.6ex, scale=.08]{
            \draw[dashed, fill=white] (0,0) circle [radius=7];
            \draw[wlined] (45:7) -- (90:3);
            \draw[wlined] (135:7) -- (90:3);
            \draw[wlined] (225:7) -- (-90:3);
            \draw[wlined] (315:7) -- (-90:3);
            \draw[weblined] (90:3) -- (-90:3);
            }
    \ }
    -\frac{1}{[2]}
    \mathord{
        \ \tikz[baseline=-.6ex, scale=.08, rotate=90]{
            \draw[dashed, fill=white] (0,0) circle [radius=7];
            \draw[wlined] (-45:7) to[out=north west, in=south] (3,0) to[out=north, in=south west] (45:7);
            \draw[wlined] (-135:7) to[out=north east, in=south] (-3,0) to[out=north, in=south east] (135:7);
        }
    \ }
    =
    \mathord{
        \ \tikz[baseline=-.6ex, scale=.08, rotate=90]{
            \draw[dashed, fill=white] (0,0) circle [radius=7];
            \draw[wlined] (45:7) -- (90:3);
            \draw[wlined] (135:7) -- (90:3);
            \draw[wlined] (225:7) -- (-90:3);
            \draw[wlined] (315:7) -- (-90:3);
            \draw[weblined] (90:3) -- (-90:3);
            }
    \ }
    -\frac{1}{[2]}
    \mathord{
        \ \tikz[baseline=-.6ex, scale=.08]{
            \draw[dashed, fill=white] (0,0) circle [radius=7];
            \draw[wlined] (-45:7) to[out=north west, in=south] (3,0) to[out=north, in=south west] (45:7);
            \draw[wlined] (-135:7) to[out=north east, in=south] (-3,0) to[out=north, in=south east] (135:7);
        }
    \ }.
\end{align}
Then the product of two unbounded $\fso_5$-webs can be written as a (non-unique) $\cR$-linear combination of unbounded $\fso_5$-webs by first expanding in $\mathsf{BWeb}_{\fso_5,\bSigma}$ and then deleting the resulting $\fso_5$-crossroads using \eqref{eq:so5_crossroad}.

\subsection{Flat realization}\label{sec:bangle}
We discuss a relation between a basis of the $\fsp_4$-skein algebra and the bounded $\fsp_4$-laminations.
\begin{dfn}[endpoint-shift]\label{dfn:shift}
For any $D\in\diag_{\fsp_4,\bSigma}$, we define the corresponding $\fsp_4$-web $\diagshift(D)\in\Skein{\mathfrak{sp}_4}{\bSigma}^{q}$ as follows:
For each boundary interval $I\in\bB$, slide univalent vertices of $D$ on $I$ in the positive direction until they hit a special point, and collect them into a simultaneous crossing at the special point.
Then we get a map $\diagshift\colon\diag_{\fsp_4,\bSigma}\to\Skein{\mathfrak{sp}_4}{\bSigma}^{q}$, $D \mapsto \diagshift(D)$. 
We denote the image $\diagshift(\Bdiag_{\fsp_4,\bSigma})$ by $\Bweb_{\fsp_4,\bSigma}$, and call its element a \emph{basis web}.
\end{dfn}

\begin{thm}[{\cite[Theorem~2.15]{IYsp4}}]\label{thm:basis-web}
    $\Bweb_{\fsp_4,\bSigma}$ is an $\mathcal{R}$-basis of $\Skein{\mathfrak{sp}_4}{\bSigma}^{q}$.
\end{thm}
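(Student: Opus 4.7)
The plan is to prove spanning and linear independence separately, following the strategy that goes back to Kuperberg in the closed/planar case and was adapted to the $\fsl_3$ marked surface case by Frohman--Sikora.

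For spanning, I would start with an arbitrary tangled $\fsp_4$-diagram $T$ on $\bSigma$ and apply the defining relations in three phases. First, use the braiding/crossing-resolution relations of \cref{def:skein-rel} to rewrite each internal crossing as an $\mathcal{R}$-linear combination of crossingless diagrams; ordering crossings by a measure that strictly decreases under each resolution (e.g., number of crossings, then total edge count, then number of trivalent vertices) guarantees termination. Second, use the clasped boundary relations to slide every univalent vertex along its boundary interval in the positive direction until all endpoints on a given interval meet at the adjacent special point as a simultaneous crossing; this is precisely the effect of $\diagshift$ applied to some bounded diagram. Third, remove elliptic faces using the $\fsp_4$-skein relations: $0$-gons and $1$-gons produce scalars or vanish, bigons become parallel strands with scalar coefficients, triangles vanish outright; bordered $H$-faces and elliptic bordered faces are eliminated via the clasped moves of \cref{def:skein-rel}. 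Iterating produces a linear combination of shifted reduced bounded diagrams, i.e., elements of $\Bweb_{\fsp_4,\bSigma}$.

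For linear independence, I would fix an ideal triangulation $\tri$ of $\bSigma$ and introduce a $\bZ_{\geq 0}$-filtration on $\Skein{\fsp_4}{\bSigma}^{q}$ by the total weight of transverse intersections with the interior edges of $\tri$, where type~$1$ and type~$2$ edges are given appropriate integer weights so that every skein relation is non-increasing. Passing to the associated graded algebra, each basis web $\diagshift(D)$ decomposes into a product of local pieces supported on the triangles of $\tri$, and ladder-equivalence and simultaneous-crossing ambiguities become genuine equalities in the graded piece (as foreshadowed in the introduction). Inside each triangle the reduced bounded $\fsp_4$-diagrams are classified by the eight non-negative integer parameters $(k_i,l_i,n_1,n_2)$ of \cref{lem:triangle_blad} and these parameters are read off from boundary data, so distinct basis elements remain distinct locally. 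A hypothetical non-trivial linear relation among basis webs, restricted to its top filtration degree and projected triangle by triangle, would contradict this classification together with the biangle rigidity in \cref{lem:biangle_blad}.

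The main obstacle is the confluence of the reduction system in phase three of the spanning argument: compared to the $\fsl_3$ setting, one has two edge types, both trivalent vertices and $4$-valent crossroads, and the additional ladder-resolution equivalence. One must verify local confluence at a substantially larger list of critical pairs (ellptic face meeting a crossroad; elliptic bordered face meeting a clasped endpoint; overlapping $H$-faces of mixed types), then pair it with termination in a well-chosen multi-graded lexicographic order on (crossings, type~$2$ edges, type~$1$ edges, trivalent vertices). This is the technical heart of the theorem; once in place, Newman's lemma delivers a canonical normal form and hence both spanning and the well-definedness of the graded pieces used for independence.
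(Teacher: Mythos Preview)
The paper does not prove this theorem here---it is quoted from \cite[Theorem~2.15]{IYsp4}---so there is no in-paper argument to compare against directly. Your spanning strategy via a terminating reduction system is the approach taken there, and the confluence check you flag as the ``main obstacle'' is indeed the technical heart; it is \cite[Lemma~2.13]{IYsp4}, which the present paper invokes just before \cref{lem:equivalence-generator}.

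Your treatment of linear independence, however, is both more elaborate than necessary and contains a gap. Once confluence and termination are established for the oriented skein relations, Bergman's diamond lemma (the linear form of Newman's lemma you already cite) immediately yields that the irreducible diagrams---precisely $\Bweb_{\fsp_4,\bSigma}$---form an $\mathcal{R}$-basis of the quotient; no separate independence argument is required. Your proposed filtration-and-cut argument, by contrast, rests on a ``projection triangle by triangle'' that is not a priori available: there is no algebra map from $\Skein{\fsp_4}{\bSigma}^{q}$ to a tensor product of triangle skein modules without stated-skein or splitting machinery, and the graded-algebra framework of \cref{sec:reconstruction} in this paper is built \emph{on top of} \cref{thm:basis-web} (the filtration in \cref{dfn:filtration} is defined using $\Bweb_{\fsp_4,\bSigma}$), so appealing to it here would be circular. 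You can simply drop the second half of your plan and let the diamond lemma finish the job.
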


As mentioned in \cref{sec:lamination}, any element in $\Blad_{\fsp_4,\bSigma}$ has a unique reduced representative in $\Bdiag_{\fsp_4,\bSigma}$ up to loop and arc parallel-moves (\cref{eq:loop_parallel-move,eq:arc_parallel-move}).
Moreover, for a reduced bounded $\fsp_4$-diagram $D\in\Bdiag_{\fsp_4,\bSigma}$, $\diagshift(D)\in\Skein{\fsp_4}{\bSigma}^{q}$ is invariant under the loop and arc parallel-moves of $D$ by the clasped $\fsp_4$-skein relations and the Reidemeister moves. 
Hence, $\diagshift\colon\Bdiag_{\fsp_4,\bSigma}\to\Bweb_{\fsp_4,\bSigma}$ descends to $\ladshift\colon\Blad_{\fsp_4,\bSigma}\to\Bweb_{\fsp_4,\bSigma}$.
\begin{dfn}[flat realization]\label{dfn:bang-realization}
We denote the above bijection between bounded $\fsp_4$-laminations and a basis of $\fsp_4$-skein algebra by
\begin{align*}
\mathrm{Flat}\colon\Blad_{\fsp_4,\bSigma}\xrightarrow{\ladshift} \Bweb_{\fsp_4,\bSigma}
\end{align*}
and call it the \emph{flat realization} of the integral $\fsp_4$-laminations with potential condition; see also \cref{lem:lamination_Blad}.
\end{dfn}
\begin{rem}\label{rem:positive-basis}
The construction of linear bases for (quantum) cluster algebras is an important aspect of the study of the positivity conjecture. D.~Thurston~\cite{Thu14} proposed three types of bases for the $\mathfrak{sl}_2$-skein algebra of a surface, called the bangle basis, the band basis, and the bracelet basis. 
These bases are introduced into the cluster algebras of the surface in the $\mathfrak{sl}_2$-case: see \cite{Qin21} for a concise review. Mandel and Qin~\cite{MQ} showed that the bracelet basis coincides with the theta basis of Gross-Hacking-Keel-Kontsevich~\cite{GHKK}, and thus satisfies the strong positivity. 
In the $\fsp_4$-case, our $\mathrm{Flat}$ is seemingly an $\fsp_4$-analogue of the bangle basis. 
We still do not known an $\fsp_4$-analogue of the bracelet basis, but believe that it should be realized in a similar way by incorporating an appropriate Chebyshev polynomial.
\end{rem}

\section{Intersection pairing between \texorpdfstring{$\fsp_4$-}{sp(4)-} and \texorpdfstring{$\fso_5$-}{so(5)-}webs}\label{sec:pairing}

\subsection{Intersection pairing}

Recall $\Blad_{\fsp_4,\bSigma}$ and $\Blad_{\fso_5,\bSigma}^\infty$ in Definitions \ref{def:transverse}.
We are going to define the \emph{intersection pairing}
\begin{align*}
    \bi_\bSigma: \Blad_{\fsp_4,\bSigma} \times \Blad_{\fso_5,\bSigma}^\infty \to \frac 1 2 \bZ_{\geq 0}.
\end{align*}


\begin{dfn}
Let $[W] \in \Blad_{\fsp_4,\bSigma}$ and $[V] \in \Blad^\infty_{\fso_5,\bSigma}$. Take representatives $W$, $V$ of $[W]$, $[V]$, respectively, so that they have no crossroads ({\em i.e.} only have rungs) and intersect transversely to each other. For each transverse intersection point $p \in W \cap V$, we define the \emph{intersection index} $\varepsilon_p(W,V) \in \{1/2,1\}$ as follows:
\begin{align*}
\begin{tikzpicture}[scale=0.8]\draw[dashed] (0,0) circle(1cm);
\draw[weblined] (0,-1) -- (0,1);
\draw[webline] (-1,0) -- (1,0);
\node at (0,-1.5) {$\varepsilon_p(W,V)=1$};
\node[red] at (-1.4,0) {$W$};
\node[mygreen] at (0,1.4) {$V$};
\begin{scope}[xshift=5cm]
\draw[dashed] (0,0) circle(1cm);
\draw[wlined] (0,-1) -- (0,1);
\draw[webline] (-1,0) -- (1,0);
\node at (0,-1.5) {$\varepsilon_p(W,V)=1/2$};
\node[red] at (-1.4,0) {$W$};
\node[mygreen] at (0,1.4) {$V$};
\end{scope}
\begin{scope}[yshift=-4cm]
\draw[dashed] (0,0) circle(1cm);
\draw[weblined] (0,-1) -- (0,1);
\draw[wline] (-1,0) -- (1,0);
\node at (0,-1.5) {$\varepsilon_p(W,V)=1$};
\node[red] at (-1.4,0) {$W$};
\node[mygreen] at (0,1.4) {$V$};
\end{scope}
\begin{scope}[xshift=5cm,yshift=-4cm]
\draw[dashed] (0,0) circle(1cm);
\draw[wlined] (0,-1) -- (0,1);
\draw[wline] (-1,0) -- (1,0);
\node at (0,-1.5) {$\varepsilon_p(W,V)=1$};
\node[red] at (-1.4,0) {$W$};
\node[mygreen] at (0,1.4) {$V$};
\end{scope}
\end{tikzpicture}
\end{align*}
Then we define the \emph{intersection number} of the ladder-equivalence classes $[W]$ and $[V]$ to be
\begin{align}
    \bi_\bSigma([W],[V]):= \min \sum_{p \in W \cap V} \varepsilon_p(W,V) \in \frac{1}{2}\bZ_{\geq 0},
\end{align}
where the minimum is taken over the transverse representatives $(W,V)$ of $([W],[V])$ with no crossroads. The resulting map
\begin{align*}
    \bi_\bSigma: \Blad_{\fsp_4,\bSigma} \times \Blad^\infty_{\fso_5,\bSigma} \to \frac 1 2 \bZ_{\geq 0},
\end{align*}
is called the \emph{intersection pairng}. 
\end{dfn}

\begin{rem}
    For any bounded $\fsp_4$-diagram $V$ and unbounded $\fso_5$-diagram $W$, any local move that reduces the geometric intersection number between $V$ and $W$ also strictly decreases $\sum_{p\in W\cap V}\varepsilon_{p}(W,V)$, except for the following two cases:
    \begin{align*}
        \mbox{
        \tikz[baseline=-.6ex, scale=.1]{
            \draw[dashed, fill=white] (0,0) circle [radius=6];
            \draw[webline] (60:6) -- (-3,0);
            \draw[webline] (-60:6) -- (-3,0);
            \draw[wline] (180:6) -- (-3,0);
            \draw[wlined] (90:6) -- (-90:6);
            \node at (90:6) [above, mygreen]{\scriptsize $V$};
            \node at (180:6) [left, red]{\scriptsize $W$};
        }
        }
        \scalebox{1.5}[1]{$\rightsquigarrow$}
        \mbox{
        \tikz[baseline=-.6ex, scale=.1]{
            \draw[dashed, fill=white] (0,0) circle [radius=6];
            \draw[webline] (60:6) -- (2,0);
            \draw[webline] (-60:6) -- (2,0);
            \draw[wline] (180:6) -- (2,0);
            \draw[wlined] (90:6) -- (-90:6);
            \node at (90:6) [above, mygreen]{\scriptsize $V$};
            \node at (180:6) [left, red]{\scriptsize $W$};
        }
        },\quad         
        \mbox{
        \tikz[baseline=-.6ex, scale=.1]{
            \draw[dashed, fill=white] (0,0) circle [radius=6];
            \draw[wlined] (60:6) -- (-3,0);
            \draw[wlined] (-60:6) -- (-3,0);
            \draw[weblined] (180:6) -- (-3,0);
            \draw[webline] (90:6) -- (-90:6);
            \node at (90:6) [above, red]{\scriptsize $W$};
            \node at (180:6) [left, mygreen]{\scriptsize $V$};
        }
        }
        \scalebox{1.5}[1]{$\rightsquigarrow$}
        \mbox{
        \tikz[baseline=-.6ex, scale=.1]{
            \draw[dashed, fill=white] (0,0) circle [radius=6];
            \draw[wlined] (60:6) -- (2,0);
            \draw[wlined] (-60:6) -- (2,0);
            \draw[weblined] (180:6) -- (2,0);
            \draw[webline] (90:6) -- (-90:6);
            \node at (90:6) [above, red]{\scriptsize $W$};
            \node at (180:6) [left, mygreen]{\scriptsize $V$};
        }
        }.
    \end{align*}
    These local moves preserve the value of $\sum_{p\in W\cap V}\varepsilon_{p}(W,V)$.
\end{rem}

\begin{rem}
Recall that the Cartan matrix $C(\fso_5)=(C_{st})_{s,t=1,2}$ of type $B_2$ is given by
\begin{align*}
    C(\fso_5) = \begin{pmatrix}
    2 & -1 \\
    -2 & 2
    \end{pmatrix},
\end{align*}
whose inverse matrix $C(\fso_5)^{-1}=(C^{st})_{s,t=1,2}$ is 
\begin{align*}
    C(\fso_5)^{-1} = \begin{pmatrix}
    1 & 1/2 \\
    1 & 1
    \end{pmatrix}.
\end{align*}
Observe that $\varepsilon_p(W,V)=C^{st}$ when $W$ and $V$ is colored by $s$ and $t$ at $p$, respectively. It is designed so that it is compatible with the highest degree of the coordinate expression of trace functions on the moduli space $\P_{PSO_5,\bSigma}$.
\end{rem}

\subsection{Intersection coordinate associated with a web cluster}
The notions of \emph{elementary $\fso_5$-webs} and \emph{$\fso_5$-web clusters} are defined similarly to \cite[Definitions 2.16 and 2.22]{IYsp4}. 
Elementary webs are represented by unbounded $\fso_5$-diagrams on $\bSigma$, and hence determine a ladder-equivalence class in $\Blad_{\fso_5,\Sigma}^\infty$. By abuse of terminology, we also call the latter class an elemenatary $\fso_5$-web, though it is not an element of the skein algebra. Similarly for the subset determined by an $\fso_5$-web cluster.

Typical web clusters are associated with decorated triangulations, which we describe in a detail. 
We refer the reader to \cref{sec:cluster_notation} for a detail on the tropical cluster structure. 
Recall that a decorated triangulation is a triple $\bD=(\tri,m_\tri,\bs_\tri)$, where
\begin{itemize}
    \item $\tri$ is an ideal triangulation of $\bSigma$;
    \item $m_\tri:t(\tri) \to \bM$ is a choice of a vertex of each triangle;
    \item $\bs_\tri:t(\tri) \to \{+,-\}$ is a choice of a sign at each triangle. 
\end{itemize}
Given a decorated triangulation $\bD$, we draw the quiver $Q_{m_\tri(T),\bs_\tri(T)}$ on each triangle $T \in t(\tri)$ as shown in the left of \cref{fig:labeling_triangle_case}, and amalgamate them to obtain a quiver $Q^{\bD}$ drawn on $\Sigma$. These quivers $Q^{\bD}$ (or the corresponding exchange matrices $\ve^{\bD}$) are mutation-equivalent to each other (\cref{thm:classical_mutation_equivalence}), and hence define a canonical mutation class $\sfs(\fsp_4,\bSigma)$ of exchange matrices. The associated $\fso_5$-web cluster $\cC_\bD=\{V_i\}$ is defined to be the collection of webs shown in the right of \cref{fig:labeling_triangle_case} over all triangles. 

\begin{rem}
Recall \cref{rem:reduced_word}. 
\begin{enumerate}
    \item The assignment of $\fso_5$-web clusters is compatible with that of $\fsp_4$-web clusters in \cite[Figure 5.1]{IYsp4} via the identification explained in \cref{rem:reduced_word}. 
    \item We expect that the cluster variables correspond to tree-type elementary webs \cite[Conjecture 4]{IYsp4} in the case of $\bM_\circ=\emptyset$. We do not know how to assign an $\fso_5$-web cluster to seeds in $\sfs(\fso_5,\bSigma)$, and $\fso_5$-web clusters are possibly more general than the seeds in  $\sfs(\fso_5,\bSigma)$.
\end{enumerate}
\end{rem}



\begin{figure}[ht]
\begin{tikzpicture}[scale=0.9]
\foreach \i in {90,210,330}
    {
    \markedpt{\i:3};
    \draw[blue] (\i:3) -- (\i+120:3);
    }
\quiverplusC{90:3}{210:3}{330:3};
\node at (0,-3) {$(\tri,m,+)$};
{\color{mygreen}
\uniarrow{x122}{x121}{dashed,shorten >=4pt, shorten <=2pt,bend left=20}
\uniarrow{x311}{x312}{dashed,shorten >=2pt, shorten <=4pt,bend right=20}
}
\uniarrow{x232}{x231}{myblue,dashed,shorten >=2pt, shorten <=4pt,bend left=20}
\draw (x121) node[above left]{$4$};
\draw (x122) node[above left]{$3$};
\draw (x231) node[below]{$5$};
\draw (x232) node[below]{$6$};
\draw (x311) node[above right]{$7$};
\draw (x312) node[above right]{$8$};
\draw (G1) node[below right]{$1$};
\draw (G2) node[above=0.2em]{$2$};

{\begin{scope}[xshift=8cm,yshift=0.5cm]
\begin{scope}
\draw[weblined] (0,0) -- (210:1);
	\draw[wlined] (0,0) -- (90:1);
	\draw[wlined] (0,0) -- (-30:1);
	\foreach \i in {90,210,330}
	{
	\markedpt{\i:1};\draw[blue] (\i:1) -- (\i+120:1);
	}
    \node at (0,-1) {$V_1$};
\end{scope}
\begin{scope}[yshift=2.5cm]
	\draw[weblined] (90:0.5) -- (90:1);
	\draw[weblined] (210:0.5) -- (210:1);
	\draw[wlined] (90:0.5) -- (210:0.5);
	\draw[wlined] (90:0.5) -- (-30:1); 
	\draw[wlined] (210:0.5) -- (-30:1);
	\foreach \i in {90,210,330}
	{
	\markedpt{\i:1};\draw[blue] (\i:1) -- (\i+120:1);
	}
	\node at (0,-1) {$V_2$};
\end{scope}
	
\begin{scope}[xshift=-2.5cm]
	\draw[wlined] (210:1) to[bend right=15] (90:1);
	\foreach \i in {90,210,330}
	{
	\markedpt{\i:1};\draw[blue] (\i:1) -- (\i+120:1);
	}
    \node at (0,-1) {$V_3$};
\end{scope}
	
\begin{scope}[xshift=-2.5cm,yshift=2.5cm]
	\draw[weblined] (210:1) to[bend right=15] (90:1);
	\foreach \i in {90,210,330}
	{
	\markedpt{\i:1};\draw[blue] (\i:1) -- (\i+120:1);
	}
    \node at (0,-1) {$V_4$};
\end{scope}
	
\begin{scope}[xshift=2.5cm]
	\draw[wlined] (90:1) to[bend right=15] (-30:1);
	\foreach \i in {90,210,330}
	{
	\markedpt{\i:1};\draw[blue] (\i:1) -- (\i+120:1);
	}
    \node at (0,-1) {$V_7$};
\end{scope}
	
\begin{scope}[xshift=2.5cm,yshift=2.5cm]
	\draw[weblined] (90:1) to[bend right=15] (-30:1);
	\foreach \i in {90,210,330}
	{
	\markedpt{\i:1};\draw[blue] (\i:1) -- (\i+120:1);
	}
    \node at (0,-1) {$V_8$};
\end{scope}
	
\begin{scope}[xshift=1.25cm,yshift=-2.5cm]
	\draw[weblined] (-30:1) to[bend right=15] (210:1);
	\foreach \i in {90,210,330}
	{
	\markedpt{\i:1};\draw[blue] (\i:1) -- (\i+120:1);
	}
    \node at (0,-1) {$V_6$};
\end{scope}
	
\begin{scope}[xshift=-1.25cm,yshift=-2.5cm]
	\draw[wlined] (-30:1) to[bend right=15] (210:1);
	\foreach \i in {90,210,330}
	{
	\markedpt{\i:1};\draw[blue] (\i:1) -- (\i+120:1);
	}
    \node at (0,-1) {$V_5$};
	\end{scope}
\end{scope}}
{\begin{scope}[yshift=-8cm]
\foreach \i in {90,210,330}
    {
    \markedpt{\i:3};
    \draw[blue] (\i:3) -- (\i+120:3);
    }
\quiverminusC{90:3}{210:3}{330:3};
\node at (0,-3) {$(\tri,m,-)$};
{\color{mygreen}
\uniarrow{x122}{x121}{dashed,shorten >=4pt, shorten <=2pt,bend left=20}
\uniarrow{x311}{x312}{dashed,shorten >=2pt, shorten <=4pt,bend right=20}
}
\uniarrow{x232}{x231}{myblue,dashed,shorten >=2pt, shorten <=4pt,bend left=20}
\draw (x121) node[above left]{$4$};
\draw (x122) node[above left]{$3$};
\draw (x231) node[below]{$5$};
\draw (x232) node[below]{$6$};
\draw (x311) node[above right]{$7$};
\draw (x312) node[above right]{$8$};
\draw (G1) node[below left]{$1$};
\draw (G2) node[above=0.2em]{$2$};

{\begin{scope}[xshift=8cm,yshift=0.5cm]
\begin{scope}
\draw[weblined] (0,0) -- (-30:1);
	\draw[wlined] (0,0) -- (90:1);
	\draw[wlined] (0,0) -- (210:1);
	\foreach \i in {90,210,330}
	{
	\markedpt{\i:1};\draw[blue] (\i:1) -- (\i+120:1);
	}
    \node at (0,-1) {$V_1$};
\end{scope}
\begin{scope}[yshift=2.5cm]
	\draw[weblined] (90:0.5) -- (90:1);
	\draw[weblined] (-30:0.5) -- (-30:1);
	\draw[wlined] (90:0.5) -- (-30:0.5);
	\draw[wlined] (90:0.5) -- (210:1); 
	\draw[wlined] (-30:0.5) -- (210:1);
	\foreach \i in {90,210,330}
	{
	\markedpt{\i:1};\draw[blue] (\i:1) -- (\i+120:1);
	}
	\node at (0,-1) {$V_2$};
\end{scope}
	
\begin{scope}[xshift=-2.5cm]
	\draw[wlined] (210:1) to[bend right=15] (90:1);
	\foreach \i in {90,210,330}
	{
	\markedpt{\i:1};\draw[blue] (\i:1) -- (\i+120:1);
	}
    \node at (0,-1) {$V_3$};
\end{scope}
	
\begin{scope}[xshift=-2.5cm,yshift=2.5cm]
	\draw[weblined] (210:1) to[bend right=15] (90:1);
	\foreach \i in {90,210,330}
	{
	\markedpt{\i:1};\draw[blue] (\i:1) -- (\i+120:1);
	}
    \node at (0,-1) {$V_4$};
\end{scope}
	
\begin{scope}[xshift=2.5cm]
	\draw[wlined] (90:1) to[bend right=15] (-30:1);
	\foreach \i in {90,210,330}
	{
	\markedpt{\i:1};\draw[blue] (\i:1) -- (\i+120:1);
	}
    \node at (0,-1) {$V_7$};
\end{scope}
	
\begin{scope}[xshift=2.5cm,yshift=2.5cm]
	\draw[weblined] (90:1) to[bend right=15] (-30:1);
	\foreach \i in {90,210,330}
	{
	\markedpt{\i:1};\draw[blue] (\i:1) -- (\i+120:1);
	}
    \node at (0,-1) {$V_8$};
\end{scope}
	
\begin{scope}[xshift=1.25cm,yshift=-2.5cm]
	\draw[weblined] (-30:1) to[bend right=15] (210:1);
	\foreach \i in {90,210,330}
	{
	\markedpt{\i:1};\draw[blue] (\i:1) -- (\i+120:1);
	}
    \node at (0,-1) {$V_6$};
\end{scope}
	
\begin{scope}[xshift=-1.25cm,yshift=-2.5cm]
	\draw[wlined] (-30:1) to[bend right=15] (210:1);
	\foreach \i in {90,210,330}
	{
	\markedpt{\i:1};\draw[blue] (\i:1) -- (\i+120:1);
	}
    \node at (0,-1) {$V_5$};
	\end{scope}
\end{scope}}
\end{scope}}
\end{tikzpicture}
    \caption{Assginment of $\fso_5$-elementary webs corresponding to the decorated triangulations.}
    \label{fig:labeling_triangle_case}
\end{figure}
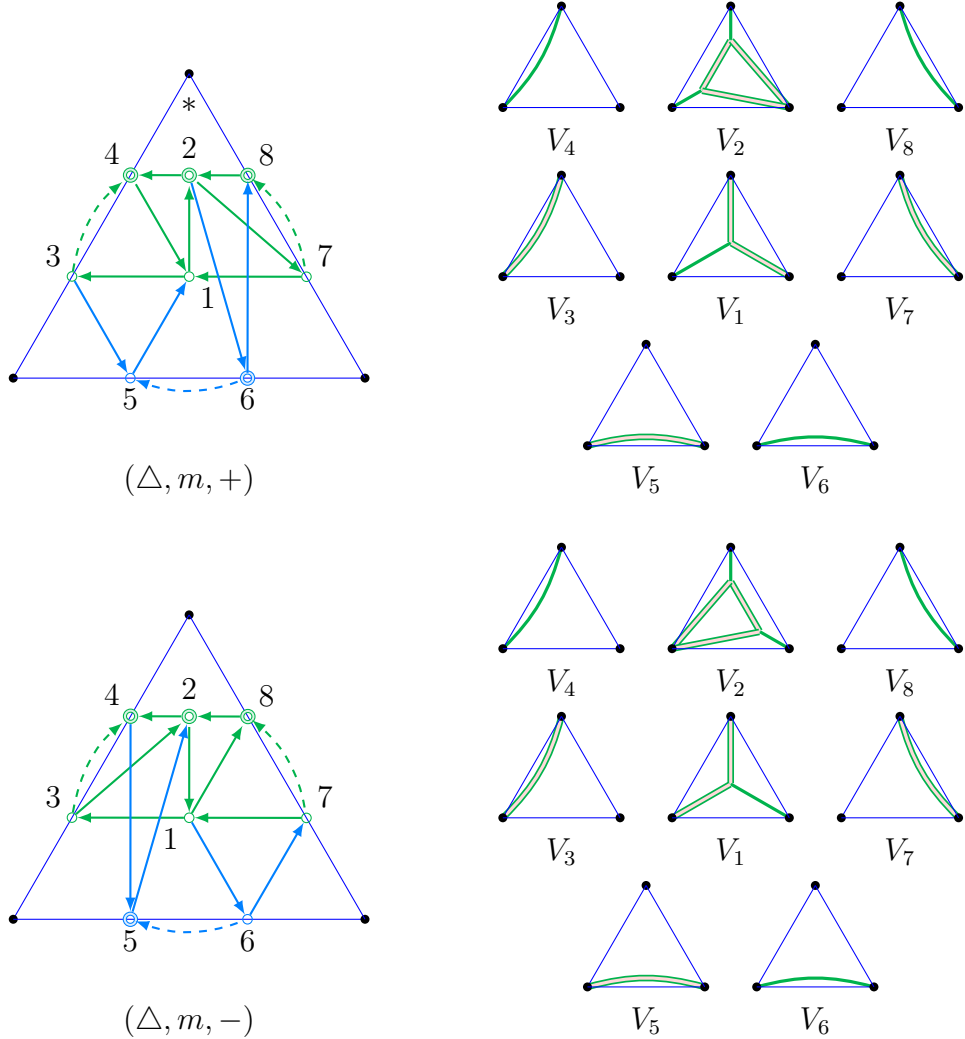

Given an $\fso_5$-web cluster $\cC=\{V_i\}_{i \in I}$ in the skein algebra $\mathscr{S}^q_{\fso_5,\bSigma}$, define the associated \emph{intersection coordinate system} to be
\begin{align}\label{eq:coord_web_cluster}
    \bsfa^\cC=(\sfa_i^\cC)_{i \in I} :=(\bi_\bSigma(-,V_i))_{i \in I}: \Blad_{\fsp_4,\bSigma} \to \bigg(\frac 1 2 \bZ_{\geq 0}\bigg)^{I}.
\end{align}
Write $\bsfa^{\bD}:=\bsfa^{\cC_{\bD}}$ for the web cluster $\cC_\bD$ associated with a decorated triangulation $\bD$.

Let 
\begin{align}\label{eq:coord_vect}
    \vect_{\bSigma}^{\cC}:= \bsfa^{\cC}(\Blad_{\fsp_4,\bSigma}) \subset \bigg(\frac 1 2 \bZ_{\geq 0}\bigg)^I
\end{align}
denote the image under the coordinate map. It is an intersection of a cone in $\bR^I$ with $\bigg(\frac 1 2 \bZ_{\geq 0}\bigg)^I$.



\subsection{Extension to the space of bounded \texorpdfstring{$\fsp_4$}{sp(4)}-laminations}
Let us upgrade the intersection pairing to a map
\begin{align}\label{eq:pairing_lamination}
    \bi_\bSigma: \cL^a(\bSigma,\bQ) \times \Blad_{\fso_5,\Sigma}^\infty \to \bQ.
\end{align}
For this, let $L \in \cL^a(\bSigma,\bZ)$ be a bounded integral $\fsp_4$-lamination, which is a weighted union $\{([W_\alpha],u_\alpha)\}_\alpha$ of components $[W_\alpha] \in \Blad_{\fsp_4,\bSigma}$ with measures $u_\alpha \in \bQ$. 
Then for any $[V] \in \Blad_{\fso_5,\Sigma}^\infty$, define
\begin{align*}
    \bi_\bSigma(L,[V]) := \sum_\alpha u_\alpha \bi_\bSigma([W_\alpha],[V]).
\end{align*}
One can easily verify that $\bi_\bSigma(L,[V])$ is invariant under the cabling operations (L1) and (L2) and hence well-defined. Moreover, it is equivariant under the rescaling $\bQ_{>0}$-action: $\bi_\bSigma(u\cdot L,[V])=u\cdot \bi_\bSigma(L,[V])$. 

\begin{dfn}[Intersection coordinates]\label{def:intersection_coordinate}
We extend the intersection coordinate system \eqref{eq:coord_web_cluster} associated with an $\fso_5$-web cluster $\cC=\{V_i\}_{i \in I}$ to be a coordinate system
\begin{align*}
    \bsfa^\cC=(\sfa_i^\cC)_{i \in I}=(\bi_\bSigma(-,V_i))_{i \in I}: \cL^a
    (\bSigma,\bQ) \to \bQ^{I}.
\end{align*}

\end{dfn}



The pair $(\ve^{\bD},\bsfa^{\bD})$ with $\bsfa^{\bD}:=\bsfa^{\cC_\bD}$ is regarded as a tropical seed on $X=\cL^a(\bSigma,\bQ)$ (see \cref{sec:cluster_notation}). 
Here is the main theorem of this paper:

\begin{thm}[Proof in \cref{subsec:reconstruction}]\label{thm:bijection}
For any decorated triangulation $\bD=(\tri,m_\tri,\bs_\tri)$ of $\bSigma$, the coordinate system 
\begin{align*}
    \bsfa^{\bD}: \cL^a
    (\bSigma,\bQ) \xrightarrow{\sim} \bQ^{I(\bD)}
\end{align*}
gives a bijection. 
\end{thm}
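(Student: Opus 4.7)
The plan is to reduce bijectivity of $\bsfa^{\bD}$ to the triangle case via cut-and-paste along the ideal triangulation $\tri$, and then to resolve the gluing ambiguity by working in the graded $\fsp_4$-skein algebra.

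\textbf{Triangle case.} I first treat $\bSigma=T$, a single triangle. By \cref{lem:triangle_blad}, every class in $\Blad_{\fsp_4,T}$ has a unique reduced representative described by eight non-negative integers $(k_1,k_2,k_3,l_1,l_2,l_3,n_1,n_2)$ (three pairs of corner-arc multiplicities plus two interior tripod weights). One directly computes the intersection numbers $\bi_T(W,V_i)$ of such a diagram $W$ against each $\fso_5$-web $V_i$ of the cluster $\cC_\bD$ shown in \cref{fig:labeling_triangle_case}. The six decorated triangulations of $T$ give rise to six linear charts tiling the tropical $\X$-plane, as illustrated on the right of \cref{fig:intro_hexagon}. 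The sign-coherence property (to be verified in \cref{subsec:reconstruction}) guarantees that on each chart $\bsfa^{\bD}$ is linear with invertible integer matrix; extending by the rescaling $\bQ_{>0}$-action and the peripheral $(\bQ^2)^{\bM}$-action of \cref{subsec:peripheral_action} then yields the bijection $\bsfa^{\bD}\colon\cL^a(T,\bQ)\xrightarrow{\sim}\bQ^{I(\bD)}$.

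\textbf{Global reduction.} For general $\bSigma$, put $L\in\cL^a(\bSigma,\bQ)$ in minimal position with respect to $\tri$. Each $V_i\in\cC_\bD$ is supported in a single triangle $T(V_i)$, so $\bi_\bSigma(L,V_i)=\bi_{T(V_i)}(L\cap T(V_i),V_i)$ and $\bsfa^{\bD}(L)$ decomposes as a disjoint union of local triangle coordinates. Injectivity then follows from the triangle case applied triangle-by-triangle. For surjectivity, given $\bsfa\in\bQ^{I(\bD)}$, the triangle case produces a reduced piece on each triangle; I then verify that two pieces meeting at an interior edge $e\in e_{\interior}(\tri)$ automatically have matching boundary data by expressing the numbers of type~$1$ and type~$2$ endpoints on $e$ as linear combinations of the corner-arc coordinates at the two endpoints of $e$. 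These expressions coincide from both sides, since the relevant $V_i$'s are the very same corner-arc webs in $\cC_\bD$ seen from either adjacent triangle.

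\textbf{Gluing and the main obstacle.} The hard part is to show that the triangle-by-triangle reconstruction descends to a well-defined class in $\Blad_{\fsp_4,\bSigma}$: a priori, different choices of how to reconnect matching endpoints across interior edges produce $\fsp_4$-diagrams related by the moves \cref{eq:loop_parallel-move}--\cref{eq:ladder-resolution} and by flat braid deformations. To identify them in $\Blad_{\fsp_4,\bSigma}$, I pass to the graded $\fsp_4$-skein algebra associated with the filtration along $\tri$. The graded skein relations illustrated in the introduction ensure that birth--death of bigons and the ``jumping over'' a double point strictly lower the grading, so the top-degree representative of any reconnection is unambiguous; combined with \cref{lem:biangle_blad} in a biangle neighborhood of each interior edge, this identifies all reconnected diagrams as a single ladder-equivalence class. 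Finally, the rational extension with possibly negative peripheral measures is handled by the $(\bQ^2)^{\bM}$-action of \cref{subsec:peripheral_action}, which shifts $\pot_m^s$ freely and reduces the problem to integral laminations satisfying the potential condition, i.e., to the image of $\Blad_{\fsp_4,\bSigma}$ under \cref{lem:lamination_Blad}.
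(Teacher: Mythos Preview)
Your overall strategy matches the paper's: reduce to the triangle case, reconstruct pieces from coordinates, glue across interior edges, and control the gluing via the graded skein algebra. The triangle analysis and the matching of boundary data across edges are essentially as you describe.

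However, you misplace the main difficulty. The sentence ``Injectivity then follows from the triangle case applied triangle-by-triangle'' is exactly where the real work hides: knowing that $L|_T$ and $L'|_T$ agree as ladder-classes in each triangle $T$ does \emph{not} immediately give $L=L'$ globally, since two global diagrams with identical triangle pieces may differ by how strands are reconnected across interior edges. Your ``Gluing and the main obstacle'' paragraph contains precisely the tool needed to close this gap, but you frame it as a surjectivity concern (well-definedness of the reconstruction). In the paper the roles are reversed: surjectivity requires no uniqueness at all---one simply constructs \emph{one} glued diagram via flat braids in biangles and then makes it non-elliptic by flat Reidemeister moves (Steps~1--3 of \cref{subsec:reconstruction}). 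The graded skein argument is instead the heart of \emph{injectivity} (\cref{thm:injectivity}): one deforms $D_\tri$ into $D'_\tri$ by flat Reidemeister moves (\cref{rem:flat-braid}), which preserve the image in $\cG^{\tri}\Skein{\fsp_4}{\bSigma}^{q}$; then injectivity of $\pi_\tri$ on basis webs (\cref{lem:gr-basis}) forces $W_D=W_{D'}$, hence $L=L'$ via the bijection $\ladshift$.

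A minor point on the triangle case: sign-coherence (\cref{lem:triangle_sign_coherence}) is used to establish mutation-equivalence of the six triangle seeds (part~(1) of \cref{lem:triangle_case}), not directly for invertibility of the linear charts; bijectivity on each chamber comes from the explicit linear relation between the geometric parameters $(k_i,l_i,n_1,n_2)$ and the $\sfa$-coordinates via the Hilbert basis expansion \eqref{eq:Hilbert_expansion} together with \cref{prop:a-coord_shape}.
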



Let us prepare some useful notions. 
\begin{dfn}
Fix a web cluster $\cC=\{V_i\}_{i \in I}$. 
We say that the coordinate vector $v \in \vect_\bSigma^\cC \subset (\frac 1 2 \bZ_{\geq 0})^I$ is \emph{$\cC$-irreducible} if it admits no non-trivial decomposition into a sum $v=v_1+v_2$ with $v_1,v_2 \in \vect_\bSigma^\cC$. The set $\mathscr{H}_{\fsp_4,\bSigma}^{\cC} \subset \vect_\bSigma^\cC$ of irreducible vectors is called the \emph{Hilbert basis}. 
\end{dfn}

\begin{rem}
The set $\vect_{\bSigma}^{\cC} \subset (\frac 1 2 \bZ_{\geq 0})^I$ turns out to be the lattice points of a pointed convex polyhedral cone, and we expect the same for general $\cC$. It follows that $\mathscr{H}_{\fsp_4,\bSigma}^{\cC}$ is a unique non-empty finite set. See \cite{DS20II} for a detail on the Hilbert basis and its usage in the $\fsl_3$-case. 
\end{rem}

\begin{dfn}\label{def:congruent}
An integral bounded $\fsp_4$-lamination $L \in \cL^a(\bSigma,\bZ)$ is said to be \emph{congruent} if $\bsfa^{\bD}(L) \in \bZ^{I(\bD)}$ for any decorated triangulation $\bD$. Let $\cL^a(\bSigma,\bZ)_{\congr} \subset \cL^a(\bSigma,\bQ)$ denote the subset of congruent laminations. 
\end{dfn}
As a direct consequence of \cref{thm:bijection}, we have a bijection
\begin{align*}
    \bsfa^{\bD}: \cL^a
    (\bSigma,\bZ)_\congr \xrightarrow{\sim} \bZ^{I(\bD)}.
\end{align*}


\input{4_coordinates}

\subsection{Triangle case}\label{subsec:triangle_case}

Let us consider a triangle $\bSigma=T$ together with one of the six decorated triangulation $\bD$ in \cref{fig:exch_triangle}. 
Let $\bsfa^{\bD}=(\sfa_i^{\bD})_{i \in I(\bD)}$ be the corresponding tuple of intersection coordinates.

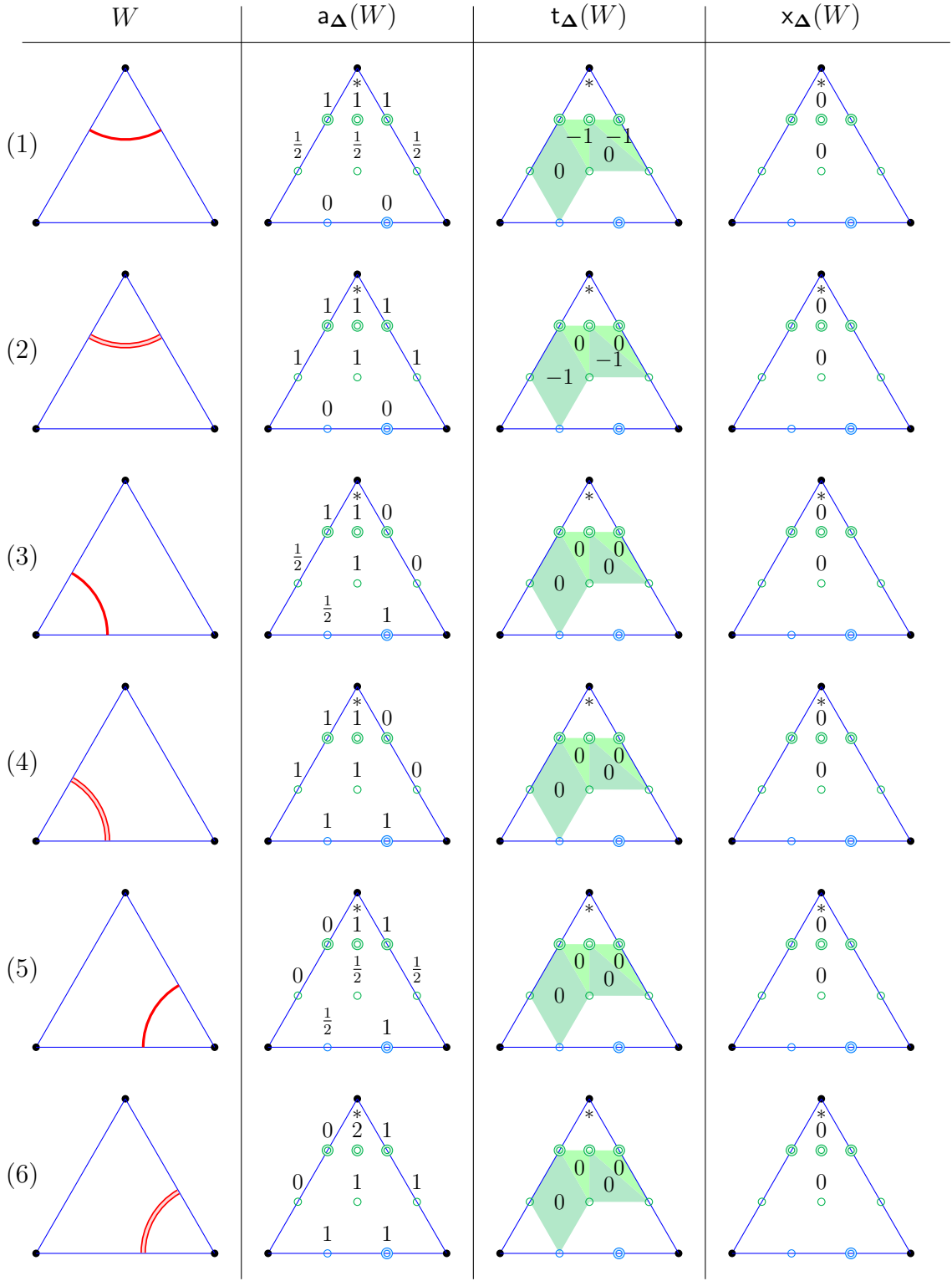
\begin{figure}[ht]
    \centering
\begin{tikzpicture}[scale=0.82]
\node at (-2,0.5) {(1)};
\node at (0,3) {$W$};
\node at (4.5,3) {$\mathsf{a}_{\bD}(W)$};
\node at (9,3) {$\sft_{\bD}(W)$};
\node at (13.5,3) {$\mathsf{x}_{\bD}(W)$};
\draw(2.25,3.2) -- (2.25,-21.5);
\draw(4.5+2.25,3.2) -- (4.5+2.25,-21.5);
\draw(9+2.25,3.2) -- (9+2.25,-21.5);
\draw(-2,2.5) -- (16,2.5);
\foreach \i in {90,210,330}
    {
    \markedpt{\i:2};
    \draw[blue] (\i:2) -- (\i+120:2);
    }
\draw[webline] ($(90:2)!0.4!(210:2)$) arc(-120:-60:0.8*1.732);
{\begin{scope}[xshift=4.5cm]
\foreach \i in {90,210,330}
    {
    \markedpt{\i:2};
    \draw[blue] (\i:2) -- (\i+120:2);
    }
\node[scale=0.75] at (90:1.7) {$\ast$};
\quiververtexC{90:2}{210:2}{-30:2}
\node[above=0.2em,scale=0.85] at (x121) {$1$};
\node[above=0.2em,scale=0.85] at (x122) {$\frac{1}{2}$};
\node[above=0.2em,scale=0.85] at (x312) {$1$};
\node[above=0.2em,scale=0.85] at (x311) {$\frac{1}{2}$};
\node[above=0.2em,scale=0.85] at (G2) {$1$};
\node[above=0.2em,scale=0.85] at (G1) {$\frac{1}{2}$};
\node[above=0.2em,scale=0.85] at (x231) {$0$};
\node[above=0.2em,scale=0.85] at (x232) {$0$};
\end{scope}}
{\begin{scope}[xshift=9cm]
\foreach \i in {90,210,330}
    {
    \markedpt{\i:2};
    \draw[blue] (\i:2) -- (\i+120:2);
    }
\node[scale=0.75] at (90:1.7) {$\ast$};
\quiververtexC{90:2}{210:2}{-30:2}
\begin{pgfonlayer}{bg} 
\fill[rbA!30] (x122) -- (x231) -- (G1) -- (x121) --cycle;
\fill[rbB!30] (x121) -- (G1) -- (G2) --cycle;
\fill[rbA!30] (G1) -- (G2) -- (x311) --cycle;
\fill[rbB!30] (G2) -- (x312) -- (x311) --cycle;
\end{pgfonlayer}
\node[scale=0.85] at ($(G1)!0.5!(x122)$) {$0$};
\CoG{G1}{G2}{x311};
\node[scale=0.85] at (G) {$0$};
\CoG{G1}{G2}{x121};
\node[scale=0.85] at (G) {$-1$};
\CoG{x311}{x312}{G2};
\node[scale=0.85] at (G) {$-1$};
\end{scope}}
{\begin{scope}[xshift=13.5cm]
\foreach \i in {90,210,330}
    {
    \markedpt{\i:2};
    \draw[blue] (\i:2) -- (\i+120:2);
    }
\node[scale=0.75] at (90:1.7) {$\ast$};
\quiververtexC{90:2}{210:2}{-30:2}
\node[above=0.2em,scale=0.85] at (G2) {$0$};
\node[above=0.2em,scale=0.85] at (G1) {$0$};
\end{scope}}

\begin{scope}[yshift=-4cm]
\node at (-2,0.5) {(2)};
\foreach \i in {90,210,330}
    {
    \markedpt{\i:2};
    \draw[blue] (\i:2) -- (\i+120:2);
    }
\draw[wline] ($(90:2)!0.4!(210:2)$) arc(-120:-60:0.8*1.732);
{\begin{scope}[xshift=4.5cm]
\foreach \i in {90,210,330}
    {
    \markedpt{\i:2};
    \draw[blue] (\i:2) -- (\i+120:2);
    }
\node[scale=0.75] at (90:1.7) {$\ast$};
\quiververtexC{90:2}{210:2}{-30:2}
\node[above=0.2em,scale=0.85] at (x121) {$1$};
\node[above=0.2em,scale=0.85] at (x122) {$1$};
\node[above=0.2em,scale=0.85] at (x312) {$1$};
\node[above=0.2em,scale=0.85] at (x311) {$1$};
\node[above=0.2em,scale=0.85] at (G2) {$1$};
\node[above=0.2em,scale=0.85] at (G1) {$1$};
\node[above=0.2em,scale=0.85] at (x231) {$0$};
\node[above=0.2em,scale=0.85] at (x232) {$0$};
\end{scope}}
{\begin{scope}[xshift=9cm]
\foreach \i in {90,210,330}
    {
    \markedpt{\i:2};
    \draw[blue] (\i:2) -- (\i+120:2);
    }
\node[scale=0.75] at (90:1.7) {$\ast$};
\quiververtexC{90:2}{210:2}{-30:2}
\begin{pgfonlayer}{bg} 
\fill[rbA!30] (x122) -- (x231) -- (G1) -- (x121) --cycle;
\fill[rbB!30] (x121) -- (G1) -- (G2) --cycle;
\fill[rbA!30] (G1) -- (G2) -- (x311) --cycle;
\fill[rbB!30] (G2) -- (x312) -- (x311) --cycle;
\end{pgfonlayer}
\node[scale=0.85] at ($(G1)!0.5!(x122)$) {$-1$};
\CoG{G1}{G2}{x311};
\node[scale=0.85] at (G) {$-1$};
\CoG{G1}{G2}{x121};
\node[scale=0.85] at (G) {$0$};
\CoG{x311}{x312}{G2};
\node[scale=0.85] at (G) {$0$};
\end{scope}}
{\begin{scope}[xshift=13.5cm]
\foreach \i in {90,210,330}
    {
    \markedpt{\i:2};
    \draw[blue] (\i:2) -- (\i+120:2);
    }
\node[scale=0.75] at (90:1.7) {$\ast$};
\quiververtexC{90:2}{210:2}{-30:2}
\node[above=0.2em,scale=0.85] at (G2) {$0$};
\node[above=0.2em,scale=0.85] at (G1) {$0$};
\end{scope}}
\end{scope}

\begin{scope}[yshift=-8cm]
\node at (-2,0.5) {(3)};
\foreach \i in {90,210,330}
    {
    \markedpt{\i:2};
    \draw[blue] (\i:2) -- (\i+120:2);
    }
\draw[webline] ($(210:2)!0.4!(-30:2)$) arc(0:60:0.8*1.732);
{\begin{scope}[xshift=4.5cm]
\foreach \i in {90,210,330}
    {
    \markedpt{\i:2};
    \draw[blue] (\i:2) -- (\i+120:2);
    }
\node[scale=0.75] at (90:1.7) {$\ast$};
\quiververtexC{90:2}{210:2}{-30:2}
\node[above=0.2em,scale=0.85] at (x121) {$1$};
\node[above=0.2em,scale=0.85] at (x122) {$\frac{1}{2}$};
\node[above=0.2em,scale=0.85] at (x312) {$0$};
\node[above=0.2em,scale=0.85] at (x311) {$0$};
\node[above=0.2em,scale=0.85] at (G2) {$1$};
\node[above=0.2em,scale=0.85] at (G1) {$1$};
\node[above=0.2em,scale=0.85] at (x231) {$\frac{1}{2}$};
\node[above=0.2em,scale=0.85] at (x232) {$1$};
\end{scope}}
{\begin{scope}[xshift=9cm]
\foreach \i in {90,210,330}
    {
    \markedpt{\i:2};
    \draw[blue] (\i:2) -- (\i+120:2);
    }
\node[scale=0.75] at (90:1.7) {$\ast$};
\quiververtexC{90:2}{210:2}{-30:2}
\begin{pgfonlayer}{bg} 
\fill[rbA!30] (x122) -- (x231) -- (G1) -- (x121) --cycle;
\fill[rbB!30] (x121) -- (G1) -- (G2) --cycle;
\fill[rbA!30] (G1) -- (G2) -- (x311) --cycle;
\fill[rbB!30] (G2) -- (x312) -- (x311) --cycle;
\end{pgfonlayer}
\node[scale=0.85] at ($(G1)!0.5!(x122)$) {$0$};
\CoG{G1}{G2}{x311};
\node[scale=0.85] at (G) {$0$};
\CoG{G1}{G2}{x121};
\node[scale=0.85] at (G) {$0$};
\CoG{x311}{x312}{G2};
\node[scale=0.85] at (G) {$0$};
\end{scope}}
{\begin{scope}[xshift=13.5cm]
\foreach \i in {90,210,330}
    {
    \markedpt{\i:2};
    \draw[blue] (\i:2) -- (\i+120:2);
    }
\node[scale=0.75] at (90:1.7) {$\ast$};
\quiververtexC{90:2}{210:2}{-30:2}
\node[above=0.2em,scale=0.85] at (G2) {$0$};
\node[above=0.2em,scale=0.85] at (G1) {$0$};
\end{scope}}
\end{scope}

\begin{scope}[yshift=-12cm]
\node at (-2,0.5) {(4)};
\foreach \i in {90,210,330}
    {
    \markedpt{\i:2};
    \draw[blue] (\i:2) -- (\i+120:2);
    }
\draw[wline] ($(210:2)!0.4!(-30:2)$) arc(0:60:0.8*1.732);
{\begin{scope}[xshift=4.5cm]
\foreach \i in {90,210,330}
    {
    \markedpt{\i:2};
    \draw[blue] (\i:2) -- (\i+120:2);
    }
\node[scale=0.75] at (90:1.7) {$\ast$};
\quiververtexC{90:2}{210:2}{-30:2}
\node[above=0.2em,scale=0.85] at (x121) {$1$};
\node[above=0.2em,scale=0.85] at (x122) {$1$};
\node[above=0.2em,scale=0.85] at (x312) {$0$};
\node[above=0.2em,scale=0.85] at (x311) {$0$};
\node[above=0.2em,scale=0.85] at (G2) {$1$};
\node[above=0.2em,scale=0.85] at (G1) {$1$};
\node[above=0.2em,scale=0.85] at (x231) {$1$};
\node[above=0.2em,scale=0.85] at (x232) {$1$};
\end{scope}}
{\begin{scope}[xshift=9cm]
\foreach \i in {90,210,330}
    {
    \markedpt{\i:2};
    \draw[blue] (\i:2) -- (\i+120:2);
    }
\node[scale=0.75] at (90:1.7) {$\ast$};
\quiververtexC{90:2}{210:2}{-30:2}
\begin{pgfonlayer}{bg} 
\fill[rbA!30] (x122) -- (x231) -- (G1) -- (x121) --cycle;
\fill[rbB!30] (x121) -- (G1) -- (G2) --cycle;
\fill[rbA!30] (G1) -- (G2) -- (x311) --cycle;
\fill[rbB!30] (G2) -- (x312) -- (x311) --cycle;
\end{pgfonlayer}
\node[scale=0.85] at ($(G1)!0.5!(x122)$) {$0$};
\CoG{G1}{G2}{x311};
\node[scale=0.85] at (G) {$0$};
\CoG{G1}{G2}{x121};
\node[scale=0.85] at (G) {$0$};
\CoG{x311}{x312}{G2};
\node[scale=0.85] at (G) {$0$};
\end{scope}}
{\begin{scope}[xshift=13.5cm]
\foreach \i in {90,210,330}
    {
    \markedpt{\i:2};
    \draw[blue] (\i:2) -- (\i+120:2);
    }
\node[scale=0.75] at (90:1.7) {$\ast$};
\quiververtexC{90:2}{210:2}{-30:2}
\node[above=0.2em,scale=0.85] at (G2) {$0$};
\node[above=0.2em,scale=0.85] at (G1) {$0$};
\end{scope}}
\end{scope}

\begin{scope}[yshift=-16cm]
\node at (-2,0.5) {(5)};
\foreach \i in {90,210,330}
    {
    \markedpt{\i:2};
    \draw[blue] (\i:2) -- (\i+120:2);
    }
\draw[webline] ($(-30:2)!0.4!(90:2)$) arc(120:180:0.8*1.732);
{\begin{scope}[xshift=4.5cm]
\foreach \i in {90,210,330}
    {
    \markedpt{\i:2};
    \draw[blue] (\i:2) -- (\i+120:2);
    }
\node[scale=0.75] at (90:1.7) {$\ast$};
\quiververtexC{90:2}{210:2}{-30:2}
\node[above=0.2em,scale=0.85] at (x121) {$0$};
\node[above=0.2em,scale=0.85] at (x122) {$0$};
\node[above=0.2em,scale=0.85] at (x312) {$1$};
\node[above=0.2em,scale=0.85] at (x311) {$\frac{1}{2}$};
\node[above=0.2em,scale=0.85] at (G2) {$1$};
\node[above=0.2em,scale=0.85] at (G1) {$\frac{1}{2}$};
\node[above=0.2em,scale=0.85] at (x231) {$\frac{1}{2}$};
\node[above=0.2em,scale=0.85] at (x232) {$1$};
\end{scope}}
{\begin{scope}[xshift=9cm]
\foreach \i in {90,210,330}
    {
    \markedpt{\i:2};
    \draw[blue] (\i:2) -- (\i+120:2);
    }
\node[scale=0.75] at (90:1.7) {$\ast$};
\quiververtexC{90:2}{210:2}{-30:2}
\begin{pgfonlayer}{bg} 
\fill[rbA!30] (x122) -- (x231) -- (G1) -- (x121) --cycle;
\fill[rbB!30] (x121) -- (G1) -- (G2) --cycle;
\fill[rbA!30] (G1) -- (G2) -- (x311) --cycle;
\fill[rbB!30] (G2) -- (x312) -- (x311) --cycle;
\end{pgfonlayer}
\node[scale=0.85] at ($(G1)!0.5!(x122)$) {$0$};
\CoG{G1}{G2}{x311};
\node[scale=0.85] at (G) {$0$};
\CoG{G1}{G2}{x121};
\node[scale=0.85] at (G) {$0$};
\CoG{x311}{x312}{G2};
\node[scale=0.85] at (G) {$0$};
\end{scope}}
{\begin{scope}[xshift=13.5cm]
\foreach \i in {90,210,330}
    {
    \markedpt{\i:2};
    \draw[blue] (\i:2) -- (\i+120:2);
    }
\node[scale=0.75] at (90:1.7) {$\ast$};
\quiververtexC{90:2}{210:2}{-30:2}
\node[above=0.2em,scale=0.85] at (G2) {$0$};
\node[above=0.2em,scale=0.85] at (G1) {$0$};
\end{scope}}
\end{scope}

\begin{scope}[yshift=-20cm]
\node at (-2,0.5) {(6)};
\foreach \i in {90,210,330}
    {
    \markedpt{\i:2};
    \draw[blue] (\i:2) -- (\i+120:2);
    }
\draw[wline] ($(-30:2)!0.4!(90:2)$) arc(120:180:0.8*1.732);
{\begin{scope}[xshift=4.5cm]
\foreach \i in {90,210,330}
    {
    \markedpt{\i:2};
    \draw[blue] (\i:2) -- (\i+120:2);
    }
\node[scale=0.75] at (90:1.7) {$\ast$};
\quiververtexC{90:2}{210:2}{-30:2}
\node[above=0.2em,scale=0.85] at (x121) {$0$};
\node[above=0.2em,scale=0.85] at (x122) {$0$};
\node[above=0.2em,scale=0.85] at (x312) {$1$};
\node[above=0.2em,scale=0.85] at (x311) {$1$};
\node[above=0.2em,scale=0.85] at (G2) {$2$};
\node[above=0.2em,scale=0.85] at (G1) {$1$};
\node[above=0.2em,scale=0.85] at (x231) {$1$};
\node[above=0.2em,scale=0.85] at (x232) {$1$};
\end{scope}}
{\begin{scope}[xshift=9cm]
\foreach \i in {90,210,330}
    {
    \markedpt{\i:2};
    \draw[blue] (\i:2) -- (\i+120:2);
    }
\node[scale=0.75] at (90:1.7) {$\ast$};
\quiververtexC{90:2}{210:2}{-30:2}
\begin{pgfonlayer}{bg} 
\fill[rbA!30] (x122) -- (x231) -- (G1) -- (x121) --cycle;
\fill[rbB!30] (x121) -- (G1) -- (G2) --cycle;
\fill[rbA!30] (G1) -- (G2) -- (x311) --cycle;
\fill[rbB!30] (G2) -- (x312) -- (x311) --cycle;
\end{pgfonlayer}
\node[scale=0.85] at ($(G1)!0.5!(x122)$) {$0$};
\CoG{G1}{G2}{x311};
\node[scale=0.85] at (G) {$0$};
\CoG{G1}{G2}{x121};
\node[scale=0.85] at (G) {$0$};
\CoG{x311}{x312}{G2};
\node[scale=0.85] at (G) {$0$};
\end{scope}}
{\begin{scope}[xshift=13.5cm]
\foreach \i in {90,210,330}
    {
    \markedpt{\i:2};
    \draw[blue] (\i:2) -- (\i+120:2);
    }
\node[scale=0.75] at (90:1.7) {$\ast$};
\quiververtexC{90:2}{210:2}{-30:2}
\node[above=0.2em,scale=0.85] at (G2) {$0$};
\node[above=0.2em,scale=0.85] at (G1) {$0$};
\end{scope}}
\end{scope}
\end{tikzpicture}
    \caption{Tropical coordinates of corner $\fsp_4$-webs on a triangle.}
    \label{fig:coordinate_corners}
\end{figure}

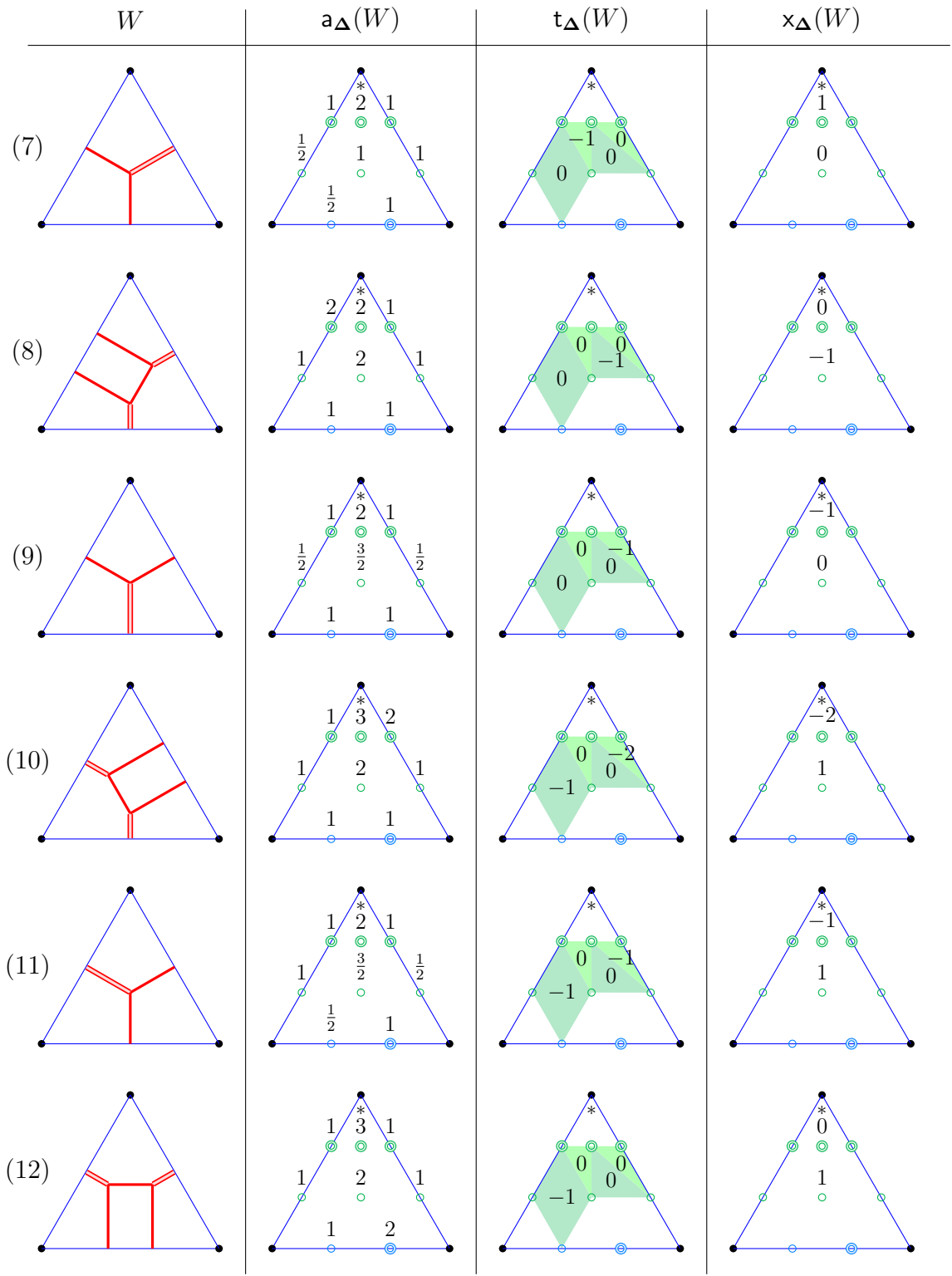
\begin{figure}[ht]
    \centering
\begin{tikzpicture}[scale=0.82]
\node at (-2,0.5) {(7)};
\node at (0,3) {$W$};
\node at (4.5,3) {$\mathsf{a}_{\bD}(W)$};
\node at (9,3) {$\sft_{\bD}(W)$};
\node at (13.5,3) {$\mathsf{x}_{\bD}(W)$};
\draw(2.25,3.2) -- (2.25,-21.5);
\draw(4.5+2.25,3.2) -- (4.5+2.25,-21.5);
\draw(9+2.25,3.2) -- (9+2.25,-21.5);
\draw(-2,2.5) -- (16,2.5);
\foreach \i in {90,210,330}
    {
    \markedpt{\i:2};
    \draw[blue] (\i:2) -- (\i+120:2);
    }
\draw[wline] (0,0) -- ($(90:2)!0.5!(-30:2)$);
\draw[webline] (0,0) -- ($(210:2)!0.5!(90:2)$);
\draw[webline] (0,0) -- ($(210:2)!0.5!(-30:2)$);
{\begin{scope}[xshift=4.5cm]
\foreach \i in {90,210,330}
    {
    \markedpt{\i:2};
    \draw[blue] (\i:2) -- (\i+120:2);
    }
\node[scale=0.75] at (90:1.7) {$\ast$};
\quiververtexC{90:2}{210:2}{-30:2}
\node[above=0.2em,scale=0.85] at (x121) {$1$};
\node[above=0.2em,scale=0.85] at (x122) {$\frac{1}{2}$};
\node[above=0.2em,scale=0.85] at (x312) {$1$};
\node[above=0.2em,scale=0.85] at (x311) {$1$};
\node[above=0.2em,scale=0.85] at (G2) {$2$};
\node[above=0.2em,scale=0.85] at (G1) {$1$};
\node[above=0.2em,scale=0.85] at (x231) {$\frac{1}{2}$};
\node[above=0.2em,scale=0.85] at (x232) {$1$};
\end{scope}}
{\begin{scope}[xshift=9cm]
\foreach \i in {90,210,330}
    {
    \markedpt{\i:2};
    \draw[blue] (\i:2) -- (\i+120:2);
    }
\node[scale=0.75] at (90:1.7) {$\ast$};
\quiververtexC{90:2}{210:2}{-30:2}
\begin{pgfonlayer}{bg} 
\fill[rbA!30] (x122) -- (x231) -- (G1) -- (x121) --cycle;
\fill[rbB!30] (x121) -- (G1) -- (G2) --cycle;
\fill[rbA!30] (G1) -- (G2) -- (x311) --cycle;
\fill[rbB!30] (G2) -- (x312) -- (x311) --cycle;
\end{pgfonlayer}
\node[scale=0.85] at ($(G1)!0.5!(x122)$) {$0$};
\CoG{G1}{G2}{x311};
\node[scale=0.85] at (G) {$0$};
\CoG{G1}{G2}{x121};
\node[scale=0.85] at (G) {$-1$};
\CoG{x311}{x312}{G2};
\node[scale=0.85] at (G) {$0$};
\end{scope}}
{\begin{scope}[xshift=13.5cm]
\foreach \i in {90,210,330}
    {
    \markedpt{\i:2};
    \draw[blue] (\i:2) -- (\i+120:2);
    }
\node[scale=0.75] at (90:1.7) {$\ast$};
\quiververtexC{90:2}{210:2}{-30:2}
\node[above=0.2em,scale=0.85] at (G2) {$1$};
\node[above=0.2em,scale=0.85] at (G1) {$0$};
\end{scope}}

\begin{scope}[yshift=-4cm]
\node at (-2,0.5) {(8)};
\foreach \i in {90,210,330}
    {
    \markedpt{\i:2};
    \draw[blue] (\i:2) -- (\i+120:2);
    }
\draw($(210:2)!0.5!(-30:2)$)++(0,0.5) coordinate(H1);
\draw($(90:2)!0.5!(-30:2)$)++(-150:0.5) coordinate(H2);
\draw[webline] (H1) -- (H2);
\draw[wline] (H1) -- ($(210:2)!0.5!(-30:2)$);
\draw[wline] (H2) -- ($(90:2)!0.5!(-30:2)$);
\draw[webline] (H1) --++(150:1.25);
\draw[webline] (H2) --++(150:1.25);
{\begin{scope}[xshift=4.5cm]
\foreach \i in {90,210,330}
    {
    \markedpt{\i:2};
    \draw[blue] (\i:2) -- (\i+120:2);
    }
\node[scale=0.75] at (90:1.7) {$\ast$};
\quiververtexC{90:2}{210:2}{-30:2}
\node[above=0.2em,scale=0.85] at (x121) {$2$};
\node[above=0.2em,scale=0.85] at (x122) {$1$};
\node[above=0.2em,scale=0.85] at (x312) {$1$};
\node[above=0.2em,scale=0.85] at (x311) {$1$};
\node[above=0.2em,scale=0.85] at (G2) {$2$};
\node[above=0.2em,scale=0.85] at (G1) {$2$};
\node[above=0.2em,scale=0.85] at (x231) {$1$};
\node[above=0.2em,scale=0.85] at (x232) {$1$};
\end{scope}}
{\begin{scope}[xshift=9cm]
\foreach \i in {90,210,330}
    {
    \markedpt{\i:2};
    \draw[blue] (\i:2) -- (\i+120:2);
    }
\node[scale=0.75] at (90:1.7) {$\ast$};
\quiververtexC{90:2}{210:2}{-30:2}
\begin{pgfonlayer}{bg} 
\fill[rbA!30] (x122) -- (x231) -- (G1) -- (x121) --cycle;
\fill[rbB!30] (x121) -- (G1) -- (G2) --cycle;
\fill[rbA!30] (G1) -- (G2) -- (x311) --cycle;
\fill[rbB!30] (G2) -- (x312) -- (x311) --cycle;
\end{pgfonlayer}
\node[scale=0.85] at ($(G1)!0.5!(x122)$) {$0$};
\CoG{G1}{G2}{x311};
\node[scale=0.85] at (G) {$-1$};
\CoG{G1}{G2}{x121};
\node[scale=0.85] at (G) {$0$};
\CoG{x311}{x312}{G2};
\node[scale=0.85] at (G) {$0$};
\end{scope}}
{\begin{scope}[xshift=13.5cm]
\foreach \i in {90,210,330}
    {
    \markedpt{\i:2};
    \draw[blue] (\i:2) -- (\i+120:2);
    }
\node[scale=0.75] at (90:1.7) {$\ast$};
\quiververtexC{90:2}{210:2}{-30:2}
\node[above=0.2em,scale=0.85] at (G2) {$0$};
\node[above=0.2em,scale=0.85] at (G1) {$-1$};
\end{scope}}
\end{scope}

\begin{scope}[yshift=-8cm]
\node at (-2,0.5) {(9)};
\foreach \i in {90,210,330}
    {
    \markedpt{\i:2};
    \draw[blue] (\i:2) -- (\i+120:2);
    }
\draw[webline] (0,0) -- ($(90:2)!0.5!(-30:2)$);
\draw[webline] (0,0) -- ($(210:2)!0.5!(90:2)$);
\draw[wline] (0,0) -- ($(210:2)!0.5!(-30:2)$);
{\begin{scope}[xshift=4.5cm]
\foreach \i in {90,210,330}
    {
    \markedpt{\i:2};
    \draw[blue] (\i:2) -- (\i+120:2);
    }
\node[scale=0.75] at (90:1.7) {$\ast$};
\quiververtexC{90:2}{210:2}{-30:2}
\node[above=0.2em,scale=0.85] at (x121) {$1$};
\node[above=0.2em,scale=0.85] at (x122) {$\frac{1}{2}$};
\node[above=0.2em,scale=0.85] at (x312) {$1$};
\node[above=0.2em,scale=0.85] at (x311) {$\frac{1}{2}$};
\node[above=0.2em,scale=0.85] at (G2) {$2$};
\node[above=0.2em,scale=0.85] at (G1) {$\frac{3}{2}$};
\node[above=0.2em,scale=0.85] at (x231) {$1$};
\node[above=0.2em,scale=0.85] at (x232) {$1$};
\end{scope}}
{\begin{scope}[xshift=9cm]
\foreach \i in {90,210,330}
    {
    \markedpt{\i:2};
    \draw[blue] (\i:2) -- (\i+120:2);
    }
\node[scale=0.75] at (90:1.7) {$\ast$};
\quiververtexC{90:2}{210:2}{-30:2}
\begin{pgfonlayer}{bg} 
\fill[rbA!30] (x122) -- (x231) -- (G1) -- (x121) --cycle;
\fill[rbB!30] (x121) -- (G1) -- (G2) --cycle;
\fill[rbA!30] (G1) -- (G2) -- (x311) --cycle;
\fill[rbB!30] (G2) -- (x312) -- (x311) --cycle;
\end{pgfonlayer}
\node[scale=0.85] at ($(G1)!0.5!(x122)$) {$0$};
\CoG{G1}{G2}{x311};
\node[scale=0.85] at (G) {$0$};
\CoG{G1}{G2}{x121};
\node[scale=0.85] at (G) {$0$};
\CoG{x311}{x312}{G2};
\node[scale=0.85] at (G) {$-1$};
\end{scope}}
{\begin{scope}[xshift=13.5cm]
\foreach \i in {90,210,330}
    {
    \markedpt{\i:2};
    \draw[blue] (\i:2) -- (\i+120:2);
    }
\node[scale=0.75] at (90:1.7) {$\ast$};
\quiververtexC{90:2}{210:2}{-30:2}
\node[above=0.2em,scale=0.85] at (G2) {$-1$};
\node[above=0.2em,scale=0.85] at (G1) {$0$};
\end{scope}}
\end{scope}

\begin{scope}[yshift=-12cm]
\node at (-2,0.5) {(10)};
\foreach \i in {90,210,330}
    {
    \markedpt{\i:2};
    \draw[blue] (\i:2) -- (\i+120:2);
    }
\draw($(210:2)!0.5!(-30:2)$)++(0,0.5) coordinate(H1);
\draw($(90:2)!0.5!(210:2)$)++(-30:0.5) coordinate(H2);
\draw[webline] (H1) -- (H2);
\draw[wline] (H1) -- ($(210:2)!0.5!(-30:2)$);
\draw[wline] (H2) -- ($(90:2)!0.5!(210:2)$);
\draw[webline] (H1) --++(30:1.25);
\draw[webline] (H2) --++(30:1.25);
{\begin{scope}[xshift=4.5cm]
\foreach \i in {90,210,330}
    {
    \markedpt{\i:2};
    \draw[blue] (\i:2) -- (\i+120:2);
    }
\node[scale=0.75] at (90:1.7) {$\ast$};
\quiververtexC{90:2}{210:2}{-30:2}
\node[above=0.2em,scale=0.85] at (x121) {$1$};
\node[above=0.2em,scale=0.85] at (x122) {$1$};
\node[above=0.2em,scale=0.85] at (x312) {$2$};
\node[above=0.2em,scale=0.85] at (x311) {$1$};
\node[above=0.2em,scale=0.85] at (G2) {$3$};
\node[above=0.2em,scale=0.85] at (G1) {$2$};
\node[above=0.2em,scale=0.85] at (x231) {$1$};
\node[above=0.2em,scale=0.85] at (x232) {$1$};
\end{scope}}
{\begin{scope}[xshift=9cm]
\foreach \i in {90,210,330}
    {
    \markedpt{\i:2};
    \draw[blue] (\i:2) -- (\i+120:2);
    }
\node[scale=0.75] at (90:1.7) {$\ast$};
\quiververtexC{90:2}{210:2}{-30:2}
\begin{pgfonlayer}{bg} 
\fill[rbA!30] (x122) -- (x231) -- (G1) -- (x121) --cycle;
\fill[rbB!30] (x121) -- (G1) -- (G2) --cycle;
\fill[rbA!30] (G1) -- (G2) -- (x311) --cycle;
\fill[rbB!30] (G2) -- (x312) -- (x311) --cycle;
\end{pgfonlayer}
\node[scale=0.85] at ($(G1)!0.5!(x122)$) {$-1$};
\CoG{G1}{G2}{x311};
\node[scale=0.85] at (G) {$0$};
\CoG{G1}{G2}{x121};
\node[scale=0.85] at (G) {$0$};
\CoG{x311}{x312}{G2};
\node[scale=0.85] at (G) {$-2$};
\end{scope}}
{\begin{scope}[xshift=13.5cm]
\foreach \i in {90,210,330}
    {
    \markedpt{\i:2};
    \draw[blue] (\i:2) -- (\i+120:2);
    }
\node[scale=0.75] at (90:1.7) {$\ast$};
\quiververtexC{90:2}{210:2}{-30:2}
\node[above=0.2em,scale=0.85] at (G2) {$-2$};
\node[above=0.2em,scale=0.85] at (G1) {$1$};
\end{scope}}
\end{scope}

\begin{scope}[yshift=-16cm]
\node at (-2,0.5) {(11)};
\foreach \i in {90,210,330}
    {
    \markedpt{\i:2};
    \draw[blue] (\i:2) -- (\i+120:2);
    }
\draw[webline] (0,0) -- ($(90:2)!0.5!(-30:2)$);
\draw[wline] (0,0) -- ($(210:2)!0.5!(90:2)$);
\draw[webline] (0,0) -- ($(210:2)!0.5!(-30:2)$);
{\begin{scope}[xshift=4.5cm]
\foreach \i in {90,210,330}
    {
    \markedpt{\i:2};
    \draw[blue] (\i:2) -- (\i+120:2);
    }
\node[scale=0.75] at (90:1.7) {$\ast$};
\quiververtexC{90:2}{210:2}{-30:2}
\node[above=0.2em,scale=0.85] at (x121) {$1$};
\node[above=0.2em,scale=0.85] at (x122) {$1$};
\node[above=0.2em,scale=0.85] at (x312) {$1$};
\node[above=0.2em,scale=0.85] at (x311) {$\frac{1}{2}$};
\node[above=0.2em,scale=0.85] at (G2) {$2$};
\node[above=0.2em,scale=0.85] at (G1) {$\frac{3}{2}$};
\node[above=0.2em,scale=0.85] at (x231) {$\frac{1}{2}$};
\node[above=0.2em,scale=0.85] at (x232) {$1$};
\end{scope}}
{\begin{scope}[xshift=9cm]
\foreach \i in {90,210,330}
    {
    \markedpt{\i:2};
    \draw[blue] (\i:2) -- (\i+120:2);
    }
\node[scale=0.75] at (90:1.7) {$\ast$};
\quiververtexC{90:2}{210:2}{-30:2}
\begin{pgfonlayer}{bg} 
\fill[rbA!30] (x122) -- (x231) -- (G1) -- (x121) --cycle;
\fill[rbB!30] (x121) -- (G1) -- (G2) --cycle;
\fill[rbA!30] (G1) -- (G2) -- (x311) --cycle;
\fill[rbB!30] (G2) -- (x312) -- (x311) --cycle;
\end{pgfonlayer}
\node[scale=0.85] at ($(G1)!0.5!(x122)$) {$-1$};
\CoG{G1}{G2}{x311};
\node[scale=0.85] at (G) {$0$};
\CoG{G1}{G2}{x121};
\node[scale=0.85] at (G) {$0$};
\CoG{x311}{x312}{G2};
\node[scale=0.85] at (G) {$-1$};
\end{scope}}
{\begin{scope}[xshift=13.5cm]
\foreach \i in {90,210,330}
    {
    \markedpt{\i:2};
    \draw[blue] (\i:2) -- (\i+120:2);
    }
\node[scale=0.75] at (90:1.7) {$\ast$};
\quiververtexC{90:2}{210:2}{-30:2}
\node[above=0.2em,scale=0.85] at (G2) {$-1$};
\node[above=0.2em,scale=0.85] at (G1) {$1$};
\end{scope}}
\end{scope}

\begin{scope}[yshift=-20cm]
\node at (-2,0.5) {(12)};
\foreach \i in {90,210,330}
    {
    \markedpt{\i:2};
    \draw[blue] (\i:2) -- (\i+120:2);
    }
\draw($(210:2)!0.5!(90:2)$)++(-30:0.5) coordinate(H1);
\draw($(90:2)!0.5!(-30:2)$)++(-150:0.5) coordinate(H2);
\draw[webline] (H1) -- (H2);
\draw[wline] (H1) -- ($(210:2)!0.5!(90:2)$);
\draw[wline] (H2) -- ($(-30:2)!0.5!(90:2)$);
\draw[webline] (H1) --++(-90:1.25);
\draw[webline] (H2) --++(-90:1.25);
{\begin{scope}[xshift=4.5cm]
\foreach \i in {90,210,330}
    {
    \markedpt{\i:2};
    \draw[blue] (\i:2) -- (\i+120:2);
    }
\node[scale=0.75] at (90:1.7) {$\ast$};
\quiververtexC{90:2}{210:2}{-30:2}
\node[above=0.2em,scale=0.85] at (x121) {$1$};
\node[above=0.2em,scale=0.85] at (x122) {$1$};
\node[above=0.2em,scale=0.85] at (x312) {$1$};
\node[above=0.2em,scale=0.85] at (x311) {$1$};
\node[above=0.2em,scale=0.85] at (G2) {$3$};
\node[above=0.2em,scale=0.85] at (G1) {$2$};
\node[above=0.2em,scale=0.85] at (x231) {$1$};
\node[above=0.2em,scale=0.85] at (x232) {$2$};
\end{scope}}
{\begin{scope}[xshift=9cm]
\foreach \i in {90,210,330}
    {
    \markedpt{\i:2};
    \draw[blue] (\i:2) -- (\i+120:2);
    }
\node[scale=0.75] at (90:1.7) {$\ast$};
\quiververtexC{90:2}{210:2}{-30:2}
\begin{pgfonlayer}{bg} 
\fill[rbA!30] (x122) -- (x231) -- (G1) -- (x121) --cycle;
\fill[rbB!30] (x121) -- (G1) -- (G2) --cycle;
\fill[rbA!30] (G1) -- (G2) -- (x311) --cycle;
\fill[rbB!30] (G2) -- (x312) -- (x311) --cycle;
\end{pgfonlayer}
\node[scale=0.85] at ($(G1)!0.5!(x122)$) {$-1$};
\CoG{G1}{G2}{x311};
\node[scale=0.85] at (G) {$0$};
\CoG{G1}{G2}{x121};
\node[scale=0.85] at (G) {$0$};
\CoG{x311}{x312}{G2};
\node[scale=0.85] at (G) {$0$};
\end{scope}}
{\begin{scope}[xshift=13.5cm]
\foreach \i in {90,210,330}
    {
    \markedpt{\i:2};
    \draw[blue] (\i:2) -- (\i+120:2);
    }
\node[scale=0.75] at (90:1.7) {$\ast$};
\quiververtexC{90:2}{210:2}{-30:2}
\node[above=0.2em,scale=0.85] at (G2) {$0$};
\node[above=0.2em,scale=0.85] at (G1) {$1$};
\end{scope}}
\end{scope}
\end{tikzpicture}
    \caption{Tropical coordinates of some $\fsp_4$-webs on a triangle.}
    \label{fig:coordinates_elementary}
\end{figure}

\begin{thm}\label{lem:triangle_case}
Let $\bSigma=T$ be a triangle. Then:
\begin{enumerate}
    \item All the tropical seeds $(\ve^{\bD},\bsfa^{\bD})$ are mutation-equivalent to each other. 
    \item The coordinate system $\bsfa^{\bD}: \cL^a(T,\bQ) \to \bQ^8$ gives a bijection for any decorated triangulation $\bD$.     
\end{enumerate}
In particular, $\cL^a(T,\bQ)$ is identified with the tropical cluster variety $\A_{\fsp_4,T}(\bQ^\sfT)$. 
\end{thm}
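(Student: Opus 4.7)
\medskip

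\noindent\textbf{Stage 1: Hilbert basis and cone decomposition.} First I would identify the Hilbert basis $\mathscr{H}_{\fsp_4,T}^{\cC_\bD}$ as the twelve classes depicted in Figures~\ref{fig:coordinate_corners} and~\ref{fig:coordinates_elementary}: six \emph{corner arcs} (one of each type at each vertex) and six \emph{elementary tripod-type webs}. This should follow by combining the classification of non-elliptic reduced $\fsp_4$-diagrams in \cref{lem:triangle_blad}, where every such diagram is parametrized by $(k_1,k_2,k_3,l_1,l_2,l_3,n_1,n_2)\in\bZ_{\geq 0}^{8}$, with direct computation of the local intersection pairing against each $V_i$ in the $\fso_5$-web cluster of \cref{fig:labeling_triangle_case}. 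The sign-coherence visible in the tables (all nonzero $\sfa_i^{\bD}$ have the same sign across each listed generator) implies that $\vect_T^{\cC_\bD}$ is a pointed rational cone.

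\medskip

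\noindent\textbf{Stage 2: Bijectivity for one decorated triangulation.} Fix $\bD$. By \cref{lem:triangle_blad} and \cref{def:lamination}, a rational $\fsp_4$-lamination $L\in\cL^a(T,\bQ)$ is encoded by an 8-tuple $(k_1,k_2,k_3,l_1,l_2,l_3,n_1,n_2)\in\bQ^6\times\bQ_{\geq 0}^2$: the $k_i,l_i$ record the (rational, possibly negative) measures of peripheral corner arcs and $(n_1,n_2)$ records the measure of the non-peripheral tripod piece. Since the intersection pairing \eqref{eq:pairing_lamination} is $\bQ$-linear in measures, the map $\bsfa^{\bD}$ is given by an explicit rational linear assignment whose coefficient matrix is read off from Figures~\ref{fig:coordinate_corners}--\ref{fig:coordinates_elementary}. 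I would verify bijectivity by explicit inversion: the six ``corner-arc" coordinates recover the $k_i,l_i$, while the remaining two coordinates pin down $(n_1,n_2)$; the constraint $n_1,n_2\geq 0$ cuts out exactly one of the six simplicial chambers visible in the hexagonal fan of \cref{fig:intro_hexagon}, and the six decorated triangulations correspond bijectively to the six chambers.

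\medskip

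\noindent\textbf{Stage 3: Mutation equivalence.} Next I would check that passing between two decorated triangulations $\bD,\bD'$ related by a sign flip (resp.\ a corner change) corresponds to tropical mutation $\mu_1$ (resp.\ $\mu_2$) of the tropical seed $(\ve^{\bD},\bsfa^{\bD})$. By the sign-coherence established in Stage~1, the piecewise-linear tropical $\A$-mutation formula is linear on each of the six cones of $\vect_T^{\cC_\bD}$, so it suffices to compare $\bsfa^{\bD'}$ with $\mu_k^{\mathsf{T}}(\bsfa^{\bD})$ on the twelve Hilbert-basis generators. These values are tabulated in Figures~\ref{fig:coordinate_corners} and~\ref{fig:coordinates_elementary}, reducing the check to a finite, direct computation. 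Combining this with Stage~2 yields the identification $\cL^a(T,\bQ)\cong\A_{\fsp_4,T}(\bQ^\sfT)$ as tropical cluster varieties of finite type $C_2$.

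\medskip

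\noindent\textbf{Main obstacle.} The most delicate point is Stage~3: the quivers $Q_{m,\pm}$ of \cref{fig:labeling_triangle_case} must be matched carefully across all six decorations (keeping track of which vertex plays the role of the mutable/frozen node and of the orientation conventions on rungs), and the verification for the non-corner generators $V_7,\ldots,V_{12}$ involves the tropical $\A$-mutation for the ``long root" vertex, where the exchange matrix entries include the Cartan factor of $2$. I expect to use the dihedral symmetry of $T$ to reduce the six pairs of adjacent decorations to a single elementary case $(\mu_1)$ and a single elementary case $(\mu_2)$, after which sign-coherence completes the verification.
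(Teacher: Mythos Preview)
Your Stages 1 and 3 match the paper's approach to part (1) quite closely: the paper also identifies the twelve Hilbert-basis classes (\cref{lem:Hilbert_basis}), establishes their sign-coherence for the two unfrozen $\sfx$-coordinates (\cref{lem:triangle_sign_coherence}), and then reduces the mutation check to the two elementary relations $\mu_1,\mu_2$ on the generators, using dihedral symmetry (\cref{lem:reflection_coord}). One small correction: by \cref{fig:exch_triangle}, a single mutation $\mu_1$ or $\mu_2$ changes \emph{both} the distinguished vertex and the sign simultaneously; it is not a pure sign flip or a pure corner change as you describe.

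The real gap is in Stage 2. Your claim that ``the six corner-arc coordinates recover the $k_i,l_i$'' does not make sense as stated: the eight coordinates $\sfa_1,\dots,\sfa_8$ are indexed by quiver vertices (two interior, six on edges), not by corner arcs, and each corner arc contributes nontrivially to many $\sfa_i$ at once (see the rows of \cref{fig:coordinate_corners}). More seriously, the parametrization $(k_1,k_2,k_3,l_1,l_2,l_3,n_1,n_2)$ from \cref{lem:triangle_blad} covers only \emph{one} of the six dihedral positions of the diagram in \cref{fig:pyramid}; the map $\bsfa^{\bD}$ is therefore piecewise linear with six linear pieces, and you must show these pieces glue to a bijection onto all of $\bQ^8$, not just that one piece is injective. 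The paper handles this by a genuine change of coordinates: it introduces the full tuple of tropical $\X$-coordinates $(\sfx_1,\dots,\sfx_8)$ including frozen ones, checks that the linear map $p:(\sfa_i)\mapsto(\sfx_i)$ is invertible over $\bQ$ by writing down the explicit $8\times 8$ inverse (\cref{lem:p-bijective}), and then shows that the PL map $(\sfa_i)\mapsto(\sfx_1,\sfx_2,\pot_1,\pot_2,\pot'_1,\pot'_2,\pot''_1,\pot''_2)$ is bijective (\cref{lem:potential_coordinates}, \cref{prop:a-coord_shape}). In those coordinates the inversion is transparent: $(\sfx_1,\sfx_2)$ selects the chamber and determines $(n_1,n_2)$, and the six potentials equal the $(k_i,l_i)$. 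Without an analogous intermediate step, your direct inversion does not go through.
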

Recall from \cref{fig:exch_triangle} that we have six decorated triangulations of $T$, according to the three choices of a marked point and two choices of a sign. 
Let $m,m',m''$ be the three marked points of $T$ in this counter-clockwise order, where $m$ is shown as the top vertex of $T$ in figures. Let $\rho$ be the cyclic rotation of $T$ such that $m \mapsto m' \mapsto m''$, and $r$ the reflection of $T$ that fixes $m$ and interchanges the other two marked points. Then $\rho$ and $r$ generate a dihedral symmetry group of order $6$. 

\subsubsection{Hilbert basis}
The following consideration is useful for the proof of \cref{lem:triangle_case}. 
In \cref{fig:coordinate_corners,fig:coordinates_elementary}, the coordinates of 12 typical diagrams $W_1,\dots,W_{12}$ on $T$ associated with $\bD=(\tri,m,+)$ are shown. 
The coordinates for $(\tri,m',+)$ and $(\tri,m'',+)$ are obtained by the rotation symmetry $\rho$ in the obvious way.  The coordinates for the $-$ sign are obtained by the reflection symmetry $r$, as follows:
\begin{lem}\label{lem:reflection_coord}
Then for any $L \in \Blad_{\fsp_4,T}$, we have the following:
\begin{enumerate}
    \item $\sfa_i^{(\tri,m,-)}(L)=\sfa_i^{(\tri,m,+)}(L)$ for $i=3,\dots,8$;
    \item $\sfa_i^{(\tri,m,-)}(L)=\sfa_i^{(\tri,m,+)}(r(L))$ for $i=1,2$.
\end{enumerate}
Here we use the labeling of coordinates in the top of \cref{fig:labeling_triangle_case}. The other coordinates for $m',m''$ are related by the corresponding reflections $r':=\rho r \rho^{-1},r'':=\rho^{-1} r \rho$. 
\end{lem}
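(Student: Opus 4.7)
The plan is to carry out a direct component-by-component comparison of the two $\fso_5$-web clusters $\cC_{(\tri,m,+)} = \{V_i^+\}_{i \in I}$ and $\cC_{(\tri,m,-)} = \{V_i^-\}_{i \in I}$, and then invoke the invariance of the intersection pairing under self-homeomorphisms of the marked triangle.

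For (1), inspection of \cref{fig:labeling_triangle_case} shows that the six webs $V_3, V_4, \ldots, V_8$ are corner arcs drawn identically in the $+$ and $-$ pictures (three of type~1 and three of type~2, one of each color on each side of the triangle, independent of the sign decoration). Therefore $V_i^+ = V_i^-$ as classes in $\Blad_{\fso_5,T}^\infty$ for $i = 3,\ldots,8$, and (1) follows immediately from the definition $\sfa_i^{\bD}(L) = \bi_\bSigma(L, V_i^{\bD})$.

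For (2), the interior tripod $V_1^+$ and ladder $V_2^+$ have their type~2 edges incident to the marked point $m' = \rho(m)$, while $V_1^-$ and $V_2^-$ have their type~2 edges incident to $m'' = \rho^{-1}(m)$; all other structural features coincide. Since $r$ is the reflection fixing $m$ and exchanging $m' \leftrightarrow m''$, a visual check confirms that $V_i^- = r(V_i^+)$ as classes in $\Blad_{\fso_5,T}^\infty$ for $i = 1,2$. Combining this with the equivariance $\bi_\bSigma(\phi(L), \phi(V)) = \bi_\bSigma(L,V)$ for any self-homeomorphism $\phi$ of $\bSigma$ preserving $\bM$ setwise (which is clear from the fact that the local intersection indices $\varepsilon_p$ depend only on the colors of the crossing strands, not on the ambient orientation), one obtains
\begin{align*}
\sfa_i^{(\tri,m,-)}(L) = \bi_\bSigma(L, r(V_i^+)) = \bi_\bSigma(r(L), V_i^+) = \sfa_i^{(\tri,m,+)}(r(L)),
\end{align*}
where $r^{-1}=r$ is used in the middle step. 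The assertions for the other two marked points follow by conjugating the above identity with the rotation $\rho$, which realizes the reflections $r', r''$.

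The only point requiring any care is the identification $V_i^- = r(V_i^+)$ for $i=1,2$ at the level of ladder-equivalence classes; this reduces to observing that the two interior vertices of the ladder $V_2^\pm$ can be placed symmetrically with respect to the reflection axis through $m$ without altering the class. No genuine technical obstacle is expected, and the rest of the argument is essentially mechanical.
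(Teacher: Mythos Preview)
Your argument is correct and follows essentially the same approach as the paper: both proofs observe that $V_i^+ = V_i^-$ for $i=3,\dots,8$ and $V_i^- = r(V_i^+)$ for $i=1,2$ by direct inspection of the figure, then invoke the invariance of the intersection pairing under the reflection $r$. Your added remarks (on the use of $r^{-1}=r$ and on the ladder-equivalence identification for $V_2$) are fine but not strictly needed.
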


\begin{proof}
The first assertion is clear, since the $\fso_5$-elementary webs $V_3,\dots,V_8$ are shared by $(\tri,m,+)$ and $(\tri,m,-)$. For the second assertion, observe from that the $\fso_5$-elementary webs $V^-_1,V^-_2$ associated to $(\tri,m,-)$ are the images of those $V^+_1,V^+_2$ associated to $(\tri,m,+)$ under the reflection $r$. Moreover, the intersection pairing is invariant under $r$. Therefore, 
\begin{align*}
    \sfa_i^{(\tri,m,-)}(L) = \bi_{\bSigma}(L,V^-_i) = \bi_{\bSigma}(L,r(V^+_i))) = \bi_{\bSigma}(r(L),V^+_i) = \sfa_i^{(\tri,m,+)}(r(L)) 
\end{align*}
as desired.
\end{proof}

\begin{lem}\label{lem:Hilbert_basis}
The coordinate vectors $\bsfa^\bD(W_\lambda)$ for $\lambda=1,\dots,12$ form the Hilbert basis $\mathscr{H}_{\fsp_4,T}^{\bD}:=\mathscr{H}_{\fsp_4,T}^{\cC_\bD}$ for any decorated triangulation $\bD$.
\end{lem}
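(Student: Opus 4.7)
The strategy has two parts: (i) show that the twelve coordinate vectors $\bsfa^{\bD}(W_\lambda)$ positively generate $\vect_T^{\bD}$ as a sub-monoid of $\left(\frac{1}{2}\bZ_{\geq 0}\right)^{I(\bD)}$, and (ii) show that each of them is irreducible. Once both are established, any irreducible $v \in \vect_T^{\bD}$ decomposes by (i) into a sum of $\bsfa^{\bD}(W_\lambda)$'s; irreducibility forces this sum to have a single term, so $v = \bsfa^{\bD}(W_\lambda)$ for some $\lambda$. Together with (ii), this yields $\mathscr{H}_{\fsp_4,T}^{\bD} = \{\bsfa^{\bD}(W_\lambda)\}_{\lambda=1}^{12}$.

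For (i), I would invoke \cref{lem:triangle_blad}: after fixing one of the six pyramid orientations (the dihedral orbit of the displayed diagram), every reduced $\fsp_4$-diagram on $T$ is parametrized by eight non-negative integers $(k_1,k_2,k_3,l_1,l_2,l_3,n_1,n_2)$. Together with the cabling operations (L1) and (L2) of \cref{def:lamination}, any element of $\cL^a(T,\bQ)$ decomposes as a sum with non-negative rational measures of $k_i$ corner arcs of type~1, $l_i$ corner arcs of type~2, and a pyramid of parameters $(n_1,n_2)$; moreover, each such pyramid further splits as $n_1$ copies of an elementary $(1,0)$-pyramid plus $n_2$ copies of a $(0,1)$-pyramid in that fixed orientation. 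The six corner arcs are precisely $W_1,\dots,W_6$, and ranging over all orientations while identifying duplicates under the dihedral symmetry of $T$, the elementary pyramids exhaust $W_7,\dots,W_{12}$. Since the intersection pairing---hence $\bsfa^{\bD}$---is additive over the component decomposition of a lamination, the twelve coordinate vectors positively generate $\vect_T^{\bD}$.

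For (ii), I would inspect the vectors recorded in \cref{fig:coordinate_corners,fig:coordinates_elementary}. For each $\lambda$ it suffices to check that $\bsfa^{\bD}(W_\lambda) - \bsfa^{\bD}(W_\mu)$ has a negative entry for every $\mu \neq \lambda$; combined with the generation established in (i), this blocks any non-trivial decomposition of $\bsfa^{\bD}(W_\lambda)$ inside $\vect_T^{\bD}$.

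The main obstacle is achieving uniformity across the six choices of $\bD$ and twelve diagrams, especially for the pyramid cases whose coordinate vectors have the largest supports. A cleaner route is via sign-coherence, illustrated schematically in \cref{fig:intro_hexagon}: the cone $\vect_T^{\bD}$ admits a subdivision into six full-dimensional linear sub-cones corresponding to the pyramid orientations, and the twelve generator vectors are precisely the primitive integral vectors along the rays of this fan. Irreducibility then follows from linear extremality within each sub-cone, together with the observation that no decomposition can cross the walls of the fan without forcing a negative coordinate.
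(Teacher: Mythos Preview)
Your generation argument (i) is essentially the paper's proof: the paper invokes \cref{lem:triangle_blad} to write the coordinate vector of an arbitrary class as the displayed nonnegative integer combination
\[
\sum_{i=1}^{3}\big(k_i\,\bsfa^{\bD}(W_{2i-1})+l_i\,\bsfa^{\bD}(W_{2i})\big)+n_1\,\bsfa^{\bD}(W_{10})+n_2\,\bsfa^{\bD}(W_{9}),
\]
and then remarks that the five other dihedral orientations of the pyramid replace $(W_9,W_{10})$ by the remaining adjacent pairs among $W_7,\ldots,W_{12}$.

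There is a concrete problem with your irreducibility criterion in (ii). The check ``$\bsfa^{\bD}(W_\lambda)-\bsfa^{\bD}(W_\mu)$ has a negative entry for every $\mu\neq\lambda$'' is not satisfied. Reading off \cref{fig:coordinates_elementary} in the ordering $(\sfa_1,\ldots,\sfa_8)$ of \cref{fig:labeling_triangle_case}, one finds for instance
\[
\bsfa^{\bD}(W_{10})-\bsfa^{\bD}(W_9)=\bigl(\tfrac12,\,1,\,\tfrac12,\,0,\,0,\,0,\,\tfrac12,\,1\bigr)\geq 0,
\qquad
\bsfa^{\bD}(W_{12})-\bsfa^{\bD}(W_2)=\bigl(1,\,2,\,0,\,0,\,1,\,2,\,0,\,0\bigr)\geq 0.
\]
So the entrywise comparison does not by itself block a decomposition; you would still have to argue that these nonnegative differences (and the analogous ones) fail to lie in $\vect_T^{\bD}$.

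Your fan-based alternative is closer to a workable argument but conflates the two-dimensional $(\sfx_1,\sfx_2)$-fan of \cref{fig:cone_Hilbert} with the eight-dimensional cone $\vect_T^{\bD}$: the six rays only see $W_7,\ldots,W_{12}$ modulo the corner arcs, so ``primitive vectors on the rays'' does not account for all twelve. A correct version projects a putative decomposition $W_\lambda=v_1+v_2$ to the $(\sfx_1,\sfx_2)$-plane, uses that the images lie in closed chambers and sum to the primitive ray vector $v_\lambda$, and then rules out each possibility against the tables. The paper itself stops after establishing generation and writes ``therefore the assertion follows''; irreducibility of the twelve vectors is not argued explicitly there either, and in fact only the generation statement is used downstream in the proof of \cref{lem:triangle_case}~(1).
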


\begin{proof}
We take $\bD=(\tri,m,+)$, and the other cases follow by symmetry. 
Recall that any ladder-class in $\Blad_{\fsp_4,T}$  has a representative as shown in \cref{fig:pyramid}, up to the dihedral symmetry. It is easily verified that its coordinate vector is 
\begin{align}\label{eq:Hilbert_expansion}
    \begin{aligned}
        k_1\bsfa^{\bD}(W_{1})+ l_1 \bsfa^{\bD}(W_{2})+ k_2 \bsfa^{\bD}(W_{3})+ l_2\bsfa^{\bD}(W_{4}) + &k_3 \bsfa^{\bD}(W_{5}) + l_3 \bsfa^{\bD}(W_{6}) \\
    +&n_1 \bsfa^{\bD}(W_{10})+n_2\bsfa^{\bD}(W_{9}).
    \end{aligned}
\end{align}
Its reflected/rotated versions have similar coordinate vectors, where the latter two terms are replaced with other two from \cref{fig:coordinates_elementary}. Therefore the assertion follows. 
\end{proof}
The structure of the Hilbert basis is best viewed in the plane of tropical $\X$-coordinates $(\sfx_1^{\bD},\sfx_2^{\bD})$. \cref{fig:cone_Hilbert} shows the vectors $v_\lambda:=(\sfx_1^{\bD}(W_{\lambda}),\sfx_2^{\bD}(W_{\lambda}))$ for $\lambda=7,\dots,12$. They divide the coordinate plane into six chambers. Any element in $\Blad_{\fsp_4,T}$ belongs to one of them, corresponding to its $3\times 2$ dihedral symmetry. 

\begin{figure}[ht]
    \centering
\begin{tikzpicture}
\draw[->](-3,0) -- (3,0) node[right]{$\sfx_1$};
\draw[->](0,-3) -- (0,3) node[above]{$\sfx_2$};
\draw (0,0) -- (3,-3);
\draw (0,0) -- (1.5,-3);
\begin{scope}[>=latex]
    \draw[->,thick] (0,0) -- (0,1) node[right]{$v_7$};
    \draw[->,thick] (0,0) -- (-1,0) node[above left]{$v_8$};
    \draw[->,thick] (0,0) -- (0,-1) node[left]{$v_9$};
    \draw[->,thick] (0,0) -- (1,-2) node[left]{$v_{10}$};
    \draw[->,thick] (0,0) -- (1,-1) node[above right]{$v_{11}$};
    \draw[->,thick] (0,0) -- (1,0) node[above]{$v_{12}$};
\end{scope}
\end{tikzpicture}
    \caption{The tropical $\X$-coordinates of the Hilbert basis elements $W_{7},\dots,W_{12}$. }
    \label{fig:cone_Hilbert}
\end{figure}
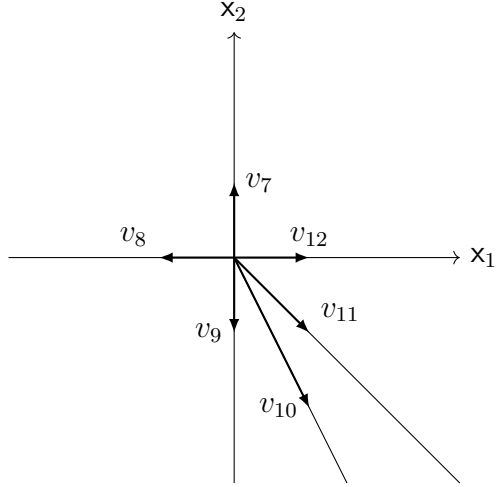

\subsubsection{Mutation-equivalence}

The following is easily verified by inspection into \cref{fig:coordinate_corners,fig:coordinates_elementary}:
\begin{lem}\label{lem:triangle_sign_coherence}
The typical diagrams on $T$ are sign-coherent (\cref{def:sign_coherence}) with each other for the two unfrozen coordinates $\sfx_i^{\bD}$ in any decorated triangulation $\bD$ of $T$. 
\end{lem}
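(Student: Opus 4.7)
The plan is direct verification from the coordinate tables in Figures~\ref{fig:coordinate_corners} and~\ref{fig:coordinates_elementary}, after exploiting the dihedral symmetry of $T$ to reduce the workload. First, I would invoke the symmetry argument: by \cref{lem:reflection_coord} and its cyclic counterpart under $\rho$, the tropical seed data attached to the six decorated triangulations of $T$ are permuted by $S_3 = \langle \rho, r \rangle$ compatibly with the coordinate labels. Consequently it suffices to establish sign-coherence in a single case, which I take to be $\bD = (\tri, m, +)$.

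Next, I would organize the verification using the Hilbert basis and the pyramid normal form. By \cref{lem:triangle_blad}, any class in $\Blad_{\fsp_4, T}$ admits a canonical pyramid representative (Figure~\ref{fig:pyramid}) once one chooses an element of $S_3$, and by the proof of \cref{lem:Hilbert_basis} the coordinate vector $\bsfa^{\bD}$ of such a representative is a $\bZ_{\geq 0}$-combination of exactly eight Hilbert basis vectors as in~\eqref{eq:Hilbert_expansion}: the six corner arcs $W_1, \dots, W_6$ together with two elementary diagrams drawn from $\{W_7, \dots, W_{12}\}$. The six allowed pairs of elementary diagrams, corresponding to the six images of the pyramid under $S_3$, are exactly the pairs of consecutive rays bounding the six chambers displayed in \cref{fig:cone_Hilbert}.

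The main computational step is then to read off the unfrozen $\sfx^{\bD}$-coordinates from the tables and check that, within each of these six admissible packets of eight Hilbert basis vectors, no coordinate index simultaneously takes a strictly positive and a strictly negative value. The corner arcs $W_1, \dots, W_6$ all have $\sfx_1^{\bD} = \sfx_2^{\bD} = 0$ and thus impose no constraint. For the standard packet $\{W_9, W_{10}\}$ the tables give $(\sfx_1, \sfx_2) = (0,-1)$ and $(1,-2)$, which share the sign pattern $(\geq 0, \leq 0)$; the remaining five packets, obtained from the other chambers of \cref{fig:cone_Hilbert}, are handled identically. In fact sign-coherence of each packet is geometrically transparent: the six rays in \cref{fig:cone_Hilbert} are arranged so that any two consecutive rays lie in a common closed half-plane for each coordinate axis.

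The step I expect to be the main obstacle is not computational but bookkeeping: correctly matching each of the six pyramid orientations under $S_3$ with the corresponding pair of elementary Hilbert basis vectors in \cref{fig:cone_Hilbert}, and tracking how $\rho$ and $r$ act on the unfrozen coordinate labels via \cref{lem:reflection_coord}. Once this correspondence is established, the sign-coherence claim reduces to six immediate inspections of the tables.
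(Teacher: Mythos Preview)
Your proposal is correct and follows the same approach as the paper—direct inspection of \cref{fig:coordinate_corners,fig:coordinates_elementary}—though you spell out the symmetry reduction and the chamber-by-chamber check far more explicitly than the paper's one-line ``easily verified by inspection.'' One point worth flagging: what you actually prove (and what the application in the proof of \cref{lem:triangle_case}~(1) requires) is the chamber-wise statement that the Hilbert basis vectors appearing together in any single expansion of the form \eqref{eq:Hilbert_expansion} are pairwise sign-coherent; a literal reading of the lemma as ``all twelve $W_\lambda$ pairwise'' would be false (for instance $\sfx_2^{\bD}(W_7)=1$ while $\sfx_2^{\bD}(W_9)=-1$ for $\bD=(\tri,m,+)$), so your packet-wise interpretation is the intended one, and your geometric observation about consecutive rays in \cref{fig:cone_Hilbert} sharing a closed half-plane for each coordinate axis is exactly the right way to see it.
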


\begin{proof}[Proof of \cref{lem:triangle_case} (1)]
We take $\bD=(\tri,m,+)$ for the initial coordinates. We label the associated coordinates $\bsfa^{\bD}=(\sfa_i)_{i=1}^8$ as shown in the top-left of \cref{fig:labeling_triangle_case}. 
By symmetry, we only need to verify the two mutation formulae from $\bD$:
\begin{align*}
    \mu_1: \quad &\sfa_1^{(\tri,m',-)}(L) + \sfa_1(L)= \max\{ \sfa_2+ \sfa_3, \sfa_4 + \sfa_5 + \sfa_7\}(L), \\
    \mu_2: \quad &\sfa_2^{(\tri,m'',-)}(L) + \sfa_2(L)= \max\{ \sfa_4 + \sfa_6 + 2\sfa_7 , 2\sfa_1 + \sfa_8\}(L),
\end{align*}
where we use the labeling in \cref{fig:labeling_triangle_case}. By \cref{lem:reflection_coord}, these equations are equivalent to
\begin{align}\label{eq:mutation_check}
    \begin{aligned}
    \mu_1: \quad &\sfa_1(r\rho^{-1}(L)) + \sfa_1(L)= \max\{ \sfa_2+ \sfa_3, \sfa_4 + \sfa_5 + \sfa_7\}(L), \\
    \mu_2: \quad &\sfa_2(r\rho(L)) + \sfa_2(L)= \max\{ \sfa_4 + \sfa_6 + 2\sfa_7 , 2\sfa_1 + \sfa_8\}(L).
    \end{aligned}
\end{align}

By \cref{lem:Hilbert_basis}, the coordinate vector of any rational bounded $\fsp_4$-lamination $L$ is of the form \eqref{eq:Hilbert_expansion}, where we now allow the coefficients to be rational, and those in the first row to be negative. 
Observe that the negative coordinates do not affect on the sign-coherence of $L$ with $W_\lambda$, since the $\sfx$-coordinates of $W_\lambda$ with $1 \leq \lambda \leq 6$ are zero. 
Then by \cref{lem:triangle_sign_coherence}, \cref{lem:sign-coherence} (and \cref{rem:sign_coherence}), it suffices to check that the mutation formulae \eqref{eq:mutation_check} holds for each $W_\lambda$. They are easily verified from \cref{fig:coordinate_corners,fig:coordinates_elementary}. 
\end{proof}

\subsubsection{Reconstruction from coordinates}
By \cref{lem:triangle_case} (1), it suffices to show that one of the six coordinate systems, say for $\bD=(\tri,m,+)$, is a bijection. We label the associated coordinates $\bsfa^{\bD}=(\sfa_i)_{i=1}^8$ as shown in the top-left of \cref{fig:labeling_triangle_case}. 

Let us also consider $\bD'=(\tri,m',+)$ and $\bD''=(\tri,m'',+)$, and the associated coordinate systems $\sfa^{\bD'}=(\sfa'_i)_{i=1}^8$, $\sfa_{\bD''}=(\sfa''_i)_{i=1}^8$, together with the same labeling rule for their respective different directions. For example, $\sfa_3=\sfa'_7=\sfa''_5$, $\sfa_4=\sfa'_8=\sfa''_6$ for the frozen variables. \cref{lem:triangle_case} (1) tells us that the coordinates $(\sfa'_1,\sfa'_2,\sfa''_1,\sfa''_2)$ are PL functions of $(\sfa_i)_{i=1}^8$. 
We also write $\pot_s:=\pot_{m,s}$, $\pot'_s:=\pot_{m',s}$, $\pot''_s:=\pot_{m'',s}$ for $s=1,2$.  

Recall the tropical $\X$-coordinates
\begin{align}
    \begin{aligned}
    &\sfx_2 = \sfa_4+\sfa_6+2\sfa_7-2\sfa_1-\sfa_8, \\
    &\sfx_1 = \sfa_2+\sfa_3-\sfa_4-\sfa_5-\sfa_7. \label{eq:trop_X_1}
    \end{aligned}
\end{align}
It is also useful to introduce the following tropical frozen $\X$-coordinates (cf. \cite[Proposition 13.4]{GS19}):

\begin{align}
    \begin{aligned}
    &\sfx_3 := \sfa_4 + \sfa_5 - \sfa_1 - \sfa_3, \\
    &\sfx_4 := 2\sfa_1 -\sfa_2 -\sfa_4, \\
    &\sfx_5 := \sfa_1 -\sfa_3 -\sfa_5, \\
    &\sfx_6 := 2\sfa_5 + \sfa_8 - \sfa_2 -\sfa_6, \\
    &\sfx_7 := \sfa_1 + \sfa_8 - \sfa_2 -\sfa_7, \\
    &\sfx_8 := \sfa_2 - \sfa_6 - \sfa_8. \label{eq:trop_X_2}
    \end{aligned}
\end{align}

\begin{lem}\label{lem:p-bijective}
The linear map $p: \bQ^8 \to \bQ^8$ given by $(\sfa_i)_{i=1}^8 \mapsto (\sfx_i)_{i=1}^8$ is bijective. 
\end{lem}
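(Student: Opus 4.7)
The map $p$ is $\bQ$-linear between $\bQ$-vector spaces of the same dimension $8$, so bijectivity is equivalent to the invertibility of the associated $8 \times 8$ coefficient matrix $M$ defined by $\sfx_i = \sum_j M_{ij}\sfa_j$ via \eqref{eq:trop_X_1}--\eqref{eq:trop_X_2}. The plan is to exhibit the inverse explicitly by back-substitution, thereby showing $\det M \neq 0$ without a direct cofactor expansion.

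Read off $M$ from the eight defining linear forms. The key structural observation is that the six frozen relations \eqref{eq:trop_X_2} behave like a nearly triangular system of ``differences'' of adjacent $\sfa$-coordinates at the three corners: $\sfx_8$ involves only $\sfa_2,\sfa_6,\sfa_8$; $\sfx_7$ then introduces $\sfa_1,\sfa_7$; $\sfx_6$ brings in $\sfa_5$; $\sfx_5$ brings in $\sfa_3$; and $\sfx_4$ brings in $\sfa_4$. Eliminating one variable at each step, one expresses $\sfa_1,\sfa_2,\sfa_3,\sfa_4,\sfa_5$ as $\bQ$-linear combinations of $\sfx_4,\ldots,\sfx_8$ and the three ``free'' variables $\sfa_6,\sfa_7,\sfa_8$.

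At this stage only three linear forms remain: $\sfx_3$ (which, after the above substitutions, becomes a linear relation between $\sfa_6,\sfa_8$ and frozen $\sfx_j$'s) together with the two mutable relations $\sfx_1,\sfx_2$ from \eqref{eq:trop_X_1}. This yields a $3 \times 3$ linear system in $\sfa_6,\sfa_7,\sfa_8$ with rational coefficients, which a short calculation confirms is non-degenerate; one obtains closed-form solutions such as $\sfa_6 = -(\sfx_1+\sfx_2+\sfx_5+\sfx_6+\sfx_7+\sfx_8)$, and similar expressions for $\sfa_7,\sfa_8$. Back-substituting through the chain then recovers $\sfa_1,\ldots,\sfa_5$ as explicit $\bQ$-linear combinations of the $\sfx_j$, producing the desired inverse map.

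The main obstacle is purely bookkeeping — there is no conceptual difficulty. Half-integer coefficients appear (coming from the $2$'s in $\sfx_2,\sfx_4,\sfx_6$), but these are harmless since we work over $\bQ$; bijectivity ultimately reduces to the non-vanishing of a single small determinant extracted from the final $3 \times 3$ block.
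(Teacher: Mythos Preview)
Your approach is correct and amounts to the same linear-algebra fact as the paper's proof: the matrix of $p$ is invertible over $\bQ$. The paper simply writes down the $8\times 8$ matrix of $p$ and then exhibits its inverse $p^{-1}$ explicitly (entries in $\tfrac{1}{2}\bZ$), so invertibility is immediate by inspection. Your back-substitution route is a reorganization of the same computation: you exploit the near-triangular shape of \eqref{eq:trop_X_2} to eliminate $\sfa_1,\dots,\sfa_5$, then solve a residual $3\times 3$ system for $\sfa_6,\sfa_7,\sfa_8$. The sample formula $\sfa_6=-(\sfx_1+\sfx_2+\sfx_5+\sfx_6+\sfx_7+\sfx_8)$ you give is indeed correct (it matches the sixth row of the paper's $p^{-1}$), so the method clearly goes through. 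The only mild criticism is that you assert the $3\times 3$ block is non-degenerate without displaying the check; the paper's version is more self-contained in that it records the full inverse, but there is no substantive difference between the two arguments.
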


\begin{proof}
The matrix presentation of the map $p$ is given by
\begin{align*}
p = 
\begin{pmatrix}
0 & 1 & 1 & -1 & -1 & 0 & -1 & 0 \\
-2 & 0 & 0 & 1 & 2 & -1 & 0 & 1 \\
-1 & 0 & 1 & 0 & 0 & 0 & 1 & 0 \\
2 & -1 & -2 & 1 & 0 & 0 & 0 & 0 \\
1 & -1 & 0 & 0 & 1 & 0 & 0 & 0 \\
0 & 1 & 0 & 0 & -2 & 1 & 0 & -1 \\
1 & 0 & -1 & 0 & 0 & 0 & 1 & -1 \\
0 & -1 & 0 & 0 & 0 & 1 & 0 & 1
\end{pmatrix}.
\end{align*}
It is invertible over $\bQ$, whose inverse matrix is 
\begin{align*}
p^{-1} = 
-\begin{pmatrix}
2 & \frac{3}{2} & 1 & \frac{1}{2} & 1 & \frac{1}{2} & 1 & 1 \\
2 & 2 & 1 & 1 & 2 & 1 & 1 & 1 \\
1 & \frac{1}{2} & 1 & \frac{1}{2} & 0 & 0 & 1 & \frac{1}{2} \\
2 & 1 & 1 & 1 & 0 & 0 & 1 & 1 \\
1 & \frac{1}{2} & 1 & \frac{1}{2} & 1 & \frac{1}{2} & 0 & 0 \\
1 & 1 & 1 & 1 & 1 & 1 & 0 & 0 \\
1 & 1 & 0 & 0 & 1 & \frac{1}{2} & 1 & \frac{1}{2} \\
1 & 1 & 0 & 0 & 1 & 1 & 1 & 1
\end{pmatrix}.
\end{align*}    
\end{proof}

\begin{lem}\label{lem:potential_coordinates}
We have
\begin{align*}
    \pot_{2} =& \min\{ \sfa_1+\sfa_3-\sfa_4-\sfa_5,\ \sfa_1+\sfa_7-\sfa_2\}, \\
    \pot_{1} =& \min\{ \sfa_2+\sfa_4-2\sfa_1,\ \sfa_2+\sfa_8-\sfa_6-2\sfa_7\}, \\
    \pot'_{2} =& \sfa_3+\sfa_5-\sfa_1, \\
    \pot'_{1} =& \min\{ \sfa_2+\sfa_6-2\sfa_5-\sfa_8,\ 2\sfa_1+\sfa_2 -\sfa_4-2\sfa_5-2\sfa_7, \\
    &\qquad\qquad 2\sfa_1-\sfa_3-\sfa_5-\sfa_7,\ 2\sfa_1+\sfa_4-\sfa_2-2\sfa_3\}, \\
    \pot''_{2} =& \min\{ \sfa_2+\sfa_7-\sfa_1-\sfa_8,\ \sfa_1+\sfa_2-\sfa_4-\sfa_6-\sfa_7,\ \sfa_1+\sfa_5-\sfa_3-\sfa_6\}, \\
    \pot''_{1} =& \sfa_6+\sfa_8-\sfa_2
\end{align*}
as functions on $\cL^a(T,\bQ)$. 
\end{lem}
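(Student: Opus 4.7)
The plan is to verify each of the six claimed identities by reducing to a finite case-check on the Hilbert basis of $\cL^a(T,\bQ)$, and then extending to the full space via sign-coherence.

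First, since $T$ carries a single triangle, \cref{lem:potential_local} identifies each $\pot_m^s$ with the local potential $\pot_{m;T}^s$, which simply counts the total measure of corner arcs of type $s$ at the given marked point in the representative of \cref{fig:pyramid}. In the Hilbert basis decomposition of \cref{lem:Hilbert_basis}, this gives $\pot_m^1(W_1) = 1$, $\pot_m^2(W_2) = 1$, and analogous identities for the corner arcs $W_3, W_4$ at $m'$ and $W_5, W_6$ at $m''$, while $\pot_m^s(W_\lambda) = 0$ for $\lambda \in \{7, \dots, 12\}$ since the tree-type Hilbert basis elements have no peripheral components. Across the six dihedral chambers of $\cL^a(T,\bQ)$, the potential functions are thus explicit linear functionals of the Hilbert basis coordinates.

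Second, I would directly evaluate each linear argument appearing in the claimed $\min$-expressions on the twelve Hilbert basis generators using the coordinate tables in \cref{fig:coordinate_corners,fig:coordinates_elementary}, and verify that the claimed $\min$ matches the potential in every case. For example, on $W_1$ the two arguments of $\pot_1$ are $\sfa_2 + \sfa_4 - 2\sfa_1 = 1 + 1 - 1 = 1$ and $\sfa_2 + \sfa_8 - \sfa_6 - 2\sfa_7 = 1 + 1 - 0 - 1 = 1$, both matching $\pot_m^1(W_1) = 1$; on the tripod $W_7$ one finds $\sfa_1 + \sfa_3 - \sfa_4 - \sfa_5 = 0$ and $\sfa_1 + \sfa_7 - \sfa_2 = 0$, consistent with $\pot_m^2(W_7) = 0$. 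Each argument of a $\min$ in the statement equals a tropical $\X$-coordinate (possibly frozen) up to a linear combination of the rest, cf.~\eqref{eq:trop_X_1}--\eqref{eq:trop_X_2}, so by \cref{lem:triangle_sign_coherence} these arguments are sign-coherent against the Hilbert basis.

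Third, since both sides of each identity are piecewise linear, and the sign-coherence guarantees that within each dihedral chamber exactly one argument of the $\min$ is minimal, both sides become genuinely linear on each chamber. By \cref{lem:sign-coherence} (together with \cref{rem:sign_coherence}), the identity on each chamber is implied by its validity on the chamber's generating Hilbert basis elements, which I verify individually. Finally, for the potentials $\pot'_s, \pot''_s$ at $m', m''$, I apply \cref{lem:reflection_coord} and the rotational symmetry $\rho$ to transport the analogous local formulas to coordinates based at $m$; the resulting more intricate $\min$-expressions (up to four arguments) reflect that the quiver $Q^{\bD}$ is not symmetric under rotation around $T$.

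The main obstacle is combinatorial bookkeeping: identifying which argument of each $\min$ is dominant on each of the six dihedral chambers, and correctly accounting for the non-symmetric way the coordinates $(\sfa_i)$ at $\bD = (\tri,m,+)$ detect corner arcs at $m', m''$. Once the correct arguments are written down and sign-coherence against the Hilbert basis is confirmed, the verification reduces to a finite linear algebra check on twelve generators.
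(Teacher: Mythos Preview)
Your approach is correct and essentially the same as the paper's: both reduce to a finite check on the twelve Hilbert basis elements by observing that the left- and right-hand sides are linear on each of the six dihedral chambers. The paper streamlines the linearity argument by first rewriting the asserted $\min$-expressions in terms of the tropical $\X$-coordinates $(\sfx_i)$ (see \eqref{eq:potential_X}), which makes it immediate that the differences between successive arguments are exactly $\sfx_1,\sfx_2,\sfx_1+\sfx_2,2\sfx_1+\sfx_2$ and hence sign-definite on each chamber of \cref{fig:cone_Hilbert}; your invocation of \cref{lem:sign-coherence} reaches the same conclusion, though note that lemma is stated specifically for tropical mutation relations---what you are really using is the underlying elementary fact that a minimum of linear forms is linear on any cone where their pairwise differences have fixed sign.
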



\begin{proof} 
In terms of the tropical $\X$-coordinates, 
the asserted equations are rewritten as 
\begin{align}
    \begin{aligned}
    \pot_{2} =& -\sfx_3 + \min\{ 0,-\sfx_1\}, \\
    \pot_{1} =& -\sfx_4 + \min\{ 0, -\sfx_2 \}, \\
    \pot'_{2} =& -\sfx_5, \\
    \pot'_{1} =& -\sfx_6 + \min\{ 0, -\sfx_2, -(\sfx_1+\sfx_2), -(2\sfx_1+\sfx_2)\}, \\
    \pot''_{2} =& -\sfx_7 + \min\{0, -\sfx_2, - (\sfx_1+\sfx_2)\}, \\
    \pot''_{1} =& -\sfx_8.   \label{eq:potential_X} 
    \end{aligned} 
\end{align}
In particular, they are linear on each of the six chambers in \cref{fig:cone_Hilbert}. Therefore it suffices to check the equations for each element in the Hilbert basis. Then they are easily verified from \cref{fig:coordinate_corners,fig:coordinates_elementary}.
\end{proof}

\begin{prop}\label{prop:a-coord_shape}
The PL map $\bQ^8 \to \bQ^8$, $(\sfa_i)_{i=1}^8 \mapsto (\sfx_1,\sfx_2,\pot_1,\pot_2,\pot'_1,\pot'_2,\pot''_1,\pot''_2)$ is bijective. Here the $\pot$-functions are regarded as PL functions of $(\sfa_i)$ by the expressions in \cref{lem:potential_coordinates}. 
\end{prop}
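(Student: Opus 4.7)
The plan is to factor the PL map in question through the linear bijection $p$ of \cref{lem:p-bijective}, using the explicit expressions for the potentials in terms of the tropical $\X$-coordinates that were derived in the proof of \cref{lem:potential_coordinates}.

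First, I would record the identities (already established in the proof of \cref{lem:potential_coordinates})
\begin{align*}
\pot_{1} &= -\sfx_4 + \min\{0,-\sfx_2\}, &
\pot_{2} &= -\sfx_3 + \min\{0,-\sfx_1\}, \\
\pot'_{1} &= -\sfx_6 + \min\{0,-\sfx_2,-(\sfx_1+\sfx_2),-(2\sfx_1+\sfx_2)\}, &
\pot'_{2} &= -\sfx_5, \\
\pot''_{1} &= -\sfx_8, &
\pot''_{2} &= -\sfx_7 + \min\{0,-\sfx_2,-(\sfx_1+\sfx_2)\}.
\end{align*}
These express each of the six potential functions as $-\sfx_j$ (for a distinct $j\in\{3,4,5,6,7,8\}$) plus a PL function depending only on $(\sfx_1,\sfx_2)$.

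Next, I would define the auxiliary PL map $G:\bQ^8\to\bQ^8$ given in the $\sfx$-coordinates by
\[(\sfx_1,\ldots,\sfx_8)\longmapsto(\sfx_1,\sfx_2,\pot_1,\pot_2,\pot'_1,\pot'_2,\pot''_1,\pot''_2),\]
using the identities above to evaluate the last six slots. By construction, $G$ is the identity in the first two coordinates, while for fixed $(\sfx_1,\sfx_2)$ the map on the remaining six coordinates is the affine involution that sends $\sfx_j$ to $-\sfx_j + c_j(\sfx_1,\sfx_2)$ for suitable PL constants $c_j(\sfx_1,\sfx_2)$. Such a lower-triangular PL map is manifestly a bijection, with explicit inverse obtained by solving for $\sfx_j$ in each row.

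Finally, I would observe that the PL map appearing in the proposition factors as $G\circ p$, where $p:\bQ^8\to\bQ^8$, $(\sfa_i)_{i=1}^8\mapsto(\sfx_i)_{i=1}^8$, is the linear bijection of \cref{lem:p-bijective}. Since both $G$ and $p$ are bijections of $\bQ^8$, so is their composition, which completes the proof. There is no essential obstacle here; the argument is a routine assembly of \cref{lem:p-bijective} and \cref{lem:potential_coordinates}, whose content lies precisely in the triangular shape revealed by passing to the $\sfx$-coordinates.
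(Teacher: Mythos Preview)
Your proof is correct and follows essentially the same approach as the paper: both arguments pass to the $\sfx$-coordinates via the linear bijection $p$ of \cref{lem:p-bijective} and then exploit the triangular structure of the identities \eqref{eq:potential_X}, namely that each potential equals $-\sfx_j$ plus a PL function of $(\sfx_1,\sfx_2)$ alone. The paper phrases the inverse construction concretely by first locating the chamber of $(\sfx_1,\sfx_2)$ and then reading off $\sfx_3,\dots,\sfx_8$, whereas you package the same idea more cleanly as the factorization $G\circ p$ with $G$ lower-triangular; these are the same argument in slightly different dress.
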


\begin{proof}
Let $(\sfx_1,\sfx_2,\pot_1,\pot_2,\pot'_1,\pot'_2,\pot''_1,\pot''_2) \in \bQ^8$ be given. We first specify one of the chambers in \cref{fig:cone_Hilbert} which $(\sfx_1,\sfx_2)$ belongs to. Then by \eqref{eq:potential_X}, we know that the $\pot$-functions are linear functions of $(\sfx_i)_{i=1}^8$. From these expressions, we can read off the values of $(\sfx_i)_{i=1}^8$. Then by \cref{lem:p-bijective}, we get the tropical $\A$-coordinates. 
\end{proof}

\begin{proof}[Proof of \cref{lem:triangle_case} (2)]
Given a coordinate tuple $(\sfa_i)_{i=1}^8 \in \bQ^8$, we can bijectively transform it into the coordinate tuple $(\sfx_1,\sfx_2,\pot_1,\pot_2,\pot'_1,\pot'_2,\pot''_1,\pot''_2) \in \bQ^8$ by \cref{prop:a-coord_shape}. By \cref{lem:potential_coordinates}, the six $\pot$-functions uniquely determine the parameters $(k_i,l_i)_{i=1}^3$ in \cref{fig:pyramid}. Moreover, the two parameters $(\sfx_1,\sfx_2)$ uniquely determine the direction of the honeycomb and its parameters $(n_1,n_2)$. For example, if $(\sfx_1,\sfx_2)$ lies in the chamber bounded by the rays of $v_9$ and $v_{10}$, then $\sfx_1 \geq$ and $2\sfx_1+\sfx_2 \leq 0$ and the corresponding diagram is in the direction of \cref{fig:pyramid}, and the explicit relations are
\begin{align*}
    \sfx_1 &= n_1, \quad \sfx_2 = -n_2-2n_1, \\
    \pot_1 &= k_1, \quad \pot_2 = l_1, \quad \pot'_1 = k_2, \quad \pot'_2 = l_2, \quad \pot''_1 = k_3, \quad \pot''_2 = l_3.
\end{align*}
The other cases are simiar. 
Thus the assertion is proved. 
\end{proof}

\subsubsection{Integral points}
For half-integers $x,y \in \frac 1 2\bZ$, let us write $x \equiv y$ if $x-y \in \bZ$. 
\begin{thm}\label{lem:congruence}
In terms of the coordinate systems associated with $\bD=(\tri,m,+)$, the subset $\cL^a(T,\bZ) \subset \cL^a(T,\bQ)$ of integral laminations is characterized either by 
\begin{description}
    \item[Integrality condition] $\sfx_i \in \bZ$ for $i=1,\dots,8$, or
    \item[Congruence condition] $\sfa_1 \equiv \sfa_5$, $\sfa_2 \equiv \sfa_4 \equiv \sfa_6 \equiv \sfa_8 \equiv 0$, $\sfa_1+\sfa_3+\sfa_7 \equiv 0$.
\end{description}
In particular, the subset $\Blad_{\fsp_4,T} \subset \cL^a(T,\bZ)$ is characterized by these conditions together with
\begin{description}
    \item[Potential condition] $\pot_{s},\pot'_s,\pot''_s\geq 0$ for $s=1,2$.
\end{description} 
\end{thm}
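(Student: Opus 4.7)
The plan is to exploit the bijection $\bsfa^{\bD}: \cL^a(T,\bQ) \xrightarrow{\sim} \bQ^8$ established in \cref{lem:triangle_case}~(2), combined with the explicit Hilbert basis description from \cref{lem:Hilbert_basis} and the PL formulas already obtained for the tropical $\sfx$-coordinates and for the peripheral potentials.

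\textbf{Step 1: Integrality $\Leftrightarrow$ Congruence.} This is purely linear-algebraic on $\bQ^8$. The map $p\colon(\sfa_i)\mapsto(\sfx_i)$ is a $\bQ$-linear bijection by \cref{lem:p-bijective}. Reading off the explicit inverse $p^{-1}$ displayed there, one sees that $\sfa_2,\sfa_4,\sfa_6,\sfa_8$ are integer combinations of the $\sfx_j$, while the half-integer contributions to $\sfa_1,\sfa_3,\sfa_5,\sfa_7$ are arranged so that $\sfa_1-\sfa_5$ and $\sfa_1+\sfa_3+\sfa_7$ are still integer combinations. Conversely, each $\sfx_i$ is an integer linear combination of the $\sfa_j$ in which the offending denominators $2$ of the $\sfa$'s pair up. Hence $\sfx_i\in\bZ$ for all $i$ is equivalent to the stated Congruence condition.

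\textbf{Step 2: $\bsfa^\bD(\cL^a(T,\bZ))$ is the congruence sublattice.} For the inclusion $\subseteq$, \cref{lem:triangle_blad} says that any integer lamination decomposes (after allowing negative multiplicities on the peripheral corner arcs, which is permitted in $\cL^a(T,\bZ)$) as an integer combination of the twelve Hilbert basis elements $W_\lambda$. A direct inspection of the twelve vectors in \cref{fig:coordinate_corners,fig:coordinates_elementary} verifies that each $\bsfa^\bD(W_\lambda)$ satisfies the Congruence condition, so any integer combination does too. For the inclusion $\supseteq$, given a tuple $(\sfa_i)\in\tfrac12\bZ^8$ satisfying Congruence, I would run the reconstruction used in the proof of \cref{lem:triangle_case}~(2): first compute the $\sfx$-coordinates and the potentials, then identify the chamber of $(\sfx_1,\sfx_2)$ in \cref{fig:cone_Hilbert}, then read off $(n_1,n_2)$ as an integer combination of $(\sfx_1,\sfx_2)$ and $(k_i,l_i)$ from the six potential values given by \cref{lem:potential_coordinates}. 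Since in each chamber the relations between $(\sfa_i)$, $(\sfx_i)$, $\pot^s$, and the geometric parameters are $\bZ$-linear, the Congruence (equivalently Integrality) hypothesis forces $(k_i,l_i,n_1,n_2)\in\bZ$; the conditions $n_1,n_2\geq 0$ are automatic from the choice of chamber. This realizes the tuple as an integer lamination.

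\textbf{Step 3: Potential condition.} The characterization of $\Blad_{\fsp_4,T}\subset\cL^a(T,\bZ)$ is then immediate from \cref{lem:lamination_Blad}, which identifies $\Blad_{\fsp_4,\bSigma}$ as the integer laminations whose peripheral components have non-negative multiplicities, equivalently $\pot_m^s(L)\geq 0$ for every marked point $m$ and every $s\in\{1,2\}$. On $T$ these six potentials are PL functions of the $\sfa_i$ by \cref{lem:potential_coordinates}, giving the stated Potential condition.

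The main obstacle is the finite inspection in Step 2: one must verify the Congruence condition for each of the twelve Hilbert basis vectors (modulo dihedral symmetry, only a few distinct congruence patterns) and, going the other way, check that the reconstruction in each of the six chambers uses only integer linear relations. Everything else is linear-algebraic bookkeeping using the matrices $p$ and $p^{-1}$ and the explicit PL formulas for $\pot^s_m$.
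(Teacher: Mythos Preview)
Your proposal is correct and follows essentially the same approach as the paper's proof: both rely on the reconstruction argument from \cref{lem:triangle_case}~(2) to match integer laminations with integer geometric parameters $(n_1,n_2,k_i,l_i)$, then pass through the tuple $(\sfx_1,\sfx_2,\pot_s,\pot'_s,\pot''_s)$ via \eqref{eq:potential_X} to get the Integrality condition, and finally check the equivalence with the Congruence condition using the explicit matrices $p$ and $p^{-1}$ from \cref{lem:p-bijective}. Your Step~2 inclusion $\subseteq$ via checking the twelve Hilbert basis vectors is a slight reorganization of the paper's direct parameter-to-coordinate argument, but amounts to the same verification.
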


\begin{proof}
Recall that any integral $\fsp_4$-lamination has a representative as shown in \cref{fig:pyramid}. Moreover, the parameters $(n_1,n_2,k_1,l_1,k_2,l_2,k_3,l_3) \in \bZ_{\geq 0}^2 \times \bZ^4$ corresponds to $(\sfx_1,\sfx_2,\pot_1,\pot_2,\pot'_1,\pot'_2,\pot''_1,\pot''_2) \in \bZ^6$, the latter lying in one of the six chambers in \cref{fig:cone_Hilbert}. The relation \eqref{eq:potential_X} shows that the parameters  $(\sfx_1,\sfx_2,\pot_1,\pot_2,\pot'_1,\pot'_2,\pot''_1,\pot''_2)$ are integral if and only if $(\sfx_i)_{i=1}^8$ is integral. Therefore we get the first characterization. 

Using \eqref{eq:trop_X_1} and \eqref{eq:trop_X_2}, 
it is straightforward to verify the equivalence between the integrality and the congruence relations. 
\end{proof}

\begin{rem}
The integrality and congruence conditions are also equivalent to the condition that each term in the potentials $\pot_s,\pot_s',\pot_s''$ is an integer. 
\end{rem}

\begin{dfn}\label{def:Blad_vect}
\begin{enumerate}
    \item Let $\vect_{T,\bD} \subset (\frac 1 2 \bZ)^8$ denote the convex subset of half-integral vectors satisfying the three conditions in \cref{lem:congruence}, so that we have the coordinate bijection
\begin{align}
    \bsfa^{\bD}: \Blad_{\fsp_4,T} \xrightarrow{\sim} \vect_{T,\bD}. 
\end{align}
    \item Let $\widetilde{\vect}_{T,\bD} \subset (\frac 1 2 \bZ)^8$ denote the linear subset defined only by the Integrality condition and the Congruence condition, so that we have the coordinate bijection $\cL^a(T,\bZ) \xrightarrow{\sim} \widetilde{\vect}_{T,\bD}$. 
\end{enumerate}
\end{dfn}

\begin{cor}
We have the following diagram:
\begin{equation*}
\begin{tikzcd}
    & \cL^a(T,\bQ) \ar[r,"\bsfa^\bD"] & \bQ^8 \\
\Blad_{\fsp_4,T} \ar[r,phantom,"\subset"] & \cL^a(T,\bZ) \ar[u,phantom,"\rotatebox{90}{$\subset$}"] \ar[r] &   \widetilde{\vect}_{T,\bD} \ar[u,phantom,"\rotatebox{90}{$\subset$}"] & \vect_{T,\bD} \ar[l,phantom,"\supset"]\\
     & \cL^a(T,\bZ)_\congr \ar[u,phantom,"\rotatebox{90}{$\subset$}"] \ar[r] & \bZ^8. \ar[u,phantom,"\rotatebox{90}{$\subset$}"]
\end{tikzcd}
\end{equation*}
Here, the horizontal arrows are bijections. In particular, $\cL^a(T,\bZ)_\congr$ is identified with the tropical cluster variety $\A_{\fsp_4,T}(\bZ^\sfT)$.
\end{cor}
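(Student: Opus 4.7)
The plan is to assemble the diagram by combining \cref{lem:triangle_case} with the coordinate-theoretic characterizations of the various distinguished subsets provided by \cref{lem:congruence}, and then to handle the bottom row by a mutation-integrality argument.

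First, the top horizontal bijection $\bsfa^\bD: \cL^a(T,\bQ) \xrightarrow{\sim} \bQ^8$ is precisely \cref{lem:triangle_case}~(2). Restricting to integer laminations, \cref{lem:congruence} characterizes $L\in\cL^a(T,\bZ)$ by the simultaneous Integrality and Congruence conditions on $\bsfa^\bD(L)$; since $\widetilde{\vect}_{T,\bD}$ is defined by exactly those two conditions (\cref{def:Blad_vect}~(2)), the middle horizontal arrow is a bijection. Restricting further to $\Blad_{\fsp_4,T}$, \cref{lem:congruence} adds the Potential condition, and this cuts out $\vect_{T,\bD}$ by \cref{def:Blad_vect}~(1), yielding the bijection $\Blad_{\fsp_4,T}\xrightarrow{\sim}\vect_{T,\bD}$ on the left of the middle row.

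For the bottom horizontal arrow, I would argue as follows. By \cref{def:congruent}, $L\in\cL^a(T,\bZ)_\congr$ iff $\bsfa^{\bD'}(L)\in\bZ^{I(\bD')}$ for every decorated triangulation $\bD'$. By \cref{lem:triangle_case}~(1), the coordinate systems $\bsfa^{\bD'}$ are mutually related by tropical cluster $\A$-transformations, which are PL maps with integer coefficients and integer ``tropical sign'' pieces; in particular they restrict to bijections of $\bZ^{I(\bD)}$ onto $\bZ^{I(\bD')}$. Hence the defining condition collapses to the single requirement $\bsfa^\bD(L)\in\bZ^8$, and the top bijection restricts to a bijection $\cL^a(T,\bZ)_\congr\xrightarrow{\sim}\bZ^8$. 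The vertical inclusions in the diagram are then immediate from the definitions: $\cL^a(T,\bZ)\subset\cL^a(T,\bQ)$ and $\cL^a(T,\bZ)_\congr\subset\cL^a(T,\bZ)$ by definition, $\vect_{T,\bD}\subset\widetilde{\vect}_{T,\bD}$ because the former imposes the additional Potential condition, and $\bZ^8\subset\widetilde{\vect}_{T,\bD}$ by direct inspection: integer $\sfa$-coordinates force all integer combinations $\sfx_i$ to be integers and trivially satisfy every congruence in the list of \cref{lem:congruence}.

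Finally, the identification $\cL^a(T,\bZ)_\congr\cong \A_{\fsp_4,T}(\bZ^\sfT)$ will follow from the same mutation-equivalence: the integral tropical $\A$-variety is by construction the set of families of integer vectors $(\bsfa^{\bD'})_{\bD'}$ compatible with all tropical cluster $\A$-transformations within the mutation class $\sfs(\fsp_4,\bSigma)$, and this is exactly the structure endowed on $\cL^a(T,\bZ)_\congr$ by \cref{lem:triangle_case}~(1) together with the bottom bijection just constructed. The main (and essentially only) technical point in the whole argument is this integrality-preservation under tropical mutations, which is standard, so I do not expect a real obstacle; everything else is unpacking the definitions of $\vect_{T,\bD}$, $\widetilde{\vect}_{T,\bD}$, $\cL^a(T,\bZ)$ and $\cL^a(T,\bZ)_\congr$ against \cref{lem:triangle_case} and \cref{lem:congruence}.
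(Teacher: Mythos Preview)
Your proposal is correct and follows the natural line of reasoning. The paper states this corollary without an explicit proof, treating it as an immediate consequence of \cref{lem:triangle_case}, \cref{lem:congruence}, and \cref{def:Blad_vect}; your argument fills in precisely those details, including the one substantive point the paper leaves implicit, namely that the tropical $\A$-mutation formula \eqref{eq:mutation_tropical} has integer coefficients (since $\ve_{kj}\in\bZ$ whenever $k\in I_\uf$) and therefore preserves $\bZ^8$, so that integrality at a single $\bD$ propagates to all six decorated triangulations of $T$.
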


\begin{rem}\label{rem:potential_vector}
Let
\begin{align*}
    \mathsf{v}_{m,1} :=& \textstyle(\frac 1 2, 1, \frac 1 2, 1, 0, 0, \frac 1 2, 1), \\
    \mathsf{v}_{m,2} :=& \textstyle(1,1,1,1,0,0,1,1), \\
    \mathsf{v}_{m',1} :=& \textstyle(1, 1, \frac 1 2, 1, \frac 1 2, 1, 0, 0), \\
    \mathsf{v}_{m',2} :=& \textstyle(1,1,1,1,1,1,0,0), \\
    \mathsf{v}_{m'',1} :=& \textstyle(\frac 1 2, 1, 0, 0, \frac 1 2, 1, \frac 1 2, 1), \\
    \mathsf{v}_{m'',2} :=& \textstyle(1,2,0,0,1,1,1,1)
\end{align*}
be the coordinate vectors of the corner arcs listed in \cref{fig:coordinate_corners}. Then by definition of the peripheral action $\alpha_\bM$ (\cref{subsec:peripheral_action}), we have 
\begin{align*}
    \bsfa^{\bD}(\alpha_\bM(u,L))) = \bsfa^{\bD}(L) +  \sum_{s=1,2} (u_{m}^s \mathsf{v}_{m}^s + u_{m'}^s \mathsf{v}_{m'}^s + u_{m''}^s \mathsf{v}_{m''}^s)
\end{align*}
for $u=(u_m^s,u_{m'}^s,u_{m''}^s) \in (\bQ^2)^3$ and $L \in \cL^a(T,\bQ)$. Observe that for a linear combination coming from the coroot lattice $\mathsf{Q}^\vee$ (\cref{rem:Cartan_action}), for example $-2\mathsf{v}_m^1 + 2\mathsf{v}_m^2$ and $2\mathsf{v}_m^1 -\mathsf{v}_m^2$, are integral vectors.
\end{rem}






\section{Reconsruction via graded skein algebras}\label{sec:reconstruction}

\subsection{Filtered and graded modules}
We note some facts on a filtered $\cR$-module and its associated graded $\cR$-module over a commutative ring $\cR$.
Let $M=\bigcup_{a\in\bZ_{\geq 0}}\cF_{a}M$ be a filtered $\cR$-algebra and $\cG M=\bigoplus_{a\in\bZ_{\geq 0}}\cG_{a}M$ the associated graded algebra of $M$, where $\cG_{a}M\coloneqq\cF_{a}M/\cF_{a-1}M$ and $\cF_{-1}M=\{0\}$.
We call the projection $\pi_{a}\colon\cF_{a}M\to\cG_{a}M$ the \emph{$a$-th symbol map} and define a map $\pi\colon M\to\cG M$ by $\pi(u)=\pi_{a}(u)$ for $a\in\cF_{a}M\setminus\cF_{a-1}M$.
We remark that $\pi$ is not an additive group homomorphism on $M$.
\begin{lem}\label{lem:fil-gr-isom}
    If $M$ is a free $\cR$-module with basis $X=\bigsqcup_{a\in\bZ_{\geq 0}} X_a$ such that $\bigsqcup_{a=0}^{n} X_a$ gives a basis of $\cF_{n}M$ for each $n \in \bZ_{\geq 0}$, then there exists a canonical $\cR$-module isomorphism $\eta^{X}\colon\cG M \to M$ defined by $\eta^{X}(\pi(x))=x$ for any $x\in X$ and $a\in\bZ_{\geq 0}$.
\end{lem}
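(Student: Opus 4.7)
The plan is to prove this by showing that the set $\{\pi(x) : x \in X\}$ forms an $\cR$-basis of $\cG M$ compatible with the grading, so that sending each $\pi(x)$ to $x$ extends $\cR$-linearly to an isomorphism.

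First I would verify that for each $a \in \bZ_{\geq 0}$, the image $\{\pi_a(x) : x \in X_a\}$ is an $\cR$-basis of $\cG_a M = \cF_a M/\cF_{a-1} M$. Since $\bigsqcup_{b=0}^{a} X_b$ is a basis of $\cF_a M$ and $\bigsqcup_{b=0}^{a-1} X_b$ is a basis of $\cF_{a-1} M$, the short exact sequence
\begin{equation*}
0 \to \cF_{a-1} M \to \cF_a M \to \cG_a M \to 0
\end{equation*}
splits canonically with respect to these compatible bases, and the image of $X_a$ under $\pi_a$ provides a basis for the quotient.

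Next I would take the direct sum over $a$ to conclude that $\{\pi(x) : x \in X\} = \bigsqcup_{a \geq 0} \{\pi_a(x) : x \in X_a\}$ is an $\cR$-basis of $\cG M = \bigoplus_a \cG_a M$. Then I define $\eta^X$ by specifying $\eta^X(\pi(x)) = x$ for $x \in X$ and extending $\cR$-linearly. Since $\eta^X$ sends an $\cR$-basis of $\cG M$ bijectively to the $\cR$-basis $X$ of $M$, it is an $\cR$-module isomorphism.

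There is no real obstacle here; the statement is essentially a standard observation in filtered linear algebra. The only thing to be careful about is distinguishing the map $\pi$ (which is not $\cR$-linear on all of $M$, only on each filtration piece modulo the next) from the well-defined $\cR$-linear map $\eta^X$ determined by basis elements. Canonicity of $\eta^X$ refers to its dependence on the choice of the graded basis $X$, not on further choices.
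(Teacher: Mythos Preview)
Your proof is correct and follows essentially the same approach as the paper: both arguments establish that $\{\pi_a(x) : x \in X_a\}$ is an $\cR$-basis of $\cG_a M$ (the paper phrases this as $\cG_a M \cong \mathrm{span}_{\cR} X_a$), then define $\eta^X$ degree-wise by sending this basis back to $X_a \subset \cF_a M$. Your version is slightly more explicit about the short exact sequence and the passage to the direct sum, but there is no substantive difference.
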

\begin{proof}
    There exists an $\cR$-module isomorphism $\cG_{a}M=\cF_{a}M/\cF_{a-1}M\cong\mathrm{span}_{\cR}X_a$.
    We define a section $\eta^{X}_{a}\colon\cG_{a}M \to \cF_{a}M$ through this isomorphism as $\eta_{a}^{X}(\pi_{a}(x))=x$ for any $x\in X_{a}$.
    Then $\eta^{X}(v)=\eta^{X}_{a}(v)$ for $v\in\cG_{a}M$ gives an $\cR$-module isomorphism.
\end{proof}
We remark that this isomorphism depends on a choice of the basis $X$.
It is easy to see that any lift of a basis of $\cG M$ becomes a basis of $M$.
\begin{lem}\label{lem:lifting-basis}
    For any $a\in\bZ_{\geq 0}$, let $Y_{a}$ be a basis of $\cG_{a}M$ and $\eta_{a}\colon\cG_{a}M\to\cF_{a}M$ a section of $\pi_{a}$.
    Then, $\eta_{a}(Y_{a})$ is a basis of $\cF_{a}M$.
\end{lem}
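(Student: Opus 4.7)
The plan is to combine the linear independence of lifted basis elements with a short-exact-sequence splitting, and to iterate via induction on $a$. The base case $a=0$ is immediate because $\cF_{-1}M=0$, so $\pi_0 \colon \cF_0 M \to \cG_0 M$ is an isomorphism and $\eta_0(Y_0)$ is straightforwardly a basis of $\cF_0 M$.

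For the inductive step, linear independence of $\eta_a(Y_a)$ inside $\cF_a M$ is a one-line verification: any finite $\cR$-linear relation $\sum_i c_i\eta_a(y_i)=0$ in $\cF_a M$ projects under $\pi_a$, using $\pi_a\circ\eta_a=\mathrm{id}_{\cG_a M}$, to the relation $\sum_i c_i y_i=0$ in $\cG_a M$, which forces every $c_i$ to vanish because $Y_a$ is a basis. For spanning, the short exact sequence
\begin{equation*}
0 \to \cF_{a-1}M \to \cF_a M \xrightarrow{\pi_a} \cG_a M \to 0
\end{equation*}
is split by $\eta_a$, so as $\cR$-modules $\cF_a M = \cF_{a-1}M \oplus \eta_a(\cG_a M)$. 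Any $m\in\cF_a M$ decomposes uniquely as $m = \eta_a(\pi_a(m)) + (m-\eta_a(\pi_a(m)))$, where expanding $\pi_a(m)$ in the basis $Y_a$ realizes the first summand as an $\cR$-combination of $\eta_a(Y_a)$, and the second summand lies in $\cF_{a-1}M$, which carries its own basis obtained from the inductive hypothesis applied to the chosen section $\eta_{a-1}$ of $\pi_{a-1}$.

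The only subtlety I would watch for is making sure the section $\eta_a$ is used as a genuine $\cR$-linear map (not merely a set-theoretic lift); this is guaranteed by the hypothesis that it splits the $\cR$-linear surjection $\pi_a$. Once this is secured, the argument is a routine application of the splitting lemma, and the statement encodes the fact that lifts of a basis of each graded piece assemble into a basis of the filtered module via successive splittings of the filtration quotients, which is precisely the mechanism that will be invoked in the reconstruction argument that follows.
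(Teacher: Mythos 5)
Your argument is mathematically sound, and there is essentially nothing in the paper to compare it against: the authors give no proof of this lemma, remarking just before it that ``it is easy to see that any lift of a basis of $\cG M$ becomes a basis of $M$''; your splitting-plus-induction argument is the standard one and is the same mechanism that underlies their proof of \cref{lem:fil-gr-isom}, where the sections are constructed degreewise. Two points of care, though. First, the lemma as printed is stated loosely: for $a\geq 1$ the set $\eta_a(Y_a)$ alone cannot be a basis of $\cF_a M$, since it only spans a complement of $\cF_{a-1}M=\ker\pi_a$; the intended assertion---and what your induction actually delivers---is that $\bigsqcup_{i=0}^{a}\eta_i(Y_i)$ is a basis of $\cF_a M$, i.e.\ lifts of bases of all graded pieces up to degree $a$ assemble into a basis of the filtered piece. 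Second, phrase the inductive hypothesis accordingly: it should say that the union $\bigsqcup_{i=0}^{a-1}\eta_i(Y_i)$, using \emph{all} the sections $\eta_i$ with $i\leq a-1$, is a basis of $\cF_{a-1}M$, rather than invoking ``the chosen section $\eta_{a-1}$'' alone (under the literal reading that step would be circular, since $\eta_{a-1}(Y_{a-1})$ by itself is not a basis of $\cF_{a-1}M$). With that adjustment the induction closes exactly as you describe: the decomposition $\cF_a M=\cF_{a-1}M\oplus\eta_a(\cG_a M)$ coming from the $\cR$-linear section gives spanning, and applying $\pi_a$ to any relation kills the $\cF_{a-1}M$-part and gives linear independence of the full union.
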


\subsection{Graded skein algebras} 
Fix a finite set $S$ of mutually disjoint ideal arcs. We call an element of $S$ a \emph{cut path} to distinguish it from an $\fsp_4$-diagram, and draw it by a blue arc in pictures.
We sometimes identify $S$ with a cellular decomposition of $\Sigma$ whose vertices are $\bM$ and edges are $S\cup\bB$.
For an $S$-transverse $\mathfrak{sp}_4$-diagram $D$ (\cref{def:transverse}), let $D\intersec_i S$ denote the set of intersections of type~$i$ edges of $D$ with $\cup_{\gamma\in S}\gamma$ for $i=1,2$. Let $D\intersec S:=(D\intersec_1 S) \cup (D\intersec_2 S)$ be the set of all intersections.

\begin{dfn}[$S$-degree]\label{dfn:intersection}
\ 
\begin{enumerate}
    \item For a non-empty $S$-transverse $\mathfrak{sp}_4$-diagram $D\in \diag_{\fsp_4,\bSigma}$, we define its \emph{$S$-degree} of $D$ by 
    \begin{align*}
        \widetilde{\deg}_S(D)\coloneqq 2\#D\intersec_1 S+3\#D\intersec_2 S\in\bZ_{\geq 0}.
    \end{align*}
    We define $\widetilde{\deg}_S(\varnothing)=0$ for the empty diagram $\varnothing$.
    \item An $S$-transverse $\mathfrak{sp}_4$-diagram $D\in \diag_{\fsp_4,\bSigma}$ is said to be \emph{$S$-minimal} if it realizes the minimum of the $S$-degree in its ladder-equivalence class.
    \item For $L\in\diag_{\fsp_4,\bSigma}/\sim$, we define its \emph{$S$-degree $\deg_S(L)$ of $L$} to be the $S$-degree of any $S$-minimal representative of it. 
\end{enumerate}
\end{dfn}


\begin{rem}
    We can also consider a $\bZ_{\geq 0}^2$-valued $S$-degree defined by $\widetilde{\mathbf{deg}}_S(D)\coloneqq (\#D\intersec_1 S+2\#D\intersec_2 S,\ \#D\intersec S)\in\bZ_{\geq 0}^2$. All the arguments involving the $S$-degree also work for this $\bZ_{\geq 0}^2$-valued degree with the lexicographic order of $\bZ_{\geq 0}^2$.
\end{rem}

The $S$-degree on $\Blad_{\fsp_4,\bSigma}$ introduces a partition $\Blad_{\fsp_4,\bSigma}=\bigsqcup_{a\in\bZ_{\geq 0}}(\Blad_{\fsp_4,\bSigma})_{a}$ of $\Blad_{\fsp_4,\bSigma}$, where $(\Blad_{\fsp_4,\bSigma})_{a}\coloneqq\{L\in \Blad_{\fsp_4,\bSigma}\mid \deg_{S}(L)=a\}$.
We also import the $S$-degree on $\Blad_{\fsp_4,\bSigma}$ into $\Bweb_{\fsp_4,\bSigma}$ by $\deg_S(\sigma_\bM(L))\coloneqq\deg_S(L)$ (recall \cref{dfn:shift}), and its partition $\Bweb_{\fsp_4,\bSigma}=\bigsqcup_{a\in\bZ_{\geq 0}}(\Bweb_{\fsp_4,\bSigma})_{a}$

\begin{dfn}[filtered/graded $\fsp_4$-skein algebras]\label{dfn:filtration}
Let $\bSigma$ is a marked surface, and $\cR_q\coloneqq\bZ[q^{\pm 1/2},1/[2]]$.
\begin{enumerate}
    \item Define a $\bZ_{\geq 0}$-filtration $\{\mathcal{F}_{a}^{S}\Skein{\mathfrak{sp}_4}{\bSigma}^{q}\}_{a \in\bZ_{\geq 0}}$ on $\Skein{\mathfrak{sp}_4}{\bSigma}^{q}$ by 
    \begin{align*}
        \mathcal{F}_{a}^{S}\Skein{\mathfrak{sp}_4}{\bSigma}^{q}\coloneqq\mathrm{span}_{\mathcal{R}_q}\bigsqcup_{i=0}^{a}(\Bweb_{\fsp_4,\bSigma})_{i}.
    \end{align*}
    It is easy to see that the filtration is compatible with the multiplication of the $\mathfrak{sp}_4$-skein algebra and that $
    \bigcup_{a}\mathcal{F}_{a}^{S}\Skein{\mathfrak{sp}_4}{\bSigma}^{q}=\Skein{\mathfrak{sp}_4}{\bSigma}^{q}$, so that $\Skein{\mathfrak{sp}_4}{\bSigma}^{q}$ becomes a filtered algebra over $\cR_q$. We call it the \emph{filtered $\mathfrak{sp}_4$-skein algebra}.  
    \item The \emph{associated graded $\mathfrak{sp}_4$-skein algbra} is defined to be 
    \begin{align*}
        \mathcal{G}^{S}\Skein{\mathfrak{sp}_4}{\bSigma}^{q}:=\bigoplus_{a}\mathcal{G}_{a}^{S}\Skein{\mathfrak{sp}_4}{\bSigma}^{q},
    \end{align*}
    where $\mathcal{G}_{a}^{S}\Skein{\mathfrak{sp}_4}{\bSigma}^{q}:=\mathcal{F}_{a}^{S}\Skein{\mathfrak{sp}_4}{\bSigma}^{q}/\mathcal{F}_{a-1}^{S}\Skein{\mathfrak{sp}_4}{\bSigma}^{q}$. 
    For $W,W' \in \Skein{\mathfrak{sp}_4}{\bSigma}^{q}$, we write $W\greq W'$ if $W$ is equal to $W'$ in $\mathcal{G}^{S}\Skein{\mathfrak{sp}_4}{\bSigma}^{q}$.
\end{enumerate}
\end{dfn}

\begin{rem}
    \begin{enumerate}
        \item For a filtration $\mathcal{F}^{\tri}$ associated with an ideal triangulation $S:=\tri$, we have $\mathcal{F}_{0}^{\tri}\Skein{\mathfrak{sp}_4}{\bSigma}^{q}=\mathcal{R}_q\varnothing$ since $L\in\Blad_{\fsp_4,\bSigma}$ with $\deg_{\tri}(L)=0$ means that $L$ is contained in a single triangle, which implies $L$ is the empty diagram $\varnothing$.
        \item We use $W\greq W'$ when $W$ is equal to $W'$ in $\mathcal{G}^{S}\Skein{\mathfrak{sp}_4}{\bSigma}^{q}$ to distinguish equality in $\Skein{\mathfrak{sp}_4}{\bSigma}^{q}$ clearly.
    \end{enumerate} 
\end{rem}

\begin{lem}\label{lem:equality-in-gr}
    The following equation holds in the graded $\mathfrak{sp}_4$-skein algebra $\mathcal{G}^{S}\Skein{\mathfrak{sp}_4}{\bSigma}^{q}$:    
    \begin{align*}
        \mbox{
            \tikz[baseline=-.6ex, scale=.1]{
                \draw[dashed, fill=white] (0,0) circle [radius=6];
                \draw[webline, rounded corners] (45:6) -- ($(45:6)+(-4,0)$) --  (-3,0) -- (-135:6);
                \draw[webline, rounded corners] (-45:6) -- ($(-45:6)+(-4,0)$) -- (-3,0) -- (135:6);
                \draw[fill=pink, thick] (-3,0) circle [radius=20pt];
                \draw[blue] (90:6) -- (-90:6);
            }
        }
        &\greq
        \mbox{
            \tikz[baseline=-.6ex, scale=.1]{
                \draw[dashed, fill=white] (0,0) circle [radius=6];
                \draw[webline] (45:6) -- (3,0);
                \draw[webline] (-45:6) -- (3,0);
                \draw[webline] (135:6) -- (-3,0);
                \draw[webline] (-135:6) -- (-3,0);
                \draw[wline] (-3,0) -- (3,0);
                \draw[blue] (90:6) -- (-90:6);
            }
        },&
        q^{-1}
        \mbox{
        \tikz[baseline=-.6ex, scale=.1]{
            \draw[dashed, fill=white] (0,0) circle [radius=6];
            \coordinate (U) at ($(150:6)!.2!(30:6)$);
            \coordinate (D) at ($(-150:6)!.2!(-30:6)$);
            \coordinate (U2) at ($(150:6)!.6!(30:6)$);
            \coordinate (D2) at ($(-150:6)!.6!(-30:6)$);
            \draw[oarc, rounded corners] (-150:6) -- (U2) -- (30:6);
            \draw[oarc, rounded corners] (150:6) -- (D2) -- (-30:6);
            \draw[blue] (60:6) -- (-60:6);
        }
        }
        &\greq
        \mbox{
        \tikz[baseline=-.6ex, scale=.1]{
            \draw[dashed, fill=white] (0,0) circle [radius=6];
            \coordinate (U) at ($(150:6)!.2!(30:6)$);
            \coordinate (D) at ($(-150:6)!.2!(-30:6)$);
            \coordinate (U2) at ($(150:6)!.6!(30:6)$);
            \coordinate (D2) at ($(-150:6)!.6!(-30:6)$);
            \draw[webline] (150:6) -- (30:6);
            \draw[webline] (-150:6) -- (-30:6);
            \draw[blue] (60:6) -- (-60:6);
        }
        },&
        q^{-2}
        \mbox{
        \tikz[baseline=-.6ex, scale=.1]{
            \draw[dashed, fill=white] (0,0) circle [radius=6];
            \coordinate (U) at ($(150:6)!.2!(30:6)$);
            \coordinate (D) at ($(-150:6)!.2!(-30:6)$);
            \coordinate (U2) at ($(150:6)!.6!(30:6)$);
            \coordinate (D2) at ($(-150:6)!.6!(-30:6)$);
            \draw[owarc, rounded corners] (-150:6) -- (U2) -- (30:6);
            \draw[owarc, rounded corners] (150:6) -- (D2) -- (-30:6);
            \draw[blue] (60:6) -- (-60:6);
        }
        }
        &\greq
        \mbox{
        \tikz[baseline=-.6ex, scale=.1]{
            \draw[dashed, fill=white] (0,0) circle [radius=6];
            \coordinate (U) at ($(150:6)!.2!(30:6)$);
            \coordinate (D) at ($(-150:6)!.2!(-30:6)$);
            \coordinate (U2) at ($(150:6)!.6!(30:6)$);
            \coordinate (D2) at ($(-150:6)!.6!(-30:6)$);
            \draw[wline] (150:6) -- (30:6);
            \draw[wline] (-150:6) -- (-30:6);
            \draw[blue] (60:6) -- (-60:6);
        }
        },&\\
        q^{-1}
        \mbox{
        \tikz[baseline=-.6ex, scale=.1]{
            \draw[dashed, fill=white] (0,0) circle [radius=6];
            \coordinate (U) at ($(150:6)!.2!(30:6)$);
            \coordinate (D) at ($(-150:6)!.2!(-30:6)$);
            \coordinate (U2) at ($(150:6)!.6!(30:6)$);
            \coordinate (D2) at ($(-150:6)!.6!(-30:6)$);
            \draw[owarc, rounded corners] (-150:6) -- (U2) -- (30:6);
            \draw[oarc, rounded corners] (150:6) -- (D2) -- (-30:6);
            \draw[blue] (60:6) -- (-60:6);
        }
        }
        &\greq
        \mbox{
        \tikz[baseline=-.6ex, scale=.1]{
            \draw[dashed, fill=white] (0,0) circle [radius=6];
            \coordinate (U) at ($(150:6)!.2!(30:6)$);
            \coordinate (D) at ($(-150:6)!.2!(-30:6)$);
            \coordinate (U2) at ($(150:6)!.6!(30:6)$);
            \coordinate (D2) at ($(-150:6)!.6!(-30:6)$);
            \draw[webline] (150:6) -- (U2);
            \draw[wline] (U2) -- (30:6);
            \draw[wline] (-150:6) -- (D2);
            \draw[webline] (D2) -- (-30:6);
            \draw[webline] (U2) -- (D2);
            \draw[blue] (60:6) -- (-60:6);
        }
        },&
        q^{-1}
        \mbox{
        \tikz[baseline=-.6ex, scale=.1]{
            \draw[dashed, fill=white] (0,0) circle [radius=6];
            \coordinate (U) at ($(150:6)!.2!(30:6)$);
            \coordinate (D) at ($(-150:6)!.2!(-30:6)$);
            \coordinate (U2) at ($(150:6)!.6!(30:6)$);
            \coordinate (D2) at ($(-150:6)!.6!(-30:6)$);
            \draw[oarc, rounded corners] (-150:6) -- (U2) -- (30:6);
            \draw[owarc, rounded corners] (150:6) -- (D2) -- (-30:6);
            \draw[blue] (60:6) -- (-60:6);
        }
        }
        &\greq
        \mbox{
        \tikz[baseline=-.6ex, scale=.1]{
            \draw[dashed, fill=white] (0,0) circle [radius=6];
            \coordinate (U) at ($(150:6)!.2!(30:6)$);
            \coordinate (D) at ($(-150:6)!.2!(-30:6)$);
            \coordinate (U2) at ($(150:6)!.6!(30:6)$);
            \coordinate (D2) at ($(-150:6)!.6!(-30:6)$);
            \draw[wline] (150:6) -- (U2);
            \draw[webline] (U2) -- (30:6);
            \draw[webline] (-150:6) -- (D2);
            \draw[wline] (D2) -- (-30:6);
            \draw[webline] (U2) -- (D2);
            \draw[blue] (60:6) -- (-60:6);
        }
        },&
        \mbox{
        \tikz[baseline=-.6ex, scale=.1]{
            \draw[dashed, fill=white] (0,0) circle [radius=6];
            \coordinate (LU) at ($(150:6)!.3!(30:6)$);
            \coordinate (LD) at ($(-150:6)!.3!(-30:6)$);
            \coordinate (RU) at ($(150:6)!.7!(30:6)$);
            \coordinate (RD) at ($(-150:6)!.7!(-30:6)$);
            \coordinate (L) at (-180:6);
            \coordinate (R) at (0:6);
            \draw[webline] (L) -- (LU) -- (30:6);
            \draw[webline] (-150:6) -- (LD);
            \draw[wline] (LD) -- (RD);
            \draw[webline] (150:6) -- (LU) -- (LD);
            \draw[webline] (RD) -- (R);
            \draw[webline] (RD) -- (-30:6);
            \draw[blue] (90:6) -- (-90:6);
            \draw[fill=pink, thick] (LU) circle [radius=20pt];
        }
        } 
        &\greq
        \mbox{
        \tikz[baseline=-.6ex, scale=.1, yscale=-1, xscale=-1]{
            \draw[dashed, fill=white] (0,0) circle [radius=6];
            \coordinate (LU) at ($(150:6)!.3!(30:6)$);
            \coordinate (LD) at ($(-150:6)!.3!(-30:6)$);
            \coordinate (RU) at ($(150:6)!.7!(30:6)$);
            \coordinate (RD) at ($(-150:6)!.7!(-30:6)$);
            \coordinate (L) at (-180:6);
            \coordinate (R) at (0:6);
            \draw[webline] (L) -- (LU) -- (30:6);
            \draw[webline] (-150:6) -- (LD);
            \draw[wline] (LD) -- (RD);
            \draw[webline] (150:6) -- (LU) -- (LD);
            \draw[webline] (RD) -- (R);
            \draw[webline] (RD) -- (-30:6);
            \draw[blue] (90:6) -- (-90:6);
            \draw[fill=pink, thick] (LU) circle [radius=20pt];
        }
        },\\
        \mbox{
        \tikz[baseline=-.6ex, scale=.1]{
            \draw[dashed, fill=white] (0,0) circle [radius=6];
            \coordinate (U) at ($(150:6)!.2!(30:6)$);
            \coordinate (D) at ($(-150:6)!.2!(-30:6)$);
            \coordinate (U2) at ($(150:6)!.6!(30:6)$);
            \coordinate (D2) at ($(-150:6)!.6!(-30:6)$);
            \draw[webline] (150:6) -- (U);
            \draw[wline] (U) -- (U2);
            \draw[webline] (U2) -- (30:6);
            \draw[wline] (-150:6) -- (D);
            \draw[webline] (D) -- (D2);
            \draw[wline] (D2) -- (-30:6);
            \draw[webline] (U) -- (D);
            \draw[webline] (U2) -- (D2);
            \draw[blue] (60:6) -- (-60:6);
        }
        }
        &\greq
        \mbox{
        \tikz[baseline=-.6ex, scale=.1]{
            \draw[dashed, fill=white] (0,0) circle [radius=6];
            \coordinate (U) at ($(150:6)!.2!(30:6)$);
            \coordinate (D) at ($(-150:6)!.2!(-30:6)$);
            \coordinate (U2) at ($(150:6)!.6!(30:6)$);
            \coordinate (D2) at ($(-150:6)!.6!(-30:6)$);
            \draw[webline] (150:6) -- (30:6);
            \draw[wline] (-150:6) -- (-30:6);
            \draw[blue] (60:6) -- (-60:6);
        }
        },&
        \mbox{
        \tikz[baseline=-.6ex, scale=.1]{
            \draw[dashed, fill=white] (0,0) circle [radius=6];
            \coordinate (U) at ($(150:6)!.2!(30:6)$);
            \coordinate (D) at ($(-150:6)!.2!(-30:6)$);
            \coordinate (U2) at ($(150:6)!.6!(30:6)$);
            \coordinate (D2) at ($(-150:6)!.6!(-30:6)$);
            \draw[wline] (150:6) -- (U);
            \draw[webline] (U) -- (U2);
            \draw[wline] (U2) -- (30:6);
            \draw[webline] (-150:6) -- (D);
            \draw[wline] (D) -- (D2);
            \draw[webline] (D2) -- (-30:6);
            \draw[webline] (U) -- (D);
            \draw[webline] (U2) -- (D2);
            \draw[blue] (60:6) -- (-60:6);
        }
        }
        &\greq
        \mbox{
        \tikz[baseline=-.6ex, scale=.1]{
            \draw[dashed, fill=white] (0,0) circle [radius=6];
            \coordinate (U) at ($(150:6)!.2!(30:6)$);
            \coordinate (D) at ($(-150:6)!.2!(-30:6)$);
            \coordinate (U2) at ($(150:6)!.6!(30:6)$);
            \coordinate (D2) at ($(-150:6)!.6!(-30:6)$);
            \draw[wline] (150:6) -- (30:6);
            \draw[webline] (-150:6) -- (-30:6);
            \draw[blue] (60:6) -- (-60:6);
        }
        }.&&
    \end{align*}
\end{lem}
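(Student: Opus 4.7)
The proof of each of the eight relations will proceed by the same scheme: expand the left-hand side in the skein basis $\Bweb_{\fsp_4,\bSigma}$ using the relations of Definition~\ref{def:skein-rel}, identify a single ``leading'' term whose $S$-degree agrees with that of the right-hand side, and show that every other term has strictly smaller $S$-degree. The statement $\greq$ then follows from the definition of $\mathcal{G}^S\Skein{\fsp_4}{\bSigma}^q$.

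For the crossroad-to-ladder equation, I would apply the defining relation $\Xcross = \myHweb{uarc}{uarc}{uwarc}{uarc}{uarc} - \frac{1}{[2]}\IIweb{uarc}{uarc}$ at the pink vertex on the LHS. After isotoping the legs within the components of $\bSigma\setminus S$, the $\myHweb$ term yields precisely the horizontal ladder on the RHS, while the $\IIweb$ correction becomes a pair of arcs each isotopic into a single component of $\bSigma\setminus S$; thus the correction has $S$-degree~$0$, strictly less than the $S$-degree~$3$ of the leading ladder term.

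For the six crossing-resolution equations, I would apply the appropriate skein relation from Definition~\ref{def:skein-rel}, depending on whether both crossing strands are of type~$1$, both of type~$2$, or of mixed type. In every case the cut path in the figure is positioned so that exactly one of the two resolutions on the right-hand side of the skein relation preserves the transverse intersections with $S$ of the original crossing --- this is the ``leading'' term matching the RHS --- while the other resolution reconnects the strands so that each piece can be isotoped into a single component of $\bSigma\setminus S$. The prefactor $q^{-1}$ or $q^{-2}$ in the lemma is precisely the coefficient of the leading term in the skein relation, and the remaining $\Xcross$ or $\XJcross$ correction is also of strictly lower $S$-degree, either directly or after one further application of its defining expansion. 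The two equations on the bottom row will be handled analogously, by combining the crossroad decomposition with the interior H-move relations of Definition~\ref{def:skein-rel}; each correction introduced is either a bigon-removal or a jumping-over-a-double-point move, both of which lower the $S$-degree.

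The main difficulty lies in the $S$-degree accounting. For each resolution term produced by a skein relation, I must verify that its minimal representative (in the sense of $\deg_S$) has either strictly fewer transverse intersections with $S$ than the leading term, or the same number of intersections but with a type-$2$ arc segment across $S$ (weight $3$) replaced by a type-$1$ arc (weight $2$). Both cases place the term in $\mathcal{F}^S_{a-1}$. Once this verification is carried out case by case across the eight equations, the lemma follows.
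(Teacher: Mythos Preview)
Your approach is correct and essentially the same as the paper's: apply the $\fsp_4$-skein relations of Definition~\ref{def:skein-rel} to each left-hand side, identify the unique term whose $S$-degree matches the right-hand side, and observe that all remaining terms lie in a strictly lower filtration piece. The paper's proof is terser---it carries out only the first (crossroad) equation in detail and then remarks that the equations on the second line follow from those on the first line together with isotopy and the Reidemeister moves---whereas you propose treating each case from scratch; both routes work. One small inaccuracy: for the mixed type-$1$/type-$2$ crossings the skein relation has only two $H$-web terms and no $\Xcross$ or $\XJcross$ correction, so those cases are in fact simpler than you describe.
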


\begin{proof}
    These equations are derived from the $\fsp_4$-skein relation in $\Skein{\mathfrak{sp}_4}{\bSigma}^{q}$.
    For example, we have
    \begin{align*}
        \mbox{
        \tikz[baseline=-.6ex, scale=.1]{
            \draw[dashed, fill=white] (0,0) circle [radius=6];
            \draw[webline] (45:6) -- (-135:6);
            \draw[webline] (135:6) -- (-45:6);
            \draw[fill=pink, thick] (0,0) circle [radius=20pt];
            \draw[blue] (60:6) -- (-60:6);
        }
        } 
        =
        \mbox{
        \tikz[baseline=-.6ex, scale=.1]{
            \draw[dashed, fill=white] (0,0) circle [radius=6];
            \draw[webline] (45:6) -- (3,0);
            \draw[webline] (-45:6) -- (3,0);
            \draw[webline] (135:6) -- (-3,0);
            \draw[webline] (-135:6) -- (-3,0);
            \draw[wline] (-3,0) -- (3,0);
            \draw[blue] (90:6) -- (-90:6);
        }
        } 
        -\frac{1}{[2]}
        \mbox{
        \tikz[baseline=-.6ex, scale=.1]{
            \draw[dashed, fill=white] (0,0) circle [radius=6];
            \draw[webline, rounded corners] (45:6) -- (3,0) -- (-45:6);
            \draw[webline, rounded corners] (135:6) -- (-3,0) -- (-135:6);
            \draw[blue] (90:6) -- (-90:6);
        }
        } 
    \end{align*}
    and the second term in the right-hand side vanishes in the graded quotient, since it is smaller than the first term in $S$-degree.
    Equations in the second line come from the first line, isotopy, and the Reidemeister move.
\end{proof}

\begin{dfn}\label{dfn:flat-braid}
We can define a flat braid diagram across a cut path (blue line) in $\mathcal{G}^{S}\Skein{\mathfrak{sp}_4}{\bSigma}^{q}$ with double points, interpreted as follows.
\begin{align*}
        \mbox{
        \tikz[baseline=-.6ex, scale=.08]{
            \draw[dashed, fill=white] (0,0) circle [radius=6];
            \coordinate (U) at ($(150:6)!.2!(30:6)$);
            \coordinate (D) at ($(-150:6)!.2!(-30:6)$);
            \coordinate (U2) at ($(150:6)!.6!(30:6)$);
            \coordinate (D2) at ($(-150:6)!.6!(-30:6)$);
            \draw[webline, rounded corners] (-150:6) -- (U2) -- (30:6);
            \draw[webline, rounded corners] (150:6) -- (D2) -- (-30:6);
            \draw[blue] (60:6) -- (-60:6);
        }
        }
        \coloneqq
        q^{-1}
        \mbox{
        \tikz[baseline=-.6ex, scale=.08]{
            \draw[dashed, fill=white] (0,0) circle [radius=6];
            \coordinate (U) at ($(150:6)!.2!(30:6)$);
            \coordinate (D) at ($(-150:6)!.2!(-30:6)$);
            \coordinate (U2) at ($(150:6)!.6!(30:6)$);
            \coordinate (D2) at ($(-150:6)!.6!(-30:6)$);
            \draw[oarc, rounded corners] (-150:6) -- (U2) -- (30:6);
            \draw[oarc, rounded corners] (150:6) -- (D2) -- (-30:6);
            \draw[blue] (60:6) -- (-60:6);
        }
        },
        \mbox{
        \tikz[baseline=-.6ex, scale=.08]{
            \draw[dashed, fill=white] (0,0) circle [radius=6];
            \coordinate (U) at ($(150:6)!.2!(30:6)$);
            \coordinate (D) at ($(-150:6)!.2!(-30:6)$);
            \coordinate (U2) at ($(150:6)!.6!(30:6)$);
            \coordinate (D2) at ($(-150:6)!.6!(-30:6)$);
            \draw[wline, rounded corners] (-150:6) -- (U2) -- (30:6);
            \draw[wline, rounded corners] (150:6) -- (D2) -- (-30:6);
            \draw[blue] (60:6) -- (-60:6);
        }
        }
        \coloneqq
        q^{-2}
        \mbox{
        \tikz[baseline=-.6ex, scale=.08]{
            \draw[dashed, fill=white] (0,0) circle [radius=6];
            \coordinate (U) at ($(150:6)!.2!(30:6)$);
            \coordinate (D) at ($(-150:6)!.2!(-30:6)$);
            \coordinate (U2) at ($(150:6)!.6!(30:6)$);
            \coordinate (D2) at ($(-150:6)!.6!(-30:6)$);
            \draw[owarc, rounded corners] (-150:6) -- (U2) -- (30:6);
            \draw[owarc, rounded corners] (150:6) -- (D2) -- (-30:6);
            \draw[blue] (60:6) -- (-60:6);
        }
        },
        \mbox{
        \tikz[baseline=-.6ex, scale=.08]{
            \draw[dashed, fill=white] (0,0) circle [radius=6];
            \coordinate (U) at ($(150:6)!.2!(30:6)$);
            \coordinate (D) at ($(-150:6)!.2!(-30:6)$);
            \coordinate (U2) at ($(150:6)!.6!(30:6)$);
            \coordinate (D2) at ($(-150:6)!.6!(-30:6)$);
            \draw[webline, rounded corners] (150:6) -- (D2) -- (-30:6);
            \draw[wline, rounded corners] (-150:6) -- (U2) -- (30:6);
            \draw[blue] (60:6) -- (-60:6);
        }
        }
        \coloneqq
        q^{-1}
        \mbox{
        \tikz[baseline=-.6ex, scale=.08]{
            \draw[dashed, fill=white] (0,0) circle [radius=6];
            \coordinate (U) at ($(150:6)!.2!(30:6)$);
            \coordinate (D) at ($(-150:6)!.2!(-30:6)$);
            \coordinate (U2) at ($(150:6)!.6!(30:6)$);
            \coordinate (D2) at ($(-150:6)!.6!(-30:6)$);
            \draw[owarc, rounded corners] (-150:6) -- (U2) -- (30:6);
            \draw[oarc, rounded corners] (150:6) -- (D2) -- (-30:6);
            \draw[blue] (60:6) -- (-60:6);
        }
        },
        \mbox{
        \tikz[baseline=-.6ex, scale=.08]{
            \draw[dashed, fill=white] (0,0) circle [radius=6];
            \coordinate (U) at ($(150:6)!.2!(30:6)$);
            \coordinate (D) at ($(-150:6)!.2!(-30:6)$);
            \coordinate (U2) at ($(150:6)!.6!(30:6)$);
            \coordinate (D2) at ($(-150:6)!.6!(-30:6)$);
            \draw[webline, rounded corners] (-150:6) -- (U2) -- (30:6);
            \draw[wline, rounded corners] (150:6) -- (D2) -- (-30:6);
            \draw[blue] (60:6) -- (-60:6);
        }
        }
        \coloneqq
        q^{-1}
        \mbox{
        \tikz[baseline=-.6ex, scale=.08]{
            \draw[dashed, fill=white] (0,0) circle [radius=6];
            \coordinate (U) at ($(150:6)!.2!(30:6)$);
            \coordinate (D) at ($(-150:6)!.2!(-30:6)$);
            \coordinate (U2) at ($(150:6)!.6!(30:6)$);
            \coordinate (D2) at ($(-150:6)!.6!(-30:6)$);
            \draw[oarc, rounded corners] (-150:6) -- (U2) -- (30:6);
            \draw[owarc, rounded corners] (150:6) -- (D2) -- (-30:6);
            \draw[blue] (60:6) -- (-60:6);
        }
        }.
\end{align*}
\end{dfn}
    We see that the diagrams (B), (C), and (D) in \cref{fig:ladder-web} define same elements in $\mathcal{G}^{S}\Skein{\mathfrak{sp}_4}{\bSigma}^{q}$ if $\gamma_{+}$ or $\gamma_{-}$ belongs to $S$, by applying the interpretation above.
\begin{lem}\label{rem:flat-braid}
    For any colors $c_1,c_2,c_3\in\{\text{type 1},\text{type 2}\}$, the following relations hold in $\mathcal{G}^{S}\Skein{\mathfrak{sp}_4}{\bSigma}^{q}$:
    \begin{align*}
        \mbox{
        \tikz[baseline=-.6ex, scale=.1]{
            \draw[dashed, fill=white] (0,0) circle [radius=6];
            \coordinate (U) at ($(150:6)!.2!(30:6)$);
            \coordinate (D) at ($(-150:6)!.2!(-30:6)$);
            \coordinate (U2) at ($(150:6)!.6!(30:6)$);
            \coordinate (D2) at ($(-150:6)!.6!(-30:6)$);
            \draw[webline, black, rounded corners] (150:6) -- (U) -- ($(D)!.5!(D2)$) -- (U2)-- (30:6);
            \draw[webline, black, rounded corners] (-150:6) -- (D) -- ($(U)!.5!(U2)$) -- (D2) -- (-30:6);
            \draw[blue] (60:6) -- (-60:6);
            \node at (150:6) [left]{\scriptsize $c_1$};
            \node at (-150:6) [left]{\scriptsize $c_2$};
        }
        } 
        \greq
        \mbox{
        \tikz[baseline=-.6ex, scale=.1]{
            \draw[dashed, fill=white] (0,0) circle [radius=6];
            \coordinate (U) at ($(150:6)!.2!(30:6)$);
            \coordinate (D) at ($(-150:6)!.2!(-30:6)$);
            \coordinate (U2) at ($(150:6)!.6!(30:6)$);
            \coordinate (D2) at ($(-150:6)!.6!(-30:6)$);
            \draw[webline, black] (150:6) -- (30:6);
            \draw[webline, black] (-150:6) -- (-30:6);
            \draw[blue] (60:6) -- (-60:6);
            \node at (150:6) [left]{\scriptsize $c_1$};
            \node at (-150:6) [left]{\scriptsize $c_2$};
        }
        },\quad
        \mbox{
        \tikz[baseline=-.6ex, scale=.1]{
            \draw[dashed, fill=white] (0,0) circle [radius=6];
            \coordinate (U1) at (150:4);
            \coordinate (U2) at (90:4);
            \coordinate (U3) at (30:4);
            \coordinate (D1) at (-150:4);
            \coordinate (D2) at (-90:4);
            \coordinate (D3) at (-30:4);
            \draw[webline, black, rounded corners] (150:6) -- (-30:6);
            \draw[webline, black, rounded corners] (180:6) -- (D1) -- (D2)-- (D3) -- (0:6);
            \draw[webline, black, rounded corners] (-150:6) -- (30:6);
            \draw[blue] (60:6) -- (-60:6);
            \node at (150:6) [left]{\scriptsize $c_1$};
            \node at (180:6) [left]{\scriptsize $c_2$};
            \node at (-150:6) [left]{\scriptsize $c_3$};
        }
        } 
        \greq
        \mbox{
        \tikz[baseline=-.6ex, scale=.1]{
            \draw[dashed, fill=white] (0,0) circle [radius=6];
            \coordinate (U1) at (150:4);
            \coordinate (U2) at (90:4);
            \coordinate (U3) at (30:4);
            \coordinate (D1) at (-150:4);
            \coordinate (D2) at (-90:4);
            \coordinate (D3) at (-30:4);
            \draw[webline, black, rounded corners] (150:6) -- (-30:6);
            \draw[webline, black, rounded corners] (180:6) -- (U1) -- (U2) -- (U3) -- (0:6);
            \draw[webline, black, rounded corners] (-150:6) -- (30:6);
            \draw[blue] (60:6) -- (-60:6);
            \node at (150:6) [left]{\scriptsize $c_1$};
            \node at (180:6) [left]{\scriptsize $c_2$};
            \node at (-150:6) [left]{\scriptsize $c_3$};
        }
        }
    \end{align*}
    where double points are as defined in \cref{dfn:flat-braid}.
    We call these two deformations the \emph{flat Reidemeister {II} and {III} moves}.
\end{lem}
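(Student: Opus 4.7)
The plan is to deduce both flat Reidemeister moves from results already established earlier in this section, together with the Reidemeister invariance of the (ungraded) clasped $\fsp_4$-skein algebra.

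For the flat Reidemeister II move, I will unfold each of the two flat crossings in the left-hand side bigon using \cref{dfn:flat-braid}, obtaining a scaled two-crossing over-configuration in $\mathscr{S}^q_{\!\fsp_4,\bSigma}$. For the monochromatic color choices $(c_1,c_2)=(1,1)$ or $(2,2)$, the second and third identities of \cref{lem:equality-in-gr} --- namely $q^{-1} \cdot (\text{oarc crossing}) \greq (\text{parallel weblines})$ and $q^{-2} \cdot (\text{owarc crossing}) \greq (\text{parallel wlines})$ --- can be applied locally at each of the two crossings to replace it by parallel strands. For the mixed color choices $(c_1,c_2)\in\{(1,2),(2,1)\}$, one gets a configuration involving intermediate trivalent vertices from the mixed-crossing equations in \cref{lem:equality-in-gr}, but after the two local unfoldings the resulting picture matches the left-hand side of the double-ladder identities in the same lemma (the last two displayed equations in \cref{lem:equality-in-gr}), whose right-hand side is exactly the parallel-strand configuration. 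In every case the scaling factor $q^{-\epsilon(c_1,c_2)}$ introduced by \cref{dfn:flat-braid} cancels the corresponding prefactor in \cref{lem:equality-in-gr}, so the LHS equals the RHS of the flat R-II move in $\mathcal{G}^{S}\mathscr{S}^q_{\!\fsp_4,\bSigma}$.

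For the flat Reidemeister III move, I will again unfold \cref{dfn:flat-braid} at each of the three flat crossings on both sides. The LHS and RHS then become scaled three-crossing over-configurations in $\mathscr{S}^q_{\!\fsp_4,\bSigma}$, multiplied by $\prod q^{-\epsilon(c_i,c_j)}$ over the three color pairs $(c_1,c_2)$, $(c_1,c_3)$, $(c_2,c_3)$ that cross. Since precisely the same three pairs of strands with the same color assignments cross on both sides --- only their spatial arrangement differs --- the product of scalars coincides on the LHS and RHS. The actual Reidemeister III invariance for over-/undercrossings in $\mathscr{S}^q_{\!\fsp_4,\bSigma}$, established in \cite[Lemma~2.5]{IYsp4}, then gives the equality of the two unfolded expressions already in the skein algebra; dividing by the common scalar yields flat R-III in $\mathscr{S}^q_{\!\fsp_4,\bSigma}$, which descends to $\mathcal{G}^{S}\mathscr{S}^q_{\!\fsp_4,\bSigma}$.

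The hard part will be the mixed-color case of R-II, since flat R-II is not a consequence of ordinary isotopy invariance: $\sigma_{+}^{2}$ does not equal parallel strands in $\mathscr{S}^q_{\!\fsp_4,\bSigma}$, and the equality emerges only after passing to the graded quotient where the subleading terms of the skein expansion of a single crossing vanish. Carefully matching the unfolded bigon picture with the correct double-ladder equation of \cref{lem:equality-in-gr} --- and tracking the $q^{-1}$ versus $q^{-2}$ scaling factor for each color pair --- is the only delicate bookkeeping required, while the R-III case is comparatively straightforward as it transfers an exact identity from the skein algebra.
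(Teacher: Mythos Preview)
Your proposal is correct and matches the paper's approach, whose proof is the single line ``Apply relations in \cref{lem:equality-in-gr}.'' Your treatment of flat R-II is exactly this, spelled out case by case. For flat R-III you take a slight shortcut by invoking genuine Reidemeister~III invariance in $\mathscr{S}^q_{\fsp_4,\bSigma}$ rather than resolving each crossing to a ladder via \cref{lem:equality-in-gr}; this is valid, but you should note that it requires the over/under pattern produced by \cref{dfn:flat-braid} at the three crossings to be a consistent height ordering on $c_1,c_2,c_3$, so that the two unfolded tangles really are related by an honest R-III move. This follows from the uniform ``strand entering from the upper-left is on top'' convention built into \cref{dfn:flat-braid}, and holds on both sides of the move.
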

\begin{proof}
    Apply relations in \cref{lem:equality-in-gr}.
\end{proof}

\begin{rem}
    For $W\in\Skein{\mathfrak{sp}_4}{\bSigma}^{q}$, we define $\deg_S(W)\coloneqq\min\{a\in\bZ_{\geq 0}\mid W\in\mathcal{F}_{a}^{S}\Skein{\mathfrak{sp}_4}{\bSigma}^{q}\}$.
    For an $S$-transverse representative $D\in \diag_{\fsp_4,\bSigma}$ possibly with elliptic faces, $\deg_S(\diagshift(D))\leq\widetilde{\deg}_S(D)$ since it is a monotonically increasing function of the number of intersection points, and any $\mathfrak{sp}_4$-skein relation does not increase the number of intersections with $S$. 
\end{rem}

Let $\pi_{S}\colon \Skein{\mathfrak{sp}_4}{\bSigma}^{q}\to\mathcal{G}^{S}\Skein{\mathfrak{sp}_4}{\bSigma}^{q}$ be the symbol map defined by $\pi_{S}(W)\coloneqq W+\cF^{S}_{a-1}\Skein{\mathfrak{sp}_4}{\bSigma}^{q}$ for $a=\deg_S(W)$.


\begin{lem}\label{lem:gr-basis}
    The image $\pi_{S}(\Bweb_{\fsp_4,\bSigma})$ forms a basis of $\mathcal{G}^{S}\Skein{\mathfrak{sp}_4}{\bSigma}^{q}$.
\end{lem}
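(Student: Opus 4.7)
The strategy is to reduce the statement directly to \cref{lem:fil-gr-isom}. The point is that the filtration $\cF^S_\bullet$ was tautologically set up so that the hypotheses of that lemma are satisfied with the distinguished basis furnished by \cref{thm:basis-web}.

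First, I would invoke \cref{thm:basis-web} to recall that $\Bweb_{\fsp_4,\bSigma}$ is a free $\cR_q$-basis of $\Skein{\fsp_4}{\bSigma}^q$, and note that the $S$-degree (transported from $\Blad_{\fsp_4,\bSigma}$ via $\ladshift$) partitions it into the subsets $(\Bweb_{\fsp_4,\bSigma})_a$ for $a \in \bZ_{\geq 0}$. By construction in \cref{dfn:filtration}, the filtered piece $\cF^S_a \Skein{\fsp_4}{\bSigma}^q$ is precisely the $\cR_q$-span of $\bigsqcup_{i=0}^a (\Bweb_{\fsp_4,\bSigma})_i$, and is hence itself free with this sub-basis. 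This is exactly the hypothesis of \cref{lem:fil-gr-isom} with $X = \Bweb_{\fsp_4,\bSigma}$ and $X_a = (\Bweb_{\fsp_4,\bSigma})_a$.

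Applying \cref{lem:fil-gr-isom} then yields an $\cR_q$-linear isomorphism $\eta\colon \cG^S \Skein{\fsp_4}{\bSigma}^q \xrightarrow{\sim} \Skein{\fsp_4}{\bSigma}^q$ characterized by $\eta(\pi_S(W)) = W$ for each $W \in \Bweb_{\fsp_4,\bSigma}$. Since $\eta$ is an isomorphism and its image $\eta(\pi_S(\Bweb_{\fsp_4,\bSigma})) = \Bweb_{\fsp_4,\bSigma}$ is a basis of the target, the preimage $\pi_S(\Bweb_{\fsp_4,\bSigma})$ is itself a basis of $\cG^S \Skein{\fsp_4}{\bSigma}^q$, as required.

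No real obstacle is expected: the filtration was defined precisely via the $S$-degree partition of $\Bweb_{\fsp_4,\bSigma}$, so the argument collapses to the general filtered-module fact \cref{lem:fil-gr-isom}. The only detail worth checking along the way is that for $W \in (\Bweb_{\fsp_4,\bSigma})_a$, the class $\pi_S(W)$ genuinely lives in the homogeneous piece $\cG^S_a$ rather than in some lower component, which is immediate from the linear independence of $\Bweb_{\fsp_4,\bSigma}$ forcing $W \notin \cF^S_{a-1} \Skein{\fsp_4}{\bSigma}^q$.
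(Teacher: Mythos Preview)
Your proposal is correct and follows essentially the same route as the paper: both invoke \cref{lem:fil-gr-isom} with $X=\Bweb_{\fsp_4,\bSigma}$ partitioned by $S$-degree, obtain the isomorphism $\eta$ sending $\pi_S(W)\mapsto W$, and conclude that $\pi_S(\Bweb_{\fsp_4,\bSigma})$ is a basis. Your write-up is more explicit about verifying the hypotheses and about why $\pi_S(W)$ lands in the correct graded piece, but the argument is the same.
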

\begin{proof}
    By \cref{lem:fil-gr-isom}, there exists a canonical $\cR_q$-module isomorphism $\eta^{\Bweb_{\fsp_4,\bSigma}}\colon\cG^{S}\Skein{\mathfrak{sp}_4}{\bSigma}^{q}\to\Skein{\mathfrak{sp}_4}{\bSigma}^{q}$ such that $\eta^{\Bweb_{\fsp_4,\bSigma}}(\pi_{S}(W))=W$ for $W\in\Bweb_{\fsp_4,\bSigma}$. Thus $\pi_{S}(\Bweb_{\fsp_4,\bSigma})$ forms a basis of $\mathcal{G}^{S}\Skein{\mathfrak{sp}_4}{\bSigma}^{q}$.
\end{proof}


The situation is summarized in the following diagram:
\begin{equation*}
\begin{tikzcd}
    \mathscr{W}_{\fsp_4,\bSigma} =\Blad_{\fsp_4,\bSigma} \ar[r,"\sigma_\bM"] \ar[d] & \Bweb_{\fsp_4,\bSigma} \ar[d] \ar[r,phantom,"\subset"] & \Skein{\mathfrak{sp}_4}{\bSigma}^{q} \ar[d,"\pi_{S}"]\\
    \pi_{S}(\Blad_{\fsp_4,\bSigma})\ar[r] & \pi_{S}(\Bweb_{\fsp_4,\bSigma}) \ar[r,phantom,"\subset"] & \mathcal{G}^S\Skein{\mathfrak{sp}_4}{\bSigma}^{q}
\end{tikzcd}
\end{equation*}

For a cut path $\gamma$ between special points $p$ and $q$ in $\bM$, we fix two cut paths $\gamma_{\pm}$ between $p$ and $q$ bounding a biangle $B_{\gamma}$ which is divided by $\gamma$ in half.

\begin{prop}\label{lem:gr-bigon}
    Let $\gamma$ be a cut path, and $D$ a $\{\gamma_{\pm}\}$-minimal crossroad $\mathfrak{sp}_4$-diagram on $\Sigma$.
    For any ladder resolution $D'$ of $D$ at all crossroads in $B_\gamma$, we have $\diagshift(D)\greq \diagshift(D')$ in $\mathcal{G}^{\{\gamma\}}\Skein{\mathfrak{sp}_4}{\bSigma}^{q}$. 
\end{prop}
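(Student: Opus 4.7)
}
The plan is to argue by induction on the number of crossroads of $D$ lying inside $B_\gamma$. The base case is immediate, since $D = D'$ when there are no crossroads in $B_\gamma$.

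For the inductive step, I would single out one crossroad $c$ of $D$ inside $B_\gamma$ and form $D''$ by resolving $c$ alone according to the prescription of $D'$, leaving every other crossroad of $D$ intact. The defining $\fsp_4$-skein relation at $c$ yields
\[
\diagshift(D) = \diagshift(D'') - \frac{1}{[2]} \diagshift(D_{\mathrm{cups}}),
\]
where $D_{\mathrm{cups}}$ is the cups-resolution at $c$ complementary to the chosen rung orientation. The crux is to show that $\diagshift(D_{\mathrm{cups}})$ lies in a strictly lower $\{\gamma\}$-filtration than $\diagshift(D'')$. Granted this, $\diagshift(D) \greq \diagshift(D'')$; since $D''$ is again $\{\gamma_\pm\}$-minimal with one fewer crossroad in $B_\gamma$, the inductive hypothesis gives $\diagshift(D'') \greq \diagshift(D')$, completing the step.

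The strict filtration drop at $c$ is exactly the local content of the first identity of \cref{lem:equality-in-gr}. To guarantee it globally, I would invoke the $\{\gamma_\pm\}$-minimality of $D$ together with Kuperberg's biangle lemma (\cref{lem:biangle_blad}) to realize $D|_{B_\gamma}$ as a non-elliptic flat braid whose strands run from $\gamma_+$ to $\gamma_-$ and cross $\gamma$ transversely. Then at each such $c$ the two strands meeting at $c$ cross $\gamma$ essentially, and the cups-resolution reconnects them in a pattern parallel to the rung direction, removing at least one essential $\gamma$-intersection. For arbitrary local choices of rung orientation (vertical vs.\ horizontal relative to the flat braid), I would appeal to the flat Reidemeister II and III moves in the graded algebra established in \cref{rem:flat-braid}, which identify the two orientations in $\mathcal{G}^{\{\gamma\}}\Skein{\fsp_4}{\bSigma}^{q}$ up to lower-order corrections, making the inductive step uniform in the choice of $D'$.

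The main obstacle I foresee is the careful $\{\gamma\}$-degree bookkeeping at crossroads that lie very close to (or effectively straddle) $\gamma$. The global rigidity imposed by $\{\gamma_\pm\}$-minimality—rather than mere $\{\gamma\}$-minimality—is precisely what precludes the pathological situation where both cups-resolutions happen to share the $\{\gamma\}$-degree of the crossroad and no strict filtration drop occurs; combined with the flat braid structure inside $B_\gamma$, it ensures that each local application of \cref{lem:equality-in-gr} is valid.
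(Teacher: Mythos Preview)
Your plan is essentially the paper's argument: induction on the number of crossroads in $B_\gamma$, expanding one crossroad via the skein relation $\text{crossroad}=\text{H}-\tfrac{1}{[2]}\text{cups}$, and showing the cups term lies in strictly lower $\{\gamma\}$-filtration. The paper carries this out with the same structure.

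Two points are worth tightening. First, your inductive hypothesis as stated does not literally apply to $D''$: after resolving $c$ you have a rung inside $B_\gamma$, so $D''$ is no longer a crossroad diagram. You need to either strengthen the inductive statement to allow rungs already present in $B_\gamma$, or organize the induction so that at each step you compare two full ladder resolutions differing at a single rung. The paper takes the second route: it views a ladder resolution $L_n$ of $D$ as $L_{n-1}$ (a ladder resolution of the diagram obtained by deleting the \emph{rightmost} crossroad) with one extra H-web attached, and shows that this rightmost rung can be flipped.

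Second, and more substantively, the paper's choice of the extremal crossroad is what makes the degree-drop step clean. When $c$ is the crossroad nearest $\gamma_+$, its local picture matches the explicit $n=1$ configuration, and the paper exhibits a concrete position of $\gamma$ (the ``middle blue vertical line'' in $E$, $E'$) passing through the gap created by the cups. Your appeal to \cref{lem:equality-in-gr} is only directly valid when two arms of $c$ cross $\gamma$ with nothing in between; for a crossroad buried in the middle of the flat braid you would still need to argue that $\gamma$ can be isotoped through the gap at $c$ without picking up extra intersections with the other strands. This is true (since $D$ has no trivalent vertices in $B_\gamma$, every vertical line has the same $\{\gamma\}$-degree, so routing $\gamma$ through the gap costs nothing elsewhere), but it is exactly the bookkeeping you flagged as the main obstacle, and the paper sidesteps it by always working at the boundary of the braid word.
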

\begin{proof}
    We will prove this claim by induction on the number $n$ of crossroads in $B_{\gamma}$.
    For $n=1$, $D$ can be described below (or its horizontal reflection):
    \begin{align*}
        D=
        \mbox{
        \tikz[baseline=-.6ex, scale=.5]{
        \draw[blue](0,-3) -- (0,3);
        \draw[blue] (3,-3) -- (3,3);
        \draw[webline] (0,1) -- (1,1) -- (2,1);
        \draw[wline] (2,1) -- (3,1);
        \draw[webline] (0,0) -- (1,0) -- (2,0) -- (3,0);
        \draw[webline] (0,-1) -- (1,-1) -- (2,-1) -- (3,-1);
        \draw[wline] (0,-1) -- (2,-1);
        \draw[webline] (2,1) -- (2,0);
        \draw[webline] (2,0) -- (2,-1);
        \draw[webline, black] (0,1.5) -- (3,1.5);
        \draw[webline, black] (0,2.5) -- (3,2.5);
        \draw[webline, black] (0,-1.5) -- (3,-1.5);
        \draw[webline, black] (0,-2.5) -- (3,-2.5);
        \node at (1.5,2) [rotate=90]{\scriptsize $\cdots$};
        \node at (1.5,-2) [rotate=90]{\scriptsize $\cdots$};
        \node at (0,3) [above]{\scriptsize $\gamma_{-}$};
        \node at (3,3) [above]{\scriptsize $\gamma_{+}$};
        \node at (2,0) {$\crossroad$};
        }
        }
    \end{align*}
    where the black horizontal lines represent type~1 or type~2 arcs.
    Then, by the skein relation in the third line of \cref{def:skein-rel}, $\diagshift(D)=\diagshift(L)-\frac{1}{[2]}\diagshift(E)=\diagshift(L')-\frac{1}{[2]}\diagshift(E')$ where
    \begin{align*}
        L=
        \mbox{
        \tikz[baseline=-.6ex, scale=.5]{
        \draw[blue](0,-3) -- (0,3);
        \draw[blue] (3,-3) -- (3,3);
        \draw[webline] (0,1) -- (1,1) -- (2,1);
        \draw[wline] (2,1) -- (3,1);
        \draw[webline] (0,0) -- (1,0) -- (2,0) -- (3,0);
        \draw[wline] (1,0) -- (2,0);
        \draw[webline] (0,-1) -- (1,-1) -- (2,-1) -- (3,-1);
        \draw[wline] (0,-1) -- (1,-1);
        \draw[webline] (2,1) -- (2,0);
        \draw[webline] (1,0) -- (1,-1);
        \draw[webline, black] (0,1.5) -- (3,1.5);
        \draw[webline, black] (0,2.5) -- (3,2.5);
        \draw[webline, black] (0,-1.5) -- (3,-1.5);
        \draw[webline, black] (0,-2.5) -- (3,-2.5);
        \node at (1.5,2) [rotate=90]{\scriptsize $\cdots$};
        \node at (1.5,-2) [rotate=90]{\scriptsize $\cdots$};
        \node at (0,3) [above]{\scriptsize $\gamma_{-}$};
        \node at (3,3) [above]{\scriptsize $\gamma_{+}$};
        }
        },\quad
        E=
        \mbox{
        \tikz[baseline=-.6ex, scale=.5]{
        \draw[blue](0,-3) -- (0,3);
        \draw[blue] (3,-3) -- (3,3);
        \draw[webline] (0,1) -- (1,1) -- (2,1);
        \draw[wline] (2,1) -- (3,1);
        \draw[webline] (0,0) -- (1,0);
        \draw[webline] (2,0) -- (3,0);
        \draw[webline] (0,-1) -- (1,-1) -- (2,-1) -- (3,-1);
        \draw[wline] (0,-1) -- (1,-1);
        \draw[webline] (2,1) -- (2,0);
        \draw[webline] (1,0) -- (1,-1);
        \draw[webline, black] (0,1.5) -- (3,1.5);
        \draw[webline, black] (0,2.5) -- (3,2.5);
        \draw[webline, black] (0,-1.5) -- (3,-1.5);
        \draw[webline, black] (0,-2.5) -- (3,-2.5);
        \node at (1.5,2) [rotate=90]{\scriptsize $\cdots$};
        \node at (1.5,-2) [rotate=90]{\scriptsize $\cdots$};
        \draw[blue] (1.8,3) -- (1.8,-3);
        \node at (0,3) [above]{\scriptsize $\gamma_{-}$};
        \node at (3,3) [above]{\scriptsize $\gamma_{+}$};
        }
        },\quad
        L'=
        \mbox{
        \tikz[baseline=-.6ex, scale=.5]{
        \draw[blue](0,-3) -- (0,3);
        \draw[blue] (3,-3) -- (3,3);
        \draw[webline] (0,1) -- (1,1) -- (2,1);
        \draw[wline] (2,1) -- (3,1);
        \draw[webline] (0,0) -- (1.5,.5);
        \draw[webline] (1.5,-.5) -- (3,0);
        \draw[webline] (0,-1) -- (1,-1) -- (2,-1) -- (3,-1);
        \draw[wline] (0,-1) -- (1,-1);
        \draw[webline] (2,1) -- (1.5,.5);
        \draw[webline] (1.5,-.5) -- (1,-1);
        \draw[wline] (1.5,.5) -- (1.5,-.5);
        \draw[webline, black] (0,1.5) -- (3,1.5);
        \draw[webline, black] (0,2.5) -- (3,2.5);
        \draw[webline, black] (0,-1.5) -- (3,-1.5);
        \draw[webline, black] (0,-2.5) -- (3,-2.5);
        \node at (1.5,2) [rotate=90]{\scriptsize $\cdots$};
        \node at (1.5,-2) [rotate=90]{\scriptsize $\cdots$};
        \node at (0,3) [above]{\scriptsize $\gamma_{-}$};
        \node at (3,3) [above]{\scriptsize $\gamma_{+}$};
        }
        },\quad
        E'=
        \mbox{
        \tikz[baseline=-.6ex, scale=.5]{
        \draw[blue](0,-3) -- (0,3);
        \draw[blue] (3,-3) -- (3,3);
        \draw[webline] (0,1) -- (1,1) -- (2,1);
        \draw[wline] (2,1) -- (3,1);
        \draw[webline] (0,0) -- (1.5,.5);
        \draw[webline] (1.5,-.5) -- (3,0);
        \draw[webline] (0,-1) -- (1,-1) -- (2,-1) -- (3,-1);
        \draw[wline] (0,-1) -- (1,-1);
        \draw[webline] (2,1) -- (1.5,.5);
        \draw[webline] (1.5,-.5) -- (1,-1);
        \draw[webline, black] (0,1.5) -- (3,1.5);
        \draw[webline, black] (0,2.5) -- (3,2.5);
        \draw[webline, black] (0,-1.5) -- (3,-1.5);
        \draw[webline, black] (0,-2.5) -- (3,-2.5);
        \node at (1.5,2) [rotate=90]{\scriptsize $\cdots$};
        \node at (1.5,-2) [rotate=90]{\scriptsize $\cdots$};
        \draw[blue,rounded corners] (2.5,3) -- (2,0) -- (1,0) -- (0.5,-3);
        \node at (0,3) [above]{\scriptsize $\gamma_{-}$};
        \node at (3,3) [above]{\scriptsize $\gamma_{+}$};
        }
        }.
    \end{align*}
    The middle blue vertical lines in $E$ and $E'$ realize smaller degree. It concludes $\diagshift(D)\greq\diagshift(L)\greq\diagshift(L')$.
    Assume the assertion holds for $n-1$.
    Let us consider $D_{n-1}$ with $n-1$ crossroads in $B_{\gamma}$. Then, for any ladder resolution $L_{n-1}$ of $D_{n-1}$, we can obtain a ladder resolution $L_{n}$ with $n$ rungs by attaching an $H$-web to the right side of $L_{n-1}$. $L_n$ has a new horizontal rung on the right. In a similar way to the above calculation, one can show that $\diagshift(L_{n})\greq\diagshift(L_{n}')$ where $L_{n}'$ is an $\fsp_4$-diagram obtained by replacing the rightmost rung of $L_{n}$ with one in the another direction (the vertical rung). By induction assumption, $n-1$ rungs in $L_{n-1}$ can be freely replaced in $\mathcal{G}^{\{\gamma\}}\Skein{\mathfrak{sp}_4}{\bSigma}^{q}$. Hence, the assertion holds for $n$.
\end{proof}

\begin{prop}\label{lem:gr-triangle}
    Let $S=\{\gamma_1,\gamma_2,\gamma_3\}$ be a set of cut paths bounding an ideal triangle $T$, and $D$ an $S$-transverse bounded crossroad $\mathfrak{sp}_4$-diagram such that $D\cap T$ is reduced. 
    Then, $\diagshift(D)\greq \diagshift(D')$ holds for any ladder-resolution $D'$ at crossroads in $T$.
\end{prop}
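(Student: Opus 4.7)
The plan is to proceed by induction on the number $n$ of crossroads of $D$ inside $T$, adapting the strategy of \cref{lem:gr-bigon}. The base case $n=0$ is immediate since then $D'=D$.

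For the inductive step, I would pick any crossroad $c$ of $D$ in $T$ and let $D''$ denote the $\fsp_4$-diagram obtained from $D$ by replacing the crossroad at $c$ with a single rung in one direction, leaving the other $n-1$ crossroads of $D$ in $T$ untouched. Applying the $\fsp_4$-skein relation at $c$, one obtains
\begin{align*}
    \diagshift(D) = \diagshift(D'') - \frac{1}{[2]}\diagshift(D_E),
\end{align*}
where $D_E$ is the accompanying smoothing. The goal is to show that $\diagshift(D_E)$ lies in a filtration level strictly below $\deg_S(\diagshift(D))$; once this is done, we get $\diagshift(D)\greq\diagshift(D'')$. Combined with the analogous identity using the rung in the opposite direction, any two choices of ladder-resolution at $c$ become equivalent modulo the lower-degree correction term. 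Iterating the reduction through the remaining $n-1$ crossroads then produces any full ladder-resolution, and independence of the chosen order and directions of resolution follows from the graded equality at each step.

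The core geometric step is to verify the $S$-degree drop for $D_E$. Since $D\cap T$ is reduced, each of the four strands incident to $c$ must escape $T$ through $S=\gamma_1\cup\gamma_2\cup\gamma_3$. After smoothing, the new connectivity inside $T$ should either produce an elliptic face (interior or bordered) that collapses under the $\fsp_4$-skein relations, or create a pair of mutually isotopic arcs or peripheral loops removable via (D1), (D2); in either case, the ladder class of $D_E$ contains a representative with strictly fewer intersections with $S$, giving the required drop in $S$-degree. The structural input for enumerating the possible local configurations around $c$ is provided by the pyramid classification of \cref{fig:pyramid} together with \cref{lem:triangle_blad}, and the local relations collected in \cref{lem:equality-in-gr} furnish the tools for carrying out the simplifications inside the graded algebra.

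The main obstacle will be the geometric case analysis. Unlike the biangle situation of \cref{lem:gr-bigon}, where any crossroad is naturally flanked by two symmetric sides, in a triangle the four strands at $c$ can connect to one, two, or three distinct edges of $\partial T$ in many qualitatively different combinations, and one must verify case-by-case that the smoothing truly enables a degree-reducing simplification. Some configurations will be ruled out by reducedness and non-ellipticity of $D\cap T$, but for the remaining ones the required simplification after smoothing may not be visible immediately; it may require a short sequence of flat Reidemeister moves (\cref{rem:flat-braid}) and skein identities before the drop in $S$-intersections emerges. Ensuring the argument goes through uniformly across all cases is expected to be the most delicate part of the proof.
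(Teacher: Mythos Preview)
Your approach differs substantially from the paper's. The paper does not induct on the number of crossroads; instead it invokes the pyramid classification (\cref{lem:triangle_blad}) to write $D\cap T$ as corner arcs plus two $\fsp_4$-tripods, and then proves a dedicated result (\cref{lem:tripod-ladder}) that any ladder resolution of an $\fsp_4$-tripod of degree $n$ agrees with the tripod in the graded algebra, provided two of its three legs meet cut paths. That lemma is proved by induction on the tripod degree $n$, and its inductive step rests on a preliminary result (\cref{lem:tripod-clasp}) showing that a tripod capped by a returning arc or by an $H$-piece lies in strictly lower filtration.

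Your core geometric step --- that the smoothing $D_E$ has strictly lower $S$-degree --- is exactly what \cref{lem:tripod-clasp} supplies, and it is not a finite local case analysis. Its proof is itself inductive in the tripod degree: a cap on a degree-$n$ tripod only simplifies via skein relations that produce further caps on the degree-$(n-1)$ tripod inside it, and so on down. A crossroad sitting deep in a large tripod, when smoothed in the ``wrong'' direction, does not create an elliptic face adjacent to $\partial T$ in one step; the simplification cascades through the nested structure, and the tools you cite (\cref{lem:equality-in-gr}, \cref{rem:flat-braid}) do not by themselves organize that cascade. There is a secondary problem with your iteration: once you replace a crossroad by a rung, $D''\cap T$ is no longer a crossroad diagram, so the hypothesis of the Proposition no longer holds and you cannot re-run your case analysis at the next crossroad without reformulating it for diagrams with mixed rungs and crossroads. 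The paper sidesteps both issues by arguing from the fully resolved side in \cref{lem:tripod-ladder}, comparing an arbitrary ladder resolution directly to the crossroad tripod through the recursive tripod decomposition.
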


This proposition immediately follows from the lemmas below.

\begin{lem}\label{lem:tripod-clasp}
    For any positive integer $n,k,l$ with $k+l=n-2$ and $n\geq 2$,
    \begin{align*}
        \mbox{
            \tikz[baseline=-.6ex, scale=.1]{
                \coordinate (N) at (0,10);
                \coordinate (S) at (0,-10);
                \coordinate (W) at (-10,0);
                \coordinate (E) at (10,0);
                \coordinate (NE) at (45:10);
                \coordinate (NW) at (135:10);
                \coordinate (SE) at (-45:10);
                \coordinate (SW) at (-135:10);
                \coordinate (C) at (0,0);
                \coordinate (CN) at (0,5);
                \coordinate (CS) at (0,-5);
                \coordinate (CW) at (-5,0);
                \coordinate (CE) at (5,0);
                \begin{scope}
                    \clip (C) circle [radius=10cm];
                    \draw[wline] (-90:10) -- (C);
                    \draw[webline] (150:10) -- (C);
                    \draw[webline, xshift=1cm, rounded corners, rotate=-45, yshift=2cm] (-1,0) -- (-1,4) -- (1,4) -- (1,0);
                    \draw[webline, rotate=-45, xshift=1cm, yshift=3cm] (-3,0) -- (-3,10);
                    \draw[webline, rotate=-45, xshift=1cm, yshift=3cm] (3,0) -- (3,10);
                    \node at (C) [scale=1.4]{$\tribox$};
                    \draw[blue] ($(150:8)+(60:10)$) -- ($(150:8)+(60:-10)$);
                    \draw[blue] (SW) -- (SE);
                \end{scope}
                \node at (SW) [above left=10pt]{\scriptsize $\beta$};
                \node at (SW) [left]{\scriptsize $\alpha$};
                \draw[dashed] (C) circle [radius=10cm];
                \node at (C) {\scriptsize $n$};
                \node at (-90:12) {\scriptsize $n$};
                \node at (60:12) {\scriptsize $l$};
                \node at (25:12) {\scriptsize $k$};
                \node at (150:12) {\scriptsize $n$};
            }
        },
        \mbox{
            \tikz[baseline=-.6ex, scale=.1]{
                \coordinate (N) at (0,10);
                \coordinate (S) at (0,-10);
                \coordinate (W) at (-10,0);
                \coordinate (E) at (10,0);
                \coordinate (NE) at (45:10);
                \coordinate (NW) at (135:10);
                \coordinate (SE) at (-45:10);
                \coordinate (SW) at (-135:10);
                \coordinate (C) at (0,0);
                \coordinate (CN) at (0,5);
                \coordinate (CS) at (0,-5);
                \coordinate (CW) at (-5,0);
                \coordinate (CE) at (5,0);
                \begin{scope}
                    \clip (C) circle [radius=10cm];
                    \draw[wline] (-90:10) -- (C);
                    \draw[webline] (150:10) -- (C);
                    \draw[webline, xshift=1cm, rounded corners, rotate=-45, yshift=2cm] (-1,0) -- (-1,4) -- (1,4) -- (1,0);
                    \draw[wline, xshift=1cm, rotate=-45, yshift=2cm] (0,4) -- (0,10);
                    \draw[webline, rotate=-45, xshift=1cm, yshift=3cm] (-3,0) -- (-3,10);
                    \draw[webline, rotate=-45, xshift=1cm, yshift=3cm] (3,0) -- (3,10);
                    \node at (C) [scale=1.4]{$\tribox$};
                    \draw[blue] ($(150:8)+(60:10)$) -- ($(150:8)+(60:-10)$);
                    \draw[blue] (SW) -- (SE);
                \end{scope}
                \node at (SW) [above left=10pt]{\scriptsize $\beta$};
                \node at (SW) [left]{\scriptsize $\alpha$};
                \draw[dashed] (C) circle [radius=10cm];
                \node at (C) {\scriptsize $n$};
                \node at (-90:12) {\scriptsize $n$};
                \node at (60:12) {\scriptsize $l$};
                \node at (25:12) {\scriptsize $k$};
                \node at (150:12) {\scriptsize $n$};
            }
        }
        \in\cF_{d}^{\{\alpha,\beta\}}\Skein{\fsp_4}{\bSigma}^{q},
    \end{align*}
    where $d=\widetilde{\deg}_{\alpha}(D)+\widetilde{\deg}_{\beta}(D)-1$ for the above diagram $D$.
\end{lem}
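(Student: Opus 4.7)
The plan is to prove \cref{lem:tripod-clasp} by induction on $n \geq 2$, exploiting the clasp property of the $\fsp_4$-tripod at the base of the diagram. The underlying principle is that the triangular box $\tribox$ in \cref{dfn:box-notation} acts as a projector onto the irreducible component of the relevant tensor power, and therefore annihilates any diagram that factors through a ``cap'' attached to one of its legs. The ``$-1$'' in the bound on $d$ is designed to reflect a single such clasp-absorption event.

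For the base case $n=2$, so $k=l=0$, the upper-right bundle reduces to a single U-turn of type~$1$ strands crossing both cut paths $\alpha$ and $\beta$. I straighten the U-turn using the graded identities of \cref{lem:equality-in-gr} and the flat Reidemeister II and III moves of \cref{rem:flat-braid}; the resulting configuration has a cap attached to the upper-right leg of the tripod, which the clasp $\tribox$ annihilates by the defining relations of the $\fsp_4$-skein algebra. Hence the naive leading term must cancel, and the degree drops by at least one, yielding $d = \widetilde{\deg}_\alpha(D) + \widetilde{\deg}_\beta(D) - 1$ as required.

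For the inductive step, I peel off the outermost strand of the upper-right bundle and slide it across both cut paths using \cref{lem:gr-bigon}, which ensures that in $\mathcal{G}^{\{\alpha,\beta\}}\Skein{\fsp_4}{\bSigma}^{q}$, ladder resolutions inside the biangle bounded by $\alpha$ and its parallel companion are interchangeable. Up to graded equality, this slide factors the outermost strand through a smaller U-turn attached to a shorter tripod; the clasp absorbs the resulting cap, yielding graded equality with a diagram built from an $(n-1)$-tripod of the same shape. Applying the induction hypothesis to the smaller tripod and accounting for the removed intersections closes the induction. The second variant, with the additional middle type~$2$ rung attached to the U-turn, is handled by the same peeling scheme together with one extra application of the rung-absorption relation from \cref{lem:equality-in-gr}.

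The main obstacle is the asymmetric weighting of type~$1$ versus type~$2$ intersections in $\widetilde{\deg}_S$ (weights $2$ and $3$ respectively), so the naive degree book-keeping during the peeling must carefully distinguish the two colors at each step. In particular, the ``$-1$'' saving must come from exactly one clasp absorption, meaning the induction should be neutral at intermediate steps and deposit the saving only at the base case. I expect that isolating this single absorption event, and verifying it for both diagram variants, will be the technically delicate part, requiring a careful order of peeling (outer type~$1$ strands first, then the central type~$2$ strand) mirroring Kuperberg's angular defect analysis in \cref{lem:biangle_blad}.
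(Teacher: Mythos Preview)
Your central claim — that the triangular box $\tribox$ of \cref{dfn:box-notation} ``acts as a projector onto the irreducible component'' and therefore ``annihilates any diagram that factors through a cap'' — is incorrect. The $\tribox$ is a \emph{single} $\fsp_4$-diagram built recursively from crossroads, not a linear combination; it is not the Jones--Wenzl--type clasp and does not kill caps in $\Skein{\fsp_4}{\bSigma}^{q}$. (The paper does contain, commented out, a separate clasp $\clbox$ with that absorption property, but the proof of \cref{lem:tripod-clasp} does not use it.) Consequently your base case and inductive step both rest on a mechanism that is not available.

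The paper's actual mechanism is different and more concrete. For the base case one applies the skein relations
\[
\mbox{\tikz[baseline=-.6ex,scale=.1]{\draw[dashed,fill=white](0,0)circle[radius=7];\coordinate(NW)at(135:7);\coordinate(SW)at(-135:7);\coordinate(L)at(-2,0);\coordinate(R)at(4,0);\draw[webline](SW)to[out=north east,in=south west](L)to[out=north east,in=north](R)to[out=south,in=south east](L)to[out=north west,in=south east](NW);\draw[wline](R)--(7,0);\draw[fill=pink](L)circle[radius=20pt];}}
=-[2]\,
\mbox{\tikz[baseline=-.6ex,scale=.1]{\draw[dashed,fill=white](0,0)circle[radius=7];\draw[webline](135:7)--(0,0)--(-135:7);\draw[wline](0,0)--(7,0);}}\,,
\qquad
\mbox{\tikz[baseline=-.6ex,scale=.1]{\draw[dashed,fill=white](0,0)circle[radius=7];\coordinate(L)at(-3,0);\draw[webline](-135:7)--(L)--(1,4)--(1,-4)--(L)--(135:7);\draw[wline](1,4)--(45:7);\draw[wline](1,-4)--(-45:7);\draw[fill=pink](L)circle[radius=20pt];}}
=
\mbox{\tikz[baseline=-.6ex,scale=.1]{\draw[dashed,fill=white](0,0)circle[radius=7];\draw[webline](-135:7)to[bend right](135:7);\draw[wline](-45:7)to[bend left](45:7);}}
\]
to the cap attached to the tripod leg; the resulting diagrams have \emph{elliptic bordered faces along $\alpha$ or $\beta$}, and the intersection reduction moves of \cref{fig:red-move} then strictly lower the $\{\alpha,\beta\}$-degree. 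That is where the ``$-1$'' comes from — not from annihilation, but from a reduction move along a cut path. For the inductive step one decomposes the $n$-tripod into an $(n-1)$-tripod and a tetrapod; expanding the new elliptic face at the top right via the crossroad relations produces either an elliptic face adjacent to the $(n-1)$-tripod (invoke induction) or, when $k=0$ or $l=0$, an elliptic bordered face along $\alpha$ or $\beta$ directly. Your invocation of \cref{lem:gr-bigon} for ``sliding the outermost strand'' is also misplaced: that lemma concerns the ladder-resolution invariance of crossroads inside a biangle, not transporting strands across cut paths.
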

\begin{proof}
    We prove the assertion by induction on $n$. For $n=1$, it follows immediately from the skein relation below:
    \begin{align*}
    &\mbox{
        \tikz[baseline=-.6ex, scale=.1]{
            \draw[dashed, fill=white] (0,0) circle [radius=7];
            \coordinate (NW) at (135:7);
            \coordinate (SW) at (-135:7);
            \coordinate (L) at (-2,0);
            \coordinate (R) at (4,0);
            \draw[webline] (SW) to[out=north east, in=south west] (L) to[out=north east, in=north] (R) to[out=south, in=south east] (L) to[out=north west, in=south east] (NW);
            \draw[wline] (R) -- (7,0);
            \draw[fill=pink] (L) circle [radius=20pt];
        }
    }
    =-[2]
    \mbox{
        \tikz[baseline=-.6ex, scale=.1]{
            \draw[dashed, fill=white] (0,0) circle [radius=7];
            \coordinate (NW) at (135:7);
            \coordinate (SW) at (-135:7);
            \coordinate (L) at (-2,0);
            \coordinate (R) at (4,0);
            \draw[webline] (SW) -- (0,0) -- (NW);
            \draw[wline] (0,0) -- (7,0);
        }
    },&
    \mbox{
        \tikz[baseline=-.6ex, scale=.1]{
            \draw[dashed, fill=white] (0,0) circle [radius=7];
            \coordinate (NW) at (135:7);
            \coordinate (NE) at (45:7);
            \coordinate (SW) at (-135:7);
            \coordinate (SE) at (-45:7);
            \coordinate (L) at (-3,0);
            \coordinate (R) at (3,0);
            \draw[webline] (SW) -- (L) -- (1,4) -- (1,-4) -- (L) -- (NW);
            \draw[wline] (1,4) -- (NE);
            \draw[wline] (1,-4) -- (SE);
            \draw[fill=pink] (L) circle [radius=20pt];
        }
    }=
    \mbox{
        \tikz[baseline=-.6ex, scale=.1]{
            \draw[dashed, fill=white] (0,0) circle [radius=7];
            \coordinate (NW) at (135:7);
            \coordinate (NE) at (45:7);
            \coordinate (SW) at (-135:7);
            \coordinate (SE) at (-45:7);
            \draw[webline] (SW) to[bend right] (NW);
            \draw[wline] (SE) to[bend left] (NE);
        }
    }.
    \end{align*}
    After applying these skein relations, the resulting diagrams have elliptic bordered faces along $\alpha$ or $\beta$ (see \cref{dfn:red-H-move}). Hence, one can make intersection points of these $\fsp_4$-diagrams smaller by the intersection reduction moves in \cref{fig:red-move}.
    Assume the assertion holds for $n-1$, and we will prove it for $n>2$. 
    By the definition of the $\fsp_4$-tripod of degree $n$, it decomposes into the $\fsp_4$-tripod of degree $n-1$ and an $\fsp_4$-tetrapod as
    \begin{align*}
    \mbox{
        \tikz[baseline=-.6ex, scale=.1]{
            \coordinate (N) at (0,10);
            \coordinate (S) at (0,-10);
            \coordinate (W) at (-10,0);
            \coordinate (E) at (10,0);
            \coordinate (NE) at (45:10);
            \coordinate (NW) at (135:10);
            \coordinate (SE) at (-45:10);
            \coordinate (SW) at (-135:10);
            \coordinate (C) at (0,0);
            \coordinate (CN) at (0,5);
            \coordinate (CS) at (0,-5);
            \coordinate (CW) at (-5,0);
            \coordinate (CE) at (5,0);
            \begin{scope}
                \clip (C) circle [radius=10cm];
                \draw[wline] ($(S)-(2,0)$) -- ($(C)-(2,0)$);
                \draw[webline, shorten >= -.3cm] ($(C)-(2,0)$) -- ($(NE)-(2,0)$);
                \draw[webline] ($(NW)-(2,0)$) -- ($(C)-(2,0)$);
                \node at ($(C)-(2,0)$) [scale=1]{\tribox};
                \draw[wline] ($(S)-(-6,0)$) -- ($(C)-(-6,0)$);
                \draw[webline] ($(C)-(-6,0)$) -- ($(NE)-(-6,0)$);
                \draw[webline, shorten <= -1cm] ($(C)!.6!(NE)-(2,0)$) -- ($(C)-(-6,0)$);
                \node at (2,4) [rotate=-45]{\tikz{\node at (0,0) [yscale=.4, xscale=.5]{\sqbox}}};
            \end{scope}
            \draw[dashed] (C) circle [radius=10cm];
            \node at ($(C)-(2,0)$) [xscale=.5, yscale=.7]{\scriptsize $n-1$};
            \node at ($(S)-(2,0)$) [below]{\scriptsize $n-1$};
            \node at ($(NE)-(2,0)$) [above right]{\scriptsize $n-1$};
            \node at ($(NW)-(2,0)$) [left]{\scriptsize $n-1$};
            \node at ($(S)+(6,0)$) {\scriptsize $1$};
            \node at ($(NE)+(2,-2)$) [right]{\scriptsize $1$};
            \node at ($(N)+(-3,2)$) {\scriptsize $1$};
        }
    }.
    \end{align*}
    Hence, we only have to show that every $\fsp_4$-diagram in the expansion of a top right elliptic face produces a new elliptic face adjacent to the $\fsp_4$-tripod of degree $n-1$. Then we can apply the induction assumption.
    The top right elliptic faces can be expanded by the following skein relations:
    \begin{align*}
    &\mbox{
        \tikz[baseline=-.6ex, scale=.1]{
            \draw[dashed, fill=white] (0,0) circle [radius=7];
            \coordinate (NW) at (150:7);
            \coordinate (NE) at (30:7);
            \coordinate (SW) at (-150:7);
            \coordinate (SE) at (-30:7);
            \coordinate (R) at (3,0);
            \coordinate (L) at (-3,0);
            \draw[webline] (SW) -- (L) to[out=north east, in=north west] (R) -- (SE);
            \draw[webline] (NW) -- (L) to[out=south east, in=south west] (R) -- (NE);
            \draw[fill=pink] (L) circle [radius=20pt];
            \draw[fill=pink] (R) circle [radius=20pt];
        }
    }
    =\frac{[4]}{[2]}
    \mbox{
        \tikz[baseline=-.6ex, scale=.1]{
            \draw[dashed, fill=white] (0,0) circle [radius=7];
            \coordinate (NW) at (150:7);
            \coordinate (NE) at (30:7);
            \coordinate (SW) at (-150:7);
            \coordinate (SE) at (-30:7);
            \draw[webline] (NW) to[bend left=60] (SW);
            \draw[webline] (NE) to[bend right=60] (SE);
        }
    }
    -[2]
    \mbox{
        \tikz[baseline=-.6ex, scale=.1]{
            \draw[dashed, fill=white] (0,0) circle [radius=7];
            \coordinate (NW) at (150:7);
            \coordinate (NE) at (30:7);
            \coordinate (SW) at (-150:7);
            \coordinate (SE) at (-30:7);
            \draw[webline] (SW) -- (NE);
            \draw[webline] (NW) -- (SE);
            \draw[fill=pink] (0,0) circle [radius=20pt];
        } 
    },&
    \mbox{
        \tikz[baseline=-.6ex, scale=.1, rotate=180]{
            \draw[dashed, fill=white] (0,0) circle [radius=7];
            \coordinate (NW) at (150:7);
            \coordinate (NE) at (30:7);
            \coordinate (SW) at (-150:7);
            \coordinate (SE) at (-30:7);
            \coordinate (R) at (3,0);
            \coordinate (L) at (-3,0);
            \draw[webline] (SW) -- (L) to[out=north east, in=north west] (R) -- (SE);
            \draw[webline] (NW) -- (L) -- (0,-4) -- (R) -- (NE);
            \draw[wline] (0,-4) -- (0,-7);
            \draw[fill=pink] (L) circle [radius=20pt];
            \draw[fill=pink] (R) circle [radius=20pt];
        }
    }=
    \mbox{
        \tikz[baseline=-.6ex, scale=.1, rotate=180]{
            \draw[dashed, fill=white] (0,0) circle [radius=7];
            \coordinate (NW) at (150:7);
            \coordinate (NE) at (30:7);
            \coordinate (SW) at (-150:7);
            \coordinate (SE) at (-30:7);
            \coordinate (R) at (3,0);
            \coordinate (L) at (-3,0);
            \draw[webline] (NW) -- (L) -- (SW);
            \draw[webline] (NE) to[bend right=60] (SE);
            \draw[wline] (L) to[out=east, in=north] (-90:7);
        }
    }
    +
    \mbox{
        \tikz[baseline=-.6ex, scale=.1, rotate=180]{
            \draw[dashed, fill=white] (0,0) circle [radius=7];
            \coordinate (NW) at (150:7);
            \coordinate (NE) at (30:7);
            \coordinate (SW) at (-150:7);
            \coordinate (SE) at (-30:7);
            \coordinate (R) at (3,0);
            \coordinate (L) at (-3,0);
            \draw[webline] (NW) to[bend left=60] (SW);
            \draw[webline] (NE) -- (R) -- (SE);
            \draw[wline] (R) to[out=west, in=north] (-90:7);
        }
    }.
    \end{align*}
    We can see that the above expansion creates elliptic faces adjacent to the top right of the $\fsp_4$-tripod labeled by $n-1$ except for $k=0$ or $l=0$. When $k=0$ or $l=0$, elliptic bordered faces along $\alpha$ or $\beta$ appears. Therefore, the statement holds for any $n\geq 2$.
\end{proof}

\begin{lem}\label{lem:tripod-ladder}
    For any positive integer $n$,
    \begin{align}
        \mbox{
            \tikz[baseline=-.6ex, scale=.1]{
                \coordinate (N) at (0,10);
                \coordinate (S) at (0,-10);
                \coordinate (W) at (-10,0);
                \coordinate (E) at (10,0);
                \coordinate (NE) at (45:10);
                \coordinate (NW) at (135:10);
                \coordinate (SE) at (-45:10);
                \coordinate (SW) at (-135:10);
                \coordinate (C) at (0,0);
                \coordinate (CN) at (0,5);
                \coordinate (CS) at (0,-5);
                \coordinate (CW) at (-5,0);
                \coordinate (CE) at (5,0);
                \begin{scope}
                    \clip (C) circle [radius=10cm];
                    \draw[wline] (-90:10) -- (C);
                    \draw[webline] (30:10) -- (C);
                    \draw[webline] (150:10) -- (C);
                    \node at (C) [scale=1.5]{$\tribox$};
                    \draw[blue] ($(150:8)+(60:10)$) -- ($(150:8)+(60:-10)$);
                    \draw[blue] (SW) -- (SE);
                \end{scope}
                \draw[dashed] (C) circle [radius=10cm];
                \node at (SW) [above left=10pt]{\scriptsize $\beta$};
                \node at (SW) [left]{\scriptsize $\alpha$};
                \node at (C) {\scriptsize $n$};
                \node at (-90:12) {\scriptsize $n$};
                \node at (30:12) {\scriptsize $n$};
                \node at (150:12) {\scriptsize $n$};
            }
        }
        \greq
        \mbox{
            \tikz[baseline=-.6ex, scale=.1]{
                \coordinate (N) at (0,10);
                \coordinate (S) at (0,-10);
                \coordinate (W) at (-10,0);
                \coordinate (E) at (10,0);
                \coordinate (NE) at (45:10);
                \coordinate (NW) at (135:10);
                \coordinate (SE) at (-45:10);
                \coordinate (SW) at (-135:10);
                \coordinate (C) at (0,0);
                \coordinate (CN) at (0,5);
                \coordinate (CS) at (0,-5);
                \coordinate (CW) at (-5,0);
                \coordinate (CE) at (5,0);
                \begin{scope}
                    \clip (C) circle [radius=10cm];
                    \draw[wline] (-90:10) -- (C);
                    \draw[webline] (30:10) -- (C);
                    \draw[webline] (150:10) -- (C);
                     \node at (C) [scale=1.5]{$\tribox$};
                    \draw[blue] ($(150:8)+(60:10)$) -- ($(150:8)+(60:-10)$);
                    \draw[blue] (SW) -- (SE);
                \end{scope}
                \draw[dashed] (C) circle [radius=10cm];
                \node at (C) {\scriptsize $L$};
                \node at (SW) [above left=10pt]{\scriptsize $\beta$};
                \node at (SW) [left]{\scriptsize $\alpha$};
                \node at (-90:12) {\scriptsize $n$};
                \node at (30:12) {\scriptsize $n$};
                \node at (150:12) {\scriptsize $n$};
            }
        }\label{eq:clasp-decomp}
    \end{align}
    holds in $\cG^{\{\alpha,\beta\}}\Skein{\fsp_4}{\bSigma}^{q}$ where $L$ is any ladder-resolution of $\fsp_4$-tripod.
\end{lem}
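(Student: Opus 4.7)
The plan is to proceed by induction on $n$. The base case $n=1$ is immediate: the degree-$1$ tripod is simply a trivalent vertex with one type-$2$ edge and two type-$1$ edges, carrying no crossroads, so both sides of \eqref{eq:clasp-decomp} coincide with the same diagram.

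For the inductive step, I will write the degree-$n$ tripod using the recursive description in Definition \ref{dfn:box-notation} as a degree-$(n-1)$ tripod attached to an $\fsp_4$-tetrapod at the top-right corner, with the tetrapod supplying the outermost strand of each of the three $n$-cablings. The tetrapod carries crossroads along its type-$1$ strands, and I resolve these one at a time by applying the crossroad/ladder identity of Lemma \ref{lem:equality-in-gr} (the first relation there), which converts each crossroad directly into its ladder form modulo terms of lower $\{\alpha,\beta\}$-degree. Iterating this crossroad-by-crossroad from the outside of the tetrapod inward, the degree-$n$ tripod becomes equivalent in the graded algebra to a ladder form surrounding the inner degree-$(n-1)$ tripod, plus error terms whose $\{\alpha,\beta\}$-degree is strictly smaller.

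The error terms produced during this resolution are the ``capped'' configurations obtained when a crossroad is replaced by two U-turns: two strands of the tetrapod fold back together, leaving an unresolved degree-$(n-1)$ tripod whose top-right corner carries fewer outgoing strands than it should. These configurations fall exactly into the model diagrams analyzed in Lemma \ref{lem:tripod-clasp} (possibly after one additional boundary $H$-move to expose the elliptic bordered face along $\alpha$ or $\beta$), and therefore lie in $\cF_{d-1}^{\{\alpha,\beta\}}\Skein{\fsp_4}{\bSigma}^q$ with $d$ the $\{\alpha,\beta\}$-degree of the full tripod. They thus vanish in $\cG^{\{\alpha,\beta\}}\Skein{\fsp_4}{\bSigma}^q$. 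Applying the induction hypothesis to the inner degree-$(n-1)$ tripod then shows that the full degree-$n$ tripod is graded-equivalent to some specific ladder-resolution of itself.

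To finish, I must check that any two ladder-resolutions of the full tripod represent the same class in $\cG^{\{\alpha,\beta\}}\Skein{\fsp_4}{\bSigma}^q$. This I do by interpolating between any two such $L$ and $L'$ by a finite sequence of local moves, each supported in a biangle that can be cut off by an auxiliary arc parallel to $\alpha$ or $\beta$; inside each such biangle, Proposition \ref{lem:gr-bigon} identifies the two ladder-resolutions, while Lemma \ref{rem:flat-braid} (the flat Reidemeister II/III moves) allows one to slide rungs freely. Chaining these equivalences yields the desired equality. The main obstacle is the bookkeeping in the preceding paragraph: one has to verify that the capped error configurations arising at each step really do match a case of Lemma \ref{lem:tripod-clasp}, and in particular that the $\fsp_4$-tripod substructure adjacent to the cap retains the correct degree and color pattern after the skein expansion. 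Organizing the tetrapod layers so that the capping always occurs at a corner where Lemma \ref{lem:tripod-clasp} applies is the combinatorial heart of the argument.
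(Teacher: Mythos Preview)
Your overall plan---induction on $n$, the recursive splitting into a degree-$(n-1)$ tripod plus a tetrapod, and the use of Lemma~\ref{lem:tripod-clasp} to kill the capped error terms---matches the paper's. But two concrete problems make the argument as written incomplete.

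First, your invocation of Lemma~\ref{lem:equality-in-gr} in Step~2 is not legitimate. That lemma converts a crossroad to a ladder \emph{only} when a cut path from $S$ passes through the crossroad: the drop in degree comes from the arcs term having fewer intersections with that cut path. In the tripod picture the cut paths $\alpha,\beta$ sit along the bottom and upper-left legs, while the tetrapod's crossroads live at the upper-right corner, away from both. So Lemma~\ref{lem:equality-in-gr} does not apply there. What you actually need is the raw skein relation $\text{crossroad}=\text{ladder}-\tfrac{1}{[2]}(\text{arcs})$, followed by the separate argument (Lemma~\ref{lem:tripod-clasp}) that the arcs term, once it interacts with the inner tripod, lies in a lower filtration piece with respect to $\{\alpha,\beta\}$. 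You do gesture at this in your next paragraph, but the logic should be stated that way rather than as a consequence of Lemma~\ref{lem:equality-in-gr}.

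Second, and more seriously, your final paragraph---showing that any two ladder-resolutions $L,L'$ agree in $\cG^{\{\alpha,\beta\}}$ by cutting out auxiliary biangles and invoking Proposition~\ref{lem:gr-bigon} and Lemma~\ref{rem:flat-braid}---does not work. Both of those results are statements in the graded algebra \emph{with respect to a cut path that belongs to $S$}; introducing an auxiliary arc parallel to $\alpha$ or $\beta$ does not put it into $S=\{\alpha,\beta\}$, so the filtration you are working in does not see it and those lemmas give you nothing. The paper sidesteps this entire issue by running the argument in the opposite direction: start from an \emph{arbitrary} ladder-resolution $L$, split it as $L_{n-1}$ on the inner tripod and $L'$ on the tetrapod, and then use the skein relation to replace each rung of $L'$ by a crossroad plus an arcs term. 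The main term recovers the standard degree-$n$ tripod; every remaining term has at least one cap adjacent to the inner tripod and lands in the configurations of Lemma~\ref{lem:tripod-clasp}, hence has strictly lower $\{\alpha,\beta\}$-degree. This proves $L\greq\text{tripod}$ for every $L$ at once, and your Step~5 becomes unnecessary.
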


\begin{proof}
    By induction on $n$. For $n=2$, we can prove it by straightforward calculation using the $\fsp_4$-skein relation at a crossroad. Assume that any ladder resolution of the $\fsp_4$-tripod of degree smaller than $n$ can be replaced by the original $\fsp_4$-tripod in $\cG^{\{\alpha,\beta\}}\Skein{\fsp_4}{\bSigma}^{q}$. 
    For any ladder-resolution $L$ of the $\fsp_4$-tripod of degree $n$, we consider a decomposition of $L$ below:
    \begin{align*}
    \mbox{
        \tikz[baseline=-.6ex, scale=.12]{
            \coordinate (N) at (0,10);
            \coordinate (S) at (0,-10);
            \coordinate (W) at (-10,0);
            \coordinate (E) at (10,0);
            \coordinate (NE) at (45:10);
            \coordinate (NW) at (135:10);
            \coordinate (SE) at (-45:10);
            \coordinate (SW) at (-135:10);
            \coordinate (C) at (0,0);
            \coordinate (CN) at (0,5);
            \coordinate (CS) at (0,-5);
            \coordinate (CW) at (-5,0);
            \coordinate (CE) at (5,0);
            \begin{scope}
                \clip (C) circle [radius=10cm];
                \draw[wline] ($(S)-(2,0)$) -- ($(C)-(2,0)$);
                \draw[webline, shorten >= -.3cm] ($(C)-(2,0)$) -- ($(NE)-(2,0)$);
                \draw[webline] ($(NW)-(2,0)$) -- ($(C)-(2,0)$);
                \node at ($(C)-(2,0)$) [scale=1.4]{\tribox};
                \draw[wline] ($(S)-(-6,0)$) -- ($(C)-(-6,0)$);
                \draw[webline] ($(C)-(-6,0)$) -- ($(NE)-(-6,0)$);
                \draw[webline, shorten <= -1cm] ($(C)!.6!(NE)-(2,0)$) -- ($(C)-(-6,0)$);
                \node at (2,4) [rotate=-45]{\tikz{\node at (0,0) [yscale=.5, xscale=.6]{\sqbox}}};
                    \draw[blue] ($(150:8)+(40:10)$) -- ($(150:8)+(60:-10)$);
                    \draw[blue] (SW) -- (SE);
            \end{scope}
            \draw[dashed] (C) circle [radius=10cm];
            \node at ($(C)-(2,1)$) [yscale=.8]{\scriptsize $L_{n-1}$};
            \node at (2,4) [xscale=.7, yscale=.7]{\scriptsize $L'$};
            \node at ($(S)-(2,0)$) [below]{\scriptsize $n-1$};
            \node at ($(NE)-(2,0)$) [above right]{\scriptsize $n-1$};
            \node at ($(NW)-(2,0)$) [left]{\scriptsize $n-1$};
            \node at ($(S)+(6,0)$) {\scriptsize $1$};
            \node at ($(NE)+(2,-2)$) [right]{\scriptsize $1$};
            \node at ($(N)+(-3,2)$) {\scriptsize $1$};
                \node at (SW) [above left=10pt]{\scriptsize $\beta$};
                \node at (SW) [left]{\scriptsize $\alpha$};
        }
    }.
    \end{align*}
\end{proof}
Then we can replace any rung in $L'$ with a crossroad by induction assumption. We apply the $\fsp_4$-skein relation to $n-1$ rungs in $L'$ to replace all rungs with crossroads. This expansion is described as 
    \begin{align*}
    \mbox{
        \tikz[baseline=-.6ex, scale=.12]{
            \coordinate (N) at (0,10);
            \coordinate (S) at (0,-10);
            \coordinate (W) at (-10,0);
            \coordinate (E) at (10,0);
            \coordinate (NE) at (45:10);
            \coordinate (NW) at (135:10);
            \coordinate (SE) at (-45:10);
            \coordinate (SW) at (-135:10);
            \coordinate (C) at (0,0);
            \coordinate (CN) at (0,5);
            \coordinate (CS) at (0,-5);
            \coordinate (CW) at (-5,0);
            \coordinate (CE) at (5,0);
            \begin{scope}
                \clip (C) circle [radius=10cm];
                \draw[wline] ($(S)-(2,0)$) -- ($(C)-(2,0)$);
                \draw[webline, shorten >= -.3cm] ($(C)-(2,0)$) -- ($(NE)-(2,0)$);
                \draw[webline] ($(NW)-(2,0)$) -- ($(C)-(2,0)$);
                \node at ($(C)-(2,0)$) [scale=1.4]{\tribox};
                \draw[wline] ($(S)-(-6,0)$) -- ($(C)-(-6,0)$);
                \draw[webline] ($(C)-(-6,0)$) -- ($(NE)-(-6,0)$);
                \draw[webline, shorten <= -1cm] ($(C)!.6!(NE)-(2,0)$) -- ($(C)-(-6,0)$);
                \node at (2,4) [rotate=-45]{\tikz{\node at (0,0) [yscale=.5, xscale=.6]{\sqbox}}};
                    \draw[blue] ($(150:8)+(40:10)$) -- ($(150:8)+(60:-10)$);
                    \draw[blue] (SW) -- (SE);
            \end{scope}
            \draw[dashed] (C) circle [radius=10cm];
            \node at ($(C)-(2,0)$) {\scriptsize $n-1$};
            \node at (2,4) [xscale=.7, yscale=.7]{\scriptsize $L'$};
            \node at ($(S)-(2,0)$) [below]{\scriptsize $n-1$};
            \node at ($(NE)-(2,0)$) [above right]{\scriptsize $n-1$};
            \node at ($(NW)-(2,0)$) [left]{\scriptsize $n-1$};
            \node at ($(S)+(6,0)$) {\scriptsize $1$};
            \node at ($(NE)+(2,-2)$) [right]{\scriptsize $1$};
            \node at ($(N)+(-3,2)$) {\scriptsize $1$};
                \node at (SW) [above left=10pt]{\scriptsize $\beta$};
                \node at (SW) [left]{\scriptsize $\alpha$};
        }
    }
    =
    \mbox{
        \tikz[baseline=-.6ex, scale=.12]{
            \coordinate (N) at (0,10);
            \coordinate (S) at (0,-10);
            \coordinate (W) at (-10,0);
            \coordinate (E) at (10,0);
            \coordinate (NE) at (45:10);
            \coordinate (NW) at (135:10);
            \coordinate (SE) at (-45:10);
            \coordinate (SW) at (-135:10);
            \coordinate (C) at (0,0);
            \coordinate (CN) at (0,5);
            \coordinate (CS) at (0,-5);
            \coordinate (CW) at (-5,0);
            \coordinate (CE) at (5,0);
            \begin{scope}
                \clip (C) circle [radius=10cm];
                \draw[wline] ($(S)-(2,0)$) -- ($(C)-(2,0)$);
                \draw[webline, shorten >= -.3cm] ($(C)-(2,0)$) -- ($(NE)-(2,0)$);
                \draw[webline] ($(NW)-(2,0)$) -- ($(C)-(2,0)$);
                \node at ($(C)-(2,0)$) [scale=1.4]{\tribox};
                \draw[wline] ($(S)-(-6,0)$) -- ($(C)-(-6,0)$);
                \draw[webline] ($(C)-(-6,0)$) -- ($(NE)-(-6,0)$);
                \draw[webline, shorten <= -1cm] ($(C)!.6!(NE)-(2,0)$) -- ($(C)-(-6,0)$);
                \node at (2,4) [rotate=-45]{\tikz{\node at (0,0) [yscale=.5, xscale=.6]{\sqbox}}};
                    \draw[blue] ($(150:8)+(40:10)$) -- ($(150:8)+(60:-10)$);
                    \draw[blue] (SW) -- (SE);
            \end{scope}
            \draw[dashed] (C) circle [radius=10cm];
            \node at ($(C)-(2,0)$) {\scriptsize $n-1$};
            \node at ($(S)-(2,0)$) [below]{\scriptsize $n-1$};
            \node at ($(NE)-(2,0)$) [above right]{\scriptsize $n-1$};
            \node at ($(NW)-(2,0)$) [left]{\scriptsize $n-1$};
            \node at ($(S)+(6,0)$) {\scriptsize $1$};
            \node at ($(NE)+(2,-2)$) [right]{\scriptsize $1$};
            \node at ($(N)+(-3,2)$) {\scriptsize $1$};
                \node at (SW) [above left=10pt]{\scriptsize $\beta$};
                \node at (SW) [left]{\scriptsize $\alpha$};
        }
    }
    +\sum_{X}
    \mbox{
        \tikz[baseline=-.6ex, scale=.12]{
            \coordinate (N) at (0,10);
            \coordinate (S) at (0,-10);
            \coordinate (W) at (-10,0);
            \coordinate (E) at (10,0);
            \coordinate (NE) at (45:10);
            \coordinate (NW) at (135:10);
            \coordinate (SE) at (-45:10);
            \coordinate (SW) at (-135:10);
            \coordinate (C) at (0,0);
            \coordinate (CN) at (0,5);
            \coordinate (CS) at (0,-5);
            \coordinate (CW) at (-5,0);
            \coordinate (CE) at (5,0);
            \begin{scope}
                \clip (C) circle [radius=10cm];
                \draw[wline] ($(S)-(2,0)$) -- ($(C)-(2,0)$);
                \draw[webline, shorten >= -.3cm] ($(C)-(2,0)$) -- ($(NE)-(2,0)$);
                \draw[webline] ($(NW)-(2,0)$) -- ($(C)-(2,0)$);
                \node at ($(C)-(2,0)$) [scale=1.4]{\tribox};
                \draw[wline] ($(S)-(-6,0)$) -- ($(C)-(-6,0)$);
                \draw[webline] ($(C)-(-6,0)$) -- ($(NE)-(-6,0)$);
                \draw[webline, shorten <= -1cm] ($(C)!.6!(NE)-(2,0)$) -- ($(C)-(-6,0)$);
                \node at (2,4) [rotate=-45]{\tikz{\node at (0,0) [yscale=.5, xscale=.6]{\sqbox}}};
                    \draw[blue] ($(150:8)+(40:10)$) -- ($(150:8)+(60:-10)$);
                    \draw[blue] (SW) -- (SE);
            \end{scope}
            \draw[dashed] (C) circle [radius=10cm];
            \node at ($(C)-(2,0)$) {\scriptsize $n-1$};
            \node at (2,4) [xscale=.7, yscale=.7]{\scriptsize $X$};
            \node at ($(S)-(2,0)$) [below]{\scriptsize $n-1$};
            \node at ($(NE)-(2,0)$) [above right]{\scriptsize $n-1$};
            \node at ($(NW)-(2,0)$) [left]{\scriptsize $n-1$};
            \node at ($(S)+(6,0)$) {\scriptsize $1$};
            \node at ($(NE)+(2,-2)$) [right]{\scriptsize $1$};
            \node at ($(N)+(-3,2)$) {\scriptsize $1$};
                \node at (SW) [above left=10pt]{\scriptsize $\beta$};
                \node at (SW) [left]{\scriptsize $\alpha$};
        }
    },
    \end{align*}
    where $X$ is obtained from $L'$ by replacing any $a$ ($0\geq a<n-1$) of its rungs with crossroads and removing the remaining $n-1-a$ rungs. Then, we can see that the latter terms containing $X$ have the same $\fsp_4$-diagrams in \cref{lem:tripod-clasp}. Hence, these terms vanish in the graded quotient $\cG^{\{\alpha,\beta\}}\Skein{\fsp_4}{\bSigma}^{q}$.

\begin{proof}[Proof of \cref{lem:gr-triangle}]
    Now $D\cap T$ is reduced, hence it is described by the $\fsp_4$-diagram with two $\fsp_4$-tripods as \cref{fig:pyramid}. For any $\fsp_4$-tripod of $D\cap T$, two edges are adjacent to cut paths, and we can apply \cref{lem:tripod-ladder}.  
\end{proof}

\begin{dfn}[good position]
    Let $S$ be a set of cut paths in $\bSigma$.
    \begin{itemize}
        \item We denote by $S^{\mathrm{split}}$ a cellular decomposition of $\Sigma$ obtained by replacing the set of ideal arcs $S$ by $\{\gamma_{+},\gamma_{-}\mid \gamma\in S\}$ where $\gamma_{+}$ and $\gamma_{-}$ bound a biangle $B_{\gamma}$ containing $\gamma$ as in \cref{fig:ladder-web}~{(A)}. 
        We denote by $b(S^{\mathrm{split}})$ the set of biangles $\{B_{\gamma}\mid \gamma\in S\}$, and by $t(S^{\mathrm{split}})$ the set of connected components in $\Sigma\setminus S^{\mathrm{split}}$ other than $b(S^{\mathrm{split}})$.
        \item We call $\splittri$ \emph{split triangulation} for an ideal triangulation $\tri$
        \item An $S^{\mathrm{split}}$-transverse $\fsp_4$-diagram $D\in\diag_{\fsp_4,\bSigma}$ is in a \emph{good position} with respect to $S^{\mathrm{split}}$ if $D\cap B_{\gamma}$ is non-elliptic and $D\cap \Sigma'$ is reduced for every biangles $B_{\gamma}$ and surfaces $\Sigma'$ in $t(S^{\mathrm{split}})$. 
    \end{itemize}
\end{dfn}

\begin{cor}\label{cor:ladder-invariance}
    Let $\tri$ be an ideal triangulation of $\bSigma$, and $D\in\diag_{\fsp_4,\bSigma}$ is an $\tri$-minimal crossroad $\mathfrak{sp}_4$-diagram in good position with respect to a split ideal triangulation $\splittri$. 
    Then, $\diagshift(D)\greq \diagshift(D')$ for any ladder resolution $D'$ of $D$. 
\end{cor}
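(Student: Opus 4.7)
The plan is to resolve the crossroads of $D$ region-by-region, using the local analyses of \cref{lem:gr-bigon} and \cref{lem:gr-triangle}, and to verify that the resulting identities refine to equalities in the graded skein algebra $\cG^\tri\Skein{\fsp_4}{\bSigma}^{q}$ attached to the full ideal triangulation. The split triangulation $\splittri$ partitions $\Sigma$ into biangles $B_\gamma$ (for $\gamma \in \tri$) and triangles $T \in t(\splittri)$, and every crossroad of $D$ sits in exactly one of these regions.

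First I would handle the biangle crossroads. For each biangle $B_\gamma$, the good position hypothesis guarantees that $D \cap B_\gamma$ is non-elliptic, so \cref{lem:gr-bigon} applies: replacing all crossroads of $D$ in $B_\gamma$ by any ladder resolution yields the same class in $\cG^{\{\gamma\}}\Skein{\fsp_4}{\bSigma}^{q}$. Since the underlying skein identities act only inside $B_\gamma$ and their error terms carry strictly fewer intersections with $\gamma$, while leaving intersections with any other $\gamma' \in \tri$ untouched (because $\gamma' \cap B_\gamma = \varnothing$), these error terms have strictly smaller $\tri$-degree. Hence the equivalence passes to $\cG^\tri\Skein{\fsp_4}{\bSigma}^{q}$.

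Second I would handle the triangle crossroads. For each $T \in t(\splittri)$, good position gives $D \cap T$ reduced, so \cref{lem:gr-triangle} applies with $S$ equal to the three boundary cut paths of $T$ and produces an identity in $\cG^S\Skein{\fsp_4}{\bSigma}^{q}$. To lift this identity to $\cG^\tri$, observe that the skein expansions and intersection reductions employed in the proofs of \cref{lem:tripod-clasp,lem:tripod-ladder} push elliptic bordered faces across $\partial T$ into the adjacent biangle. That biangle carries a flat braid by the good position, so pushing an elliptic face across $\gamma_\pm^{(i)} \in S$ reduces the count of intersections with $\gamma_\pm^{(i)}$ and, because strands in a flat braid cross the biangle in parallel, yields a matching reduction on the interior cut path $\gamma^{(i)} \in \tri$. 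Consequently the triangle error terms have strictly lower $\tri$-degree, and the triangle identity upgrades to $\cG^\tri$. Iterating these two steps over all regions of $\splittri$ converts $D$ into any prescribed ladder resolution $D'$ without disturbing the class in $\cG^\tri\Skein{\fsp_4}{\bSigma}^{q}$, giving $\diagshift(D) \greq \diagshift(D')$.

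The principal technical hurdle is the triangle step: \cref{lem:gr-triangle} is only asserted in $\cG^S$ with $S \subset \splittri$, and one must confirm that each reduction against a boundary $\gamma_\pm^{(i)}$ of $T$ descends to a genuine reduction against the associated $\gamma^{(i)} \in \tri$. The flat braid structure in the surrounding biangle, guaranteed by the good position hypothesis, is precisely what powers this propagation and thereby closes the argument.
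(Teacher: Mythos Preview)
Your proposal is correct and matches the paper's intended route: the corollary is stated without proof, precisely because it is meant to follow by combining \cref{lem:gr-bigon} and \cref{lem:gr-triangle} region-by-region over the cells of $\splittri$, exactly as you do. Your biangle argument is complete as written (the error terms are supported in $B_\gamma$, so their reduced $\{\gamma\}$-degree immediately gives reduced $\tri$-degree), and for the triangle step you have correctly isolated the only nontrivial point—namely that \cref{lem:gr-triangle} is stated for the boundary arcs of the split triangle rather than for the arcs of $\tri$ themselves—and supplied the right mechanism: the flat-braid structure in the adjacent half-biangle (part of the good-position hypothesis) lets an elliptic bordered face along $\gamma_{i,\pm}$ be pushed through to an elliptic bordered face along $\gamma_i\in\tri$, via the intersection-reduction moves of \cref{dfn:red-H-move} together with \cref{lem:homotopy-move}. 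The paper leaves this propagation implicit, so your proof is, if anything, more explicit than the paper's.
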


\begin{cor}\label{cor:ladder-representative}
    For any ideal triangulation $\tri$, there exists a canonical map $\Blad_{\fsp_4,\bSigma}\to\mathcal{G}^{\tri}\Skein{\mathfrak{sp}_4}{\bSigma}^{q}$ which sends a $\tri$-minimal representative $D$ to $W_D$.
\end{cor}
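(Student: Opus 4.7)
The plan is to define the map on a class $L\in\Blad_{\fsp_4,\bSigma}$ by choosing a $\tri$-minimal representative $D$ of $L$ that is in good position with respect to a split triangulation $\splittri$, setting $W_D := \pi_\tri(\diagshift(D))\in\mathcal{G}^\tri\Skein{\fsp_4}{\bSigma}^q$, and then verifying that $W_D$ does not depend on the chosen representative.

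First I would check existence of such a representative. Starting from any $\tri$-minimal reduced bounded diagram representing $L$, an ambient isotopy rel.~$\tri$ makes it transverse to $\splittri$, after which I can reduce the number of intersections with $\splittri$ inside each biangle $B_\gamma$ and apply the reduction procedure inside each triangle of $\splittri$. These operations happen away from $\tri$, so they do not increase $\widetilde{\deg}_\tri$; hence the resulting representative is still $\tri$-minimal and is in good position by construction.

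The main content is well-definedness. Given two $\tri$-minimal good-position representatives $D_1,D_2$ of $L$, by \cref{lem:equivalence-generator} they are related by a finite sequence of ambient isotopies together with moves of type (D1), (D2), (D4). Moves (D1) and (D2) are consequences of the $\fsp_4$-skein relations, so $\diagshift(D_1)=\diagshift(D_2)$ already in $\Skein{\fsp_4}{\bSigma}^q$ before passing to the graded quotient. For a ladder resolution (D4), I intend to localize it inside a single triangle of $\tri$ by an isotopy (so that the local modification lies in a disk disjoint from $\tri$), and then invoke \cref{cor:ladder-invariance} directly to conclude that $\pi_\tri(\diagshift(-))$ is unchanged.

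The main obstacle will be ensuring that a (D4) move taken during a ladder equivalence can always be realized on an intermediate $\tri$-minimal good-position diagram, since a naive intermediate step might temporarily introduce extra intersections with $\tri$ or produce an elliptic piece. I would handle this by a normalization argument: after each move, reapply the intersection-reducing routine inside biangles of $\splittri$ and the triangle-reduction inside each triangle of $\splittri$, both of which preserve the ladder class and the value of $\pi_\tri(\diagshift(-))$ by \cref{lem:gr-bigon} and \cref{lem:gr-triangle}. This reduces the verification for each elementary step to a local situation already covered by \cref{cor:ladder-invariance}, yielding a canonical map $\Blad_{\fsp_4,\bSigma}\to \mathcal{G}^\tri\Skein{\fsp_4}{\bSigma}^q$, $L\mapsto W_D$.
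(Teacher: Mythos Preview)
Your outline is essentially on the right track, and the paper gives no explicit proof of this corollary, so there is no fixed target to match. However, the well-definedness argument you sketch has a genuine circularity. You propose to connect two good-position representatives $D_1,D_2$ by a chain of moves (D1), (D2), (D4) coming from \cref{lem:equivalence-generator}, and then to re-normalize after each step using \cref{lem:gr-bigon} and \cref{lem:gr-triangle}. But those two propositions have hypotheses: the first requires the diagram to already be $\{\gamma_\pm\}$-minimal, and the second requires $D\cap T$ to already be reduced. After an arbitrary (D4) move on an intermediate diagram there is no reason these hypotheses hold, so you cannot invoke them to justify that the normalization step preserves $\pi_\tri(\diagshift(-))$; that is precisely what you are trying to prove. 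The ``localize the (D4) to a single triangle by an isotopy'' idea has the same defect: the isotopy bringing the rung into a triangle may well increase $\widetilde{\deg}_\tri$, destroying $\tri$-minimality and hence the applicability of \cref{cor:ladder-invariance}.

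The paper's toolkit gives a route that avoids this. Rather than threading an arbitrary ladder-equivalence through good-position diagrams, compare two good-position $\tri$-minimal representatives directly via \cref{thm:unique-S-good} (equivalently, the uniqueness clause in \cref{cor:good-position}): they differ only by loop/arc parallel-moves and crossbar passes. A crossbar pass is an honest isotopy of the underlying graph in $\Sigma$ (it just slides the $H$-piece along the two parallel strands), so $\diagshift$ is unchanged in $\Skein{\fsp_4}{\bSigma}^q$ before even passing to the graded quotient; and you already noted that parallel moves preserve $\diagshift$ by the remark following \cref{thm:basis-web}. This gives well-definedness on good-position representatives for free. To pass from an arbitrary $\tri$-minimal representative to a good-position one, use \cref{cor:exist-good}(2): only $H$-moves along $\splittri$ are needed, and these are either isotopies or, by the caption to the $H$-move figure, compositions of ladder resolutions with isotopies; now \cref{cor:ladder-invariance} applies because at that stage you are resolving crossroads of a diagram that \emph{is} already in good position. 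This is presumably the argument the paper has in mind when it records the result as an immediate corollary of \cref{cor:ladder-invariance}.
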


\subsection{Reconstruction from coordinates}\label{subsec:reconstruction}
We are going to prove \cref{thm:bijection}, namely that the coordinate map $\bsfa^{\bD}$ is bijective, by providing an explicit reconstruction procedure from a given coordinate vector. 
The reconstruction procedure will be equivariant under the scaling action of $\bQ_{>0}$ and the peripheral action $\alpha_\bM$ (\cref{subsec:peripheral_action}). Observe that:
\begin{itemize}
    \item For any vector $\bsfa \in \bQ^{I(\bD)}$, there exists a sufficiently large integer $\lambda \in \bZ_{>0}$ such that $\lambda\cdot \bsfa \in \bZ^{I(\bD)}$. 
    \item Let $\mathsf{v}^{\bD}_{m,s} \in \bZ^{I(\bD)}$ denote the coordinate vector of the peripheral component around $m \in \bM$ of type $s=1,2$. By \cref{rem:potential_vector} and \cref{lem:action_potential}, we have the potential condition $\pot_{m,s}^{\bD}(\sfa+ u_{m,s}\mathsf{v}^{\bD}_{m,s})=\pot_{m,s}^{\bD}(\alpha_\bM(u,L)) \geq 0$ for sufficiently large integer tuple $u=(u_{m,s}) \in (\bZ^2_{>0})^{\bM}$. 
\end{itemize}

\subsubsection{Reconstruction (surjectivity)}
Based on the observations above, let us begin with an integral vector $\bsfa=(\sfa_i) \in \bZ^{I(\bD)}$ satisfying the potential condition $\pot_{m,s}^{\bD}(\bsfa) \geq 0$ for all $m \in \bM$ and $s=1,2$. Then for each $T\in t(\tri^{\mathrm{split}})$, it provides a vector $\bsfa_{T}=(\sfa_i) \in \vect_{T,(\tri,m_T,\bs_T)}$. Here recall \cref{def:Blad_vect} (1), and that the potential condition on $T$ is satisfied by \cref{lem:potential_local}. 

The inverse map of $\sfa_{\bD}$ is constructed in the following steps. 
\begin{description}
    \item [Step~1] For any $T\in t(\tri^{\mathrm{split}})$, there exists a unique reduced $\fsp_4$-diagram $D_T$ in $T$ up to flips of corner arcs such that $\bsfa^{(\tri,m_T,\bs_T)}(D_T) = \bsfa_T$ by \cref{lem:congruence}, \cref{lem:lamination_Blad} and \cref{lem:triangle_blad}.
    \item [Step~2] One can construct an $\fsp_4$-diagram $D$ on $\Sigma$ by gluing $D_{T}$ by flat braids in biangles. See \cref{fig:ladder-web}. The resulting diagram may contain elliptic faces.
    \item [Step~3] $D$ can be deformed into an $\fsp_4$-diagram $D_{\tri}$ with no bigons by a sequence of flat Reidemeister moves as in \cref{rem:flat-braid}. This deformation only involves flips of corner arcs at triangles in $t(\tri^{\mathrm{split}})$, hence the coordinate is invariant. One can easily check that $D_{\tri}$ has no elliptic faces and we obtain a reduced $\fsp_4$-diagram $D_{\tri}$ realizing the given coordinate $\bsfa=(\sfa_i) \in \bZ^{I(\bD)}$ by boundary $H$-moves. \Cref{cor:elliptic-minimal} concludes the $\tri$-minimality of $D_{\tri}$. 
\end{description}

In this way, we get a ladder-equivalence class $L(\bsfa) \in \Blad_{\fsp_4,\bSigma}$ represented by $D_{\tri}$. 


\begin{prop}\label{prop:surjective}
The coordinate map $\bsfa^{\bD}: \cL^a(\bSigma;\bQ) \to \bQ^{I(\bD)}$ is surjective. 
\end{prop}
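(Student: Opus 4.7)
The plan is to formalize the reconstruction procedure outlined in Steps~1--3 preceding the proposition, verifying at each stage that it produces a well-defined lamination whose intersection coordinates agree with the given vector. The proof naturally splits into a reduction step, a local-to-global construction, and a coordinate-preservation check.

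First I would handle the reduction to the integral case with non-negative potentials. Given an arbitrary $\bsfa \in \bQ^{I(\bD)}$, choose $\lambda \in \bZ_{>0}$ so that $\lambda\bsfa \in \bZ^{I(\bD)}$, and then choose $u=(u_{m,s}) \in (\bZ_{>0}^2)^{\bM}$ sufficiently large so that $\lambda\bsfa + \sum_{m,s} u_{m,s}\mathsf{v}^{\bD}_{m,s}$ satisfies $\pot_{m,s}^{\bD} \geq 0$ for every $m \in \bM$, $s\in\{1,2\}$. By the $\bQ_{>0}$-equivariance of the intersection pairing and \cref{lem:action_potential}, once such a modified vector is realized by some $\widetilde{L} \in \cL^a(\bSigma,\bZ)$, the lamination $L := \lambda^{-1}\alpha_\bM(-u,\widetilde{L}) \in \cL^a(\bSigma,\bQ)$ will satisfy $\bsfa^{\bD}(L) = \bsfa$. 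So it suffices to produce a preimage for integral vectors satisfying the potential condition.

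Next I would execute Steps~1--3. The local potential condition $\pot_{m,s;T}^{\bD} \geq 0$ on each triangle $T \in t(\tri^{\mathrm{split}})$ follows from \cref{lem:potential_local}, ensuring that the restricted vector $\bsfa_T$ lies in $\vect_{T,(\tri,m_T,\bs_T)}$. The triangle bijection \cref{lem:triangle_case}(2) together with \cref{lem:congruence} and \cref{lem:triangle_blad} then yields a reduced $\fsp_4$-diagram $D_T$ on $T$ with $\bsfa^{(\tri,m_T,\bs_T)}(D_T) = \bsfa_T$, unique up to flips of corner arcs. The congruence condition on $\bsfa$ ensures that across each interior edge, the numbers of type-$1$ and type-$2$ endpoints coming from the two adjacent triangles match, so the $D_T$ can be glued across the biangles of $\tri^{\mathrm{split}}$ via flat braid diagrams as in \cref{fig:ladder-web} to form a global $\fsp_4$-diagram $D$ on $\bSigma$.

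The main obstacle lies in Step~3: showing that the resulting (possibly non-reduced) $D$ can be simplified to a $\tri$-minimal reduced diagram $D_\tri$ \emph{without altering the intersection coordinates}. This is precisely where the graded skein algebra from \cref{sec:reconstruction} is indispensable. The bigons created by gluing are removed using the flat Reidemeister II and III moves of \cref{rem:flat-braid}, which correspond to equalities in $\cG^{\tri}\Skein{\mathfrak{sp}_4}{\bSigma}^{q}$; combining these with boundary $H$-moves, \cref{cor:ladder-invariance}, and \cref{lem:gr-triangle}, one deforms $D$ to a reduced diagram $D_\tri$ with no elliptic faces whose restriction to each triangle is ladder-equivalent to the original $D_T$. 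The $\tri$-minimality of $D_\tri$ then follows from \cref{cor:elliptic-minimal}, which in turn implies that the local intersection coordinates of $D_\tri$ with each test web $V_i$ of the cluster $\cC_\bD$ decompose as a sum of local triangle contributions. Summing these local contributions recovers $\bsfa^{\bD}(D_\tri) = \bsfa$, completing the proof.
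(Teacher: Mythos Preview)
Your proof is correct and follows essentially the same approach as the paper: both reduce to the integral, potential-nonnegative case via rescaling and the peripheral action, then invoke the triangle-by-triangle reconstruction (Steps~1--3) to produce $L(\bsfa')$. The only minor difference is that you invoke more of the graded skein machinery (\cref{cor:ladder-invariance}, \cref{lem:gr-triangle}) in Step~3 than the paper does; the paper treats the flat Reidemeister moves directly as diagram-level flips of corner arcs that preserve the triangle coordinates, reserving the graded skein arguments for the injectivity proof.
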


\begin{proof}
Given a coordinate vector $\bsfa \in \bQ^{I(\bD)}$, 
\begin{itemize}
    \item Take a positive integer $\lambda \in \bZ_{>0}$ such that $\lambda \bsfa \in \bZ^{I(\bD)}$. 
    \item Take a positive integer $c_{m,s} \in \bZ_{>0}$ for each $m \in \bM$ and $s=1,2$ such that the vector $\bsfa':=\lambda \bsfa + \sum_{m,s} c_{m,s} \mathsf{v}_{m,s}$ satisfies the potential condition $\pot_{m,s}^{\bD}(\bsfa') \geq 0$. Here $\mathsf{v}_{m,s} \in \bZ^{I(\bD)}$ is the coordinate vector of the peripheral component around $m \in \bM$ of type $s=1,2$ (see \cref{rem:potential_vector}).
\end{itemize}
Then we define $\sfr^{\bD}(\bsfa)$ to be the rational bounded $\fsp_4$-lamination obtained by the union of $\lambda^{-1}\cdot L(\bsfa')$ and the peripheral component of type $s$ around each marked point $m$ of weight $-\lambda^{-1}c_{m,s}$. Then it satisfies $\bsfa^{\bD}(\sfr^{\bD}(\bsfa))=\bsfa$ by construction. Thus the coordinate map $\bsfa^{\bD}$ is surjective.
\end{proof}


\subsubsection{Uniqueness (injectivity)}
Let $L, L'\in\Blad_{\fsp_4,\bSigma}$ be two ladder-equivalence classes such that $\bsfa^\bD(L)=\bsfa^\bD(L')$, and $D,D'\in\Bdiag_{\fsp_4,\bSigma}$ be their reduced representatives. 
Here, let us denote by $L_{D}\in\Blad_{\fsp_4,\bSigma}$ and $W_{D}\in\Skein{\fsp_4}{\bSigma}^{q}$ the ladder-equivalence class of $D$ and $\fsp_4$-web $\diagshift(D)$ respectively.
\begin{description}
    \item [Step~A] The classes $L=L_{D}$ and $L'=L_{D'}$ have $\tri^{\mathrm{split}}$-minimal representatives $D_{\tri}$ and $D_{\tri}'$ that realize the given coordinates and located in good positions with respect to $\tri^{\mathrm{split}}$. 
    They are obtained from $D$ and $D'$ by a sequence of intersection reduction moves, according to \cref{cor:good-position}. 
    Note that $L=L_{D}=L_{D_{\tri}}$ and $L'=L_{D'}=L_{D'_{\tri}}$ hold.
    
    Furthermore, for each $T\in t(\tri^{\mathrm{split}})$, $D_{\tri}\cap T$ and $D'_{\tri}\cap T$ are reduced $\fsp_4$-diagram with the same coordinate. Hence they are related by a sequence of flips of corner arcs by \cref{lem:triangle_case}, \cref{lem:lamination_Blad}, and \cref{lem:triangle_blad}.
    \item [Step~B] $D_{\tri}$ can be deformed into $D''\in\Bdiag_{\fsp_4,\bSigma}$ by a sequence of flat Reidemeister moves~{II} (\cref{rem:flat-braid}) at corners of triangles so that $D''\cap T$ coincides with $D'_{\tri}\cap T$ at all $T\in t(\tri^{\mathrm{split}})$. See, for example, \cref{fig:bigon-birth}.
    \item [Step~C] For $\fsp_4$-webs $W_{D_{\tri}}$ and $W_{D''}$, their images under $\pi_{\tri}\colon\Skein{\fsp_4}{\bSigma}^{q}\to\mathcal{G}^{\tri}\Skein{\fsp_4}{\bSigma}^{q}$ coincide by \cref{rem:flat-braid}. Namely, $\pi_{\tri}(W_{D_{\tri}})=\pi_{\tri}(W_{D''})$ holds in $\cG^{\tri}\Skein{\fsp_4}{\bSigma}$.
    Moreover, for any $\gamma\in\tri$, the flat braid diagram $D''\cap B_{\gamma}$ in a biangle $B_{\gamma}$ of $\tri^{\mathrm{split}}$ is related to $D_{\tri}'\cap B$ by a sequence of flat Reidemeister moves in \cref{rem:flat-braid}.
    Hence $\pi_{\tri}(W_{D''})\greq\pi_{\tri}(W_{D_{\tri}'})$.
    Consequently, we obtain $\pi_{\tri}(W_{D_{\tri}})=\pi_{\tri}(W_{D_{\tri}}')$.
    \item [Step~D] $\pi_{\tri}(W_{D})=\pi_{\tri}(W_{D'})$ because $W_{D}=W_{D_{\tri}}$ and $W_{D'}=W_{D_{\tri}'}$ in $\Skein{\fsp_4}{\bSigma}^{q}$, see the construction of $D_{\tri}$ and $D_{\tri}'$ in $\textbf{Step~A}$.
    Since $D,D'\in\Bdiag_{\fsp_4,\bSigma}$ and the above equality holds, it follows from the proof of \cref{lem:gr-basis} that $W_{D}=W_{D'}$ in $\Skein{\fsp_4}{\bSigma}^{q}$.
    $W_{D}=W_{D'}$ means $L_{D}=L_{D'}$ because $\ladshift |_{\Blad_{\fsp_4,\bSigma}}\colon\Blad_{\fsp_4,\Sigma}\to\Bweb_{\fsp_4,\Sigma}$ is bijective, see below \cref{thm:basis-web}.
\end{description}
Hence, we obtain the following theorem.
\begin{thm}\label{thm:injectivity}
    Let $D$ and $D'$ be non-elliptic $\fsp_4$-diagrams.
    Let $D_{\tri}$ and $D'_{\tri}$ denote $\tri$-minimal $\mathfrak{sp}_4$-diagrams of $D$ and $D'$, respectively, that are in a good position with respect to the split triangulation $\tri^{\mathrm{split}}$.
    If $D_{\tri}\cap T$ and $D'_{\tri}\cap T$ have the same reduced $\fsp_4$-diagram except for arc parallel-moves
    for any $T\in t(\tri)$
    then $D\approx D'$. 
\end{thm}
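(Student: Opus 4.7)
The plan is to use the graded skein algebra $\mathcal{G}^{\tri}\Skein{\fsp_4}{\bSigma}^{q}$ as a bridge, following the four-step strategy laid out just before the statement. The essential idea is that the graded quotient is coarse enough to identify all flat Reidemeister-equivalent deformations, yet still fine enough to separate distinct basis webs.

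First, the hypothesis supplies $\tri^{\mathrm{split}}$-minimal representatives $D_\tri$ and $D'_\tri$ in good position, and we have $L_D=L_{D_\tri}$, $L_{D'}=L_{D'_\tri}$. By assumption, $D_\tri\cap T$ and $D'_\tri\cap T$ are reduced $\fsp_4$-diagrams agreeing up to arc parallel-moves for each $T\in t(\tri^{\mathrm{split}})$. I then deform $D_\tri$ by localized flat Reidemeister II moves near the triangle corners (\cref{rem:flat-braid}) to produce a diagram $D''$ satisfying $D''\cap T=D'_\tri\cap T$ for every $T$, while $W_{D_\tri}\greq W_{D''}$ in $\mathcal{G}^{\tri}\Skein{\fsp_4}{\bSigma}^{q}$.

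Second, the only remaining discrepancy between $D''$ and $D'_\tri$ lies inside the biangles of $\tri^{\mathrm{split}}$. The restrictions $D''\cap B_\gamma$ and $D'_\tri\cap B_\gamma$ are flat braid diagrams sharing the same boundary data (inherited from the triangles), and by \cref{lem:biangle_blad} they are connected by a sequence of flat Reidemeister II and III moves. By \cref{rem:flat-braid}, each such move is an equality in the graded skein algebra, so $W_{D''}\greq W_{D'_\tri}$, and transitivity gives $W_{D_\tri}\greq W_{D'_\tri}$ in $\mathcal{G}^{\tri}\Skein{\fsp_4}{\bSigma}^{q}$.

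Third, I invoke \cref{lem:gr-basis}: the symbols of elements of $\Bweb_{\fsp_4,\bSigma}$ form a basis of $\mathcal{G}^{\tri}\Skein{\fsp_4}{\bSigma}^{q}$. Since $W_{D_\tri}$ and $W_{D'_\tri}$ are both basis webs, equality of their symbols forces $W_{D_\tri}=W_{D'_\tri}$ in $\Skein{\fsp_4}{\bSigma}^{q}$ itself. Applying the bijection $\ladshift|_{\Blad_{\fsp_4,\bSigma}}\colon\Blad_{\fsp_4,\bSigma}\xrightarrow{\sim}\Bweb_{\fsp_4,\bSigma}$ (\cref{dfn:bang-realization} and \cref{thm:basis-web}) yields $L_{D_\tri}=L_{D'_\tri}$, and combining with $L_D=L_{D_\tri}$, $L_{D'}=L_{D'_\tri}$ gives $D\approx D'$.

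The main obstacle I expect lies in the second step: one must check that, inside each biangle, two flat braid presentations with matching boundary data can really be connected by flat Reidemeister II and III moves whose effect on the symbol is trivial. This upgrades the combinatorial biangle uniqueness of \cref{lem:biangle_blad} to an equality in the graded algebra, and calls for careful bookkeeping of how the type-1 and type-2 strands braid, how the corner ladder-resolutions produced in the first step propagate into the biangle, and how any incidental elliptic faces are absorbed by the degree-strict skein identities established in \cref{lem:equality-in-gr}.
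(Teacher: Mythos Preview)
Your proposal follows the paper's four-step argument (Steps~A--D) essentially verbatim: deform $D_\tri$ by flat Reidemeister~II moves at corners to match $D'_\tri$ on triangles, use flat Reidemeister moves in biangles to get $\pi_\tri(W_{D_\tri})=\pi_\tri(W_{D'_\tri})$, and then invoke the graded basis of \cref{lem:gr-basis} together with the bijection $\ladshift$.

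One small imprecision: you assert that ``$W_{D_\tri}$ and $W_{D'_\tri}$ are both basis webs'', but $D_\tri$ is only known to be in good position, not globally reduced in the sense of $\Bdiag_{\fsp_4,\bSigma}$. The paper handles this by first observing $W_D=W_{D_\tri}$ and $W_{D'}=W_{D'_\tri}$ as skein elements (the passage from $D$ to $D_\tri$ via intersection reduction moves and $H$-moves is an isotopy on the underlying diagram, hence preserves $\diagshift$), and then applying the basis argument to $W_D,W_{D'}\in\Bweb_{\fsp_4,\bSigma}$, which are genuine basis webs because $D,D'\in\Bdiag_{\fsp_4,\bSigma}$. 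With that routing made explicit, your argument is complete and coincides with the paper's.
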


\begin{figure}
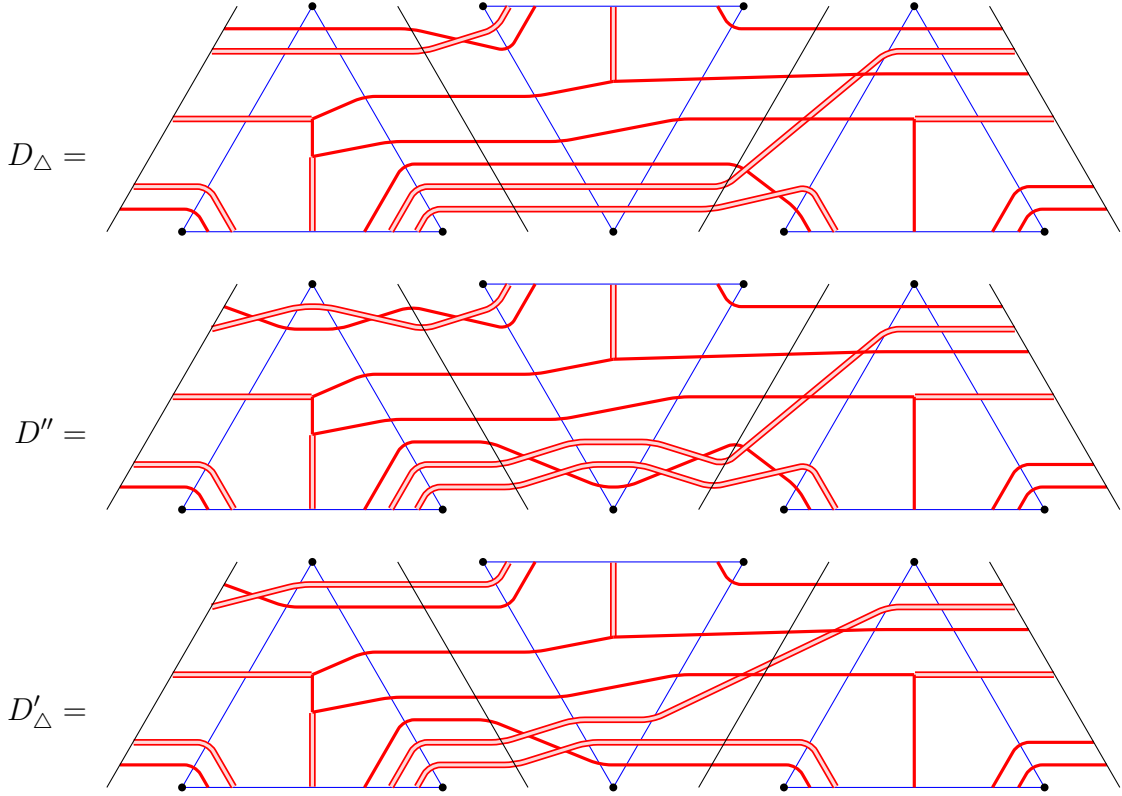

    \centering
    \begin{align*}
    D_{\tri}&=\mbox{
        \tikz[baseline=-.6ex, scale=.1]{
            \coordinate (C1) at (0,0);
            \coordinate (C2) at (40,0);
            \coordinate (C3) at (80,0);
            \coordinate (P1) at (90:20);
            \coordinate (P2) at (210:20);
            \coordinate (P3) at (-30:20);
            \coordinate (Q1) at ($(C2)+(-90:20)+(0,10)$);
            \coordinate (Q2) at ($(C2)+(30:20)+(0,10)$);
            \coordinate (Q3) at ($(C2)+(150:20)+(0,10)$);
            \coordinate (R1) at ($(C3)+(P1)$);
            \coordinate (R2) at ($(C3)+(P2)$);
            \coordinate (R3) at ($(C3)+(P3)$);
            \coordinate (K1) at ($(P1)+(-10,0)$);
            \coordinate (K2) at ($(P2)+(-10,0)$);
            \coordinate (L1) at ($(P1)!.5!(Q3)$);
            \coordinate (L2) at ($(P3)!.5!(Q1)$);
            \coordinate (M1) at ($(Q2)!.5!(R1)$);
            \coordinate (M2) at ($(Q1)!.5!(R2)$);
            \coordinate (N1) at ($(R1)+(10,0)$);
            \coordinate (N2) at ($(R3)+(10,0)$);
            \foreach \j in {1,2,...,9}
            {
            \coordinate (K1\j) at ($(K1)!.\j!(K2)$);
            \coordinate (L1\j) at ($(L1)!.\j!(L2)$);
            \coordinate (M1\j) at ($(M1)!.\j!(M2)$);
            \coordinate (N1\j) at ($(N1)!.\j!(N2)$);
            \coordinate (PW\j) at ($(P1)!.\j!(P2)$);
            \coordinate (PE\j) at ($(P1)!.\j!(P3)$);
            \coordinate (PS\j) at ($(P2)!.\j!(P3)$);
            \coordinate (QW\j) at ($(Q3)!.\j!(Q1)$);
            \coordinate (QE\j) at ($(Q2)!.\j!(Q1)$);
            \coordinate (QN\j) at ($(Q3)!.\j!(Q2)$);
            \coordinate (RW\j) at ($(R1)!.\j!(R2)$);
            \coordinate (RE\j) at ($(R1)!.\j!(R3)$);
            \coordinate (RS\j) at ($(R2)!.\j!(R3)$);
            }
            \draw[blue] (P1) -- (P2) -- (P3)--cycle;
            \draw[blue] (Q1) -- (Q2) -- (Q3)--cycle;
            \draw[blue] (R1) -- (R2) -- (R3)--cycle;
            \begin{scope}
                \clip (K1) -- (K2) -- (M2) -- (M1) -- cycle;
            \end{scope}
            \draw [webline, rounded corners] (K11) -- (PW1) -- (PE1) -- (L11) -- (QW2) -- (QN2);
            \draw [wline, rounded corners] (K12) -- (PW2) -- (PE2) -- (L12) -- (QW1) -- (QN1);
            \draw [wline, rounded corners] (K18) -- (PW8) -- (PS2);
            \draw [webline, rounded corners] (K19) -- (PW9) -- (PS1);
            \draw [webline, rounded corners] (PS7) -- (PE7) -- (QW7) -- (QE7) -- (M17) -- (RW9) -- (RS1);
            \draw [wline, rounded corners] (PS8) -- (PE8) -- (QW8) -- (QE8) -- (M18) -- (RW2) -- (RE2) -- (N12);
            \draw [wline, rounded corners] (PS9) -- (PE9) -- (QW9) -- (QE9) -- (M19) -- (RW8) -- (RS2);
            \draw [webline, rounded corners] (QN9) -- (QE1) -- (RW1) -- (RE1) -- (N11);
            \draw [webline, rounded corners] (RS8) -- (RE8) -- (N18);
            \draw [webline, rounded corners] (RS9) -- (RE9) -- (N19);
            \draw [wline] ($(C3)+(0,5)$) -- (N15);
            \draw [wline] ($(C2)+(0,10)$) -- (QN5);
            \draw [wline] ($(C1)+(0,5)$) -- (K15);
            \draw [wline] (C1) -- (PS5);
            \draw [webline, rounded corners] (N13) -- (RW3) -- ($(C2)+(0,10)$);
            \draw [webline, rounded corners] ($(C2)+(0,10)$) -- (QW4) -- (PE4) -- ($(C1)+(0,5)$);
            \draw [webline] ($(C1)+(0,5)$) -- (C1);
            \draw [webline, rounded corners] (C1) -- (PE6) -- (QW6) -- (QE5) -- ($(C3)+(0,5)$);
            \draw [webline] ($(C3)+(0,5)$) -- (RS5);
            \draw (K1)--(K2);
            \draw (L1)--(L2);
            \draw (M1)--(M2);
            \draw (N1)--(N2);
            \foreach \i in {1,2,3} \fill (P\i) circle(15pt);
            \foreach \i in {1,2,3} \fill (Q\i) circle(15pt);
            \foreach \i in {1,2,3} \fill (R\i) circle(15pt);
        }
    }\\[1em]
    D''&=\mbox{
        \tikz[baseline=-.6ex, scale=.1]{
            \coordinate (C1) at (0,0);
            \coordinate (C2) at (40,0);
            \coordinate (C3) at (80,0);
            \coordinate (P1) at (90:20);
            \coordinate (P2) at (210:20);
            \coordinate (P3) at (-30:20);
            \coordinate (Q1) at ($(C2)+(-90:20)+(0,10)$);
            \coordinate (Q2) at ($(C2)+(30:20)+(0,10)$);
            \coordinate (Q3) at ($(C2)+(150:20)+(0,10)$);
            \coordinate (R1) at ($(C3)+(P1)$);
            \coordinate (R2) at ($(C3)+(P2)$);
            \coordinate (R3) at ($(C3)+(P3)$);
            \coordinate (K1) at ($(P1)+(-10,0)$);
            \coordinate (K2) at ($(P2)+(-10,0)$);
            \coordinate (L1) at ($(P1)!.5!(Q3)$);
            \coordinate (L2) at ($(P3)!.5!(Q1)$);
            \coordinate (M1) at ($(Q2)!.5!(R1)$);
            \coordinate (M2) at ($(Q1)!.5!(R2)$);
            \coordinate (N1) at ($(R1)+(10,0)$);
            \coordinate (N2) at ($(R3)+(10,0)$);
            \foreach \j in {1,2,...,9}
            {
            \coordinate (K1\j) at ($(K1)!.\j!(K2)$);
            \coordinate (L1\j) at ($(L1)!.\j!(L2)$);
            \coordinate (M1\j) at ($(M1)!.\j!(M2)$);
            \coordinate (N1\j) at ($(N1)!.\j!(N2)$);
            \coordinate (PW\j) at ($(P1)!.\j!(P2)$);
            \coordinate (PE\j) at ($(P1)!.\j!(P3)$);
            \coordinate (PS\j) at ($(P2)!.\j!(P3)$);
            \coordinate (QW\j) at ($(Q3)!.\j!(Q1)$);
            \coordinate (QE\j) at ($(Q2)!.\j!(Q1)$);
            \coordinate (QN\j) at ($(Q3)!.\j!(Q2)$);
            \coordinate (RW\j) at ($(R1)!.\j!(R2)$);
            \coordinate (RE\j) at ($(R1)!.\j!(R3)$);
            \coordinate (RS\j) at ($(R2)!.\j!(R3)$);
            }
            \draw[blue] (P1) -- (P2) -- (P3)--cycle;
            \draw[blue] (Q1) -- (Q2) -- (Q3)--cycle;
            \draw[blue] (R1) -- (R2) -- (R3)--cycle;
            \begin{scope}
                \clip (K1) -- (K2) -- (M2) -- (M1) -- cycle;
            \end{scope}
            \draw [webline, rounded corners] (K11) -- (PW2) -- (PE2) -- (L11) -- (QW2) -- (QN2);
            \draw [wline, rounded corners] (K12) -- (PW1) -- (PE1) -- (L12) -- (QW1) -- (QN1);
            \draw [wline, rounded corners] (K18) -- (PW8) -- (PS2);
            \draw [webline, rounded corners] (K19) -- (PW9) -- (PS1);
            \draw [webline, rounded corners] (PS7) -- (PE7) -- (L17) -- (QW9) -- (QE9) -- (M17) -- (RW9) -- (RS1);
            \draw [wline, rounded corners] (PS8) -- (PE8) -- (L18) -- (QW7) -- (QE7) -- (M18) -- (RW2) -- (RE2) -- (N12);
            \draw [wline, rounded corners] (PS9) -- (PE9) -- (L19) -- (QW8) -- (QE8) -- (M19) -- (RW8) -- (RS2);
            \draw [webline, rounded corners] (QN9) -- (QE1) -- (RW1) -- (RE1) -- (N11);
            \draw [webline, rounded corners] (RS8) -- (RE8) -- (N18);
            \draw [webline, rounded corners] (RS9) -- (RE9) -- (N19);
            \draw [wline] ($(C3)+(0,5)$) -- (N15);
            \draw [wline] ($(C2)+(0,10)$) -- (QN5);
            \draw [wline] ($(C1)+(0,5)$) -- (K15);
            \draw [wline] (C1) -- (PS5);
            \draw [webline, rounded corners] (N13) -- (RW3) -- ($(C2)+(0,10)$);
            \draw [webline, rounded corners] ($(C2)+(0,10)$) -- (QW4) -- (PE4) -- ($(C1)+(0,5)$);
            \draw [webline] ($(C1)+(0,5)$) -- (C1);
            \draw [webline, rounded corners] (C1) -- (PE6) -- (QW6) -- (QE5) -- ($(C3)+(0,5)$);
            \draw [webline] ($(C3)+(0,5)$) -- (RS5);
            \draw (K1)--(K2);
            \draw (L1)--(L2);
            \draw (M1)--(M2);
            \draw (N1)--(N2);
            \foreach \i in {1,2,3} \fill (P\i) circle(15pt);
            \foreach \i in {1,2,3} \fill (Q\i) circle(15pt);
            \foreach \i in {1,2,3} \fill (R\i) circle(15pt);
        }
    }\\[1em]
    D'_{\tri}&=\mbox{
        \tikz[baseline=-.6ex, scale=.1]{
            \coordinate (C1) at (0,0);
            \coordinate (C2) at (40,0);
            \coordinate (C3) at (80,0);
            \coordinate (P1) at (90:20);
            \coordinate (P2) at (210:20);
            \coordinate (P3) at (-30:20);
            \coordinate (Q1) at ($(C2)+(-90:20)+(0,10)$);
            \coordinate (Q2) at ($(C2)+(30:20)+(0,10)$);
            \coordinate (Q3) at ($(C2)+(150:20)+(0,10)$);
            \coordinate (R1) at ($(C3)+(P1)$);
            \coordinate (R2) at ($(C3)+(P2)$);
            \coordinate (R3) at ($(C3)+(P3)$);
            \coordinate (K1) at ($(P1)+(-10,0)$);
            \coordinate (K2) at ($(P2)+(-10,0)$);
            \coordinate (L1) at ($(P1)!.5!(Q3)$);
            \coordinate (L2) at ($(P3)!.5!(Q1)$);
            \coordinate (M1) at ($(Q2)!.5!(R1)$);
            \coordinate (M2) at ($(Q1)!.5!(R2)$);
            \coordinate (N1) at ($(R1)+(10,0)$);
            \coordinate (N2) at ($(R3)+(10,0)$);
            \foreach \j in {1,2,...,9}
            {
            \coordinate (K1\j) at ($(K1)!.\j!(K2)$);
            \coordinate (L1\j) at ($(L1)!.\j!(L2)$);
            \coordinate (M1\j) at ($(M1)!.\j!(M2)$);
            \coordinate (N1\j) at ($(N1)!.\j!(N2)$);
            \coordinate (PW\j) at ($(P1)!.\j!(P2)$);
            \coordinate (PE\j) at ($(P1)!.\j!(P3)$);
            \coordinate (PS\j) at ($(P2)!.\j!(P3)$);
            \coordinate (QW\j) at ($(Q3)!.\j!(Q1)$);
            \coordinate (QE\j) at ($(Q2)!.\j!(Q1)$);
            \coordinate (QN\j) at ($(Q3)!.\j!(Q2)$);
            \coordinate (RW\j) at ($(R1)!.\j!(R2)$);
            \coordinate (RE\j) at ($(R1)!.\j!(R3)$);
            \coordinate (RS\j) at ($(R2)!.\j!(R3)$);
            }
            \draw[blue] (P1) -- (P2) -- (P3)--cycle;
            \draw[blue] (Q1) -- (Q2) -- (Q3)--cycle;
            \draw[blue] (R1) -- (R2) -- (R3)--cycle;
            \begin{scope}
                \clip (K1) -- (K2) -- (M2) -- (M1) -- cycle;
            \end{scope}
            \draw [webline, rounded corners] (K11) -- (PW2) -- (PE2) -- (L12) -- (QW2) -- (QN2);
            \draw [wline, rounded corners] (K12) -- (PW1) -- (PE1) -- (L11) -- (QW1) -- (QN1);
            \draw [wline, rounded corners] (K18) -- (PW8) -- (PS2);
            \draw [webline, rounded corners] (K19) -- (PW9) -- (PS1);
            \draw [webline, rounded corners] (PS7) -- (PE7) -- (L17) -- (QW9) -- (QE9) -- (RW9) -- (RS1);
            \draw [wline, rounded corners] (PS8) -- (PE8) -- (L18) -- (QW7) -- (QE7) -- (RW2) -- (RE2) -- (N12);
            \draw [wline, rounded corners] (PS9) -- (PE9) -- (L19) -- (QW8) -- (QE8) -- (RW8) -- (RS2);
            \draw [webline, rounded corners] (QN9) -- (QE1) -- (RW1) -- (RE1) -- (N11);
            \draw [webline, rounded corners] (RS8) -- (RE8) -- (N18);
            \draw [webline, rounded corners] (RS9) -- (RE9) -- (N19);
            \draw [wline] ($(C3)+(0,5)$) -- (N15);
            \draw [wline] ($(C2)+(0,10)$) -- (QN5);
            \draw [wline] ($(C1)+(0,5)$) -- (K15);
            \draw [wline] (C1) -- (PS5);
            \draw [webline, rounded corners] (N13) -- (RW3) -- ($(C2)+(0,10)$);
            \draw [webline, rounded corners] ($(C2)+(0,10)$) -- (QW4) -- (PE4) -- ($(C1)+(0,5)$);
            \draw [webline] ($(C1)+(0,5)$) -- (C1);
            \draw [webline, rounded corners] (C1) -- (PE6) -- (QW6) -- (QE5) -- ($(C3)+(0,5)$);
            \draw [webline] ($(C3)+(0,5)$) -- (RS5);
            \draw (K1)--(K2);
            \draw (L1)--(L2);
            \draw (M1)--(M2);
            \draw (N1)--(N2);
            \foreach \i in {1,2,3} \fill (P\i) circle(15pt);
            \foreach \i in {1,2,3} \fill (Q\i) circle(15pt);
            \foreach \i in {1,2,3} \fill (R\i) circle(15pt);
        }
    }
    \end{align*}
    \caption{$D_{\tri}$ and $D'_{\tri}$ are non-elliptic with the same coordinate. $D''$ is rearrangement of corner arcs of $D_{\tri}$ obtained by three times of the Reidemeister move~{II} in \cref{rem:flat-braid}.}
    \label{fig:bigon-birth}
\end{figure}

\begin{cor}\label{cor:injective}
The coordinate map $\sfa^{\bD}: \cL^a(\bSigma,\bQ) \xrightarrow{\sim} \bQ^{I(\bD)}$ is injective.
\end{cor}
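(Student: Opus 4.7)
The plan is to reduce the rational case to the integral setting handled by Theorem~\ref{thm:injectivity}, via rescaling of measures and the peripheral action $\alpha_\bM$. Suppose $L_1, L_2 \in \cL^a(\bSigma,\bQ)$ satisfy $\bsfa^\bD(L_1) = \bsfa^\bD(L_2)$. First I would clear denominators: choose $\lambda \in \bZ_{>0}$ so that $\lambda L_1$ and $\lambda L_2$ both lie in $\cL^a(\bSigma,\bZ)$, which is possible because each lamination has only finitely many components with rational measures; the equality of coordinates is preserved by the $\bQ_{>0}$-equivariance of $\bsfa^\bD$. Next, to impose the potential condition, I would choose integers $c_{m,s} \in \bZ_{>0}$ (for $m \in \bM$ and $s = 1,2$) large enough so that $L := \alpha_\bM(c, \lambda L_1)$ and $L' := \alpha_\bM(c, \lambda L_2)$ both satisfy $\pot_m^s \geq 0$; this is possible by Lemma~\ref{lem:action_potential}, since adding positive peripheral weights only increases potentials. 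By Lemma~\ref{lem:lamination_Blad}, both $L$ and $L'$ then correspond to elements of $\Blad_{\fsp_4,\bSigma}$, and they still share the same coordinate vector
\[
\bsfa^\bD(L) = \bsfa^\bD(\lambda L_1) + \sum_{m,s} c_{m,s}\, \mathsf{v}_{m,s} = \bsfa^\bD(L').
\]

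With $L, L' \in \Blad_{\fsp_4,\bSigma}$ agreeing on coordinates, I would take reduced representatives $D, D' \in \Bdiag_{\fsp_4,\bSigma}$ and pass to $\tri$-minimal representatives $D_\tri$ and $D'_\tri$ in good position with respect to the split triangulation $\tri^{\mathrm{split}}$. On each triangle $T \in t(\tri^{\mathrm{split}})$, the restrictions $D_\tri \cap T$ and $D'_\tri \cap T$ are reduced $\fsp_4$-diagrams whose coordinate vectors in $\vect_{T,(\tri,m_T,\bs_T)}$ coincide; by the triangle case bijection (Theorem~\ref{lem:triangle_case}, combined with Lemma~\ref{lem:triangle_blad}), they therefore agree as reduced diagrams up to flips of corner arcs. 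Theorem~\ref{thm:injectivity} then yields $D \approx D'$, hence $L = L'$ in $\Blad_{\fsp_4,\bSigma}$.

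Finally, since $\alpha_\bM$ is a $\bQ$-action (and hence a group action) on $\cL^a(\bSigma,\bQ)$, the equality $\alpha_\bM(c, \lambda L_1) = \alpha_\bM(c, \lambda L_2)$ implies $\lambda L_1 = \lambda L_2$, and dividing out by $\lambda$ gives $L_1 = L_2$. Combined with Proposition~\ref{prop:surjective}, this will complete the proof of Theorem~\ref{thm:bijection}. The substantive difficulty has already been absorbed into Theorem~\ref{thm:injectivity} via the graded skein algebra machinery; the present corollary is essentially a bookkeeping argument that translates rational data into positive integral data fitting that framework, and the only subtle point will be keeping careful track of how the coordinate vector transforms under scaling and under the peripheral shift.
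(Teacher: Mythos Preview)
Your proof is correct and follows essentially the same approach as the paper: reduce to $\Blad_{\fsp_4,\bSigma}$ via the $\bQ_{>0}$-scaling and the peripheral action, then invoke Theorem~\ref{thm:injectivity} after verifying its triangle-by-triangle hypothesis via the bijection of Theorem~\ref{lem:triangle_case}. The paper's own proof is a two-sentence summary of exactly this reduction; you have simply spelled out the bookkeeping in detail.
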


\begin{proof}
Thanks to the equivariance under the $\bQ_{>0}$-scaling action and the peripheral action, it suffices to consider two ladder-equivalence classes $L, L'\in\Blad_{\fsp_4,\bSigma}$. Then the assertion follows from \cref{thm:injectivity}.
\end{proof}
By combining \cref{prop:surjective} and \cref{cor:injective}, we obtain \cref{thm:bijection}.

\appendix

\section{Tropical cluster structure associated with the pair \texorpdfstring{$(\fsp_4,\bSigma)$}{(sp(4),S)}}\label{sec:cluster_notation}

\subsection{Tropical seeds and their mutations}
Fix a finite index set $I$, and a subset $I_\uf \subset I$. The indices in $I_\uf$ are called \emph{unfrozen indices}, and those in $I_\f:= I \setminus I_\uf$ are called \emph{frozen indices}. Let $D=\diag(d_i \mid i \in I)$ be a positive integral diagonal matrix. We fix $\bA\in \{\bZ,\bQ,\bR\}$ and a set $X$, and consider the set $\bA^X$ of maps $X \to \bA$ equipped with the pointwise operations $(\max,+)$. 

A \emph{tropical seed} on $X$ is a pair $(\ve,\bsfa)$, where
\begin{itemize}
    \item $\ve=(\ve_{ij})_{i,j \in I}$ is a matrix with half-integral entries such that $\ve D$ is skew-symmetric, and $\ve_{ij} \in \bZ$ unless $(i,j) \in I_\f \times I_\f$;
    \item $\bsfa=(\sfa_i)_{i \in I}$ is a tuple of elements in $\bA^X$.
\end{itemize}
We call $\ve$ the \emph{exchange matrix}, and $\sfa_i$ the \emph{tropical cluster ($K_2$-)variables}. 

Given a tropical seed $(\ve,\bsfa)$ and an unfrozen index $k \in I_\uf$, the \emph{mutation at $k$} is an operation $\mu_k: (\ve,\bsfa) \to (\ve',\bsfa')$ creating a new tropical seed, where
\begin{align}
    \ve'_{ij}&=
        \begin{cases}
            -\ve_{i j} & \mbox{if $i=k$ or $j=k$,}\\
            \ve_{i j}+[\ve_{i k}]_+[\ve_{k j}]_+ - [-\ve_{i k}]+-[-\ve_{k j}]_+ & \mbox{otherwise},
        \end{cases} \label{eq:mutation_matrix}\\
    \sfa'_i &= 
        \begin{cases}
            -\sfa_k + \max\{\sum_{j \in I} [\ve_{kj}]_+\sfa_j, \sum_{j \in I} [-\ve_{kj}]_+\sfa_j\} & \mbox{if $i=k$}, \\
            \sfa_i & \mbox{otherwise}.
        \end{cases} \label{eq:mutation_tropical}
\end{align}
Here $[a]_+:=\max\{0,a\}$ for any $a \in \bA^X$.

Observe that the exchange matrices transform under the mutations themselves, independently of the tropical cluster variables. Two tropical seeds (or exchange matrices) are said to be \emph{mutation-equivalent} if they are transformed to each other by a finite composition of mutations and permutations of indices. The equivalence class $\sfs$ is usually called the \emph{mutation class} of tropical seeds/exchange matrices. 

\begin{dfn}
Let $\sfs$ be a mutation class of tropical seeds. 
If $X$ admits a structure of piecewise-linear manifold over $\bA$ with the distinguished collection of coordinate systems $\sfa=(\sfa_i): X \xrightarrow{\sim} \bA^I$ corresponding to each tropical seed in $\sfs$, we call it the \emph{tropical cluster $K_2$-variety} over $\bA$ associated with $\sfs$ and write $X=\A_\sfs(\bA^T)$.
\end{dfn}
In this definition, the only matter is the bijectivity of the coordinate tuple $\sfa=(\sfa_i): X \to \bA^I$. If it is verified for one tropical seed in $\sfs$, then the other tropical seeds inductively determine PL charts related by the coordinate transformation \eqref{eq:mutation_tropical}, which are bijective.

\bigskip
\paragraph{\textbf{Sign-coherence}}
In an application to the geometry, the main task is often the verification of the tropical mutation relation \eqref{eq:mutation_tropical} between two coordinate systems on some space $X$. The following consideration is useful for this check. 

Given a tropical seed $(\ve,\bsfa)$, let us introduce the \emph{tropical cluster Poisson variable} by
\begin{align*}
    \sfx_k:= \sum_{i \in I} \ve_{ki} \sfa_i
\end{align*}
for $k \in I_\uf$. 

\begin{lem}
The mutation rule \eqref{eq:mutation_tropical} can be rewritten as
\begin{align*}
    \sfa'_i &= 
        \begin{cases}
            -\sfa_k + \sum_{j \in I} [-\sgn(\sfx_k)\ve_{kj}]_+\sfa_j & \mbox{if $i=k$}, \\
            \sfa_i & \mbox{otherwise}.
        \end{cases} 
\end{align*}
Here $\sgn(x) \in \{+,0,-\}^X$ denotes the function assigning the sign of $x \in \bA^X$ at each point.
\end{lem}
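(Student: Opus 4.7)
The plan is to prove the identity by a pointwise case analysis on the sign of the tropical cluster Poisson variable $\sfx_k$. First I would introduce the shorthands $A := \sum_{j \in I} [\ve_{kj}]_+ \sfa_j$ and $B := \sum_{j \in I} [-\ve_{kj}]_+ \sfa_j$, so that the original mutation rule \eqref{eq:mutation_tropical} reads $\sfa'_k = -\sfa_k + \max(A,B)$. Using the elementary identity $\ve_{kj} = [\ve_{kj}]_+ - [-\ve_{kj}]_+$, one then expands
\begin{align*}
\sfx_k \;=\; \sum_{j \in I} \ve_{kj}\,\sfa_j \;=\; A - B
\end{align*}
as an equality of elements of $\bA^X$.

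The second step is the sign analysis. The identity $\sfx_k = A - B$ immediately shows that at each point $x \in X$ one has $\sfx_k(x) \geq 0$ if and only if $A(x) \geq B(x)$; in particular $\max(A,B) = A$ on the half-space $\{\sfx_k \geq 0\}$ and $\max(A,B) = B$ on $\{\sfx_k \leq 0\}$. Observe next that for a fixed sign $\epsilon \in \{\pm 1\}$ the expression $\sum_{j \in I} [\epsilon\,\ve_{kj}]_+\sfa_j$ reproduces $A$ when $\epsilon$ takes one value and $B$ when it takes the other, so choosing $\epsilon$ as the function $-\sgn(\sfx_k)$ (with the paper's sign convention in the statement) picks out precisely $\max(A,B)$ on each half-space.

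Finally I would verify the compatibility of the two branches on the wall $\{\sfx_k = 0\}$: there $A = B$, so both branches yield the same value and the expression $\sum_j [-\sgn(\sfx_k)\ve_{kj}]_+\sfa_j$ is well-defined regardless of how $\sgn$ is extended to zero. This also guarantees that the reformulated mutation is a continuous piecewise-linear function of $X$, which is essential for the interpretation of the coordinate changes between seeds as PL transition maps of the tropical cluster $K_2$-variety.

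Essentially no obstacle arises: the proof is a brief sign-by-sign unpacking of the formulas, and the only point requiring mild care is the match along the wall $\{\sfx_k = 0\}$, which is handled cleanly by the identity $\sfx_k = A - B$ established in the first step.
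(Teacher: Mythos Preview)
The paper states this lemma without proof, so there is no argument to compare against directly; your decomposition via $A := \sum_j [\ve_{kj}]_+ \sfa_j$, $B := \sum_j [-\ve_{kj}]_+ \sfa_j$ and the identity $\sfx_k = A - B$ is exactly the natural route and is correct in outline.

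However, you hedge the one step that actually needs checking with the phrase ``(with the paper's sign convention in the statement)'' instead of carrying it out. Doing so explicitly: when $\sfx_k > 0$ one has $A > B$, hence $\max(A,B) = A = \sum_j [(+1)\ve_{kj}]_+ \sfa_j$, which requires the coefficient inside the bracket to be $+\sgn(\sfx_k)$, not $-\sgn(\sfx_k)$ as printed. The displayed formula therefore appears to carry a sign typo (the same sign is used consistently in the proof of the subsequent sign-coherence lemma, so the slip is internal to this appendix and harmless for the rest of the paper). A complete proof should catch and flag this rather than defer to the printed sign. A smaller issue in the same vein: your handling of the wall $\{\sfx_k = 0\}$ is not quite right either. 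If one takes $\sgn(0)=0$ literally, then $[\pm\sgn(\sfx_k)\ve_{kj}]_+ = [0]_+ = 0$ for every $j$ and the right-hand side collapses to $-\sfa_k$, not $-\sfa_k + A$. The intended reading is that on the wall either choice $\pm 1$ yields the same value (since $A=B$ there); the value $0$ must be excluded or reinterpreted.
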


\begin{dfn}\label{def:sign_coherence}
Two points $\xi,\xi' \in X$ are said to be \emph{sign-coherent} for $\sfx_k$ if $\sfx_k(\xi)\cdot \sfx_k(\xi') \geq 0$. 
\end{dfn}

\begin{lem}\label{lem:sign-coherence}
Let $(\ve,\bsfa)$, $(\ve',\bsfa')$ be two tropical seeds on $X$ with $\ve'=\mu_k \ve$, and $\xi_1,\xi_2 \in X$. Assume that the tropical mutation relation \eqref{eq:mutation_tropical} holds at $\xi_1$ and $\xi_2$, and $\xi_1,\xi_2$ are sign-coherent for $\sfx_k$. Then \eqref{eq:mutation_tropical} holds for any point $\xi \in X$ satisfying 
\begin{align*}
    \sfa_i(\xi)= m_1\sfa_i(\xi_1) + m_2 \sfa_i(\xi_2), \quad \sfa'_i(\xi)= m_1\sfa'_i(\xi_1) + m_2 \sfa'_i(\xi_2)
\end{align*}
for some $m_1,m_2 \in \bA_{\geq 0}$. 
\end{lem}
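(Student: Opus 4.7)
The only content is the case $i=k$, since for $i\ne k$ the mutation rule reduces both $\sfa'_i(\xi)$ and the right-hand side of \eqref{eq:mutation_tropical} to $\sfa_i(\xi)$, which is part of the hypothesis defining $\xi$. My plan is to exploit that the PL map in \eqref{eq:mutation_tropical} is actually linear on each of the two half-spaces cut out by $\sfx_k\gtrless 0$, and that a non-negative linear combination of two points in a common half-space stays in that half-space.

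Concretely, introduce the two linear ``candidate'' operators
\[
    \Phi_\pm(\bsfa):=-\sfa_k+\sum_{j\in I}[\pm\ve_{kj}]_+\,\sfa_j,
\]
which satisfy $\Phi_+-\Phi_-=\sfx_k$; in particular $\Phi_+=\Phi_-$ on the hyperplane $\{\sfx_k=0\}$. The max-formula \eqref{eq:mutation_tropical} at $i=k$ then reads $\sfa'_k=\Phi_{\sgn(\sfx_k)}(\bsfa)$, with either choice of sign allowed where $\sfx_k=0$. The sign-coherence hypothesis supplies a single $\epsilon\in\{+,-\}$ with $\epsilon\cdot\sfx_k(\xi_r)\ge 0$ for $r=1,2$, so the assumed validity of \eqref{eq:mutation_tropical} at $\xi_1$ and $\xi_2$ translates into the two equalities $\sfa'_k(\xi_r)=\Phi_\epsilon(\bsfa(\xi_r))$.

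Since $\sfx_k=\sum_i\ve_{ki}\sfa_i$ is linear in $\bsfa$, the combination $\sfx_k(\xi)=m_1\sfx_k(\xi_1)+m_2\sfx_k(\xi_2)$ is again compatible with $\epsilon$ thanks to $m_1,m_2\ge 0$, so the required identity at $\xi$ reduces to $\sfa'_k(\xi)=\Phi_\epsilon(\bsfa(\xi))$. Linearity of $\Phi_\epsilon$ together with the two defining equalities at $\xi$ then yields
\[
    \Phi_\epsilon(\bsfa(\xi))=\sum_{r=1,2}m_r\,\Phi_\epsilon(\bsfa(\xi_r))=\sum_{r=1,2}m_r\,\sfa'_k(\xi_r)=\sfa'_k(\xi),
\]
which is precisely \eqref{eq:mutation_tropical} at $\xi$. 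I foresee no genuine obstacle; the only delicate point is the possible ambiguity in the choice of $\epsilon$ when some of $\sfx_k(\xi_r)$ or $\sfx_k(\xi)$ vanishes, but this is absorbed by the coincidence $\Phi_+|_{\{\sfx_k=0\}}=\Phi_-|_{\{\sfx_k=0\}}$. This also clarifies why the sign-coherence hypothesis is the natural one: it is exactly what is needed to promote the linear identity from the two test points to the whole positive cone they span.
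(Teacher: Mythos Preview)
Your proof is correct and follows essentially the same approach as the paper's: both arguments reduce to the case $i=k$, use sign-coherence to select a common linear branch of the piecewise-linear mutation formula, and then exploit linearity of that branch under non-negative combinations. Your treatment is slightly more explicit (the $\Phi_\pm$ notation and the observation $\Phi_+-\Phi_-=\sfx_k$ handling the boundary case $\sfx_k=0$), but the underlying idea is identical.
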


\begin{proof}
Observe that $\xi$ is sign-coherent with $\xi_1,\xi_2$. 
Assume that $\sfx_k(\xi_1),\sfx_k(\xi_2) \geq 0$ without loss of generality. 
For $i \neq k$, the assertion is obvious. For $i=k$, we have
\begin{align*}
    \sfa'_i(\xi) &= m_1\sfa'_i(\xi_1)+m_2\sfa'_i(\xi_2) \\
    &= m_1\big(-\sfa_k(\xi_1) + \sum_{j \in I} [-\ve_{kj}]_+\sfa_j(\xi_1)\big) + m_2 \big(-\sfa_k(\xi_2) + \sum_{j \in I} [-\ve_{kj}]_+\sfa_j(\xi_2)\big) \\
    &= -\sfa_k(\xi) + \sum_{j \in I} [-\ve_{kj}]_+\sfa_j(\xi),
\end{align*}
which coincides with the rule \eqref{eq:mutation_tropical} at $\xi$ by $\sfx_k(\xi) \geq 0$. 
\end{proof}

\begin{rem}\label{rem:sign_coherence}
The statement also holds true if $m_1,m_2$ contain negative numbers but $\xi$ is still sign-coherent with $\xi_1,\xi_2$.
\end{rem}

\bigskip
\paragraph{\textbf{Exchange matrices and weighted quivers}}
It is useful to represent an exchange matrix $\ve=(\ve_{ij})_{i,j \in I}$ by a weighted quiver $Q$. 
The weighted quiver $Q$ corresponding to $\ve$ has vertices parameterized by the set $I$, and each vertex $i \in I$ is assigned the integer weight $d_i$. The structure matrix $\sigma=(\sigma_{ij})_{i,j \in I}$ of $Q$, whose $(i,j)$-entry indicates the number of arrows from $i$ to $j$ minus the number of arrows from $j$ to $i$, is defined to be
\begin{align*}
    \sigma_{ij}:=d_i^{-1}\varepsilon_{ij}\gcd(d_i,d_j).
\end{align*}
In figures, we draw $n$ dashed arrows from $i$ to $j$ if $\sigma_{ij}=n/2$ for $n \in \bZ$, where a pair of dashed arrows is replaced with a solid arrow. In this paper, we only deal with the weighted quivers whose vertices have weights $1$ or $2$. A vertex of weight $1$ (resp. $2$) is shown by a small circle (resp. a doubled circle).
\begin{ex}
For the matrices 
$\varepsilon=\begin{pmatrix} 0 & 2 \\ -1 & 0\end{pmatrix}$
and $D=\mathrm{diag}(2,1)$, the matrix $\ve D=\begin{pmatrix} 0 & 2 \\ -2 & 0\end{pmatrix}$ is skew-symmetric. 
We have 
$\sigma=\begin{pmatrix} 0 & 1 \\ -1 & 0\end{pmatrix}$. The corresponding weighted quiver is given by
\begin{align*}
    \tikz[>=latex]{\dnode{0,0}{black} node[above]{$1$}; \draw(2,0) circle(2pt) node[above]{$2$}; \qstarrow{0,0}{2,0};}.
\end{align*}
\end{ex}

The following lemma is useful to compute the mutations in terms of the weighted quivers:
\begin{lem}[{\cite[Lemma 2.3]{IIO21}}]\label{lem:weighted_mutation}
Assume that the weights $d_i$ take only two values: $\{d_i\}_{i \in I}=\{1,d\}$ for some integer $d\geq 2$. 
Then for $\ve'=\mu_k \ve$, the corresponding mutation of
the structure matrix $\sigma' = \mu_k\sigma$ is given by
\begin{align*}
  \sigma_{ij}' = 
  \begin{cases}
    -\sigma_{ij} & i=k \text{ or } j=k, \\
    \sigma_{ij} + ([\sigma_{ik}]_+ [\sigma_{kj}]_+ - [-\sigma_{ik}]_+ [-\sigma_{kj}]_+)\alpha_{ij}^k & \text{otherwise},
  \end{cases} 
\end{align*}
where 
\begin{align*}
    \alpha_{ij}^k = \begin{cases}
    d & \mbox{if $d_i=d_j \neq d_k$},\\
    1 & \mbox{otherwise}.
    \end{cases}
\end{align*}
\end{lem}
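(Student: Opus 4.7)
The plan is to verify the formula by a direct computation that unpacks the definition $\sigma_{ij} := d_i^{-1}\varepsilon_{ij}\gcd(d_i,d_j)$ together with the $\varepsilon$-mutation rule \eqref{eq:mutation_matrix}. The diagonal case, where $i=k$ or $j=k$, is immediate: the $\varepsilon$-mutation flips the sign, and since $d_i^{-1}\gcd(d_i,d_j)$ is unchanged, we get $\sigma'_{ij} = -\sigma_{ij}$.

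For the generic case $i,j \neq k$, I would substitute
\[
  \varepsilon_{ik} = \sigma_{ik}\,\frac{d_i}{\gcd(d_i,d_k)}, \qquad \varepsilon_{kj} = \sigma_{kj}\,\frac{d_k}{\gcd(d_k,d_j)}
\]
into the mutation formula. Since $d_i, d_k$ and the gcds are positive, the operations $[\,\cdot\,]_+$ pass through these rescalings, so multiplying by $\gcd(d_i,d_j)/d_i$ yields a correction to $\sigma_{ij}$ of the form
\[
  \frac{\gcd(d_i,d_j)\,d_k}{\gcd(d_i,d_k)\,\gcd(d_k,d_j)}\,\bigl([\sigma_{ik}]_+[\sigma_{kj}]_+ - [-\sigma_{ik}]_+[-\sigma_{kj}]_+\bigr).
\]
It then suffices to check that the prefactor equals $\alpha^k_{ij}$ as defined in the statement.

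The main step, and the only one requiring care, is this case check under the hypothesis that $\{d_i\}_{i \in I} = \{1,d\}$. I would run through the possible triples of weights: when $d_i = d_j = d_k$, all three gcds equal $d_i$ and the prefactor collapses to $1$; when $d_i \neq d_j$ (so both values $1$ and $d$ appear among them), one checks separately $d_k = 1$ and $d_k = d$ and finds $1$ in each case; when $d_i = d_j \neq d_k$, both $\gcd(d_i,d_k)$ and $\gcd(d_k,d_j)$ equal $\min(d_i,d_k) = 1$, while $\gcd(d_i,d_j) = d_i$, giving a prefactor of $d_i \, d_k / 1 = d$ (whether $d_i = d, d_k = 1$ or $d_i = 1, d_k = d$). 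This matches $\alpha^k_{ij}$, completing the verification. The hypothesis on only two distinct weight values is essential here, since otherwise the prefactor need not be $1$ or $d$.
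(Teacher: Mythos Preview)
Your proposal is correct. The paper itself does not give a proof of this lemma: it is simply cited as \cite[Lemma~2.3]{IIO21} and stated without argument, so there is nothing in the paper to compare against. Your direct verification---unpacking $\sigma_{ij}=d_i^{-1}\varepsilon_{ij}\gcd(d_i,d_j)$, pushing the positive rescaling factors through $[\,\cdot\,]_+$, and then case-checking the resulting prefactor $\gcd(d_i,d_j)\,d_k/\bigl(\gcd(d_i,d_k)\gcd(d_k,d_j)\bigr)$ under the two-weight hypothesis---is the natural approach and matches what one would expect the cited proof to do.
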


\subsection{Mutation class of exchange matrices for the pair \texorpdfstring{$(\fsp_4,\bSigma)$}{(sp(4),S)}}
Let us review the construction of a canonical mutation class of exchange matrices associated with the pair $(\fsp_4,\bSigma)$, following \cite{GS19} and the notation in \cite{IYsp4}. It encodes the combinatorial structure of the moduli space $\A_{Sp_4,\Sigma}$. In \cite{GS19}, the associated (rational) cluster $K_2$-variables are also constructed as rational functions on $\A_{Sp_4,\Sigma}$, whose tropical analogues correspond to the tropical cluster variables $\sfa_i$.

A decorated triangulation is a triple $\bD=(\tri,m_\tri,\bs_\tri)$, where
\begin{itemize}
    \item $\tri$ is an ideal triangulation of $\Sigma$;
    \item $m_\tri:t(\tri) \to \bM$ is a choice of a vertex of each triangle;
    \item $\bs_\tri:t(\tri) \to \{+,-\}$ is a choice of a sign at each triangle. 
\end{itemize}

\begin{figure}[ht]
\begin{tikzpicture}[scale=0.9]
    \foreach \i in {90,210,330}
    {
    \markedpt{\i:3};
    \draw[blue] (\i:3) -- (\i+120:3);
    }
\quiverplusC{90:3}{210:3}{330:3};
{\color{mygreen}
\uniarrow{x122}{x121}{dashed,shorten >=4pt, shorten <=2pt,bend left=20}
\uniarrow{x311}{x312}{dashed,shorten >=2pt, shorten <=4pt,bend right=20}
}
\uniarrow{x232}{x231}{myblue,dashed,shorten >=2pt, shorten <=4pt,bend left=20}
\draw(90:3.3) node{$m$};

\begin{scope}[xshift=7cm]
    \foreach \i in {90,210,330}
    {
    \markedpt{\i:3};
    \draw[blue] (\i:3) -- (\i+120:3);
    }
\quiverminusC{90:3}{210:3}{330:3};
{\color{mygreen}
\uniarrow{x121}{x122}{dashed,shorten >=4pt, shorten <=2pt,bend right=20}
\uniarrow{x312}{x311}{dashed,shorten >=4pt, shorten <=2pt,bend left=20}
}
\uniarrow{x232}{x231}{myblue,dashed,shorten >=2pt, shorten <=4pt,bend left=20}
\draw(90:3.3) node{$m$};
\end{scope}
\end{tikzpicture}
    \caption{The quivers $Q_{m,+}$ (left) and $Q_{m,-}$ (right) placed on a triangle with a fixed special point $m$. Here the vertices on the opposite side of $m$ and the arrows incident to them are colored blue for visibility.}
    \label{fig:quiver_triangle}
\end{figure}
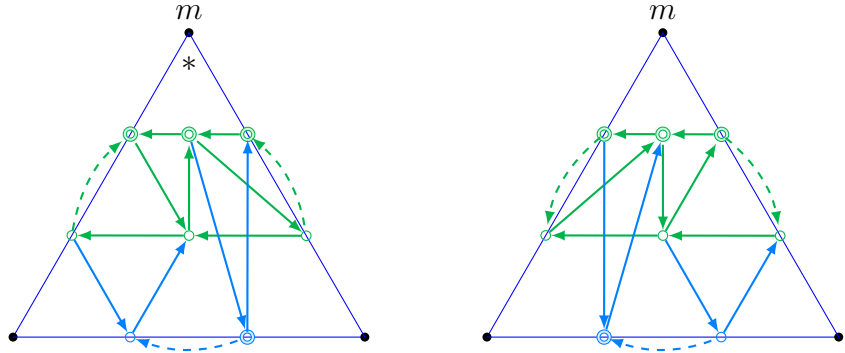

Given a decorated triangulation $\bD$, we define a weighted quiver $Q^{\bD}$ as follows. Let $Q_{m,+}$ and $Q_{m,-}$ be the weighted quivers on a triangle shown in the left and right of \cref{fig:quiver_triangle}, respectively. By convention, $Q_{m,\pm}$ are considered up to isotopy on $T$ which preserves each edge set-wisely. In particular, we are allowed to move an interior vertex inside the triangle; move and swap the two vertices on a common edge. 

For each triangle $T \in t(\tri)$, we draw the quiver $Q_{m_\tri(T),\bs_\tri(T)}$, and glue them via the \emph{amalgamation} procedure \cite{FG06} to get a weighted quiver $Q^{\bD}$ drawn on $\Sigma$. Here two vertices on a common interior edge with the same weight are identified; opposite half-arrows cancel together, and parallel half-arrows combine to give a solid arrow. 

By convention, $Q^{\bD}$ is considered up to isotopy on $\Sigma$ which preserves each boundary interval set-wisely. 
The vertex set of $Q^{\bD}$ is denoted by $I(\tri)=I_{\mathfrak{sp}_4}(\tri)$. We have $\# I(\tri) = 2\# e(\tri)+2 \# t(\tri)$. 

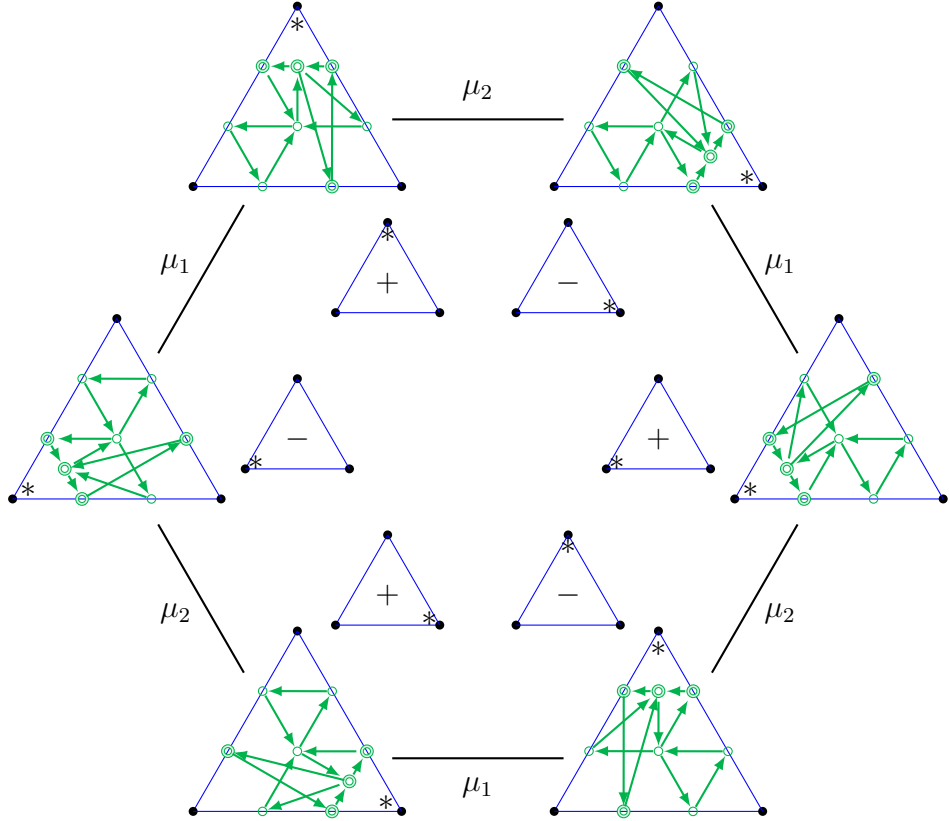
\begin{figure}[ht]
\begin{tikzpicture}[scale=0.8]
\begin{scope}[xshift=3cm]
    \foreach \i in {90,210,330}
    {
    \markedpt{\i:1};
    \draw[blue] (\i:1) -- (\i+120:1);
    }
\node at (0,0) {$+$};
\node at (210:0.8) {$\ast$};
\end{scope}

\begin{scope}[xshift=1.5cm, yshift=1.5*1.732cm]
    \foreach \i in {90,210,330}
    {
    \markedpt{\i:1};
    \draw[blue] (\i:1) -- (\i+120:1);
    }
\node at (0,0) {$-$};
\node at (330:0.8) {$\ast$};
\end{scope}

\begin{scope}[xshift=-1.5cm, yshift=1.5*1.732cm]
    \foreach \i in {90,210,330}
    {
    \markedpt{\i:1};
    \draw[blue] (\i:1) -- (\i+120:1);
    }
\node at (0,0) {$+$};
\node at (90:0.8) {$\ast$};
\end{scope}

\begin{scope}[xshift=-3cm]
    \foreach \i in {90,210,330}
    {
    \markedpt{\i:1};
    \draw[blue] (\i:1) -- (\i+120:1);
    }
\node at (0,0) {$-$};
\node at (210:0.8) {$\ast$};
\end{scope}

\begin{scope}[xshift=-1.5cm, yshift=-1.5*1.732cm]
    \foreach \i in {90,210,330}
    {
    \markedpt{\i:1};
    \draw[blue] (\i:1) -- (\i+120:1);
    }
\node at (0,0) {$+$};
\node at (330:0.8) {$\ast$};
\end{scope}

\begin{scope}[xshift=1.5cm, yshift=-1.5*1.732cm]
    \foreach \i in {90,210,330}
    {
    \markedpt{\i:1};
    \draw[blue] (\i:1) -- (\i+120:1);
    }
\node at (0,0) {$-$};
\node at (90:0.8) {$\ast$};
\end{scope}

\foreach \k in {0,60,120,180,240,300}
\draw[thick] (\k+15:5.5) -- (\k+45:5.5);
\foreach \k in {30,150,270}
\node at (\k:5.8) {$\mu_1$};
\foreach \k in {90,210,-30}
\node at (\k:5.8) {$\mu_2$};
\begin{scope}[xshift=6cm]
    \foreach \i in {90,210,330}
    {
    \markedpt{\i:2};
    \draw[blue] (\i:2) -- (\i+120:2);
    }
\quiverplus{210:2}{330:2}{90:2};
\end{scope}

\begin{scope}[xshift=3cm, yshift=3*1.732cm]
    \foreach \i in {90,210,330}
    {
    \markedpt{\i:2};
    \draw[blue] (\i:2) -- (\i+120:2);
    }
\quiverminus{330:2}{90:2}{210:2};
\end{scope}

\begin{scope}[xshift=-3cm, yshift=3*1.732cm]
    \foreach \i in {90,210,330}
    {
    \markedpt{\i:2};
    \draw[blue] (\i:2) -- (\i+120:2);
    }
\quiverplus{90:2}{210:2}{330:2};
\end{scope}

\begin{scope}[xshift=-6cm]
    \foreach \i in {90,210,330}
    {
    \markedpt{\i:2};
    \draw[blue] (\i:2) -- (\i+120:2);
    }
\quiverminus{210:2}{330:2}{90:2};
\end{scope}

\begin{scope}[xshift=-3cm, yshift=-3*1.732cm]
    \foreach \i in {90,210,330}
    {
    \markedpt{\i:2};
    \draw[blue] (\i:2) -- (\i+120:2);
    }
\quiverplus{330:2}{90:2}{210:2};
\end{scope}

\begin{scope}[xshift=3cm, yshift=-3*1.732cm]
    \foreach \i in {90,210,330}
    {
    \markedpt{\i:2};
    \draw[blue] (\i:2) -- (\i+120:2);
    }
\quiverminus{90:2}{210:2}{330:2};
\end{scope}

\end{tikzpicture}
    \caption{The exchange graph $\Exch_{\sfs(\mathfrak{sp}_4,T)}$ for a triangle $T$. Here $\mu_d$ denotes the mutation at the unique unfrozen vertex with weight $d \in \{1,2\}$.}
    \label{fig:exch_triangle}
\end{figure}

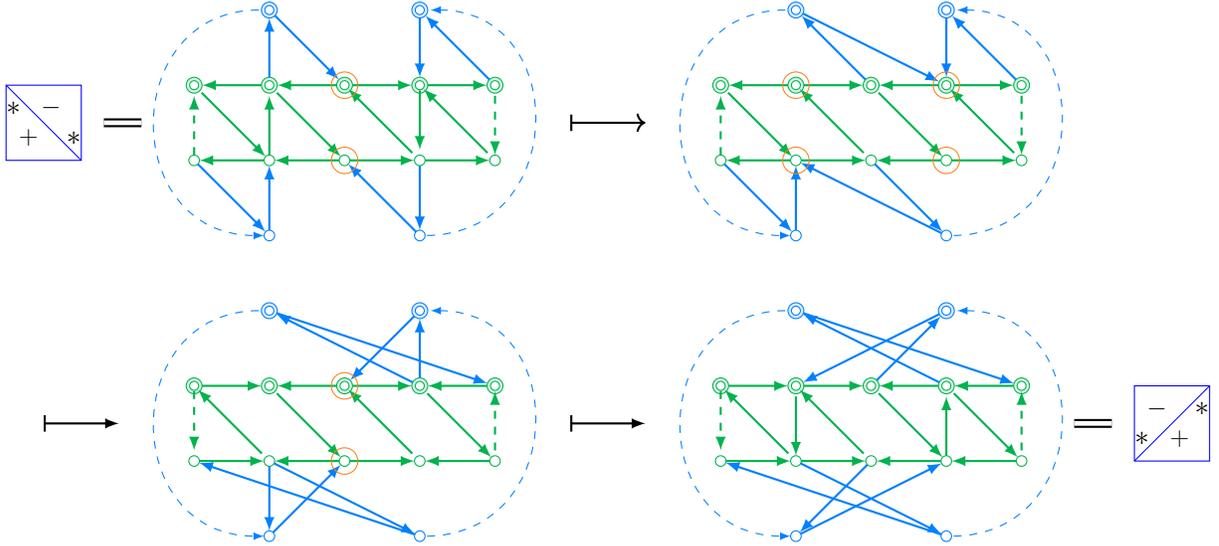
\begin{figure}[ht]
\begin{tikzpicture}
\quiversquare{0,0}{4,0}{4,3}{0,3};
\begin{scope}[>=latex]
{\color{mygreen}
\qsarrow{v21}{v20};
\qsarrow{v22}{v21};
\qsarrow{v22}{v23};
\qsarrow{v23}{v24};
\qarrow{v11}{v10};
\qarrow{v12}{v11};
\qarrow{v12}{v13};
\qarrow{v13}{v14};
\qstarrow{v20}{v11};
\qsharrow{v11}{v21};
\qstarrow{v21}{v12};
\qstarrow{v14}{v23};
\qsharrow{v23}{v13};
\qstarrow{v13}{v22};
\qshdarrow{v10}{v20};
\qstdarrow{v24}{v14};
}
{\color{myblue}
\qarrow{v10}{yl};
\qarrow{yl}{v11};
\qarrow{v13}{yr};
\qarrow{yr}{v12};
\qsarrow{v21}{zl};
\qsarrow{zl}{v22};
\qsarrow{v24}{zr};
\qsarrow{zr}{v23};
\draw[dashed,->,shorten >=2pt,shorten <=4pt] (zl) ..controls ++(-2,0) and ($(yl)+(-2,0)$).. (yl);
\draw[dashed,<-,shorten >=2pt,shorten <=4pt] (zr) ..controls ++(2,0) and ($(yr)+(2,0)$).. (yr);
}
\draw[myorange] (v12) circle(5pt);
\draw[myorange] (v22) circle(5pt);
\end{scope}
\draw[thick,|->] (5,1.5) -- (6,1.5);
\draw[blue] (-2.5,1) --++(1,0) --++(0,1)--++(-1,0) --cycle;
\draw[blue] (-2.5,2) --++(1,-1);
\node[scale=0.9] at (-2.4,1.7) {$\ast$};
\node[scale=0.9] at (-1.6,1.3) {$\ast$};
\node[scale=0.8] at (-2.2,1.3) {$+$};
\node[scale=0.8] at (-1.9,1.7) {$-$};
\draw[thick,double distance=0.15em] (-1.2,1.5) -- (-0.7,1.5);

\begin{scope}[xshift=7cm,>=latex]
\quiversquare{0,0}{4,0}{4,3}{0,3};
{\color{mygreen}
\qsarrow{v21}{v20};
\qsarrow{v21}{v22};
\qsarrow{v23}{v22};
\qsarrow{v23}{v24};
\qarrow{v11}{v10};
\qarrow{v11}{v12};
\qarrow{v13}{v12};
\qarrow{v13}{v14};
\qstarrow{v20}{v11};
\qstarrow{v12}{v21};
\qstarrow{v14}{v23};
\qstarrow{v22}{v13};
\qshdarrow{v10}{v20};
\qstdarrow{v24}{v14};
}
{\color{myblue}
\qarrow{v10}{yl};
\qarrow{yl}{v11};
\qarrow{v12}{yr};
\qarrow{yr}{v11};
\qsarrow{v22}{zl};
\qsarrow{v24}{zr};
\qsarrow{zr}{v23};
\qsarrow{zl}{v23};
\draw[dashed,->,shorten >=2pt,shorten <=4pt] (zl) ..controls ++(-2,0) and ($(yl)+(-2,0)$).. (yl);
\draw[dashed,<-,shorten >=2pt,shorten <=4pt] (zr) ..controls ++(2,0) and ($(yr)+(2,0)$).. (yr);
}
\draw[myorange] (v11) circle(5pt);
\draw[myorange] (v21) circle(5pt);
\draw[myorange] (v13) circle(5pt);
\draw[myorange] (v23) circle(5pt);
\end{scope}

\begin{scope}[yshift=-4cm,>=latex]
\quiversquare{0,0}{4,0}{4,3}{0,3};
{\color{mygreen}
\qsarrow{v20}{v21};
\qsarrow{v22}{v21};
\qsarrow{v22}{v23};
\qsarrow{v24}{v23};
\qarrow{v10}{v11};
\qarrow{v12}{v11};
\qarrow{v12}{v13};
\qarrow{v14}{v13};
\qstarrow{v11}{v20};
\qstarrow{v21}{v12};
\qstarrow{v23}{v14};
\qstarrow{v13}{v22};
\qshdarrow{v20}{v10};
\qstdarrow{v14}{v24};
}
{\color{myblue}
\qarrow{v11}{yl};
\qarrow{yl}{v12};
\qarrow{v11}{yr};
\qarrow{yr}{v10};
\qsarrow{v23}{zr};
\qsarrow{zr}{v22};
\qsarrow{v23}{zl};
\qsarrow{zl}{v24};
\draw[dashed,->,shorten >=2pt,shorten <=4pt] (zl) ..controls ++(-2,0) and ($(yl)+(-2,0)$).. (yl);
\draw[dashed,<-,shorten >=2pt,shorten <=4pt] (zr) ..controls ++(2,0) and ($(yr)+(2,0)$).. (yr);
}
\draw[myorange] (v12) circle(5pt);
\draw[myorange] (v22) circle(5pt);
\draw[thick,|->] (5,1.5) -- (6,1.5);
\draw[thick,|->] (-2,1.5) -- (-1,1.5);
\end{scope}

\begin{scope}[xshift=7cm,yshift=-4cm,>=latex]
\quiversquare{0,0}{4,0}{4,3}{0,3};
{\color{mygreen}
\qsarrow{v20}{v21};
\qsarrow{v21}{v22};
\qsarrow{v23}{v22};
\qsarrow{v24}{v23};
\qarrow{v10}{v11};
\qarrow{v11}{v12};
\qarrow{v13}{v12};
\qarrow{v14}{v13};
\qstarrow{v11}{v20};
\qstarrow{v12}{v21};
\qstarrow{v23}{v14};
\qstarrow{v22}{v13};
\qshdarrow{v20}{v10};
\qstdarrow{v14}{v24};
\qstarrow{v21}{v11};
\qsharrow{v13}{v23};
}
{\color{myblue}
\qarrow{v12}{yl};
\qarrow{yl}{v13};
\qarrow{v11}{yr};
\qarrow{yr}{v10};
\qsarrow{v22}{zr};
\qsarrow{v23}{zl};
\qsarrow{zl}{v24};
\qsarrow{zr}{v21};
\draw[dashed,->,shorten >=2pt,shorten <=4pt] (zl) ..controls ++(-2,0) and ($(yl)+(-2,0)$).. (yl);
\draw[dashed,<-,shorten >=2pt,shorten <=4pt] (zr) ..controls ++(2,0) and ($(yr)+(2,0)$).. (yr);
}
\draw[blue] (5.5,1) --++(1,0) --++(0,1)--++(-1,0) --cycle;
\draw[blue] (6.5,2) --++(-1,-1);
\node[scale=0.9] at (5.6,1.3) {$\ast$};
\node[scale=0.9] at (6.4,1.7) {$\ast$};
\node[scale=0.8] at (5.8,1.7) {$-$};
\node[scale=0.8] at (6.1,1.3) {$+$};
\draw[thick,double distance=0.15em] (5.2,1.5) -- (4.7,1.5);
\end{scope}
\end{tikzpicture}
    \caption{Mutation sequences that realize a flip of triangulation. Here we forget the boundary framing of the weighted quivers so that most parts are drawn planar. The vertices at which we perform mutations are shown in orange circles. Notice that the mutations at two vertices without arrows between them commute with each other.}
    \label{fig:flip_sequence}
\end{figure}

\begin{thm}[cf.~Goncharov--Shen {\cite[Section 12.5]{GS19}}]\label{thm:classical_mutation_equivalence}
For any two decorated triangulations $\bD$, $\bD'$ of $\Sigma$, the two weighted quivers $Q^{\bD}$, $Q^{\bD'}$ are mutation-equivalent. 
\end{thm}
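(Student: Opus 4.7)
The plan is to decompose the mutation-equivalence problem into two kinds of ``elementary'' moves on the space of decorated triangulations: (a) changing the decoration data $(m_\tri,\bs_\tri)$ on a single triangle while keeping $\tri$ fixed, and (b) performing a flip of $\tri$ at one interior edge, possibly together with some prescribed choice of decorations on the two triangles involved. Since any two ideal triangulations are connected by a finite sequence of flips (the flip graph is connected under our assumptions on $\bSigma$), and any two decoration choices on a fixed $\tri$ differ only at finitely many triangles, a proof of mutation-equivalence for (a) and (b) will imply the theorem by induction on the number of flips and decoration changes.

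For (a), I would work triangle by triangle and show that for a single triangle $T$, the six weighted quivers $Q_{m,\pm}$ (with $m$ ranging over the three marked points and the sign ranging over $\{+,-\}$) are all related by mutations at the two unfrozen vertices. This is exactly the content of the hexagonal exchange graph $\Exch_{\sfs(\fsp_4,T)}$ displayed in \cref{fig:exch_triangle}: one checks by direct computation (using \cref{lem:weighted_mutation} to handle the weights $1$ and $2$ correctly) that applying $\mu_1$ or $\mu_2$ at the unique unfrozen vertex of the corresponding weight sends $Q_{m,\ve}$ to one of the adjacent quivers in the hexagon. After establishing the local picture on $T$, the amalgamation procedure ensures that the same sequence of mutations applied to the vertices of $Q^\bD$ lying in the interior of $T$ yields the globally amalgamated quiver $Q^{\bD'}$ where only the decoration on $T$ has changed; the frozen vertices on the edges of $T$ are untouched, so compatibility with the neighboring triangles is automatic.

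For (b), the task is to exhibit a sequence of mutations realizing a flip of the diagonal of a quadrilateral $Q$ formed by two triangles sharing an edge. This is where \cref{fig:flip_sequence} comes in: it gives explicit mutation sequences that transform the amalgamated quiver on $Q$ (for some chosen decorations on the two triangles) into the amalgamated quiver after flipping the diagonal (again with some chosen decorations). I would verify these sequences by direct computation, checking at each step that the prescribed mutation produces the claimed arrow configuration, again using \cref{lem:weighted_mutation}. Because the decorations at the two triangles before and after the flip are allowed to differ, combining this with (a) gives the mutation-equivalence for arbitrary flip together with arbitrary decoration change on those two triangles.

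The main obstacle will be the bookkeeping in step (b): the quiver on a quadrilateral has $14$ vertices (with mixed weights $1$ and $2$), the flip sequence involves several mutations whose order matters, and the half-arrows between weight-$1$ and weight-$2$ vertices must be combined and cancelled correctly in each intermediate quiver. The non-symmetric Cartan matrix of $\fsp_4$ makes this considerably more delicate than the classical $\fsl_2$ case. To mitigate this, I would record each intermediate exchange matrix in a block form adapted to the partition of vertices by the underlying combinatorial position (corner, edge-interior, triangle-interior) and verify commutativity of disjointly-supported mutations, which lets the long sequence be broken into short manageable pieces. The verification is ultimately mechanical and follows the computations in \cite[Section 12.5]{GS19}.
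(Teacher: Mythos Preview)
Your proposal is correct and follows essentially the same route as the paper's proof: first reduce to the triangle case via the hexagon in \cref{fig:exch_triangle} to handle all decoration changes, then realize flips by the explicit mutation sequence in \cref{fig:flip_sequence}, and conclude by connectedness of the flip graph. The paper additionally records that $\mu_1\mu_2$ and $\mu_2\mu_1$ act as rotations of the distinguished vertex while $\mu_1\mu_2\mu_1=\mu_2\mu_1\mu_2$ flips the sign, and that the flip sequence has length $2+4+2$, but the logical structure is the same as yours.
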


\begin{proof}
Here we give an explicit mutation equivalence. Let us first consider the case $\Sigma=T$. In this case, we have six decorated triangulations. The associated weighted quivers are related as shown in \cref{fig:exch_triangle}. Thus the assertion for the triangle case is proved. We remark here that $\mu_1\mu_2$ and $\mu_2\mu_1$ amount to the rotations of the distinguished vertex, and $\mu_1\mu_2\mu_1=\mu_2\mu_1\mu_2$ amounts to the change of sign.

In the general case, a consequence of the previous paragraph is that the weighted quivers $Q^{\bD}$ and $Q^{\bD'}$ are mutation-equivalent if the underlying triangulations of $\bD$ and $\bD'$ are the same. It remains to consider the flips of ideal triangulations. Again by the previous paragraph, we can choose the decorations as shown in the left-most and right-most pictures in \cref{fig:flip_sequence}. Then it is easily verified that the flip can be realized by $2+4+2$ mutations as shown there. Since any two ideal triangulations are transformed into each other by a finite sequence of flips, the assertion is proved. 
\end{proof}

Thus the quivers $Q^{\bD}$ (or the corresponding exchange matrices $\ve^{\bD}$) define a canonical mutation class, denoted by $\sfs(\fsp_4,\bSigma)$. 

In the body of this paper, we will construct the (thought-to-be) tropical cluster variables $\bsfa^{\bD}=(\sfa_i^{\bD})_{i \in I(\tri)}$ over $\bQ$ on $X=\cL^a(\Sigma,\bQ)$. To show this is indeed the case, we need to prove that one of these coordinate systems are bijective, and they are related by (a composite of) the mutation rule \eqref{eq:mutation_tropical}.


\begin{rem}\label{rem:reduced_word}
\begin{enumerate}
    \item The sign $+$ (resp. $-$) corresponds to the reduced word $(1,2,1,2)$ (resp. $(2,1,2,1)$) of the longest element in the Weyl group of $\fsp_4$. See \cite[Section 11.2]{GS19} or \cite[Appendix C]{IO20} for a general construction of quivers associated with words. 
    \item The quiver associated with the reduced word $(1,2,1,2)$ for $\fsp_4$ is isomorphic to the one with the reduced word $(2,1,2,1)$ for $\fso_5$ via the permutation of two vertices on each edge and face. In this way, we get a bijection between the seeds in $\sfs(\fsp_4,\bSigma)$ and $\sfs(\fso_5,\bSigma)$ in a mutation-compatible way.
\end{enumerate}
\end{rem}

\section{Minimal position of \texorpdfstring{$\mathfrak{sp}_4$}{sp(4)}-diagrams}\label{sec:minimal}

In this section, we provide statements about a minimal position of the $\mathfrak{sp}_4$-diagrams on a marked surface by developing Kuperberg's argument for a marked disk in \cite{Kuperberg}. These statements are the $\fsp_4$-versions of those in \cite{FS22}, and their proofs are written in a more explicit form.
One can easily see that some arguments, especially after \cref{prop:minimal-position}, also work for $\mathfrak{sl}_3$, and we believe they also helps clarify the argument in \cite{FS22}.

Let $\bSigma=(\Sigma,\bM)$ be a marked surface possibly with punctures.
In this section, ideal arc means simple arc between marked points unless otherwise mentioned, and $\fsp_4$-diagrams are bounded.

\begin{dfn}\label{dfn:red-H-move}
    For a set $S$ of cut paths, the \emph{intersection reduction moves along $S$} are deformations of $\mathfrak{sp}_4$-diagrams (or their homotopy classes relative to $S$) in \cref{fig:red-move}. 
    An \emph{elliptic bordered face along $S$} is a face that is bounded by an ideal arc in $S$ and $\fsp_4$-diagrams, appearing on the left sides.
    The \emph{$H$-moves} along $S$ are defined by \cref{fig:H-move}.
    Similarly, a \emph{bordered $H$-face along $S$} is defined by the diagrams in \cref{fig:H-move}.
\end{dfn}

\begin{figure}[h]
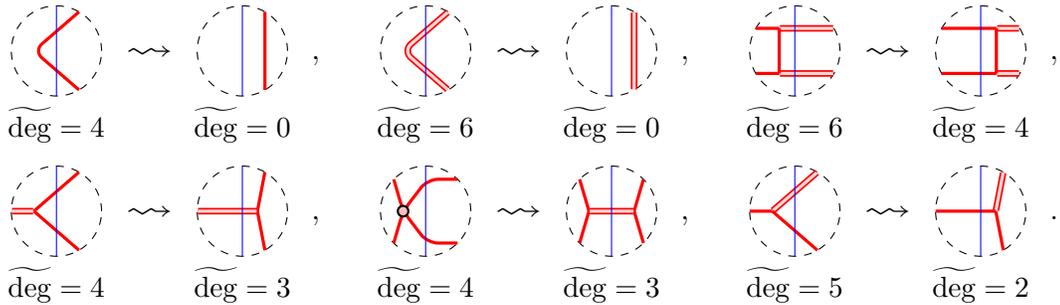

    \begin{align*}
        &\mbox{
        \tikz[baseline=-.6ex, scale=.1]{
            \draw[dashed, fill=white] (0,0) circle [radius=6];
            \draw[webline, rounded corners] (60:6) -- (-3,0) -- (-60:6);
            \draw[blue] (90:6) -- (-90:6);
            \node at (0,-6) [below]{\small $\widetilde{\deg}=4$};
        }
        }
        \scalebox{1.5}[1]{$\rightsquigarrow$}
        \mbox{
        \tikz[baseline=-.6ex, scale=.1]{
            \draw[dashed, fill=white] (0,0) circle [radius=6];
            \draw[webline] (60:6) -- (-60:6);
            \draw[blue] (90:6) -- (-90:6);
            \node at (0,-6) [below]{\small $\widetilde{\deg}=0$};
        }
        },\quad 
        \mbox{
        \tikz[baseline=-.6ex, scale=.1]{
            \draw[dashed, fill=white] (0,0) circle [radius=6];
            \draw[wline, rounded corners] (60:6) -- (-3,0) -- (-60:6);
            \draw[blue] (90:6) -- (-90:6);
            \node at (0,-6) [below]{\small $\widetilde{\deg}=6$};
        }
        }
        \scalebox{1.5}[1]{$\rightsquigarrow$}
        \mbox{
        \tikz[baseline=-.6ex, scale=.1]{
            \draw[dashed, fill=white] (0,0) circle [radius=6];
            \draw[wline] (60:6) -- (-60:6);
            \draw[blue] (90:6) -- (-90:6);
            \node at (0,-6) [below]{\small $\widetilde{\deg}=0$};
        }
        },\quad
        \mbox{
        \tikz[baseline=-.6ex, scale=.1]{
            \draw[dashed, fill=white] (0,0) circle [radius=6];
            \coordinate (U) at ($(150:6)!.3!(30:6)$);
            \coordinate (D) at ($(-150:6)!.3!(-30:6)$);
            \draw[webline] (150:6) -- (U);
            \draw[wline] (U) -- (30:6);
            \draw[webline] (-150:6) -- (D);
            \draw[wline] (D) -- (-30:6);
            \draw[webline] (U) -- (D);
            \draw[blue] (90:6) -- (-90:6);
            \node at (0,-6) [below]{\small $\widetilde{\deg}=6$};
        }
        } 
        \scalebox{1.5}[1]{$\rightsquigarrow$}
        \mbox{
        \tikz[baseline=-.6ex, scale=.1]{
            \draw[dashed, fill=white] (0,0) circle [radius=6];
            \coordinate (U) at ($(150:6)!.7!(30:6)$);
            \coordinate (D) at ($(-150:6)!.7!(-30:6)$);
            \draw[webline] (150:6) -- (U);
            \draw[wline] (U) -- (30:6);
            \draw[webline] (-150:6) -- (D);
            \draw[wline] (D) -- (-30:6);
            \draw[webline] (U) -- (D);
            \draw[blue] (90:6) -- (-90:6);
            \node at (0,-6) [below]{\small $\widetilde{\deg}=4$};
        }
        },\\
        &\mbox{
        \tikz[baseline=-.6ex, scale=.1]{
            \draw[dashed, fill=white] (0,0) circle [radius=6];
            \draw[webline] (60:6) -- (-3,0);
            \draw[webline] (-60:6) -- (-3,0);
            \draw[wline] (180:6) -- (-3,0);
            \draw[blue] (90:6) -- (-90:6);
            \node at (0,-6) [below]{\small $\widetilde{\deg}=4$};
        }
        }
        \scalebox{1.5}[1]{$\rightsquigarrow$}
        \mbox{
        \tikz[baseline=-.6ex, scale=.1]{
            \draw[dashed, fill=white] (0,0) circle [radius=6];
            \draw[webline] (60:6) -- (2,0);
            \draw[webline] (-60:6) -- (2,0);
            \draw[wline] (180:6) -- (2,0);
            \draw[blue] (90:6) -- (-90:6);
            \node at (0,-6) [below]{\small $\widetilde{\deg}=3$};
        }
        },\quad 
        \mbox{
        \tikz[baseline=-.6ex, scale=.1]{
            \draw[dashed, fill=white] (0,0) circle [radius=6];
            \draw[webline, rounded corners] (45:6) -- ($(45:6)+(-4,0)$) -- (-3,0) -- (-135:6);
            \draw[webline, rounded corners] (-45:6) -- ($(-45:6)+(-4,0)$) -- (-3,0) -- (135:6);
            \draw[fill=pink, thick] (-3,0) circle [radius=20pt];
            \draw[blue] (90:6) -- (-90:6);
            \node at (0,-6) [below]{\small $\widetilde{\deg}=4$};
        }
        } 
        \scalebox{1.5}[1]{$\rightsquigarrow$}
        \mbox{
        \tikz[baseline=-.6ex, scale=.1]{
            \draw[dashed, fill=white] (0,0) circle [radius=6];
            \draw[webline] (45:6) -- (3,0);
            \draw[webline] (-45:6) -- (3,0);
            \draw[webline] (135:6) -- (-3,0);
            \draw[webline] (-135:6) -- (-3,0);
            \draw[wline] (-3,0) -- (3,0);
            \draw[blue] (90:6) -- (-90:6);
            \node at (0,-6) [below]{\small $\widetilde{\deg}=3$};
        }
        },\quad 
        \mbox{
        \tikz[baseline=-.6ex, scale=.1]{
            \draw[dashed, fill=white] (0,0) circle [radius=6];
            \draw[wline] (60:6) -- (-3,0);
            \draw[webline] (-60:6) -- (-3,0);
            \draw[webline] (180:6) -- (-3,0);
            \draw[blue] (90:6) -- (-90:6);
            \node at (0,-6) [below]{\small $\widetilde{\deg}=5$};
        }
        }
        \scalebox{1.5}[1]{$\rightsquigarrow$}
        \mbox{
        \tikz[baseline=-.6ex, scale=.1]{
            \draw[dashed, fill=white] (0,0) circle [radius=6];
            \draw[wline] (60:6) -- (2,0);
            \draw[webline] (-60:6) -- (2,0);
            \draw[webline] (180:6) -- (2,0);
            \draw[blue] (90:6) -- (-90:6);
            \node at (0,-6) [below]{\small $\widetilde{\deg}=2$};
        }
        }.
    \end{align*}
    \caption{The intersection reduction moves (up to vertical and horizontal reflections). Here, the blue vertical line is a cut path in $S$ and $\widetilde{\deg}$ denotes the $S$-degree of the local picture. One can see that it strictly decreases by reduction moves. }
    \label{fig:red-move}
\end{figure}
\begin{figure}[h]
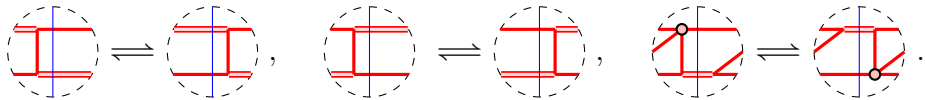

    \begin{align*}
        \mbox{
        \tikz[baseline=-.6ex, scale=.1]{
            \draw[dashed, fill=white] (0,0) circle [radius=6];
            \coordinate (U) at ($(150:6)!.3!(30:6)$);
            \coordinate (D) at ($(-150:6)!.3!(-30:6)$);
            \draw[wline] (150:6) -- (U);
            \draw[webline] (U) -- (30:6);
            \draw[webline] (-150:6) -- (D);
            \draw[wline] (D) -- (-30:6);
            \draw[webline] (U) -- (D);
            \draw[blue] (90:6) -- (-90:6);
        }
        } 
        \scalebox{1.5}[1]{$\rightleftharpoons$}
        \mbox{
        \tikz[baseline=-.6ex, scale=.1]{
            \draw[dashed, fill=white] (0,0) circle [radius=6];
            \coordinate (U) at ($(150:6)!.7!(30:6)$);
            \coordinate (D) at ($(-150:6)!.7!(-30:6)$);
            \draw[wline] (150:6) -- (U);
            \draw[webline] (U) -- (30:6);
            \draw[webline] (-150:6) -- (D);
            \draw[wline] (D) -- (-30:6);
            \draw[webline] (U) -- (D);
            \draw[blue] (90:6) -- (-90:6);
        }
        },\quad
        \mbox{
        \tikz[baseline=-.6ex, scale=.1]{
            \draw[dashed, fill=white] (0,0) circle [radius=6];
            \coordinate (U) at ($(150:6)!.3!(30:6)$);
            \coordinate (D) at ($(-150:6)!.3!(-30:6)$);
            \draw[webline] (150:6) -- (U);
            \draw[wline] (U) -- (30:6);
            \draw[wline] (-150:6) -- (D);
            \draw[webline] (D) -- (-30:6);
            \draw[webline] (U) -- (D);
            \draw[blue] (90:6) -- (-90:6);
        }
        }\ 
        \scalebox{1.5}[1]{$\rightleftharpoons$}
        \mbox{
        \tikz[baseline=-.6ex, scale=.1]{
            \draw[dashed, fill=white] (0,0) circle [radius=6];
            \coordinate (U) at ($(150:6)!.7!(30:6)$);
            \coordinate (D) at ($(-150:6)!.7!(-30:6)$);
            \draw[webline] (150:6) -- (U);
            \draw[wline] (U) -- (30:6);
            \draw[wline] (-150:6) -- (D);
            \draw[webline] (D) -- (-30:6);
            \draw[webline] (U) -- (D);
            \draw[blue] (90:6) -- (-90:6);
        }
        },\quad
        \mbox{
        \tikz[baseline=-.6ex, scale=.1]{
            \draw[dashed, fill=white] (0,0) circle [radius=6];
            \coordinate (LU) at ($(150:6)!.3!(30:6)$);
            \coordinate (LD) at ($(-150:6)!.3!(-30:6)$);
            \coordinate (RU) at ($(150:6)!.7!(30:6)$);
            \coordinate (RD) at ($(-150:6)!.7!(-30:6)$);
            \coordinate (L) at (-180:6);
            \coordinate (R) at (0:6);
            \draw[webline] (L) -- (LU) -- (30:6);
            \draw[webline] (-150:6) -- (LD);
            \draw[wline] (LD) -- (RD);
            \draw[webline] (150:6) -- (LU) -- (LD);
            \draw[webline] (RD) -- (R);
            \draw[webline] (RD) -- (-30:6);
            \draw[blue] (90:6) -- (-90:6);
            \draw[fill=pink, thick] (LU) circle [radius=20pt];
        }
        } 
        \scalebox{1.5}[1]{$\rightleftharpoons$}
        \mbox{
        \tikz[baseline=-.6ex, scale=.1, yscale=-1, xscale=-1]{
            \draw[dashed, fill=white] (0,0) circle [radius=6];
            \coordinate (LU) at ($(150:6)!.3!(30:6)$);
            \coordinate (LD) at ($(-150:6)!.3!(-30:6)$);
            \coordinate (RU) at ($(150:6)!.7!(30:6)$);
            \coordinate (RD) at ($(-150:6)!.7!(-30:6)$);
            \coordinate (L) at (-180:6);
            \coordinate (R) at (0:6);
            \draw[webline] (L) -- (LU) -- (30:6);
            \draw[webline] (-150:6) -- (LD);
            \draw[wline] (LD) -- (RD);
            \draw[webline] (150:6) -- (LU) -- (LD);
            \draw[webline] (RD) -- (R);
            \draw[webline] (RD) -- (-30:6);
            \draw[blue] (90:6) -- (-90:6);
            \draw[fill=pink, thick] (LU) circle [radius=20pt];
        }
        }.
    \end{align*}
    \caption{The $H$-moves. These moves preserve the $S$-degree. The last one is regarded as a composition of ladder resolutions and the other $H$-moves.
    }
    \label{fig:H-move}
\end{figure}


\begin{dfn}\label{def:trivial-diagram}
    Let $p,q$ be two (possibly marked) points in $\Sigma$. Let denote two non-intersecting arcs $\gamma_{-}$ and $\gamma_{+}$ between $p$ and $q$ that bound a biangle $B$.
    A \emph{trivial $\fsp_4$-diagram} in $B$ is a collection of parallel corner arcs of type $1$ or type $2$ in $B$.
\end{dfn}
\begin{lem}[{\cite[Lemma~6.5]{Kuperberg}}]\label{lem:homotopy-move}
    Let $B$ be a biangle bounded by cut paths $\gamma_{\pm}$ as in \cref{def:trivial-diagram}.
    For any $\{\gamma_{\pm}\}$-transverse $\fsp_4$-diagram $D$ in $\Sigma\setminus\{p,q\}$ whose intersection with $B$ has no elliptic interior faces, there exists a finite sequence $(D=D_{0}\leadsto D_1\leadsto\cdots\leadsto D_n=D')$ of $\{\gamma_{\pm}\}$-transverse $\fsp_4$-diagrams such that 
    \begin{itemize}
        \item $D_{i-1}\leadsto D_{i}$ (i=1,\dots,n) means either an $H$-move or an intersection reduction move along $\{\gamma_{\pm}\}$, and
        \item $D'\cap B$ is a trivial $\fsp_4$-diagram.
    \end{itemize}
    Moreover, if $D$ is $\{\gamma_{-}\}$-minimal, the above finite sequence of deformations is only performed along $\gamma_{+}$. 
\end{lem}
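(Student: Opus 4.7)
The plan is to prove the lemma by induction on the $\{\gamma_\pm\}$-degree $\widetilde{\deg}_{\{\gamma_\pm\}}(D)$ (or on the pair of intersection numbers with $\gamma_-$ and $\gamma_+$ ordered lexicographically), leveraging the classification of non-elliptic $\fsp_4$-diagrams in a biangle (\cref{lem:biangle_blad}) as the key structural input. The strategy splits into a reduction phase (using intersection reduction moves) followed by a rearrangement phase (using $H$-moves and ladder resolutions inside $B$).

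First, suppose that $D\cap B$ has an elliptic bordered face along $\gamma_{-}$ or $\gamma_{+}$. Each of the intersection reduction moves in \cref{fig:red-move} strictly decreases $\widetilde{\deg}_{\{\gamma_\pm\}}$, so after iterating we may assume $D\cap B$ has no elliptic bordered faces along either $\gamma_{\pm}$. Combined with the hypothesis that $D\cap B$ contains no elliptic interior faces, the resulting diagram is a non-elliptic crossroad $\fsp_4$-diagram in the biangle $B$. By the first part of \cref{lem:biangle_blad}, the boundary data $(n_{-},m_{-})$ and $(n_{+},m_{+})$ on $\gamma_{-}$ and $\gamma_{+}$ must then coincide; otherwise an elliptic face would necessarily appear, contradicting what we just achieved.

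Next, with matching boundary data and non-ellipticity in hand, the second part of \cref{lem:biangle_blad} provides a finite sequence of ladder resolutions and boundary $H$-moves (all taking place inside $B$) that deforms $D\cap B$ into parallel strands, i.e.\ a trivial $\fsp_4$-diagram in the sense of \cref{def:trivial-diagram}. Since ladder resolutions and $H$-moves are among the allowed operations in \cref{dfn:red-H-move} (the ladder resolutions arising through the composite $H$-move depicted on the right of \cref{fig:H-move}), this completes the construction of the sequence $D=D_0\leadsto\cdots\leadsto D_n=D'$ with $D'\cap B$ trivial.

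For the moreover statement, assume $D$ is already $\{\gamma_{-}\}$-minimal. If $D\cap B$ had an elliptic bordered face along $\gamma_{-}$, an intersection reduction move (\cref{fig:red-move}) would strictly decrease $\#D\intersec\gamma_{-}$ and hence $\widetilde{\deg}_{\{\gamma_{-}\}}(D)$, contradicting minimality. So all elliptic bordered faces in the reduction phase lie along $\gamma_{+}$, and the reductions can be localized in a collar of $\gamma_{+}$. The rearrangement phase furnished by \cref{lem:biangle_blad} can likewise be performed by sliding ladders and $H$-faces toward $\gamma_{+}$, which leaves $D\cap\gamma_{-}$ untouched throughout. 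The main technical obstacle I expect is verifying carefully that the $H$-moves and ladder resolutions in the second part of \cref{lem:biangle_blad} admit realizations that remain disjoint from a neighborhood of $\gamma_{-}$; this essentially requires revisiting the angular-defect computation of Kuperberg in \cref{lem:biangle_blad} and tracking that the successive moves can be ordered so as to operate only on the $\gamma_{+}$ side, which is what makes the minimality refinement a genuine strengthening rather than a formal corollary.
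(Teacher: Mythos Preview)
Your two-phase plan has a circularity problem: \cref{lem:biangle_blad} is stated earlier with the remark ``We will give a proof of it in \cref{sec:minimal}'', and the present lemma \emph{is} that proof. Both statements cite the same Kuperberg lemma; the point of this appendix is to carry out the angular-defect argument in the cut-path setting, not to invoke it as a black box. Relatedly, the moves supplied by \cref{lem:biangle_blad} include ladder resolutions inside $B$, which are not among the moves allowed in the statement of \cref{lem:homotopy-move} (only $H$-moves and intersection reduction moves along $\gamma_\pm$). The caption of \cref{fig:H-move} says the last $H$-move decomposes \emph{into} ladder resolutions plus the other $H$-moves, not the converse, so you cannot recover arbitrary interior ladder resolutions from the permitted operations.

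The paper avoids both issues by running the angular-defect computation directly on the dual graph of $D\cap B$: the Gauss--Bonnet count forces the existence of at least one elliptic bordered face \emph{or} one bordered $H$-face along $\gamma_\pm$, the corresponding allowed move pushes a vertex of the dual graph out of $B$, and one iterates until only arcs remain. There is no separation into a reduction phase and a rearrangement phase; the two interleave. This single-phase argument also makes the ``moreover'' clause immediate: $\{\gamma_-\}$-minimality rules out elliptic bordered faces along $\gamma_-$ up to $H$-moves, so the angular defect is forced to sit along $\gamma_+$ at every step, and all moves occur there. This is exactly the refinement you flagged as the remaining obstacle in your approach; the direct argument resolves it without extra work.
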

\begin{proof}
    This assertion is proved in a similar way to the proof of \cref{lem:triangle_blad}. Let us consider a dual graph $G$ of $D\cap B$ and assign angles, after replacing rungs in $B$ with crossroads, as shown below: 
    \[
        \mathord{
            \ \tikz[baseline=-.6ex, scale=.1]{
                \coordinate (C) at (0,0);
                \coordinate (NW) at (135:8);
                \coordinate (NE) at (45:8);
                \coordinate (S) at (-90:5);
                \coordinate (N) at ($(90:8)+(0,-3)$);
                \coordinate (SW) at ($(-180:8)+(0,-3)$);
                \coordinate (SE) at ($(0:8)+(0,-3)$);
                \draw[wline] (C) -- (S);
                \draw[webline] (C) -- (NE);
                \draw[webline] (C) -- (NW);
                \draw[thick, mygreen] (SW) -- (N) -- (SE);
                \draw[line width=1.6pt, green!15, preaction={draw, line width=2.8pt, mygreen}] (SW) -- (SE);
                \pic [draw,thick,"\scriptsize $\pi/2$", angle eccentricity=-1, angle radius=.2cm] {angle = SW--N--SE};
                \pic [draw,thick,"\scriptsize $\pi/4$", angle eccentricity=-1.5, angle radius=.2cm] {angle = SE--SW--N};
                \draw[blue] ($(-90:5)+(-5,0)$) -- +(10,0);
        \ }
        }\ ,\quad
        \mathord{
            \ \tikz[baseline=-.6ex, scale=.1]{
                \coordinate (C) at (0,0);
                \coordinate (NW) at (135:8);
                \coordinate (NE) at (45:8);
                \coordinate (N) at (90:8);
                \coordinate (SW) at (-135:8);
                \coordinate (SE) at (-45:8);
                \coordinate (S) at (-90:8);
                \coordinate (E) at (180:8);
                \coordinate (W) at (0:8);
                \draw[webline] (C) -- (NW);
                \draw[webline] (C) -- (NE);
                \draw[webline] (C) -- (SW);
                \draw[webline] (C) -- (SE);
                \draw[thick, mygreen] (N) -- (W) -- (S) -- (E) -- cycle;
                \pic [draw,thick,"\scriptsize $\pi/2$", angle eccentricity=-1, angle radius=.2cm] {angle = E--N--W};
                \draw[fill=pink] (C) circle [radius=20pt];
        \ }
        }.
    \]
    We focus on a connected component of $G$ with at least one cycle.
    The angular defect at an interior vertex is $2\pi-\pi n_1/2$ and the angular defect at an exterior vertex is $3\pi/2-\pi (2n_1+n_2)/4$ where $n_i$ is the number of type $i$ edges incident to the vertex for $i=1,2$.
    The angular defect of an exterior vertex at a corner is at most $3\pi/4$. 
    In addition, the total angular defect must be $2\pi$ by the Gauss-Bonnet theorem.
    Furthermore, one can confirm that the summation of the angular defects of all exterior vertices is at least $\pi/2$.
    This implies that $D\cap B$ should have at least one elliptic bordered face or one bordered $H$-face.
    An intersection reduction move and an $H$-move along $\gamma_{\pm}$ exclude at least one vertex of $G$ from $B$.
    Apply this process to all connected components of $G$, then the resulting diagram is just a collection of arcs in $B$.
    It becomes the identity $\fsp_4$-diagram after applying the first two intersection reduction moves involving the elimination of returning arcs.
    For more details on the proof, see the proof in \cite[{Proposition~2.24}]{IYsp4}. A similar argument is carried out by replacing a triangle with a biangle in the proof. If $D$ is $\{\gamma_{-}\}$-minimal, there is no bordered elliptic face along $\gamma_{-}$ up to $H$-moves along $\gamma_{-}$. Hence one can deform $D$ to $D'$ by a sequence of intersection reduction moves and $H$-moves along $\{\gamma_{+}\}$.
\end{proof}

For an $\fsp_4$-diagram $D$ and an ideal arc $\gamma$, we will discuss the relationship between the minimality of $\widetilde{\deg}_{\gamma}(D)$ and faces adjacent to $\gamma$.

\begin{prop}\label{prop:minimal-position}
    Let $\gamma$ be any ideal arc from $p$ to $q$ ($p,q\in\bM$), and $D$ any $\{\gamma\}$-transverse reduced $\fsp_4$-diagram.
    If a $\{\gamma\}$-minimal $\fsp_4$-diagram $D'$ is ladder-equivalent to $D$ in $\Sigma$, then there exists a finite sequence $(D=D_0\leadsto D_1\leadsto\cdots\leadsto D_n=D')$ of $\{\gamma\}$-transverse $\fsp_4$-diagrams such that $D_{i-1}\leadsto D_{i}$ (i=1,\dots,n) is either an $H$-move along $\gamma$, an intersection reduction move along $\gamma$, a ladder equivalence $D_{i-1}\approx_{\gamma} D_{i}$, an arc parallel-move, or a loop parallel-move.
\end{prop}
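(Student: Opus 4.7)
The strategy is to normalize both $D$ and $D'$ inside a bicollar neighborhood of $\gamma$ by repeated applications of Kuperberg's biangle lemma (Lemma~\ref{lem:homotopy-move}), and then realize the remaining equivalence by moves supported away from $\gamma$.

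First I would fix two parallel ideal arcs $\gamma_+$ and $\gamma_-$ isotopic to $\gamma$, bounding a biangle $B$ with $\gamma$ in its interior, chosen close enough to $\gamma$ so that both $D$ and $D'$ are transverse to $\gamma_\pm$ with intersection patterns matching those along $\gamma$. In particular, $D'$ is $\gamma_-$-minimal. Because $D$ is reduced, the intersection $D \cap B$ has no elliptic interior faces, and the same holds for $D'$. Under the bicollar retraction, $H$-moves and intersection reduction moves along $\gamma_\pm$ (the two operations produced by Lemma~\ref{lem:homotopy-move}) realize $H$-moves and intersection reduction moves along $\gamma$ in the terminology of the proposition.

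Next I would apply Lemma~\ref{lem:homotopy-move} twice. Applied to $D$, it yields a sequence $D \leadsto \widetilde{D}$ of $H$-moves and intersection reduction moves along $\gamma_\pm$ such that $\widetilde{D} \cap B$ is a trivial $\fsp_4$-diagram. Applied to $D'$, the ``moreover'' clause (valid because $D'$ is $\gamma_-$-minimal) produces $D' \leadsto \widetilde{D}'$ using only moves along $\gamma_+$, with $\widetilde{D}' \cap B$ trivial. Since reductions never increase $\gamma$-intersections, both $\widetilde{D}$ and $\widetilde{D}'$ attain the minimum $\gamma$-degree of the ladder class, and their trivial $B$-diagrams have the same numbers of type~$1$ and type~$2$ strands. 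The color sequence of $\widetilde{D}'$ along $\gamma_-$ agrees with that of $D' \cap \gamma_-$, which was never modified; by performing additional $H$-moves along $\gamma$ inside $B$ to permute neighboring type~$1$ and type~$2$ strands, one can then adjust $\widetilde{D}$ so that $\widetilde{D} \cap B$ and $\widetilde{D}' \cap B$ coincide.

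Finally, the diagrams $\widetilde{D}$ and $\widetilde{D}'$ are ladder-equivalent on $\Sigma$ and agree on a neighborhood of $\gamma$. I would then appeal to Lemma~\ref{lem:equivalence-generator}, which expresses ladder equivalence of reduced diagrams as a composition of (D1), (D2), and (D4), to realize this equivalence by moves localized away from $B$: instances of (D1) and (D2) become loop and arc parallel-moves, while an application of (D4) meeting $B$ can be pushed off $B$ by a preparatory sequence of $H$-moves and ladder resolutions inside $B$ (which are themselves $H$-moves along $\gamma$, or reductions along $\gamma$, in the proposition's sense). This produces the desired sequence $D \leadsto \widetilde{D} \leadsto \widetilde{D}' \leadsto D'$ composed of the listed moves.

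The main obstacle is this last step: verifying that every application of (D4) crossing $\gamma$ in the ambient ladder equivalence can be replaced, at the cost of additional $H$-moves and intersection reductions along $\gamma$ together with arc and loop parallel-moves, by moves supported in $\Sigma \setminus \gamma$. A careful combinatorial analysis of how crossroads and rungs interact with $\gamma$, in the spirit of the reductions carried out in Lemma~\ref{lem:equality-in-gr}, should supply this replacement.
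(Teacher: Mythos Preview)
Your reduction in the final paragraph is the real gap, and it is not a technicality that ``a careful combinatorial analysis'' will fill. After normalizing to $\widetilde{D}$ and $\widetilde{D}'$ with matching trivial patterns inside the bicollar $B$, you still need to produce a chain of the permitted moves connecting them. But knowing only that $\widetilde{D}\approx\widetilde{D}'$ in $\Sigma$ and that they agree on $B$ does \emph{not} give $\widetilde{D}\approx_{\gamma}\widetilde{D}'$: the ambient ladder equivalence furnished by Lemma~\ref{lem:equivalence-generator} is a global statement and the intermediate isotopies and (D4)-moves in that sequence may drag vertices and crossroads back and forth across $\gamma$, creating intersections along the way. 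Such crossings are precisely what the proposition forbids (only intersection-\emph{reducing} moves along $\gamma$ are allowed, not their inverses), so you cannot simply ``push (D4) off $B$'' move-by-move. In effect, your last step assumes a relative version of the very statement you are trying to prove.

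The paper avoids this circularity by a different, constructive route: it takes the actual deformation $\psi$ witnessing $D\approx D'$, pulls $\gamma$ back along $\psi$ to an arc $\alpha:=\psi^{\ast}\gamma$ for which $D$ is already $\{\alpha\}$-minimal, and then processes the finitely many biangles cut out by $\gamma$ and $\alpha$ one at a time. Each biangle is swept clean using Lemma~\ref{lem:homotopy-move}, with some extra care when an intersection point $p_i$ sits on a crossroad (a small detour $p_1'$ is introduced so that the relevant face is non-elliptic along $\alpha$, and the crossroad is then slid out along $\gamma$). The key point is that all moves produced this way are along $\gamma$ and never increase the $\alpha$-degree, so minimality along $\alpha$ is preserved throughout and no uncontrolled back-and-forth across $\gamma$ ever arises. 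This is the idea you are missing: rather than normalizing the two diagrams separately and trying to match them afterward, track the arc through the given equivalence and straighten it biangle by biangle.
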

\begin{proof}
    First, we remark that arc and loop parallel-moves do not change the degree of $D$, and these moves are independent of the other moves.
    Let $D'$ be any $\{\gamma\}$-minimal reduced $\fsp_4$-diagram.
    Assume that both $D$ and $D'$ have no rungs other than those intersecting $\gamma$ and $D$ is isotopic to $D'$ up to ladder resolutions.
    Then, one can construct a cut path $\psi^{\ast}\gamma$ on $D$ from $\gamma$ through a deformation $\psi$ from $D$ to $D'$ so that $D$ is $\{\psi^{\ast}\gamma\}$-minimal. See \cref{fig:minimal-arc}. Let $\alpha\coloneqq\psi^{\ast}\gamma$ for simplicity.
    Overlay $\gamma$ and $\alpha$ on the crossroad representative $X$ obtained by replacing all rungs of $D$ with crossroads. Note that $\alpha$ is homotopic to $\gamma$ relative to the endpoints $\{p,q\}$. In this diagram, $\gamma$ and $\alpha$ may have intersection points, and let us assign labels $p=p_0,p_1,\dots,p_m=q$ to these intersection points according to the direction from $p$ to $q$. Let $\gamma_{i,i+1}$ (resp. $\alpha_{i,i+1}$) denote the subarc of $\gamma$ (resp. $\alpha$) between $p_i$ and $p_{i+1}$. 
    Note that each $p_i$ may lie at a crossroad of $X$, and $p$ or $q$ must be adjacent to a biangle. However, the pair of subarcs between $p_i$ and $p_{i+1}$ may not necessarily bound a biangle if $p$ or $q$ is a puncture. If necessary, we exchange labels $p$ and $q$ to assume that $\gamma_{0,1}$ and $\alpha_{0,1}$ bound a biangle $B_{1}$.
    
    First, apply \cref{lem:homotopy-move} to the biangle $B_{1}$ after applying ladder resolutions at the crossroads on $\gamma_{0,1}$ and $\alpha_{0,1}$, and slide the resulting trivial $\fsp_4$-diagram out of $B_1$ across $p_1$.
    This deformation makes new $\fsp_4$-diagram $D^{(1)}$ such that $D^{(1)}\cap B_{1}$ is empty. See \cref{ex:minimal-argument}.
    If $p_1$ coincides with a crossroad, then we cannot apply \cref{lem:homotopy-move} to $B_{1}$. 
    In this case, the $\fsp_4$-diagram $x$ in $B_1$ can be seen as a top-left of \cref{fig:minimal-proof}.
    We take a point $p_1'$ on $\gamma$ as above, and the top left face $R$ as in \cref{fig:minimal-proof}.
    Then, one can consider the $\fsp_4$-diagram $x$ as an $\fsp_4$-diagram in a biangle $B_{1}'$ bounded by $\gamma_{p_0,p_1'}$ and $\alpha_{p_0,p_1}\ast\gamma_{p_1,p_1'}$ where the symbol $\ast$ denotes a concatenation of two arcs.
    The minimality of the ladder-resolution of $x$ implies that $R$ is a non-elliptic bordered face. Obviously, the other bordered faces along $\alpha$ are non-elliptic and $\alpha$ is minimal.
    Hence, we can apply \cref{lem:homotopy-move} to $x$ in $B_{1}'$ and obtain the trivial $\fsp_4$-diagram in $B_{1}'$ by intersection reduction moves and $H$-moves along $\gamma_{p_{0},p_{1}}$.
    Then, we can deform the $\fsp_4$-diagram by sliding the crossroad from $p_1$ to the outside of $\alpha_{p_0,p_1}$ along $\gamma$ as in the last picture in \cref{fig:minimal-proof}. This deformation corresponds to an intersection reduction move along $\gamma$, but it does not change the position with respect to $\alpha$. Consequently, we obtain a trivial $\fsp_4$-diagram in $B_1$ by intersection reduction moves $\gamma$ and $H$-moves along $\gamma$. Note that this deformation preserves the minimality of the resulting diagram along $\alpha$.
    
    Next, we apply the above argument to a new biangle $B_{2}$ bounded by $\alpha_{0,1}\ast\gamma_{1,2}$ and $\gamma_{0,1}\ast\alpha_{1,2}$. Note that $D^{(1)}\cap B_{2}$ is $\gamma_{0,1}\ast\alpha_{1,2}$-minimal. At this step, intersection reduction moves and $H$-moves are only applied along the subarc $\gamma_{1,2}$ because the sliding of the trivial $\fsp_4$-diagram eliminated elliptic faces bounded by both $\alpha_{0,1}$ and $\gamma_{1,2}$; see \cref{ex:minimal-argument}. Then we obtain an $\fsp_4$-diagram $D^{(2)}$ such that $D^{(2)}\cap (B_{1}\cup B_{2})$ is empty.
    We next consider $B_{3}$ bounded by $\alpha_{0,1}\ast\gamma_{1,2}\ast\alpha_{2,3}$ and $\gamma_{0,1}\ast\alpha_{1,2}\ast\gamma_{2,3}$ and repeat a similar argument to obtain $D^{(3)}$ with $D^{(3)}\cap (B_{1}\cup B_{2}\cup B_{3})=\emptyset$. We will repeat such deformation of $\fsp_4$-diagrams until one can not make a new biangle. Perform the same operation from the biangle whose vertex is at $q$ if necessary. As a result, we obtain a sequence $D^{(1)}, D^{(2)},\dots, D^{(m)}$. By the construction of these deformations, each deformation between $D^{(i)}$ and $D^{(i+1)}$ is constructed from a sequence of intersection reduction moves along $\gamma$ and $H$-moves along $\gamma$.
\end{proof}

\begin{figure}
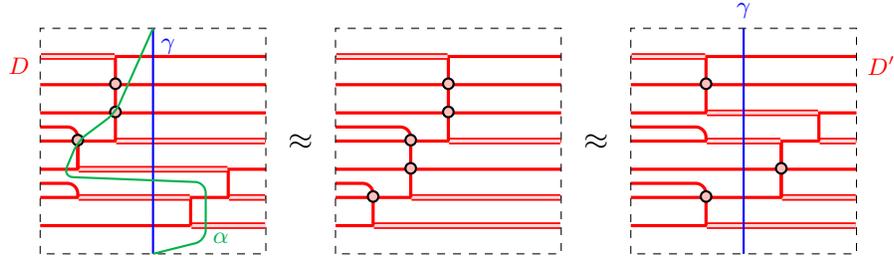

    \begin{align*}
        \mbox{
            \tikz[baseline=-.6ex, scale=.1, yshift=-15cm]{
                \foreach \i in {0,1,...,8}{
                \coordinate (P\i) at (0,30*\i/8);
                \coordinate (Q\i) at (30,30*\i/8);
                \coordinate (R\i) at (0,30*\i/8+30/16);
                \coordinate (S\i) at (30,30*\i/8+30/16);
                }
                \draw[webline] (P1) -- ($(P1)!4/6!(Q1)$);
                \draw[wline] ($(P1)!4/6!(Q1)$) -- (Q1);
                \draw[webline] ($(P1)!4/6!(Q1)$) -- ($(P2)!4/6!(Q2)$);
                \draw[webline] (P2) -- (Q2);
                \draw[wline] ($(P2)!1/6!(Q2)$) -- ($(P2)!4/6!(Q2)$);
                \draw[wline] ($(P2)!5/6!(Q2)$) -- (Q2);
                \draw[webline, rounded corners] (R2) -- ($(R2)!1/6!(S2)$) -- ($(P2)!1/6!(Q2)$);
                \draw[webline] ($(P2)!5/6!(Q2)$) -- ($(P3)!5/6!(Q3)$);
                \draw[webline] (P3) -- (Q3);
                \draw[wline] ($(P3)!1/6!(Q3)$) -- ($(P3)!5/6!(Q3)$);
                \draw[webline] ($(P3)!1/6!(Q3)$) -- ($(P4)!1/6!(Q4)$);
                \draw[webline, rounded corners] (R4) -- ($(R4)!1/6!(S4)$) -- ($(P3)!1/6!(Q3)$);
                \draw[webline] (P4) -- (Q4);
                \draw[wline] ($(P4)!2/6!(Q4)$) -- ($(P4)!6/6!(Q4)$);
                \draw[webline] ($(P4)!2/6!(Q4)$) -- ($(P7)!2/6!(Q7)$);
                \draw[webline] (P5) -- (Q5);
                \draw[webline] (P6) -- (Q6);
                \draw[webline] (P7) -- (Q7);
                \draw[wline] ($(P7)!0/6!(Q7)$) -- ($(P7)!2/6!(Q7)$);
                \draw[blue, thick] (15,0) -- (15,30);
                \draw[dashed] (0,0) rectangle (30,30);
                \node at (17,30) [below, blue]{\scriptsize $\gamma$};
                \node at (24,0) [above, mygreen]{\scriptsize $\alpha$};
                \node at (0,25) [red, left]{\scriptsize $D$};
                \node at ($(P4)!1/6!(Q4)$){\crossroad};
                \node at ($(P5)!2/6!(Q5)$){\crossroad};
                \node at ($(P6)!2/6!(Q6)$){\crossroad};
                \draw[mygreen, thick, rounded corners] (15,30) -- ($(P5)!2/6!(Q5)$) -- ($(P4)!1/6!(Q4)$) -- ($(P3)!1/6!(Q3)+(-2,-1)$) -- ($(P2)!4/6!(Q2)+(2,2)$) -- ($(P1)!4/6!(Q1)+(2,-2)$) -- (15,0);
            }
        }
        \approx
        \mbox{
            \tikz[baseline=-.6ex, scale=.1, yshift=-15cm]{
                \foreach \i in {0,1,...,8}{
                \coordinate (P\i) at (0,30*\i/8);
                \coordinate (Q\i) at (30,30*\i/8);
                \coordinate (R\i) at (0,30*\i/8+30/16);
                \coordinate (S\i) at (30,30*\i/8+30/16);
                }
                \draw[webline] (P1) -- ($(P1)!4/6!(Q1)$);
                \draw[wline] ($(P1)!1/6!(Q1)$) -- (Q1);
                \draw[webline] ($(P1)!1/6!(Q1)$) -- ($(P2)!1/6!(Q2)$);
                \draw[webline] (P2) -- (Q2);
                \draw[wline] ($(P2)!2/6!(Q2)$) -- (Q2);
                \draw[webline, rounded corners] (R2) -- ($(R2)!1/6!(S2)$) -- ($(P2)!1/6!(Q2)$);
                \draw[webline] (P3) -- (Q3);
                \draw[webline, rounded corners] (R4) -- ($(R4)!2/6!(S4)$) -- ($(P2)!2/6!(Q2)$);
                \draw[webline] (P4) -- (Q4);
                \draw[wline] ($(P4)!3/6!(Q4)$) -- ($(P4)!6/6!(Q4)$);
                \draw[webline] ($(P4)!3/6!(Q4)$) -- ($(P7)!3/6!(Q7)$);
                \draw[webline] (P5) -- (Q5);
                \draw[webline] (P6) -- (Q6);
                \draw[webline] (P7) -- (Q7);
                \draw[wline] ($(P7)!0/6!(Q7)$) -- ($(P7)!3/6!(Q7)$);
                %
                \draw[dashed] (0,0) rectangle (30,30);
                \node at ($(P2)!1/6!(Q2)$){\crossroad};
                \node at ($(P3)!2/6!(Q3)$){\crossroad};
                \node at ($(P4)!2/6!(Q4)$){\crossroad};
                \node at ($(P5)!3/6!(Q5)$){\crossroad};
                \node at ($(P6)!3/6!(Q6)$){\crossroad};
            }
        }
        \approx
        \mbox{
            \tikz[baseline=-.6ex, scale=.1, yshift=-15cm]{
                \foreach \i in {0,1,...,8}{
                \coordinate (P\i) at (0,30*\i/8);
                \coordinate (Q\i) at (30,30*\i/8);
                \coordinate (R\i) at (0,30*\i/8+30/16);
                \coordinate (S\i) at (30,30*\i/8+30/16);
                }
                \draw[webline] (P1) -- ($(P1)!4/6!(Q1)$);
                \draw[wline] ($(P1)!2/6!(Q1)$) -- (Q1);
                \draw[webline, rounded corners] (R2) -- ($(R2)!2/6!(S2)$) -- ($(P1)!2/6!(Q1)$);
                \draw[webline] (P2) -- (Q2);
                \draw[wline] ($(P2)!4/6!(Q2)$) -- (Q2);
                \draw[webline] ($(P2)!4/6!(Q2)$) -- ($(P4)!4/6!(Q4)$);
                \draw[webline] ($(P2)!4/6!(Q2)$) -- ($(P4)!4/6!(Q4)$);
                \draw[webline] (P3) -- (Q3);
                \draw[webline] (P4) -- (Q4);
                \draw[wline] ($(P4)!2/6!(Q4)$) -- ($(P4)!4/6!(Q4)$);
                \draw[wline] ($(P4)!5/6!(Q4)$) -- (Q4);
                \draw[webline, rounded corners] (R4) -- ($(R4)!2/6!(S4)$) -- ($(P4)!2/6!(Q4)$);
                \draw[webline] ($(P4)!5/6!(Q4)$) -- ($(P5)!5/6!(Q5)$);
                \draw[webline] (P5) -- (Q5);
                \draw[wline] ($(P5)!2/6!(Q5)$) -- ($(P5)!5/6!(Q5)$);
                \draw[webline] ($(P5)!2/6!(Q5)$) -- ($(P7)!2/6!(Q7)$);
                \draw[webline] (P6) -- (Q6);
                \draw[webline] (P7) -- (Q7);
                \draw[wline] (P7) -- ($(P7)!2/6!(Q7)$);
                \draw[blue, thick] (15,0) -- (15,30);
                \draw[dashed] (0,0) rectangle (30,30);
                \node at (15,30) [above, blue]{\scriptsize $\gamma$};
                \node at (30,25) [red, right]{\scriptsize $D'$};
                \node at ($(P2)!2/6!(Q2)$){\crossroad};
                \node at ($(P3)!4/6!(Q3)$){\crossroad};
                \node at ($(P6)!2/6!(Q6)$){\crossroad};
            }
        }
    \end{align*}
    \caption{$\{\gamma\}$-minimal $\fsp_4$-diagrams $D$ (left) and $D'$ (right) are ladder-equivalent to the middle $\fsp_4$-diagram. Deformation $\psi$ from $D$ to $D'$ sends $\alpha\coloneqq\psi^{\ast}\gamma$ to $\gamma$.}
    \label{fig:minimal-arc}
\end{figure}

\begin{figure}[h]
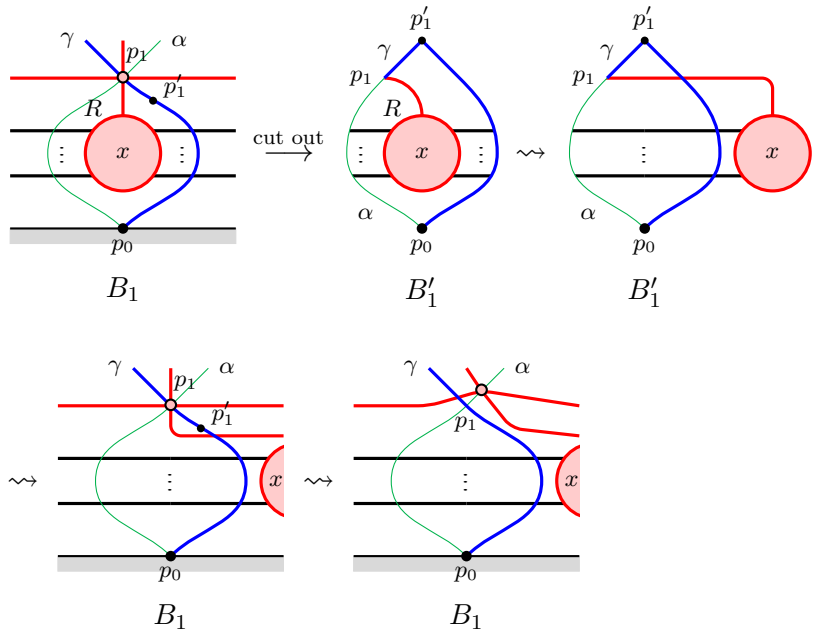

    \begin{align*}
        &\mbox{
            \tikz[baseline=-.6ex, scale=.1, yshift=-10cm]{
                \draw[webline] (10,25) -- (10,15);
                \draw[webline] (-5,20) -- (25,20);
                \draw[webline, black] (10,13) -- (-5,13);
                \draw[webline, black] (10,7) -- (-5,7);
                \draw[webline, black] (10,13) -- (25,13);
                \draw[webline, black] (10,7) -- (25,7);
                \fill[red!20!white] (10,10) circle [radius=5]; 
                \draw[webline] (10,10) circle [radius=5]; 
                \draw[mygreen] (10,0) to[out=north west, in=south] (0,10) to[out=north, in=south west] (10,20) -- (15,25);
                \draw[blue, very thick] (10,0) to[out=north east, in=south] (20,10) to[out=north, in=south east] (10,20) -- (5,25);
                \bline{-5,0}{25,0}{2}
                \fill[black] (10,0) circle [radius=20pt];
                \node at (2,10) [rotate=90, xscale=.5]{$\cdots$};
                \node at (18,10) [rotate=90, xscale=.5]{$\cdots$};
                \node at (10,0) [below]{\scriptsize $p_0$};
                \node at (12,23) {\scriptsize $p_1$};
                \node at (15,25) [right]{\scriptsize $\alpha$};
                \node at (5,25) [left]{\scriptsize $\gamma$};
                \node at (10,10) {\scriptsize $x$};
                \node at (10,20) {$\crossroad$};
                \node at (6,16) {\scriptsize $R$};
                \fill[black] (14,17) circle [radius=15pt];
                \node at (14,16) [above right]{\scriptsize $p_1'$};
                \node at (10,-5) [below]{\small $B_{1}$};
            }
        }
        \mathop{\longrightarrow}^{\text{cut out}}
        \mbox{
            \tikz[baseline=-.6ex, scale=.1, yshift=-10cm]{
                \draw[webline] (10,15) to[out=north, in=east] (5,20);
                \begin{scope}
                    \clip (10,0) to[out=north west, in=south] (0,10) to[out=north, in=south west]  (5,20)-- (10,25) -- (15,20) to[out=south east, in=north] (20,10) to[out=south, in=north east] (10,0);
                    \draw[webline, black] (10,13) -- (-5,13);
                    \draw[webline, black] (10,7) -- (-5,7);
                    \draw[webline, black] (10,13) -- (25,13);
                    \draw[webline, black] (10,7) -- (25,7);
                \end{scope}
                \fill[red!20!white] (10,10) circle [radius=5]; 
                \draw[webline] (10,10) circle [radius=5]; 
                \draw[mygreen] (10,0) to[out=north west, in=south] (0,10) to[out=north, in=south west]  (5,20);
                \draw[blue, very thick] (10,0) to[out=north east, in=south] (20,10) to[out=north, in=south east] (15,20) -- (10,25) -- (5,20);
                \fill[black] (10,0) circle [radius=20pt];
                \node at (2,10) [rotate=90, xscale=.5]{$\cdots$};
                \node at (18,10) [rotate=90, xscale=.5]{$\cdots$};
                \node at (10,0) [below]{\scriptsize $p_0$};
                \node at (5,20) [left]{\scriptsize $p_1$};
                \node at (5,2) [left]{\scriptsize $\alpha$};
                \node at (5,21) [above]{\scriptsize $\gamma$};
                \node at (10,10) {\scriptsize $x$};
                \node at (6,16) {\scriptsize $R$};
                \fill[black] (10,25) circle [radius=15pt];
                \node at (10,25) [above]{\scriptsize $p_1'$};
                \node at (10,-5) [below]{\small $B_{1}'$};
            }
        }
        \mathop{\leadsto}
        \mbox{
            \tikz[baseline=-.6ex, scale=.1, yshift=-10cm]{
                \begin{scope}
                    \clip (10,0) to[out=north west, in=south] (0,10) to[out=north, in=south west]  (5,20)-- (10,25) -- (34,25) -- (34,0);
                    \draw[webline, black] (10,13) -- (-5,13);
                    \draw[webline, black] (10,7) -- (-5,7);
                    \draw[webline, black] (10,13) -- (28,13);
                    \draw[webline, black] (10,7) -- (28,7);
                    \draw[webline, rounded corners] (-5,20) -- (27,20) -- (27,10);
                \end{scope}
                \fill[red!20!white, xshift=17cm] (10,10) circle [radius=5]; 
                \draw[webline, xshift=17cm] (10,10) circle [radius=5]; 
                \draw[mygreen] (10,0) to[out=north west, in=south] (0,10) to[out=north, in=south west]  (5,20);
                \draw[blue, very thick] (10,0) to[out=north east, in=south] (20,10) to[out=north, in=south east] (15,20) -- (10,25) -- (5,20);
                \fill[black] (10,0) circle [radius=20pt];
                \node at (10,10) [rotate=90, xscale=.5]{$\cdots$};
                \node at (10,0) [below]{\scriptsize $p_0$};
                \node at (5,20) [left]{\scriptsize $p_1$};
                \node at (5,2) [left]{\scriptsize $\alpha$};
                \node at (5,21) [above]{\scriptsize $\gamma$};
                \node at (27,10) {\scriptsize $x$};
                \fill[black] (10,25) circle [radius=15pt];
                \node at (10,25) [above]{\scriptsize $p_1'$};
                \node at (10,-5) [below]{\small $B_{1}'$};
            }
        }\\
        &\leadsto
        \mbox{
            \tikz[baseline=-.6ex, scale=.1, yshift=-10cm]{
                \clip (-5,-10) rectangle (25,30);
                \draw[webline, rounded corners] (10,25) -- (10,16) -- (25,16);
                \draw[webline] (-5,20) -- (25,20);
                \draw[webline, black] (10,13) -- (-5,13);
                \draw[webline, black] (10,7) -- (-5,7);
                \draw[webline, black] (10,13) -- (25,13);
                \draw[webline, black] (10,7) -- (25,7);
                \fill[red!20!white, xshift=17cm] (10,10) circle [radius=5]; 
                \draw[webline, xshift=17cm] (10,10) circle [radius=5]; 
                \draw[mygreen] (10,0) to[out=north west, in=south] (0,10) to[out=north, in=south west] (10,20) -- (15,25);
                \draw[blue, very thick] (10,0) to[out=north east, in=south] (20,10) to[out=north, in=south east] (10,20) -- (5,25);
                \bline{-5,0}{25,0}{2}
                \fill[black] (10,0) circle [radius=20pt];
                \node at (10,10) [rotate=90, xscale=.5]{$\cdots$};
                \node at (10,0) [below]{\scriptsize $p_0$};
                \node at (12,23) {\scriptsize $p_1$};
                \node at (15,25) [right]{\scriptsize $\alpha$};
                \node at (5,25) [left]{\scriptsize $\gamma$};
                \node at (24,10) {\scriptsize $x$};
                \node at (10,20) {$\crossroad$};
                \fill[black] (14,17) circle [radius=15pt];
                \node at (14,16) [above right]{\scriptsize $p_1'$};
                \node at (10,-5) [below]{\small $B_{1}$};
            }
        }
        \leadsto
        \mbox{
            \tikz[baseline=-.6ex, scale=.1, yshift=-10cm]{
                \clip (-5,-10) rectangle (25,30);
                \draw[webline, rounded corners] (10,25) -- ($(10,20)+(2,2)$) -- ($(10,20)+(6,-3)$) -- (25,16);
                \draw[webline, rounded corners] (-5,20) -- (5,20) -- ($(10,20)+(2,2)$) -- (25,20);
                \draw[webline, black] (10,13) -- (-5,13);
                \draw[webline, black] (10,7) -- (-5,7);
                \draw[webline, black] (10,13) -- (25,13);
                \draw[webline, black] (10,7) -- (25,7);
                \fill[red!20!white, xshift=17cm] (10,10) circle [radius=5]; 
                \draw[webline, xshift=17cm] (10,10) circle [radius=5]; 
                \draw[mygreen] (10,0) to[out=north west, in=south] (0,10) to[out=north, in=south west] (10,20) -- (15,25);
                \draw[blue, very thick] (10,0) to[out=north east, in=south] (20,10) to[out=north, in=south east] (10,20) -- (5,25);
                \bline{-5,0}{25,0}{2}
                \fill[black] (10,0) circle [radius=20pt];
                \node at (10,10) [rotate=90, xscale=.5]{$\cdots$};
                \node at (10,0) [below]{\scriptsize $p_0$};
                \node at (10,20) [below]{\scriptsize $p_1$};
                \node at (15,25) [right]{\scriptsize $\alpha$};
                \node at (5,25) [left]{\scriptsize $\gamma$};
                \node at (24,10) {\scriptsize $x$};
                \node at ($(10,20)+(2,2)$) {$\crossroad$};
                \node at (10,-5) [below]{\small $B_{1}$};
            }
        }
    \end{align*}
    \caption{The $\fsp_4$-diagram $x$ in $B_{1}'$ is obtained by cut it out along the biangle $B_{1}$. Black edges are type~$1$ or type~$2$ edges. The intersection reduction moves and $H$-moves along the right edge of $B_{1}'$ sweep $x$ out of $B_{1}'$.}
    \label{fig:minimal-proof}
\end{figure}

\begin{ex}\label{ex:minimal-argument}
    We describe an example of the minimal position argument below. The green arc (resp.~blue thick arc) corresponds to $\alpha$ (resp.~$\gamma$). Deformations ``lad.'' and ``red'' mean the ladder-resolution and the intersection reduction moves, respectively. The deformation ``iso.'' in the second line is sliding the trivial $\fsp_4$-diagram out of a small bigangle. We remark that all deformations are performed along $\gamma$.
    \begin{align*}
        &\mbox{
            \tikz[baseline=-.6ex, scale=.1, yshift=-10cm]{
                \foreach \i in {0,1,...,13}{
                    \coordinate (C\i) at (0,30*\i/13);
                    \coordinate (L\i) at (-5,30*\i/13);
                    \coordinate (LL\i) at (-10,30*\i/13);
                    \coordinate (LLL\i) at (-15,30*\i/13);
                    \coordinate (R\i) at (5,30*\i/13);
                    \coordinate (RR\i) at (10,30*\i/13);
                    \coordinate (RRR\i) at (15,30*\i/13);
                }
                \begin{scope}
                \clip (-15,0) rectangle (15,30);
                    \draw[webline] (C7) to[out=north east, in=east] (C10) to[out=west, in=north west] (C7);
                    \draw[webline] (R6) to[out=north east, in=east] (C11) to[out=west, in=north west] (L6);
                    \draw[webline] (C7) -- (R6);
                    \draw[webline] (C7) -- (L6);
                    \draw[webline] (R6) -- (C4);
                    \draw[webline] (L6) -- (C4);
                    \draw[webline] (L6) -- (LL4);
                    \draw[webline] (C4) -- (L2);
                    \draw[webline] (LL4) -- (L2);
                    \draw[webline] (R6) to[out=south east, in=west] (RRR5);
                    \draw[webline] (LL4) to[out=north west, in=east] (LLL5);
                    \draw[webline] (LL4) to[out=south west, in=east] (LLL3);
                    \draw[webline] (C4) to[out=south east, in=west] (RRR3);
                    \draw[webline] (L2) to[out=south west, in=east] (LLL1);
                    \draw[webline] (L2) to[out=south east, in=west] (RRR1);
                    \draw[mygreen] (C0) -- (C9);
                    \draw[blue, very thick, rounded corners] (C0) -- ($(R1)+(7,0)$) -- ($(R12)+(7,0)$) -- ($(L12)+(-3,0)$) -- ($(L6)+(-3,0)$) -- ($(R6)+(3,0)$) -- ($(R9)+(3,0)$) -- (C9);
                \end{scope}
                \bline{-15,0}{15,0}{2}
                \draw[dashed] (-15,0) rectangle (15,30);
                \fill[black] (C0) circle [radius=20pt];
                \draw[black, fill=white] (C9) circle [radius=20pt];
                \node at (C7) {$\crossroad$};
                \node at (L6) {$\crossroad$};
                \node at (R6) {$\crossroad$};
                \node at (C4) {$\crossroad$};
                \node at (LL4) {$\crossroad$};
                \node at (L2) {$\crossroad$};
            }
        }
        \mathop{\leadsto}^{\text{lad.}}
        \mbox{
            \tikz[baseline=-.6ex, scale=.1, yshift=-10cm]{
                \foreach \i in {0,1,...,13}{
                    \coordinate (C\i) at (0,30*\i/13);
                    \coordinate (L\i) at (-5,30*\i/13);
                    \coordinate (LL\i) at (-10,30*\i/13);
                    \coordinate (LLL\i) at (-15,30*\i/13);
                    \coordinate (R\i) at (5,30*\i/13);
                    \coordinate (RR\i) at (10,30*\i/13);
                    \coordinate (RRR\i) at (15,30*\i/13);
                }
                \begin{scope}
                \clip (-15,0) rectangle (15,30);
                    \draw[webline] ($(C7)+(1,0)$) to[out=north east, in=east] (C10) to[out=west, in=north west] ($(C7)+(-1,0)$);
                    \draw[webline] ($(R6)+(0,1)$) to[out=north east, in=east] (C11) to[out=west, in=north west] ($(L6)+(0,1)$);
                    \draw[webline] ($(C7)+(1,0)$) -- ($(R6)+(0,1)$);
                    \draw[webline] ($(C7)+(-1,0)$) -- ($(L6)+(0,1)$);
                    \draw[webline] ($(R6)+(0,-1)$) -- ($(C4)+(1,0)$);
                    \draw[webline] ($(L6)+(0,-1)$) -- ($(C4)+(-1,0)$);
                    \draw[webline] ($(L6)+(0,-1)$) -- (LL4);
                    \draw[webline] ($(C4)+(-1,0)$) -- (L2);
                    \draw[webline] (LL4) -- (L2);
                    \draw[wline] ($(C7)+(-1,0)$) -- ($(C7)+(1,0)$);
                    \draw[wline] ($(C4)+(-1,0)$) -- ($(C4)+(1,0)$);
                    \draw[wline] ($(L6)+(0,-1)$) -- ($(L6)+(0,1)$);
                    \draw[wline] ($(R6)+(0,-1)$) -- ($(R6)+(0,1)$);
                    \draw[webline] ($(R6)+(0,-1)$) to[out=south east, in=west] (RRR5);
                    \draw[webline] (LL4) to[out=north west, in=east] (LLL5);
                    \draw[webline] (LL4) to[out=south west, in=east] (LLL3);
                    \draw[webline] ($(C4)+(1,0)$) to[out=south east, in=west] (RRR3);
                    \draw[webline] (L2) to[out=south west, in=east] (LLL1);
                    \draw[webline] (L2) to[out=south east, in=west] (RRR1);
                    \draw[mygreen] (C0) -- (C9);
                    \draw[blue, very thick, rounded corners] (C0) -- ($(R1)+(7,0)$) -- ($(R12)+(7,0)$) -- ($(L12)+(-3,0)$) -- ($(L6)+(-3,0)$) -- ($(R6)+(3,0)$) -- ($(R9)+(3,0)$) -- (C9);
                \end{scope}
                \bline{-15,0}{15,0}{2}
                \draw[dashed] (-15,0) rectangle (15,30);
                \fill[black] (C0) circle [radius=20pt];
                \draw[black, fill=white] (C9) circle [radius=20pt];
                \node at (LL4) {$\crossroad$};
                \node at (L2) {$\crossroad$};
            }
        }
        \mathop{\leadsto}^{\text{red.}}
        \mbox{
            \tikz[baseline=-.6ex, scale=.1, yshift=-10cm]{
                \foreach \i in {0,1,...,13}{
                    \coordinate (C\i) at (0,30*\i/13);
                    \coordinate (L\i) at (-5,30*\i/13);
                    \coordinate (LL\i) at (-10,30*\i/13);
                    \coordinate (LLL\i) at (-15,30*\i/13);
                    \coordinate (R\i) at (5,30*\i/13);
                    \coordinate (RR\i) at (10,30*\i/13);
                    \coordinate (RRR\i) at (15,30*\i/13);
                }
                \begin{scope}
                \clip (-15,0) rectangle (15,30);
                    \draw[webline] ($(C7)+(2,7)$) to[out=north, in=east] (C10) to[out=west, in=north west] ($(C7)+(-1,0)$);
                    \draw[webline] ($(R6)+(0,10)$) to[out=north, in=east] (C11) to[out=west, in=north west] ($(L6)+(0,1)$);
                    \draw[webline] ($(C7)+(2,7)$) -- ($(R6)+(0,10)$);
                    \draw[webline] ($(C7)+(-1,0)$) -- ($(L6)+(0,1)$);
                    \draw[webline] ($(R6)+(0,-1)$) -- ($(C4)+(1,0)$);
                    \draw[webline] ($(L6)+(0,-1)$) -- ($(C4)+(-1,0)$);
                    \draw[webline] ($(L6)+(0,-1)$) -- (LL4);
                    \draw[webline] ($(C4)+(-1,0)$) -- (L2);
                    \draw[webline] (LL4) -- (L2);
                    \draw[wline, rounded corners] ($(C7)+(-1,0)$) -- ($(C7)+(2,0)$) -- ($(C7)+(2,7)$);
                    \draw[wline] ($(C4)+(-1,0)$) -- ($(C4)+(1,0)$);
                    \draw[wline] ($(L6)+(0,-1)$) -- ($(L6)+(0,1)$);
                    \draw[wline] ($(R6)+(0,-1)$) -- ($(R6)+(0,10)$);
                    \draw[webline] ($(R6)+(0,-1)$) to[out=south east, in=west] (RRR5);
                    \draw[webline] (LL4) to[out=north west, in=east] (LLL5);
                    \draw[webline] (LL4) to[out=south west, in=east] (LLL3);
                    \draw[webline] ($(C4)+(1,0)$) to[out=south east, in=west] (RRR3);
                    \draw[webline] (L2) to[out=south west, in=east] (LLL1);
                    \draw[webline] (L2) to[out=south east, in=west] (RRR1);
                    \draw[mygreen] (C0) -- (C9);
                    \draw[blue, very thick, rounded corners] (C0) -- ($(R1)+(7,0)$) -- ($(R12)+(7,0)$) -- ($(L12)+(-3,0)$) -- ($(L6)+(-3,0)$) -- ($(R6)+(3,0)$) -- ($(R9)+(3,0)$) -- (C9);
                \end{scope}
                \bline{-15,0}{15,0}{2}
                \draw[dashed] (-15,0) rectangle (15,30);
                \fill[black] (C0) circle [radius=20pt];
                \draw[black, fill=white] (C9) circle [radius=20pt];
                \node at (LL4) {$\crossroad$};
                \node at (L2) {$\crossroad$};
            }
        }
        \mathop{\leadsto}^{\text{red.}}
        \mbox{
            \tikz[baseline=-.6ex, scale=.1, yshift=-10cm]{
                \foreach \i in {0,1,...,13}{
                    \coordinate (C\i) at (0,30*\i/13);
                    \coordinate (L\i) at (-5,30*\i/13);
                    \coordinate (LL\i) at (-10,30*\i/13);
                    \coordinate (LLL\i) at (-15,30*\i/13);
                    \coordinate (R\i) at (5,30*\i/13);
                    \coordinate (RR\i) at (10,30*\i/13);
                    \coordinate (RRR\i) at (15,30*\i/13);
                }
                \begin{scope}
                \clip (-15,0) rectangle (15,30);
                    \draw[webline] ($(C7)+(2,7)$) to[out=north, in=east] (C10) to[out=west, in=north west] ($(C7)+(-1,0)$);
                    \draw[webline] (R11) -- (C11) to[out=west, in=north west] ($(L6)+(0,1)$);
                    \draw[webline] ($(C7)+(2,7)$) -- (R11);
                    \draw[webline] ($(C7)+(-1,0)$) -- ($(L6)+(0,1)$);
                    \draw[webline] ($(R5)+(6,0)$) -- ($(RR4)+(-1,0)$);
                    \draw[webline] ($(L6)+(0,-1)$) -- ($(C4)+(-1,0)$);
                    \draw[webline] ($(L6)+(0,-1)$) -- (LL4);
                    \draw[webline] ($(C4)+(-1,0)$) -- (L2);
                    \draw[webline] (LL4) -- (L2);
                    \draw[wline, rounded corners] ($(C7)+(-1,0)$) -- ($(C7)+(2,0)$) -- ($(C7)+(2,7)$);
                    \draw[wline] ($(C4)+(-1,0)$) -- ($(RR4)+(-1,0)$);
                    \draw[wline] ($(L6)+(0,-1)$) -- ($(L6)+(0,1)$);
                    \draw[wline, rounded corners] ($(R5)+(6,0)$) -- ($(R11)+(6,0)$) -- (R11);
                    \draw[webline, rounded corners] ($(R5)+(6,0)$) -- (RRR4);
                    \draw[webline] (LL4) to[out=north west, in=east] (LLL5);
                    \draw[webline] (LL4) to[out=south west, in=east] (LLL3);
                    \draw[webline] ($(RR4)+(-1,0)$) to[out=south east, in=west] (RRR3);
                    \draw[webline] (L2) to[out=south west, in=east] (LLL1);
                    \draw[webline] (L2) to[out=south east, in=west] (RRR1);
                    \draw[mygreen] (C0) -- (C9);
                    \draw[blue, very thick, rounded corners] (C0) -- ($(RRR2)+(-1,0)$) -- ($(RRR12)+(-1,0)$) -- ($(L12)+(-3,0)$) -- ($(L6)+(-3,0)$) -- ($(R6)+(3,0)$) -- ($(R9)+(3,0)$) -- (C9);
                \end{scope}
                \bline{-15,0}{15,0}{2}
                \draw[dashed] (-15,0) rectangle (15,30);
                \fill[black] (C0) circle [radius=20pt];
                \draw[black, fill=white] (C9) circle [radius=20pt];
                \node at (LL4) {$\crossroad$};
                \node at (L2) {$\crossroad$};
            }
        }\\
        &\mathop{\rightarrow}^{\text{iso.}}
        \mbox{
            \tikz[baseline=-.6ex, scale=.1, yshift=-10cm]{
                \foreach \i in {0,1,...,13}{
                    \coordinate (C\i) at (0,30*\i/13);
                    \coordinate (L\i) at (-5,30*\i/13);
                    \coordinate (LL\i) at (-10,30*\i/13);
                    \coordinate (LLL\i) at (-15,30*\i/13);
                    \coordinate (R\i) at (5,30*\i/13);
                    \coordinate (RR\i) at (10,30*\i/13);
                    \coordinate (RRR\i) at (15,30*\i/13);
                }
                \begin{scope}
                \clip (-15,0) rectangle (15,30);
                    \draw[webline] ($(C7)+(2,7)$) to[out=north, in=east] (C10) to[out=west, in=north west] ($(C7)+(-1,0)$);
                    \draw[webline] (R11) -- (C11) to[out=west, in=north west] ($(L6)+(0,1)$);
                    \draw[webline] ($(C7)+(2,7)$) -- (R11);
                    \draw[webline] ($(C7)+(-1,0)$) -- ($(L6)+(0,1)$);
                    \draw[webline] ($(R5)+(6,0)$) -- ($(RR4)+(-1,0)$);
                    \draw[webline] ($(L6)+(0,-1)$) -- ($(C4)+(-1,0)$);
                    \draw[webline] ($(L6)+(0,-1)$) -- (LL4);
                    \draw[webline] ($(C4)+(-1,0)$) -- (L2);
                    \draw[webline] (LL4) -- (L2);
                    \draw[wline, rounded corners] ($(C7)+(-1,0)$) -- ($(C5)+(-1,0)$) -- ($(RR5)+(-1,0)$) -- ($(RR10)+(-1,0)$) -- ($(C7)+(2,7)$);
                    \draw[wline] ($(C4)+(-1,0)$) -- ($(RR4)+(-1,0)$);
                    \draw[wline] ($(L6)+(0,-1)$) -- ($(L6)+(0,1)$);
                    \draw[wline, rounded corners] ($(R5)+(6,0)$) -- ($(R11)+(6,0)$) -- (R11);
                    \draw[webline, rounded corners] ($(R5)+(6,0)$) -- (RRR4);
                    \draw[webline] (LL4) to[out=north west, in=east] (LLL5);
                    \draw[webline] (LL4) to[out=south west, in=east] (LLL3);
                    \draw[webline] ($(RR4)+(-1,0)$) to[out=south east, in=west] (RRR3);
                    \draw[webline] (L2) to[out=south west, in=east] (LLL1);
                    \draw[webline] (L2) to[out=south east, in=west] (RRR1);
                    \draw[mygreen] (C0) -- (C9);
                    \draw[blue, very thick, rounded corners] (C0) -- ($(RRR2)+(-1,0)$) -- ($(RRR12)+(-1,0)$) -- ($(L12)+(-3,0)$) -- ($(L6)+(-3,0)$) -- ($(R6)+(3,0)$) -- ($(R9)+(3,0)$) -- (C9);
                \end{scope}
                \bline{-15,0}{15,0}{2}
                \draw[dashed] (-15,0) rectangle (15,30);
                \fill[black] (C0) circle [radius=20pt];
                \draw[black, fill=white] (C9) circle [radius=20pt];
                \node at (LL4) {$\crossroad$};
                \node at (L2) {$\crossroad$};
            }
        }
        \mathop{\leadsto}^{\text{lad.}}
        \mbox{
            \tikz[baseline=-.6ex, scale=.1, yshift=-10cm]{
                \foreach \i in {0,1,...,13}{
                    \coordinate (C\i) at (0,30*\i/13);
                    \coordinate (L\i) at (-5,30*\i/13);
                    \coordinate (LL\i) at (-10,30*\i/13);
                    \coordinate (LLL\i) at (-15,30*\i/13);
                    \coordinate (R\i) at (5,30*\i/13);
                    \coordinate (RR\i) at (10,30*\i/13);
                    \coordinate (RRR\i) at (15,30*\i/13);
                }
                \begin{scope}
                \clip (-15,0) rectangle (15,30);
                    \draw[webline, rounded corners] ($(C7)+(-1,0)$) -- ($(C10)+(-1,0)$) -- ($(RR10)+(-1,0)$) -- ($(RR5)+(-1,0)$);
                    \draw[webline, rounded corners] (L7) -- (L11) -- ($(RR11)+(1,0)$) -- ($(RR5)+(1,0)$);
                    \draw[webline] ($(RR5)+(-1,0)$) -- ($(RR5)+(1,0)$);
                    \draw[webline] ($(C7)+(-1,0)$) -- (L7);
                    \draw[webline] ($(R5)+(6,0)$) -- ($(RR4)+(-1,0)$);
                    \draw[webline] (L5) -- ($(C4)+(-1,0)$);
                    \draw[webline] (L5) -- (LL4);
                    \draw[webline] ($(C4)+(-1,0)$) -- (L2);
                    \draw[webline] (LL4) -- (L2);
                    \draw[wline, rounded corners] ($(C7)+(-1,0)$) -- ($(C5)+(-1,0)$) -- ($(RR5)+(-1,0)$);
                    \draw[wline] ($(C4)+(-1,0)$) -- ($(RR4)+(-1,0)$);
                    \draw[wline] (L5) -- (L7);
                    %
                    \draw[webline, rounded corners] ($(R5)+(6,0)$) -- (RRR4);
                    \draw[webline] (LL4) to[out=north west, in=east] (LLL5);
                    \draw[webline] (LL4) to[out=south west, in=east] (LLL3);
                    \draw[webline] ($(RR4)+(-1,0)$) to[out=south east, in=west] (RRR3);
                    \draw[webline] (L2) to[out=south west, in=east] (LLL1);
                    \draw[webline] (L2) to[out=south east, in=west] (RRR1);
                    \draw[mygreen, rounded corners] (C0) -- ($(C6)+(1,0)$);
                    \draw[mygreen, rounded corners] (C6) -- (C9);
                    \draw[blue, very thick, rounded corners] ($(C6)+(1,0)$) -- ($(R6)+(3,0)$) -- ($(R9)+(3,0)$) -- (C9);
                    \draw[blue, very thick, rounded corners] (C0) -- ($(RRR2)+(-1,0)$) -- ($(RRR12)+(-1,0)$) -- ($(L12)+(-3,0)$) -- ($(L6)+(-3,0)$) -- (C6);
                \end{scope}
                \bline{-15,0}{15,0}{2}
                \draw[dashed] (-15,0) rectangle (15,30);
                \fill[black] (C0) circle [radius=20pt];
                \draw[black, fill=white] (C9) circle [radius=20pt];
                \node at ($(RR5)+(1,0)$) {$\crossroad$};
                \node at (LL4) {$\crossroad$};
                \node at (L2) {$\crossroad$};
            }
        }
        \mathop{\leadsto}^{\text{red.}}
        \mbox{
            \tikz[baseline=-.6ex, scale=.1, yshift=-10cm]{
                \foreach \i in {0,1,...,13}{
                    \coordinate (C\i) at (0,30*\i/13);
                    \coordinate (L\i) at (-5,30*\i/13);
                    \coordinate (LL\i) at (-10,30*\i/13);
                    \coordinate (LLL\i) at (-15,30*\i/13);
                    \coordinate (R\i) at (5,30*\i/13);
                    \coordinate (RR\i) at (10,30*\i/13);
                    \coordinate (RRR\i) at (15,30*\i/13);
                }
                \begin{scope}
                \clip (-15,0) rectangle (15,30);
                    \draw[webline, rounded corners] ($(C5)+(-1,0)$) -- ($(C10)+(-1,0)$) -- ($(RR10)+(-1,0)$) -- ($(RR5)+(-1,0)$);
                    \draw[webline, rounded corners] (L5) -- (L11) -- ($(RR11)+(1,0)$) -- ($(RR5)+(1,0)$);
                    \draw[webline] ($(RR5)+(-1,0)$) -- ($(RR5)+(1,0)$);
                    \draw[webline] ($(C5)+(-1,0)$) -- (L5);
                    \draw[webline] ($(R5)+(6,0)$) -- ($(RR4)+(-1,0)$);
                    \draw[webline] (L5) -- ($(C4)+(-1,0)$);
                    \draw[webline] (L5) -- (LL4);
                    \draw[webline] ($(C4)+(-1,0)$) -- (L2);
                    \draw[webline] (LL4) -- (L2);
                    \draw[wline] ($(C5)+(-1,0)$) -- ($(RR5)+(-1,0)$);
                    \draw[wline] ($(C4)+(-1,0)$) -- ($(RR4)+(-1,0)$);
                    \draw[webline, rounded corners] ($(R5)+(6,0)$) -- (RRR4);
                    \draw[webline] (LL4) to[out=north west, in=east] (LLL5);
                    \draw[webline] (LL4) to[out=south west, in=east] (LLL3);
                    \draw[webline] ($(RR4)+(-1,0)$) to[out=south east, in=west] (RRR3);
                    \draw[webline] (L2) to[out=south west, in=east] (LLL1);
                    \draw[webline] (L2) to[out=south east, in=west] (RRR1);
                    \draw[mygreen, rounded corners] (C0) -- ($(C6)+(1,0)$);
                    \draw[mygreen, rounded corners] (C6) -- (C9);
                    \draw[blue, very thick, rounded corners] ($(C6)+(1,0)$) -- ($(R6)+(3,0)$) -- ($(R9)+(3,0)$) -- (C9);
                    \draw[blue, very thick, rounded corners] (C0) -- ($(RRR2)+(-2,0)$) -- ($(RRR12)+(-2,0)$) -- ($(L12)+(-3,0)$) -- ($(L6)+(-3,0)$) -- (C6);
                \end{scope}
                \bline{-15,0}{15,0}{2}
                \draw[dashed] (-15,0) rectangle (15,30);
                \fill[black] (C0) circle [radius=20pt];
                \draw[black, fill=white] (C9) circle [radius=20pt];
                \node at (L5) {$\crossroad$};
                \node at ($(RR5)+(1,0)$) {$\crossroad$};
                \node at (LL4) {$\crossroad$};
                \node at (L2) {$\crossroad$};
            }
        }
        \mathop{\leadsto}^{\text{red.}}
        \mbox{
            \tikz[baseline=-.6ex, scale=.1, yshift=-10cm]{
                \foreach \i in {0,1,...,13}{
                    \coordinate (C\i) at (0,30*\i/13);
                    \coordinate (L\i) at (-5,30*\i/13);
                    \coordinate (LL\i) at (-10,30*\i/13);
                    \coordinate (LLL\i) at (-15,30*\i/13);
                    \coordinate (R\i) at (5,30*\i/13);
                    \coordinate (RR\i) at (10,30*\i/13);
                    \coordinate (RRR\i) at (15,30*\i/13);
                }
                \begin{scope}
                \clip (-15,0) rectangle (15,30);
                    \draw[webline, rounded corners] ($(C5)+(-1,0)$) -- ($(C10)+(-1,0)$) -- ($(RR10)+(-1,0)$) -- ($(RR5)+(-1,0)$);
                    \draw[webline, rounded corners] (L5) -- (LL5) -- ($(LL13)+(0,-1)$) -- ($(RRR13)+(-1,-1)$) -- ($(RRR5)+(-1,0)$);
                    \draw[webline] ($(RR5)+(-1,0)$) -- ($(RR5)+(1,0)$);
                    \draw[webline] ($(C5)+(-1,0)$) -- (L5);
                    \draw[webline] ($(R5)+(6,0)$) -- ($(RR4)+(-1,0)$);
                    \draw[webline] (L5) -- ($(C4)+(-1,0)$);
                    \draw[webline] (L5) -- (LL4);
                    \draw[webline] ($(C4)+(-1,0)$) -- (L2);
                    \draw[webline] (LL4) -- (L2);
                    \draw[wline] ($(C5)+(-1,0)$) -- ($(RR5)+(-1,0)$);
                    \draw[wline] ($(C4)+(-1,0)$) -- ($(RR4)+(-1,0)$);
                    \draw[wline] ($(RR5)+(1,0)$) -- ($(RRR5)+(-1,0)$);
                    \draw[webline] ($(RRR5)+(-1,0)$) -- (RRR4);
                    \draw[webline] (LL4) to[out=north west, in=east] (LLL5);
                    \draw[webline] (LL4) to[out=south west, in=east] (LLL3);
                    \draw[webline] ($(RR4)+(-1,0)$) to[out=south east, in=west] (RRR3);
                    \draw[webline] (L2) to[out=south west, in=east] (LLL1);
                    \draw[webline] (L2) to[out=south east, in=west] (RRR1);
                    \draw[mygreen, rounded corners] (C0) -- ($(C6)+(1,0)$);
                    \draw[mygreen, rounded corners] (C6) -- (C9);
                    \draw[blue, very thick, rounded corners] ($(C6)+(1,0)$) -- ($(R6)+(3,0)$) -- ($(R9)+(3,0)$) -- (C9);
                    \draw[blue, very thick, rounded corners] (C0) -- ($(RRR2)+(-3,0)$) -- ($(RRR12)+(-3,0)$) -- ($(L12)+(-3,0)$) -- ($(L6)+(-3,0)$) -- (C6);
                \end{scope}
                \bline{-15,0}{15,0}{2}
                \draw[dashed] (-15,0) rectangle (15,30);
                \fill[black] (C0) circle [radius=20pt];
                \draw[black, fill=white] (C9) circle [radius=20pt];
                \node at (L5) {$\crossroad$};
                \node at (LL4) {$\crossroad$};
                \node at (L2) {$\crossroad$};
            }
        }\\
        &\mathop{\leadsto}^{\text{$H$-move}}
        \mbox{
            \tikz[baseline=-.6ex, scale=.1, yshift=-10cm]{
                \foreach \i in {0,1,...,13}{
                    \coordinate (C\i) at (0,30*\i/13);
                    \coordinate (L\i) at (-5,30*\i/13);
                    \coordinate (LL\i) at (-10,30*\i/13);
                    \coordinate (LLL\i) at (-15,30*\i/13);
                    \coordinate (R\i) at (5,30*\i/13);
                    \coordinate (RR\i) at (10,30*\i/13);
                    \coordinate (RRR\i) at (15,30*\i/13);
                }
                \begin{scope}
                \clip (-15,0) rectangle (15,30);
                    \draw[webline, rounded corners] ($(C5)+(-2,2)$) -- ($(C10)+(-2,0)$) -- ($(RR10)+(-1,0)$) -- ($(RR5)+(-1,2)$);
                    \draw[webline, rounded corners] (L5) -- (LL5) -- ($(LL13)+(0,-1)$) -- ($(RRR13)+(-2,-1)$) -- ($(RRR5)+(-2,0)$);
                    \draw[webline] ($(RR5)+(-1,2)$) -- ($(RRR5)+(-2,0)$);
                    \draw[webline] ($(C5)+(-1,2)$) -- (L5);
                    \draw[webline] ($(RRR5)+(-2,0)$) -- ($(RRR4)+(-2,0)$);
                    \draw[webline] (L5) -- ($(C4)+(-1,0)$);
                    \draw[webline] (L5) -- (LL4);
                    \draw[webline] ($(C4)+(-1,0)$) -- (L2);
                    \draw[webline] (LL4) -- (L2);
                    \draw[wline] ($(C5)+(-2,2)$) -- ($(RR5)+(-1,2)$);
                    \draw[wline] ($(C4)+(-1,0)$) -- ($(RRR4)+(-2,0)$);
                    \draw[webline] ($(RRR5)+(-2,0)$) -- (RRR4);
                    \draw[webline] (LL4) to[out=north west, in=east] (LLL5);
                    \draw[webline] (LL4) to[out=south west, in=east] (LLL3);
                    \draw[webline] ($(RRR4)+(-2,0)$) -- (RRR3);
                    \draw[webline] (L2) to[out=south west, in=east] (LLL1);
                    \draw[webline] (L2) to[out=south east, in=west] (RRR1);
                    \draw[mygreen, rounded corners] (C0) -- ($(C6)+(1,1)$);
                    \draw[mygreen, rounded corners] ($(C6)+(0,1)$) -- (C9);
                    \draw[blue, very thick, rounded corners] ($(C6)+(1,1)$) -- ($(R6)+(3,1)$) -- ($(R9)+(3,0)$) -- (C9);
                    \draw[blue, very thick, rounded corners] (C0) -- ($(RRR2)+(-5,0)$) -- ($(RRR12)+(-5,-2)$) -- ($(L12)+(-3,-2)$) -- ($(L6)+(-3,1)$) -- ($(C6)+(0,1)$);
                \end{scope}
                \bline{-15,0}{15,0}{2}
                \draw[dashed] (-15,0) rectangle (15,30);
                \fill[black] (C0) circle [radius=20pt];
                \draw[black, fill=white] (C9) circle [radius=20pt];
                \node at (L5) {$\crossroad$};
                \node at ($(RRR5)+(-2,0)$) {$\crossroad$};
                \node at (LL4) {$\crossroad$};
                \node at (L2) {$\crossroad$};
            }
        }
        \mathop{\leadsto}^{\text{red.}}
        \mbox{
            \tikz[baseline=-.6ex, scale=.1, yshift=-10cm]{
                \foreach \i in {0,1,...,13}{
                    \coordinate (C\i) at (0,30*\i/13);
                    \coordinate (L\i) at (-5,30*\i/13);
                    \coordinate (LL\i) at (-10,30*\i/13);
                    \coordinate (LLL\i) at (-15,30*\i/13);
                    \coordinate (R\i) at (5,30*\i/13);
                    \coordinate (RR\i) at (10,30*\i/13);
                    \coordinate (RRR\i) at (15,30*\i/13);
                }
                \begin{scope}
                \clip (-15,0) rectangle (15,30);
                    \draw[webline, rounded corners] ($(C5)+(-2,2)$) -- ($(LL5)+(1,2)$) -- ($(LL12)+(1,0)$) -- ($(RR12)+(1,0)$) -- ($(RR5)+(1,2)$);
                    \draw[webline, rounded corners] (L5) -- ($(LL5)+(-1,0)$) -- ($(LL13)+(-1,-1)$) -- ($(RRR13)+(-2,-1)$) -- ($(RRR5)+(-2,0)$);
                    \draw[webline] ($(RR5)+(1,2)$) -- ($(RRR5)+(-2,0)$);
                    \draw[webline] ($(C5)+(-2,2)$) -- (L5);
                    \draw[webline] ($(RRR5)+(-2,0)$) -- ($(RRR4)+(-2,0)$);
                    \draw[webline] (L5) -- ($(C4)+(-1,0)$);
                    \draw[webline] (L5) -- (LL4);
                    \draw[webline] ($(C4)+(-1,0)$) -- (L2);
                    \draw[webline] (LL4) -- (L2);
                    \draw[wline] ($(C5)+(-2,2)$) -- ($(RR5)+(1,2)$);
                    \draw[wline] ($(C4)+(-1,0)$) -- ($(RRR4)+(-2,0)$);
                    \draw[webline] ($(RRR5)+(-2,0)$) -- (RRR4);
                    \draw[webline] (LL4) to[out=north west, in=east] (LLL5);
                    \draw[webline] (LL4) to[out=south west, in=east] (LLL3);
                    \draw[webline] ($(RRR4)+(-2,0)$) -- (RRR3);
                    \draw[webline] (L2) to[out=south west, in=east] (LLL1);
                    \draw[webline] (L2) to[out=south east, in=west] (RRR1);
                    \draw[mygreen, rounded corners] (C0) -- ($(C6)+(1,1)$);
                    \draw[mygreen, rounded corners] ($(C6)+(0,1)$) -- (C9);
                    \draw[blue, very thick, rounded corners] ($(C6)+(1,1)$) -- ($(R6)+(3,1)$) -- ($(R9)+(3,0)$) -- (C9);
                    \draw[blue, very thick, rounded corners] (C0) -- ($(RRR2)+(-5,0)$) -- ($(RRR12)+(-5,-3)$) -- ($(L12)+(-3,-3)$) -- ($(L6)+(-3,1)$) -- ($(C6)+(0,1)$);
                \end{scope}
                \bline{-15,0}{15,0}{2}
                \draw[dashed] (-15,0) rectangle (15,30);
                \fill[black] (C0) circle [radius=20pt];
                \draw[black, fill=white] (C9) circle [radius=20pt];
                \node at (L5) {$\crossroad$};
                \node at ($(RRR5)+(-2,0)$) {$\crossroad$};
                \node at (LL4) {$\crossroad$};
                \node at (L2) {$\crossroad$};
            }
        }
    \end{align*}
\end{ex}

\begin{cor}
   Let $\gamma$ be any ideal arc from $p$ to $q$ ($p,q\in\bM$), $D$ and $D'$ ladder-equivalent $\fsp_4$-diagrams in $\Sigma$. If both $D$ and $D'$ are $\{\gamma\}$-minimal and reduced, then $D$ can be deformed into $D'$ by a sequence of $H$-moves along $\gamma$, ladder-equivalence relation relative to $\gamma$, arc and loop parallel-moves.
\end{cor}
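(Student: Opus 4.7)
The statement is essentially the previous proposition (\cref{prop:minimal-position}) refined to the case where both endpoints of the deformation are $\{\gamma\}$-minimal. My plan is to apply that proposition and then argue that the intersection reduction moves cannot appear in the resulting sequence.

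First I would invoke \cref{prop:minimal-position} with the given $D$ and $D'$ to obtain a finite sequence
\[
D=D_0\leadsto D_1\leadsto\cdots\leadsto D_n=D'
\]
where each step $D_{i-1}\leadsto D_i$ is one of: (i) an $H$-move along $\gamma$, (ii) an intersection reduction move along $\gamma$, (iii) a ladder equivalence $D_{i-1}\approx_\gamma D_i$ rel.~to $\{\gamma\}$, (iv) an arc parallel-move, or (v) a loop parallel-move.

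The key observation is a degree-monotonicity argument: the $S$-degree $\widetilde{\deg}_\gamma$ along this sequence is non-increasing. Indeed, moves of type (i) preserve $\widetilde{\deg}_\gamma$ by inspection of \cref{fig:H-move}; moves of type (iii) are performed outside $\gamma$ so they do not alter $D\cap\gamma$; and moves of types (iv) and (v) take place in collar neighborhoods of boundary intervals or in annular regions disjoint from $\gamma$ (up to isotopy rel.~$\gamma$, which we may arrange since the peripheral components can be pushed away from $\gamma$), and hence preserve $\widetilde{\deg}_\gamma$. Only the moves of type (ii) can change $\widetilde{\deg}_\gamma$, and by inspection of \cref{fig:red-move} they strictly decrease it.

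Since $D=D_0$ and $D'=D_n$ are both $\{\gamma\}$-minimal and ladder-equivalent, $\deg_\gamma(L_D)=\deg_\gamma(L_{D'})$, so $\widetilde{\deg}_\gamma(D_0)=\widetilde{\deg}_\gamma(D_n)$. Combined with the non-increasing monotonicity, this forces $\widetilde{\deg}_\gamma(D_i)$ to be constant, and therefore no step in the sequence can be an intersection reduction move of type (ii). Thus the sequence consists only of moves of types (i), (iii), (iv), (v), which is precisely the list of operations allowed in the statement. The only subtlety to handle carefully is item (iii): one must verify that any intermediate diagram $D_i$ is itself $\{\gamma\}$-transverse so that the ladder equivalence rel.~$\{\gamma\}$ makes sense, but this is built into the statement of \cref{prop:minimal-position}.
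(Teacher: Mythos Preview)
Your proposal is correct and follows essentially the same approach as the paper: invoke \cref{prop:minimal-position} and then eliminate the intersection reduction moves by the degree-monotonicity argument, since both endpoints are $\{\gamma\}$-minimal. The paper's proof is a one-liner stating precisely this, while you have simply spelled out the monotonicity check for each move type in more detail.
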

\begin{proof}
    \cref{prop:minimal-position} implies the assertion because intersection reduction moves along $\gamma$ reduce the $\{\gamma\}$-degree.
\end{proof}

\begin{prop}\label{prop:exist-S-minimal}
    Let $S$ be a finite collection of disjoint ideal arcs. Any $S$-transverse reduced $\fsp_4$-diagram $D$ can be deformed into $D'$ by a sequence of intersection reduction moves along $S$, $H$-moves along $S$, and ladder-equivalence relative to $S$ so that $D'$ minimizes $\{\gamma\}$-degree for each $\gamma\in S$ simultaneously.
\end{prop}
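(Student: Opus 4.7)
The plan is to proceed by induction on $|S|$. The base case $|S|=1$ is precisely \cref{prop:minimal-position}. For the inductive step, write $S = S' \sqcup \{\gamma\}$. Given an $S$-transverse reduced diagram $D$, I first apply the inductive hypothesis to $S'$ to obtain a diagram $D_0$, ladder-equivalent to $D$, that is $\{\gamma'\}$-minimal for every $\gamma' \in S'$ and was reached from $D$ by moves along $S'$. Then I apply \cref{prop:minimal-position} to the pair $(D_0,\gamma)$ to produce a sequence $D_0 \leadsto D_1 \leadsto \cdots \leadsto D_m = D'$ in which $D'$ is $\{\gamma\}$-minimal and each arrow is either an intersection-reduction move along $\gamma$, an $H$-move along $\gamma$, or a ladder-equivalence relative to $\gamma$ (possibly also arc/loop parallel moves, which are harmless). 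The real task is to show that this second sequence can be chosen so as to not disturb $\{\gamma'\}$-minimality for $\gamma' \in S'$, so that the resulting $D'$ is simultaneously $\{\gamma_i\}$-minimal for every $\gamma_i \in S$.

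To accomplish this, I will establish the following key lemma: if $D_0$ is $\{\gamma'\}$-minimal for every $\gamma' \in S'$ and $D_0$ is not $\{\gamma\}$-minimal, then some intersection-reduction move along $\gamma$ (or an $H$-move preparing one) can be realized in a small disk whose interior is disjoint from all arcs of $S'$. The geometric point is that an intersection-reduction move along $\gamma$ corresponds to an innermost elliptic bordered face or bordered $H$-face $F$ along $\gamma$ whose boundary consists of an arc of $\gamma$ together with arcs of $D_0$. If some $\gamma' \in S'$ intruded into $F$, then since $\gamma$ and $\gamma'$ are disjoint, $\gamma'$ would enter and leave $F$ only through its $D_0$-boundary; the resulting subregion cut out by $\gamma'$ and $D_0$ inside $F$ would itself be an elliptic or $H$-type face along $\gamma'$ from the taxonomy in \cref{fig:elliptic,fig:H-face}, contradicting $\{\gamma'\}$-minimality of $D_0$. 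Hence $F$, and therefore the associated move disk, may be taken disjoint from $S'$, so that the move preserves $\widetilde{\deg}_{\gamma'}(D_0)$ for each $\gamma' \in S'$.

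A parallel argument handles the ladder-resolutions used implicitly via \cref{lem:homotopy-move} and the $H$-moves along $\gamma$: in each case the move region can be isotoped into a tubular neighborhood of $\gamma$ disjoint from $S'$, using that a diagram isotopy does not change any $\widetilde{\deg}_{\gamma_i}$. Iterating the key lemma until no further reduction is possible produces the required $D'$, and by construction every move used is along some element of $S$, so the whole deformation $D \leadsto D_0 \leadsto D'$ consists of the allowed moves.

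The main obstacle is the case-by-case verification inside the key lemma. One must check, for each move in \cref{fig:red-move} and each bordered-$H$ configuration in \cref{fig:H-face}, that the \emph{innermost} face actually forces a sub-face of the forbidden type along any intruding $\gamma' \in S'$; this is essentially an extension of the angular-defect bookkeeping behind \cref{lem:biangle_blad,lem:homotopy-move}, but now applied relative to $S'$ rather than to a single arc, with the additional care that $\gamma'$ may enter and exit $F$ several times, requiring one to descend to a genuinely innermost $\gamma'$-sub-face before invoking $\{\gamma'\}$-minimality of $D_0$.
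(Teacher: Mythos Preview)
Your inductive strategy is sound, and your innermost-face argument does work for the intersection-reduction moves, but the treatment of $H$-moves contains a genuine gap. You assert that a bordered sub-face along $\gamma'$ ``would itself be an elliptic or $H$-type face along $\gamma'$ \dots\ contradicting $\{\gamma'\}$-minimality of $D_0$''. But by \cref{cor:elliptic-minimal}, minimality is equivalent to the absence of \emph{elliptic} bordered faces (up to $H$-moves); bordered $H$-faces are explicitly permitted. Concretely, take the first $H$-move in \cref{fig:H-move}: the face $F$ between the rung and $\gamma$ has a type-$1$ edge near $U$ (with type-$2$ going away at $U$) and a type-$2$ edge near $D$ (with type-$1$ going away at $D$). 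If $\gamma'$ runs parallel to $\gamma$ inside $F$, the sub-face along $\gamma'$ has \emph{exactly the same shape}---it is again a bordered $H$-face, not an elliptic one---so no contradiction arises and $\gamma'$ may legitimately sit inside $F$. Your ``diagram isotopy'' cannot then shrink $F$ past $\gamma'$ while preserving $\widetilde{\deg}_{\gamma'}$: moving the rung across $\gamma'$ is an $H$-move along $\gamma'$, not an isotopy relative to $S'$.

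The repair is easy but missing from your write-up: when $\gamma'$ lies inside the $H$-face along $\gamma$, first perform the corresponding $H$-move along $\gamma'$ (a permitted move along $S$ that preserves $\{\gamma'\}$-minimality); afterwards the $H$-face along $\gamma$ no longer contains $\gamma'$ and you may proceed. For the reduction moves you should also check explicitly that the innermost sub-face along $\gamma'$ is always \emph{elliptic} (true case-by-case against \cref{fig:red-move} and \cref{fig:elliptic}), not merely ``elliptic or $H$-type''. The paper's proof takes a different route: it does not use $\{\gamma'\}$-minimality to exclude $\gamma'$ from the move region, but instead lists all possible positions of $\gamma'$ inside each local move disk and, for each, exhibits a sequence of moves (possibly along $\gamma'$) realizing the move along $\gamma$ without increasing $\widetilde{\deg}_{\gamma'}$. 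This avoids the induction on $|S|$ at the cost of more explicit casework.
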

\begin{proof}
    We first remark that arc and loop parallel-moves do not affect the degree of $D$.
    The statement follows from the fact that any local move of $D$ along $\gamma\in S$, as described in \cref{prop:minimal-position}, does not increase the degree along $\gamma'$ for any $\gamma\neq\gamma'$. We will prove this fact.
    Let us consider the position of $\gamma'$ in the local disk $U$ corresponding to the local moves along $\gamma$.
    A replacement of $U$ with a smaller one makes $\gamma'\cap U$ simpler. For any $\gamma'\cap U$ parallel to $\gamma\cap U$, it is clear that the moves along $\gamma$ in $U$ do not increase the degree along $\gamma'$. Hence the configurations of $\gamma'$ in $U$ that should be considered are as follows:
    \begin{align*}
        &\mbox{
        \tikz[baseline=-.6ex, scale=.1]{
            \draw[dashed, fill=white] (0,0) circle [radius=6];
            \coordinate (U) at ($(150:6)!.3!(30:6)$);
            \coordinate (D) at ($(-150:6)!.3!(-30:6)$);
            \draw[webline] (150:6) -- (U);
            \draw[wline] (U) -- (30:6);
            \draw[webline] (-150:6) -- (D);
            \draw[wline] (D) -- (-30:6);
            \draw[webline] (U) -- (D);
            \draw[blue] (60:6) -- (-60:6);
            \draw[blue, rounded corners] (180:6) -- (0,0) -- (90:6);
            \node at (180:6) [left, blue]{\scriptsize $\gamma'$};
            \node at (60:6) [right, blue]{\scriptsize $\gamma$};
        }
        }, 
        \mbox{
        \tikz[baseline=-.6ex, scale=.1]{
            \draw[dashed, fill=white] (0,0) circle [radius=6];
            \draw[webline, rounded corners] (45:6) -- ($(45:6)+(-4,0)$) -- (-3,0) -- (-135:6);
            \draw[webline, rounded corners] (-45:6) -- ($(-45:6)+(-4,0)$) -- (-3,0) -- (135:6);
            \draw[fill=pink, thick] (-3,0) circle [radius=20pt];
            \draw[blue] (60:6) -- (-60:6);
            \draw[blue, rounded corners] (180:6) -- (0,0) -- (90:6);
            \node at (180:6) [left, blue]{\scriptsize $\gamma'$};
            \node at (60:6) [right, blue]{\scriptsize $\gamma$};
        }
        },
        \mbox{
        \tikz[baseline=-.6ex, scale=.1]{
            \draw[dashed, fill=white] (0,0) circle [radius=6];
            \coordinate (U) at ($(150:6)!.3!(30:6)$);
            \coordinate (D) at ($(-150:6)!.3!(-30:6)$);
            \draw[wline] (150:6) -- (U);
            \draw[webline] (U) -- (30:6);
            \draw[webline] (-150:6) -- (D);
            \draw[wline] (D) -- (-30:6);
            \draw[webline] (U) -- (D);
            \draw[blue] (60:6) -- (-60:6);
            \draw[blue, rounded corners] (180:6) -- (0,0) -- (90:6);
            \node at (180:6) [left, blue]{\scriptsize $\gamma'$};
            \node at (60:6) [right, blue]{\scriptsize $\gamma$};
        }
        },
        \mbox{
        \tikz[baseline=-.6ex, scale=.1]{
            \draw[dashed, fill=white] (0,0) circle [radius=6];
            \coordinate (U) at ($(150:6)!.3!(30:6)$);
            \coordinate (D) at ($(-150:6)!.3!(-30:6)$);
            \draw[wline] (150:6) -- (U);
            \draw[webline] (U) -- (30:6);
            \draw[webline] (-150:6) -- (D);
            \draw[wline] (D) -- (-30:6);
            \draw[webline] (U) -- (D);
            \draw[blue] (60:6) -- (-60:6);
            \draw[blue, rounded corners] (180:6) -- (0,0) -- (-90:6);
            \node at (180:6) [left, blue]{\scriptsize $\gamma'$};
            \node at (60:6) [right, blue]{\scriptsize $\gamma$};
        }
        },\\
        &\mbox{
        \tikz[baseline=-.6ex, scale=.1]{
            \draw[dashed, fill=white] (0,0) circle [radius=6];
            \coordinate (LU) at ($(150:6)!.3!(30:6)$);
            \coordinate (LD) at ($(-150:6)!.3!(-30:6)$);
            \coordinate (RU) at ($(150:6)!.7!(30:6)$);
            \coordinate (RD) at ($(-150:6)!.7!(-30:6)$);
            \coordinate (L) at (-180:6);
            \coordinate (R) at (0:6);
            \draw[webline] (160:6) -- (LU) -- (30:6);
            \draw[webline] (-150:6) -- (LD);
            \draw[wline] (LD) -- (RD);
            \draw[webline] (120:6) -- (LU) -- (LD);
            \draw[webline] (RD) -- (R);
            \draw[webline] (RD) -- (-30:6);
            \draw[blue] (80:6) -- (-80:6);
            \draw[fill=pink, thick] (LU) circle [radius=20pt];
            \draw[blue, rounded corners] (180:6) -- (LU) -- (90:6);
            \node at (180:6) [left, blue]{\scriptsize $\gamma'$};
            \node at (80:6) [right, blue]{\scriptsize $\gamma$};
        }
        },
        \mbox{
        \tikz[baseline=-.6ex, scale=.1]{
            \draw[dashed, fill=white] (0,0) circle [radius=6];
            \coordinate (LU) at ($(150:6)!.3!(30:6)$);
            \coordinate (LD) at ($(-150:6)!.3!(-30:6)$);
            \coordinate (RU) at ($(150:6)!.7!(30:6)$);
            \coordinate (RD) at ($(-150:6)!.7!(-30:6)$);
            \coordinate (L) at (-180:6);
            \coordinate (R) at (0:6);
            \draw[webline] (160:6) -- (LU) -- (30:6);
            \draw[webline] (-150:6) -- (LD);
            \draw[wline] (LD) -- (RD);
            \draw[webline] (120:6) -- (LU) -- (LD);
            \draw[webline] (RD) -- (R);
            \draw[webline] (RD) -- (-30:6);
            \draw[blue] (80:6) -- (-80:6);
            \draw[fill=pink, thick] (LU) circle [radius=20pt];
            \draw[blue, rounded corners] (140:6) -- (LU) -- ($(LU)+(2,-2)$) -- (90:6);
            \node at (140:6) [left, blue]{\scriptsize $\gamma'$};
            \node at (80:6) [right, blue]{\scriptsize $\gamma$};
        }
        },\quad
        \mbox{
        \tikz[baseline=-.6ex, scale=.1]{
            \draw[dashed, fill=white] (0,0) circle [radius=6];
            \coordinate (LU) at ($(150:6)!.3!(30:6)$);
            \coordinate (LD) at ($(-150:6)!.3!(-30:6)$);
            \coordinate (RU) at ($(150:6)!.7!(30:6)$);
            \coordinate (RD) at ($(-150:6)!.7!(-30:6)$);
            \coordinate (L) at (-180:6);
            \coordinate (R) at (0:6);
            \draw[webline] (160:6) -- (LU) -- (30:6);
            \draw[webline] (-150:6) -- (LD);
            \draw[wline] (LD) -- (RD);
            \draw[webline] (120:6) -- (LU) -- (LD);
            \draw[webline] (RD) -- (R);
            \draw[webline] (RD) -- (-30:6);
            \draw[blue] (80:6) -- (-80:6);
            \draw[fill=pink, thick] (LU) circle [radius=20pt];
            \draw[blue, rounded corners] (140:6) -- (LU) -- ($(LU)+(2,-2)$) -- (180:6);
            \node at (140:6) [left, blue]{\scriptsize $\gamma'$};
            \node at (80:6) [right, blue]{\scriptsize $\gamma$};
        }
        },
        \mbox{
        \tikz[baseline=-.6ex, scale=.1]{
            \draw[dashed, fill=white] (0,0) circle [radius=6];
            \coordinate (LU) at ($(150:6)!.3!(30:6)$);
            \coordinate (LD) at ($(-150:6)!.3!(-30:6)$);
            \coordinate (RU) at ($(150:6)!.7!(30:6)$);
            \coordinate (RD) at ($(-150:6)!.7!(-30:6)$);
            \coordinate (L) at (-180:6);
            \coordinate (R) at (0:6);
            \draw[webline] (160:6) -- (LU) -- (30:6);
            \draw[webline] (-150:6) -- (LD);
            \draw[wline] (LD) -- (RD);
            \draw[webline] (120:6) -- (LU) -- (LD);
            \draw[webline] (RD) -- (R);
            \draw[webline] (RD) -- (-30:6);
            \draw[blue] (80:6) -- (-80:6);
            \draw[fill=pink, thick] (LU) circle [radius=20pt];
            \draw[blue, rounded corners] (140:6) -- (LU) -- ($(LU)+(2,-2)$) -- (-90:6);
            \node at (140:6) [left, blue]{\scriptsize $\gamma'$};
            \node at (80:6) [right, blue]{\scriptsize $\gamma$};
        }
        }. 
    \end{align*}
    For each configuration of $\gamma'$ and the corresponding ladder resolution of $D$, one can easily construct a sequence of local moves along $\gamma'$ that does not increase the degree and realizes the original local move along $\gamma$. 
    For example,
    \begin{align*}
        &\mbox{
        \tikz[baseline=-.6ex, scale=.1]{
            \draw[dashed, fill=white] (0,0) circle [radius=6];
            \draw[webline, rounded corners] (45:6) -- ($(45:6)+(-4,0)$) -- (-3,0) -- (-135:6);
            \draw[webline, rounded corners] (-45:6) -- ($(-45:6)+(-4,0)$) -- (-3,0) -- (135:6);
            \draw[fill=pink, thick] (-3,0) circle [radius=20pt];
            \draw[blue] (60:6) -- (-60:6);
            \draw[blue, rounded corners] (180:6) -- (0,0) -- (90:6);
        }
        }
        \approx
        \mbox{
        \tikz[baseline=-.6ex, scale=.1]{
            \draw[dashed, fill=white] (0,0) circle [radius=6];
            \draw[webline] (45:6) -- ($(45:6)+(-4,0)$) -- (-3,1) -- (135:6);
            \draw[webline] (-45:6) -- ($(-45:6)+(-4,0)$) -- (-3,-1) -- (-135:6);
            \draw[wline] (-3,1) -- (-3,-1);
            \draw[blue] (60:6) -- (-60:6);
            \draw[blue, rounded corners] (180:6) -- (0,0) -- (90:6);
        }
        }
        \rightsquigarrow
        \mbox{
        \tikz[baseline=-.6ex, scale=.1]{
            \draw[dashed, fill=white] (0,0) circle [radius=6];
            \draw[webline] (45:6) -- ($(45:6)+(-4,0)$) -- (-3,1) -- (135:6);
            \draw[webline] (-45:6) -- ($(-45:6)+(-4,0)$) -- (-3,-1) -- (-135:6);
            \draw[wline] (-3,1) -- (-3,-1);
            \draw[blue] (60:6) -- (-60:6);
            \draw[blue] (180:6) to[bend right=5] (90:6);
        }
        }
        \approx
        \mbox{
        \tikz[baseline=-.6ex, scale=.1]{
            \draw[dashed, fill=white] (0,0) circle [radius=6];
            \draw[webline, rounded corners] (45:6) -- ($(45:6)+(-4,0)$) -- (-3,0) -- (-135:6);
            \draw[webline, rounded corners] (-45:6) -- ($(-45:6)+(-4,0)$) -- (-3,0) -- (135:6);
            \draw[fill=pink, thick] (-3,0) circle [radius=20pt];
            \draw[blue] (60:6) -- (-60:6);
            \draw[blue] (180:6) to[bend right=5] (90:6);
        }
        }
        \rightsquigarrow
        \mbox{
        \tikz[baseline=-.6ex, scale=.1]{
            \draw[dashed, fill=white] (0,0) circle [radius=6];
            \draw[webline] (45:6) -- (3,0);
            \draw[webline] (-45:6) -- (3,0);
            \draw[webline] (135:6) -- (-3,0);
            \draw[webline] (-135:6) -- (-3,0);
            \draw[wline] (-3,0) -- (3,0);
            \draw[blue] (70:6) -- (-70:6);
            \draw[blue] (180:6) to[bend right=5] (90:6);
        }
        },\\
        &\mbox{
        \tikz[baseline=-.6ex, scale=.1]{
            \draw[dashed, fill=white] (0,0) circle [radius=6];
            \coordinate (LU) at ($(150:6)!.3!(30:6)$);
            \coordinate (LD) at ($(-150:6)!.3!(-30:6)$);
            \coordinate (RU) at ($(150:6)!.7!(30:6)$);
            \coordinate (RD) at ($(-150:6)!.7!(-30:6)$);
            \coordinate (L) at (-180:6);
            \coordinate (R) at (0:6);
            \draw[webline] (160:6) -- (LU) -- (30:6);
            \draw[webline] (-150:6) -- (LD);
            \draw[wline] (LD) -- (RD);
            \draw[webline] (120:6) -- (LU) -- (LD);
            \draw[webline] (RD) -- (R);
            \draw[webline] (RD) -- (-30:6);
            \draw[blue] (80:6) -- (-80:6);
            \draw[fill=pink, thick] (LU) circle [radius=20pt];
            \draw[blue, rounded corners] (180:6) -- (LU) -- (90:6);
        }
        }
        \approx
        \mbox{
        \tikz[baseline=-.6ex, scale=.1]{
            \draw[dashed, fill=white] (0,0) circle [radius=6];
            \coordinate (LU) at ($(150:6)!.3!(30:6)$);
            \coordinate (LD) at ($(-150:6)!.3!(-30:6)$);
            \coordinate (RU) at ($(150:6)!.7!(30:6)$);
            \coordinate (RD) at ($(-150:6)!.7!(-30:6)$);
            \coordinate (L) at (-180:6);
            \coordinate (R) at (0:6);
            \draw[webline] ($(LU)+(1,-1)$) -- (30:6);
            \draw[webline] (-150:6) -- (LD);
            \draw[wline] (LD) -- (RD);
            \draw[webline] ($(LU)+(1,-1)$) -- (LD);
            \draw[webline] (RD) -- (R);
            \draw[webline] (RD) -- (-30:6);
            \draw[webline] (160:6) -- ($(LU)+(-1,1)$) -- (120:6);
            \draw[wline] ($(LU)+(-1,1)$) -- ($(LU)+(1,-1)$);
            \draw[blue] (80:6) -- (-80:6);
            \draw[blue, rounded corners] (180:6) -- (LU) -- (90:6);
        }
        }
        \rightsquigarrow
        \mbox{
        \tikz[baseline=-.6ex, scale=.1]{
            \draw[dashed, fill=white] (0,0) circle [radius=6];
            \coordinate (LU) at ($(150:6)!.3!(30:6)$);
            \coordinate (LD) at ($(-150:6)!.3!(-30:6)$);
            \coordinate (RU) at ($(150:6)!.7!(30:6)$);
            \coordinate (RD) at ($(-150:6)!.7!(-30:6)$);
            \coordinate (L) at (-180:6);
            \coordinate (R) at (0:6);
            \draw[webline] (RU) -- (30:6);
            \draw[webline] (-150:6) -- (RD);
            \draw[wline] (RD) -- ($(RD)+(2,0)$);
            \draw[webline] (RU) -- (RD);
            \draw[webline] ($(RD)+(2,0)$) -- (R);
            \draw[webline] ($(RD)+(2,0)$) -- (-30:6);
            \draw[webline] (160:6) -- ($(LU)+(-1,1)$) -- (120:6);
            \draw[wline] ($(LU)+(-1,1)$) -- (RU);
            \draw[blue] (80:6) -- (-80:6);
            \draw[blue, rounded corners] (180:6) -- (LU) -- (90:6);
        }
        }
        \approx
        \mbox{
        \tikz[baseline=-.6ex, scale=.1]{
            \draw[dashed, fill=white] (0,0) circle [radius=6];
            \coordinate (LU) at ($(150:6)!.3!(30:6)$);
            \coordinate (LD) at ($(-150:6)!.3!(-30:6)$);
            \coordinate (RU) at ($(150:6)!.7!(30:6)$);
            \coordinate (RD) at ($(-150:6)!.7!(-30:6)$);
            \coordinate (L) at (-180:6);
            \coordinate (R) at (0:6);
            \draw[webline] (RU) -- (30:6);
            \draw[webline] (-150:6) -- ($(RD)+(1,0)$);
            \draw[webline] (RU) -- ($(RD)+(1,0)$);
            \draw[webline] ($(RD)+(1,0)$) -- (R);
            \draw[webline] ($(RD)+(1,0)$) -- (-30:6);
            \draw[webline] (160:6) -- ($(LU)+(-1,1)$) -- (120:6);
            \draw[wline] ($(LU)+(-1,1)$) -- (RU);
            \draw[blue] (80:6) -- (-80:6);
            \draw[fill=pink, thick] ($(RD)+(1,0)$) circle [radius=20pt];
            \draw[blue, rounded corners] (180:6) -- (LU) -- (90:6);
        }
        }.
    \end{align*}
    In the above list of $\gamma'$, we exclude configurations obtained by sliding $\gamma'$ slightly away from the crossroad because these diagrams come back to the former diagram by an intersection reduction move.
\end{proof}

\begin{cor}\label{cor:elliptic-minimal}
    For a non-elliptic $\fsp_4$-diagram $D$ and an ideal arc $\gamma$. The following are equivalent:
    \begin{enumerate}
        \item $D$ is $\gamma$-minimal.
        \item $D$ has no elliptic bordered face along $\gamma$ up to $H$-moves along $\gamma$.
    \end{enumerate}
\end{cor}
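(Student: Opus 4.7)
The plan is to establish each implication by contradiction, leveraging \cref{prop:minimal-position} for the harder direction.

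For the forward direction (1) $\Rightarrow$ (2), the argument is short and direct. Suppose, toward a contradiction, that some $\fsp_4$-diagram $D''$ obtained from $D$ by a sequence of $H$-moves along $\gamma$ carries an elliptic bordered face along $\gamma$. Then the corresponding entry of \cref{fig:red-move} furnishes an intersection reduction move along $\gamma$ applicable to $D''$, producing a diagram $D'''$ with $\widetilde{\deg}_{\gamma}(D''')<\widetilde{\deg}_{\gamma}(D'')=\widetilde{\deg}_{\gamma}(D)$, where the last equality holds because $H$-moves along $\gamma$ preserve $\gamma$-degree. Since $D'''$ is ladder-equivalent to $D$, this contradicts the $\gamma$-minimality of $D$.

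For the converse (2) $\Rightarrow$ (1), I would argue contrapositively. Suppose $D$ is not $\gamma$-minimal, and pick a $\gamma$-minimal representative $D'$ of the ladder-equivalence class of $D$. By \cref{prop:minimal-position}, there is a sequence
\[
 D = D_0 \leadsto D_1 \leadsto \cdots \leadsto D_n = D'
\]
of $\{\gamma\}$-transverse $\fsp_4$-diagrams in which each consecutive pair is related by an $H$-move along $\gamma$, an intersection reduction move along $\gamma$, a ladder equivalence relative to $\gamma$, or an arc/loop parallel-move. Only intersection reduction moves strictly decrease the $\gamma$-degree; the remaining types preserve it. As $\widetilde{\deg}_{\gamma}(D)>\widetilde{\deg}_{\gamma}(D')$, at least one intersection reduction move occurs. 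Take $i$ minimal with $D_{i-1}\leadsto D_i$ an intersection reduction move; then $D_{i-1}$ carries an elliptic bordered face along $\gamma$ by definition of such a move.

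It remains to transfer this elliptic bordered face back to $D$ modulo $H$-moves along $\gamma$. The intervening moves $D = D_0 \leadsto \cdots \leadsto D_{i-1}$ are all degree-preserving. Ladder equivalences relative to $\gamma$ act only outside a tubular neighborhood of $\gamma$, hence leave the local configuration—in particular the faces adjacent to $\gamma$—untouched. Arc and loop parallel-moves merely reorder parallel strands of different types; a local case analysis, paralleling the one in the proof of \cref{prop:exist-S-minimal}, shows that such a reordering neither creates nor destroys elliptic bordered faces along $\gamma$. By commuting these neutral moves past the $H$-moves along $\gamma$, the elliptic bordered face along $\gamma$ present in $D_{i-1}$ descends to one in a diagram obtained from $D$ via $H$-moves along $\gamma$, negating (2). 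The main obstacle is precisely this final step: a careful enumeration of how a parallel-move or a ladder equivalence rel. $\gamma$ can interact with a face abutting $\gamma$, in order to rigorously justify the "commuting past" argument and conclude transparency of these moves for the property of admitting an elliptic bordered face along $\gamma$ up to $H$-moves.
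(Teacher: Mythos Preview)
Your argument follows exactly the route the paper takes. For $(1)\Rightarrow(2)$ the paper's proof is the one-liner ``any intersection reduction move decreases the $\gamma$-degree'', which is your first paragraph. For $(2)\Rightarrow(1)$ the paper argues contrapositively and simply writes ``If $D$ is not $\gamma$-minimal, then $D$ has at least one elliptic bordered face along $\gamma$ up to $H$-moves along $\gamma$ by \cref{prop:minimal-position}.'' That is it: the paper does \emph{not} carry out the analysis you attempt in your last paragraph.

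So you have correctly identified a point that the paper glosses over: the sequence produced by \cref{prop:minimal-position} may interleave $H$-moves with ladder equivalences rel.\ $\gamma$ and parallel-moves, so one must argue that the first intersection reduction move can be reached from $D$ using $H$-moves alone. Your claim that ``ladder equivalences relative to $\gamma$ act only outside a tubular neighborhood of $\gamma$'' is slightly too strong as stated---a ladder resolution may be performed at a crossroad adjacent to a face abutting $\gamma$---but the conclusion survives: such a resolution converts the crossroad-type elliptic bordered face of \cref{fig:red-move} into either the rung-type elliptic bordered face (same row) or the trivalent-type one (second row), so the property ``admits an elliptic bordered face along $\gamma$'' is preserved. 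Parallel-moves involve only vertex-free strands and are likewise transparent. With this short local check your argument is complete; the paper simply leaves it implicit.
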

\begin{proof}
    (1) implies (2) because any intersection reduction move decreases the $\gamma$-degree of $D$. Conversely, If $D$ is not $\gamma$-minimal, then $D$ have at least one elliptic bordered face along $\gamma$ up to $H$-moves along $\gamma$ by \cref{prop:minimal-position}. Hence (2) implies (1).
\end{proof}

\begin{cor}\label{cor:exist-good}
    Let $S$ be a disjoint collection of ideal arcs in $\Sigma$ and $L\in\Blad_{\fsp_4,\bSigma}$.
    \begin{enumerate}
        \item Any representative $D$ of $L$ in a good position with respect to $S^{\mathrm{split}}$ is $S$-minimal.
        \item Any $S$-minimal representative $D\in\diag_{\fsp_4,\bSigma}$ of $L$ can be deformed into a good position with respect to $S^{\mathrm{split}}$ by a finite sequence of $H$-moves along $S^{\mathrm{split}}$.
    \end{enumerate}
\end{cor}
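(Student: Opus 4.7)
The plan is to handle the two parts separately, with part (2) being the substantive one.

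For part (1), I would fix $\gamma \in S$ and prove $D$ is $\gamma$-minimal, from which $S$-minimality follows by ranging over $\gamma$. The good-position hypothesis gives that $D \cap B_\gamma$ is non-elliptic. By \cref{lem:biangle_blad}, $D \cap B_\gamma$ is ladder-equivalent, via a sequence of ladder-resolutions and $H$-moves along $\gamma_\pm$, to a configuration of parallel strands traversing $B_\gamma$; call this deformed diagram $D_0$, which agrees with $D$ outside $B_\gamma$. In the parallel form, each strand crosses $\gamma$ exactly once and the resulting cellular decomposition of $B_\gamma$ by $D_0 \cup \gamma$ contains no $0/1/2/3$-gon adjacent to $\gamma$, so $D_0$ has no elliptic bordered face along $\gamma$. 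Applying \cref{cor:elliptic-minimal} yields $\gamma$-minimality of $D_0$, and since $D \approx D_0$ and $H$-moves along $\gamma_\pm$ together with ladder-resolutions do not increase the $\gamma$-degree (the relevant local moves all happen away from $\gamma$ itself), $D$ itself must be $\gamma$-minimal.

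For part (2), starting from a reduced $S$-minimal $D$, I would use $H$-moves along the split arcs $\gamma_\pm$ to bring $D \cap B_\gamma$ into non-elliptic form, biangle by biangle. Since the biangles $\{B_\gamma\}_{\gamma \in S}$ are pairwise disjoint, modifications in one do not interact with another. Inside a fixed $B_\gamma$, the $\gamma$-minimality of $D$ combined with \cref{cor:elliptic-minimal} forbids any U-turn arc across $\gamma$ after $H$-moves along $\gamma$, so $H$-moves along $\gamma_\pm$ can be applied to push the trivalent vertices and crossroads of $D$ located in $B_\gamma$ out into the adjacent components $\Sigma' \in t(S^{\mathrm{split}})$; iterating, one reaches a state where $D \cap B_\gamma$ is a disjoint union of parallel arcs and is therefore non-elliptic. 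The reducedness of $D \cap \Sigma'$ should then be inherited from the global reducedness of $D$.

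The main obstacle I anticipate is verifying that pushing trivalent vertices from $B_\gamma$ into adjacent $\Sigma'$ via $H$-moves along $\gamma_\pm$ does not create elliptic interior faces, elliptic bordered faces, or bordered $H$-faces inside $\Sigma'$. I would handle this by an induction on a complexity measure—such as the total number of trivalent vertices and crossroads of $D$ lying in $\bigsqcup_{\gamma \in S} B_\gamma$—that strictly decreases under each pushing $H$-move. An alternative route is to mirror the proof scheme of \cref{prop:minimal-position}, applying \cref{lem:homotopy-move} inside each biangle with $\gamma_-$ (say) playing the role of the minimal arc; the fact that $D$ is already $S$-minimal should ensure the procedure terminates using only $H$-moves along $S^{\mathrm{split}}$, with no genuine intersection reduction moves needed.
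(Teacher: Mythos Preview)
Your plan for part~(1) is sound but takes a detour. The paper argues by direct contradiction: if a good-position $D$ were not $S$-minimal, it would fail to be $\gamma$-minimal for some $\gamma\in S$, and then \cref{cor:elliptic-minimal} would produce an elliptic bordered face along $\gamma$ (up to $H$-moves), contradicting the good-position hypothesis. There is no need to first straighten $D\cap B_\gamma$ into parallel strands via \cref{lem:biangle_blad}; the non-ellipticity of $D\cap B_\gamma$ and reducedness of the adjacent pieces already forbid such faces.

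For part~(2) you are pushing in the direction opposite to the paper's, and that is exactly the source of the obstacle you anticipate. The paper uses $H$-moves along $S^{\mathrm{split}}$ to move bordered $H$-faces \emph{from} the pieces $\Sigma'\in t(S^{\mathrm{split}})$ \emph{into} the biangles $B_\gamma$, thereby making each $D'\cap\Sigma'$ reduced. Since $D$ is $\gamma$-minimal for every $\gamma\in S$, \cref{cor:elliptic-minimal} guarantees that bordered $H$-faces are the only obstruction, so $H$-moves alone suffice and no intersection-reduction move is needed. Your scheme---emptying the biangles by pushing trivalent vertices and crossroads \emph{into} the adjacent $\Sigma'$---is precisely what risks creating new elliptic or bordered $H$-faces there; the paper's direction sidesteps this completely, and the termination argument and the complexity measure you propose become unnecessary. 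Your fallback of adapting \cref{lem:homotopy-move} would also work, but once the correct direction of the $H$-moves is identified the whole argument is two lines.
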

\begin{proof}
    We first prove (1). Suppose that $D$ is in a good position and not $S$-minimal. Then $D$ should have an elliptic bordered face along some $\gamma\in S$. It contradicts the $S$-minimality by \cref{cor:elliptic-minimal}.
    
    Next, we prove (2). \cref{cor:elliptic-minimal} claims that $D$ has only bordered $H$-faces along  any $\gamma\in S$. Hence, a finite sequence of $H$-moves along $S^{\mathrm{split}}$ deforms $D$ into $D'$ so that $D'\cap \Sigma'$ is reduced for any connected component $\Sigma'\in t(S^{\mathrm{split}})$. 
\end{proof}

\begin{dfn}[crossbar pass]\label{dfn:crossbar-pass}
    Let $S$ be a set of disjoint ideal arcs in a marked surface $\bSigma$. For $S^{\mathrm{split}}$ and an $S^{\mathrm{split}}$-transverse $\fsp_4$-diagram $D\in\diag_{\fsp_4,\bSigma}$, a \emph{crossbar pass} of $D$ is a deformation of $D$ that transports a bordered $H$-face along $\gamma_{\pm}$ in $B_{\gamma}\in b(S^{\mathrm{split}})$ to another in an adjacent biangle along parallel arcs, see \cref{fig:crossbar-pass}.
\end{dfn}

\begin{figure}
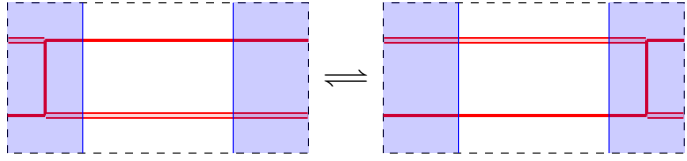

    \begin{align*}
        \mbox{
        \tikz[baseline=-.6ex, scale=.1, yshift=-10cm]{
            \draw[dashed, fill=white] (0,0) rectangle (40,20);
            \draw[wline] (0,15) -- (5,15);
            \draw[webline] (0,5) -- (5,5);
            \draw[webline] (5,5) -- (5,15);
            \draw[wline] (5,5) -- (40,5);
            \draw[webline] (5,15) -- (40,15);
            \fill[blue, opacity=.2] (0,0) rectangle (10,20);
            \fill[blue, opacity=.2] (30,0) rectangle (40,20);
            \draw[blue] (10,0) -- (10,20);
            \draw[blue] (30,0) -- (30,20);
        }
        } 
        \scalebox{1.5}[1]{$\rightleftharpoons$}
        \mbox{
        \tikz[baseline=-.6ex, scale=.1, yshift=-10cm]{
            \draw[dashed, fill=white] (0,0) rectangle (40,20);
            \draw[wline] (0,15) -- (35,15);
            \draw[webline] (0,5) -- (35,5);
            \draw[webline] (35,5) -- (35,15);
            \draw[wline] (35,5) -- (40,5);
            \draw[webline] (35,15) -- (40,15);
            \fill[blue, opacity=.2] (0,0) rectangle (10,20);
            \fill[blue, opacity=.2] (30,0) rectangle (40,20);
            \draw[blue] (10,0) -- (10,20);
            \draw[blue] (30,0) -- (30,20);
        }
        } 
    \end{align*}
    \caption{Crossbar pass: blue shaded regions belong to biangles (might be the same biangle) in $b(S^{\mathrm{split}})$.}
    \label{fig:crossbar-pass}
\end{figure}

\begin{thm}\label{thm:unique-S-good}
    Let $S$ be a finite set of disjoint ideal arcs and let $D_1,D_2\in\diag_{\fsp_4,\bSigma}$ are $\fsp_4$-diagrams in good positions with respect to $S^{\mathrm{split}}$. If $D_1\approx D_2$, then $D_2$ is connected to $D_1$ by a finite sequence of loop parallel-moves, arc parallel-moves, and crossbar passes along $S^{\mathrm{split}}$.
\end{thm}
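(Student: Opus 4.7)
The plan is to combine the $S$-minimality of good-position representatives with the local structure theorems for non-elliptic flat braids in biangles (Lemma \ref{lem:biangle_blad}) and for reduced diagrams in the complementary pieces (Lemma \ref{lem:triangle_blad}). By Corollary \ref{cor:exist-good}(1), both $D_1$ and $D_2$ are $S$-minimal, so in particular they share the same minimal $\{\gamma\}$-degree for every $\gamma\in S$.

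First I would iterate Proposition \ref{prop:minimal-position} over the cut paths $\gamma\in S$ to construct a finite sequence of $S$-transverse $\fsp_4$-diagrams $D_1=E_0,E_1,\dots,E_n=D_2$ in which every step is one of: an $H$-move along some $\gamma\in S$, a ladder equivalence relative to $S$, an arc parallel-move, or a loop parallel-move. Two ingredients make this iteration work. First, matching minimal $\{\gamma\}$-degrees at the two endpoints preclude intersection reduction moves along $\gamma$ (they would force the total degree to drop strictly). Second, the observation used in the proof of Proposition \ref{prop:exist-S-minimal} that any local move along $\gamma$ does not increase the $\{\gamma'\}$-degree for any other $\gamma'\in S$ allows the $S$-minimality to be preserved throughout the sequence.

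Next I would localize each remaining move to the split decomposition $S^{\mathrm{split}}$. An $H$-move along $\gamma\in S$ lives inside the biangle $B_\gamma$ and is literally a crossbar pass. A ladder equivalence relative to $S$ splits into pieces supported in individual biangles $B_\gamma\in b(S^{\mathrm{split}})$ and in individual components $\Sigma'\in t(S^{\mathrm{split}})$. Inside each biangle both $E_i\cap B_\gamma$ and $E_{i+1}\cap B_\gamma$ are non-elliptic flat braids with the same boundary data by $S$-minimality, so Lemma \ref{lem:biangle_blad} writes the equivalence between them as a sequence of ladder resolutions and boundary $H$-moves of the type in \cref{fig:ladder-web}; each such step is a crossbar pass. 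Inside a triangular piece $\Sigma'$ both restrictions are reduced, so Lemma \ref{lem:triangle_blad} identifies them up to the arc parallel-moves displayed in \cref{fig:pyramid}.

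The hard part will be the last ingredient when $\Sigma'\in t(S^{\mathrm{split}})$ is not a triangle, because a generic ladder equivalence rel to $S$ may pass through non-reduced intermediate diagrams whose rungs can wander throughout $\Sigma'$. I would handle this by an induction: adjoin auxiliary ideal arcs cutting $\Sigma'$ into triangles to form $\widetilde S\supset S$, apply the triangle case of Lemma \ref{lem:triangle_blad} inside each sub-triangle of $\widetilde S$, and show that any rung produced by a ladder resolution either annihilates against its inverse (yielding an arc parallel-move) or migrates across an auxiliary arc, which after deleting the auxiliary arcs reads as a crossbar pass in the adjacent biangle. A parallel rung-tracking principle already appears in the graded-skein framework of Section \ref{sec:reconstruction}: Lemma \ref{rem:flat-braid} shows that rungs are interchangeable modulo lower $S$-degree, so at the level of $S$-minimal good-position representatives the only surviving ambiguity is exactly the list of moves stated in the theorem.
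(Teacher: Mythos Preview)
Your Step 1 does not deliver what you claim. Proposition~\ref{prop:minimal-position} applied to a single $\gamma$ produces a sequence in which one of the allowed step types is a ladder equivalence $E_{i-1}\approx_\gamma E_i$ relative to that \emph{one} arc; nothing forces such a step to respect the remaining arcs of $S$. Iterating over $\gamma\in S$ does not repair this, since the coarse $\approx_\gamma$ steps obtained for one arc may freely cross the others, and you give no termination argument for the recursion. The observation from Proposition~\ref{prop:exist-S-minimal} only controls the explicit local $H$-moves, reductions, and single ladder resolutions that appear in the proof of Proposition~\ref{prop:minimal-position}; it does not upgrade an $\approx_\gamma$ step to an $\approx_S$ step. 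Step 2 then quietly assumes the missing ingredient: you ``split a ladder equivalence relative to $S$ into pieces'' on each $B_\gamma$ and each $\Sigma'$, but knowing $D_1\approx_S D_2$ so that restriction to pieces is well defined is essentially the content of the theorem.

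The paper takes a different route. It inducts on $|S|$. For the inductive step, pull the biangle $B_{\gamma_n}$ back along a chosen ladder equivalence $\psi\colon D_1\leadsto D_2$ to obtain a deformed biangle $\psi^\ast B_{\gamma_n}$ drawn on $D_1$, and then straighten $\psi^\ast B_{\gamma_n}$ to $B_{\gamma_n}$ by resolving, one by one, the innermost bigons that $\psi^\ast\gamma_{n,\pm}$ forms with the arcs $\gamma_{i,\epsilon}$ and with the other $\psi^\ast\gamma_{j,\pm}$. Minimality of all the arcs together with the good-position hypothesis forces the flat braid inside each such bigon to be confined to some $\psi^\ast B_{\gamma_j}$, so each resolution is realized by crossbar passes. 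Once the two diagrams agree rel $\{\gamma_{n,+},\gamma_{n,-}\}$, one cuts along $B_{\gamma_n}$ and applies the inductive hypothesis to $\Sigma\setminus B_{\gamma_n}$ with $S\setminus\{\gamma_n\}$. Your Step 3 gestures at a related induction on auxiliary arcs, but the crucial mechanism---that straightening one arc can be achieved purely by crossbar passes among the existing biangles---is exactly the part your outline does not supply.
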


\begin{proof}
    From \cref{lem:equivalence-generator} and the independence of loop and arc components with respect to moves (D1), (D2), and (D4) in \cref{subsec:diagram-definition}, we assume that loop and arc components are arranged in $D_1$ and $D_2$ similarly. 
    We will prove it by induction on the number of ideal arcs in $S=\{\gamma_1,\dots,\gamma_n\}$.
    
    For any marked surface $\bSigma$ with an ideal arc $\gamma$, $D_1$ and $D_2$ are $\{\gamma_{-}\}$-minimal by definition. Then, by \cref{prop:minimal-position}, they are connected by a sequence of $H$-moves along $\gamma_{-}$. These $H$-moves may or may not pass through $\gamma_{+}$; however, it turned out that they must be crossbar passes from $B_\gamma$ to itself because any bordered $H$-face along $\gamma_{\pm}$ should be contained in $B_{\gamma}$.
    
    For $n>1$, assume that the statement holds for any marked surface and any disjoint collection of $n-1$ ideal arcs on the surface.  
    Under this assumption, let us show that the statement also holds for a marked surface $\bSigma$ with $S=\{\gamma_1,\dots,\gamma_n\}$. $S$-minimal $\fsp_4$-diagrams $D_1$ and $D_2$ are in particular $\gamma_{n}$-minimal. 
    For with $D_1\approx D_2$, we will show that $D_2$ can be deformed into reduced $S$-minimal $\fsp_4$-diagrams $D_1$ by a sequence of crossbar passes between biangles in $b(S^{\mathrm{split}})$ so that $D_{1}\approx_{\{\gamma_{n,+},\gamma_{n,-}\}} D_{2}'$. If one can prove it, then the induction assumption can be applied to $\fsp_4$-diagrams $D_1\cap\Sigma'$ and $D_2'\cap\Sigma'$ on $\Sigma'\coloneqq \Sigma\setminus B_{\gamma_{n}}$ with the set of ideal arcs $S'\coloneqq\{\gamma_1,\ldots,\gamma_{n-1}\}$. Consequently, $D_{1}$ and $D_{2}'$ are related by a sequence of crossbar passes between $b({S'}^{\mathrm{split}})$, and hence the same holds for $D_{1}$ and $D_{2}$.

    Let us construct $D'_2$ using the same method as \cref{prop:minimal-position}. We denote a sequence of deformations from $D_1$ to $D_2$ by $\psi$ based on $D_1\approx D_2$. Then the biangle $B_{\gamma_{i}}$ on $D_2$ can be described as a biangle, denoted by $\psi^{\ast}B_{\gamma_{i}}$, on $D_1$ through the deformation $\psi$ for $i=1,\ldots, n$. We will construct sequences $\phi$ of deformations of $\fsp_4$-diagrams with good position from $\sqcup_{i} \psi^{\ast}B_{\gamma_{i}}$ such that this deformation bring $\psi^{\ast}B_{\gamma_{n}}$ to $B_{\gamma_{n}}$. In addition, $\phi$ must be composed only of deformations corresponding to crossbar passes.
    We will consider biangles bounded by $\psi^{\ast}\gamma_{n,\pm}$ in $\Sigma$ with $S^{\mathrm{split}}$. 
    For $p$ and $q$ on an arc $\alpha$, $\alpha_{pq}$ means subarc between $p$ and $q$.
    First, let us consider the following biangle $R$ between $\psi^{\ast}\gamma_{n,\epsilon'}$ and $\gamma_{i,\epsilon}$:
    \begin{align*}
        \mbox{
            \tikz[baseline=-.6ex, scale=.1]{
                \fill[mygreen, opacity=.2] (0,0) rectangle (25,20);
                \fill[white] (0,5) rectangle (20,15);
                \fill[blue, opacity=.2] (0,0) rectangle (10,20);
                \draw[blue, thick] (10,0) -- (10,20);
                \draw[mygreen, thick, rounded corners] (0,5) -- (20,5) -- (20,15) -- (0,15);
                \draw[dashed] (0,0) rectangle (25,20);
                \node at (10,5) [below right]{\scriptsize $p$};
                \node at (10,15) [above right]{\scriptsize $q$};
                \node at (10,20) [above]{\scriptsize $\gamma_{i,\epsilon}$};
                \node at (0,15) [left]{\scriptsize $\psi^{\ast}\gamma_{n,\epsilon'}$};
                \node at (15,10) {\scriptsize $R$};
            }
        },
    \end{align*}
    where the blue and green regions are $B_{i}$ and $\psi^{\ast}B_{n}$, respectively. 
    If $R$ has a trivial $\fsp_4$-diagram, then the biangle can be resolved by isotopy between $\psi^{\ast}\gamma_{n,\epsilon'}$ and $\gamma_{i,\epsilon}$. This deformation of $\psi^{\ast}\gamma_{n,\epsilon'}$ does not affect the $\fsp_4$-diagram $D_2$. Thus, we assume $R$ has a non-trivial diagram. The minimality of ideal arcs implies that two arcs from $p$ to $q$ have the same degree, and it implies that the right and left sides of $R$ are connected by a flat braid diagram (or the corresponding $\fsp_4$-diagram) by \cref{lem:biangle_blad}. However, the good position property claims that the flat braid diagram in $R$ should be contained in some $\psi^{\ast}B_{\gamma_j}$ for $j=1,2,\dots,n$. Hence $R$ becomes
    \begin{align*}
        \mbox{
            \tikz[baseline=-.6ex, scale=.1]{
                \fill[mygreen, opacity=.2] (0,0) rectangle (30,30);
                \fill[white] (0,5) rectangle (25,25);
                \fill[blue, opacity=.2] (0,0) rectangle (5,30);
                \draw[blue, thick] (5,0) -- (5,30);
                \fill[mygreen, opacity=.2] (0,7) rectangle (15,12);
                \fill[mygreen, opacity=.2] (0,18) rectangle (15,23);
                \draw[mygreen, thick, rounded corners] (0,5) -- (25,5) -- (25,25) -- (0,25);
                \draw[mygreen, thick, rounded corners] (0,7) -- (15,7) -- (15,12) -- (0,12);
                \draw[mygreen, thick, rounded corners] (0,18) -- (15,18) -- (15,23) -- (0,23);
                \draw[dashed] (0,0) rectangle (30,30);
                \node at (10,7) [above]{\scriptsize $r_1$};
                \node at (10,18) [above]{\scriptsize $r_k$};
                \node at (10,15) [rotate=90]{\scriptsize $\cdots$};
                \node at (5,5) [below right]{\scriptsize $p$};
                \node at (5,25) [above right]{\scriptsize $q$};
                \node at (20,15) {\scriptsize $R'$};
            }
        },
    \end{align*}
    where biangles $r_{1},\ldots,r_{k}$ are intersections of $R$ and $\sqcup_{i}\psi^{\ast}B_{\gamma_i}$, and $R'$ is $R\setminus\sqcup_{j=1}^{k} r_{j}$.
    The right side of $r_j$. $r_{j}$ is connected by a flat braid diagram for any $j=1,\ldots,k$ and left and right sides of $R'$ are connected by a trivial $\fsp_4$-diagram. Then, the right side of $r_j$ and the right side of $R'$ can jump a crossing of the flat braid in $r_j$ simultaneously. Repeating such an operation removes $r_{j}$ from $R$ and finally reduces the biangle $R$. We remark that a crossing of a flat braid represents an ``$H$'', hence this deformation of $\sqcup_{i}\psi^{\ast}B_{\gamma_i}$ corresponds to crossbar passes.

    Next, let us consider a biangle $R$ appearing as an intersection of $B_{\gamma_{i}}$ and $\psi^{\ast}B_{\gamma_{n}}$ for some $i$, which is illustrated as below
    \begin{align*}
        \mbox{
            \tikz[baseline=-.6ex, scale=.1]{
                \fill[blue, opacity=.2] (10,0) rectangle (25,20);
                \fill[mygreen, opacity=.2] (0,5) rectangle (20,15);
                \draw[blue, thick] (10,0) -- (10,20);
                \draw[mygreen, thick, rounded corners] (0,5) -- (20,5) -- (20,15) -- (0,15);
                \draw[dashed] (0,0) rectangle (25,20);
                \node at (10,5) [below left]{\scriptsize $p$};
                \node at (10,15) [above left]{\scriptsize $q$};
                \node at (10,20) [above]{\scriptsize $\gamma_{i,\epsilon}$};
                \node at (0,15) [left]{\scriptsize $\psi^{\ast}\gamma_{n,\epsilon'}$};
                \node at (15,10) {\scriptsize $R$};
            }
        }.
    \end{align*}
    The ideal arcs in $\psi^{\ast}S\coloneqq\{\psi^{\ast}\gamma_{j,\pm}\mid j=1,\ldots,n\}$ are mutually disjoint except at their endpoints. It implies that there exists an ideal arc $\alpha$ in $\psi^{\ast}S$ such that $\alpha$ passes through $\psi^{\ast}B_{\gamma_{i}}$ running from below $p$ to above $q$, and is the one closest to $\psi^{\ast}\gamma_{n,\epsilon'}$. We remark that $\alpha$ might be $\psi^{\ast}\gamma_{i,\epsilon}$. We illustrate it below;
    \begin{align*}
        \mbox{
            \tikz[baseline=-.6ex, scale=.1]{
                \fill[mygreen, opacity=.2] (0,0) rectangle (30,30);
                \fill[white] (0,5) rectangle (25,25);
                \fill[blue, opacity=.2] (5,0) rectangle (30,30);
                \draw[blue, thick] (5,0) -- (5,30);
                \fill[mygreen, opacity=.2] (0,12) rectangle (15,18);
                \draw[mygreen, thick, rounded corners] (0,5) -- (25,5) -- (25,25) -- (0,25);
                \draw[mygreen, thick, rounded corners] (0,12) -- (15,12) -- (15,18) -- (0,18);
                \draw[dashed] (0,0) rectangle (30,30);
                \node at (10,15) {\scriptsize $R$};
                \node at (5,5) [below right]{\scriptsize $s$};
                \node at (5,25) [above right]{\scriptsize $t$};
                \node at (5,12) [below right]{\scriptsize $p$};
                \node at (5,18) [above right]{\scriptsize $q$};
                \node at (0,25) [left]{\scriptsize $\alpha$};
                \node at (0,18) [left]{\scriptsize $\psi^{\ast}\gamma_{n,\epsilon'}$};
                \node at (5,30) [above]{\scriptsize $\gamma_{i,\epsilon}$};
            }
        },
    \end{align*}
    where $s$ (resp.~$t$) is the intersection point in $\alpha\cap\gamma_{i,\epsilon}$ that is nearest to $p$ (resp.~$q$).
    By the minimality of $\gamma_{i,\epsilon}$, $\psi^{\ast}\gamma_{n,\epsilon'}$, and $\alpha$, the degree of $(\gamma_{i,\epsilon})_{sp}\ast(\psi^{\ast}\gamma_{n,\epsilon'})_{pq}\ast(\gamma_{i,\epsilon})_{qt}$ coincides with that of $\alpha_{st}$. Hence, these two arcs are connected by a flat braid diagram by \cref{lem:biangle_blad}. Moreover, it follows from the good position property that this flat braid diagram is trivial.
    Therefore, a sequence of crossbar passes between $\psi^{\ast}B_{\gamma_{n}}$ and the biangle bounded by $\alpha$ reduces $R$ in a similar way as in the first case.

    Now we assume that all biangles in the first and second cases are removed from $\sqcup_{i}\psi^{\ast}B_{\gamma_{i}}$. Denote the corresponding deformation of $D_{2}$ by $\bar{\phi}\colon D_2\leadsto \bar{D}_{2}$, and define $\bar{\psi}\coloneqq\bar{\phi}\circ\psi\colon D_{1}\to \bar{D}_{2}$.
    Finally, we consider a biangle $R$ below
    \begin{align*}
        \mbox{
            \tikz[baseline=-.6ex, scale=.1]{
                \fill[mygreen, opacity=.2] (0,5) rectangle (20,15);
                \fill[blue, opacity=.2] (0,0) rectangle (10,20);
                \draw[blue, thick] (10,0) -- (10,20);
                \draw[mygreen, thick, rounded corners] (0,5) -- (20,5) -- (20,15) -- (0,15);
                \draw[dashed] (0,0) rectangle (25,20);
                \node at (10,5) [below right]{\scriptsize $p$};
                \node at (10,15) [above right]{\scriptsize $q$};
                \node at (10,20) [above]{\scriptsize $\gamma_{i,\epsilon}$};
                \node at (0,15) [left]{\scriptsize $\bar{\psi}^{\ast}\gamma_{n,\epsilon'}$};
                \node at (15,10) {\scriptsize $R$};
            }
        }.
    \end{align*}
    Similar to the above cases, a flat braid diagram in $R$ should be contained in some biangle $B_{\gamma_{j}}$. However, now this kind of intersection between $R$ and $B_{\gamma_{j}}$ is removed. Hence $R$ only has a trivial $\fsp_4$-diagram and one can remove $R$ by an isotopy without deforming $\bar{D}_{2}$.
    Repeating these operations, we can deform $\psi^{\ast}B_{\gamma_{n}}$ into $B_{n}$.
    This deformation of the biangle $\psi^{\ast}B_{\gamma_{n}}$ gives a sequence $\phi$ of crossbar passes that deforms $D_{2}$ into $D'_{2}$.
\end{proof}

\begin{cor}\label{cor:good-position}
    Let $\tri$ be an ideal triangulation of $\Sigma$ and $\tri^{\mathrm{split}}$ its split triangulation. 
    For any $L\in\Blad_{\fsp_4,\bSigma}$ and $\splittri$-transverse representative $D$ of $L$, $D$ can be deformed into a good position $D'$ with respect to $\splittri$ by a sequence of intersection reduction moves along $\splittri$, $H$-moves along $\splittri$, and ladder-equivalence relations relative to $\splittri$. Furthermore, $D'\cap T$ is uniquely determined up to arc parallel-moves of corner arcs for every $T\in t(\splittri)$.
\end{cor}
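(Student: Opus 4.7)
The plan is to assemble the two halves of the corollary from the tools already developed earlier in this appendix. For existence, start with a $\splittri$-transverse representative $D$ of $L$. First apply Proposition~\ref{prop:exist-S-minimal} with $S=\tri$ to deform $D$ into a $\tri$-minimal diagram $D_0$ via a finite sequence of intersection reduction moves along $\tri$, $H$-moves along $\tri$, and ladder-equivalences relative to $\tri$. Since each $\gamma \in \tri$ lies inside the biangle $B_\gamma \in b(\splittri)$, every such move is local to a neighborhood contained in some biangle of $\splittri$, and is equivalently a move along $\splittri$. Then apply Corollary~\ref{cor:exist-good}(2), again with $S=\tri$, to deform $D_0$ into a diagram $D'$ in good position with respect to $\splittri$ using only $H$-moves along $\splittri$. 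Concatenating these two sequences produces the required deformation.

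For uniqueness, suppose $D'_1$ and $D'_2$ are two representatives of $L$ that are both in good position with respect to $\splittri$. Invoking Theorem~\ref{thm:unique-S-good} with $S=\tri$, they are connected by a finite sequence of loop parallel-moves, arc parallel-moves, and crossbar passes along $\splittri$. By Definition~\ref{dfn:crossbar-pass}, a crossbar pass transports a bordered $H$-face within (or between adjacent) biangles in $b(\splittri)$ along parallel arcs, and hence leaves the restriction $D' \cap T$ unchanged for every $T \in t(\splittri)$. Since the triangles of an ideal triangulation contain no punctures, any loop component of $D'$ lies in a biangle (or elsewhere away from $T$) and a loop parallel-move does not alter $D' \cap T$ beyond isotopy. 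Finally, any arc parallel-move that is visible inside $T$ is necessarily an arc parallel-move of corner arcs of the reduced diagram $D' \cap T$, thanks to the explicit classification in Lemma~\ref{lem:triangle_blad}. Therefore $D'_1 \cap T$ and $D'_2 \cap T$ coincide modulo arc parallel-moves of corner arcs.

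The expected main obstacle is not depth but bookkeeping: one must verify that the intersection reduction moves and $H$-moves along $\tri$ produced by Proposition~\ref{prop:exist-S-minimal} genuinely qualify as the corresponding moves along $\splittri$, and that the restriction of a global arc parallel-move (which may sweep across several pieces of $\splittri$) to a single triangle indeed reduces to an arc parallel-move of corner arcs alone. Both points are straightforward: the moves are supported in small disks that sit inside biangles of $\splittri$, and reduced diagrams on a triangle are fully described by two tripods plus corner arcs (Lemma~\ref{lem:triangle_blad}), so corner arcs are the only components to which a nontrivial parallel-move can apply.
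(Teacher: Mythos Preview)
Your strategy matches the paper's: apply Proposition~\ref{prop:exist-S-minimal} and Corollary~\ref{cor:exist-good}(2) for existence, then Theorem~\ref{thm:unique-S-good} for uniqueness. The paper applies the first step with $S=\splittri$ directly rather than with $S=\tri$ followed by a translation argument, but this is cosmetic since each $\gamma$ is isotopic to $\gamma_\pm$.

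However, your analysis of the crossbar pass in the uniqueness step is incorrect. You claim that a crossbar pass ``leaves the restriction $D'\cap T$ unchanged for every $T\in t(\splittri)$''. Inspect Figure~\ref{fig:crossbar-pass}: as the rung slides from one biangle to the adjacent one, the two parallel strands crossing the intermediate triangle \emph{swap their types} (the type~1 strand becomes type~2 and vice versa). Thus $D'\cap T$ genuinely changes --- precisely by an arc parallel-move (D2) of the two corner arcs involved. This is exactly what the paper records: ``a crossbar pass can be regarded as arc parallel-moves in triangles in $t(\splittri)$''. Similarly, your treatment of loop parallel-moves is off: a loop component of $D'$ is an essential closed curve in $\Sigma$ and will typically meet many triangles (its restriction to each being a collection of corner arcs), so a loop parallel-move again induces arc parallel-moves of corner arcs inside each $T$ rather than leaving $D'\cap T$ untouched.

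These mistakes do not break the conclusion, since the statement only asserts uniqueness of $D'\cap T$ \emph{up to arc parallel-moves of corner arcs}, and both crossbar passes and loop parallel-moves restrict to exactly such moves. But the intermediate claims as written are false and should be replaced by the correct observation that each of the three move types in Theorem~\ref{thm:unique-S-good}, when restricted to a triangle $T$, acts as (a composite of) arc parallel-moves on corner arcs.
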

\begin{proof}
    By \cref{prop:exist-S-minimal}, $D$ can be deformed into a $\splittri$-minimal $\fsp_4$-diagram $D_{\mathrm{min}}$ through a sequence of intersection reduction moves along $\splittri$, $H$-moves along $\splittri$, and ladder-equivalence relations relative to $\splittri$. This $\splittri$-minimal representative can be deformed into a good position by a sequence of $H$-moves along $\splittri$, as stated in \cref{cor:exist-good}. By \cref{thm:unique-S-good}, such a good position is uniquely determined up to loop parallel-moves, arc parallel-moves, and crossbar passes among $b(\splittri)$. Moreover, a crossbar pass can be regarded as arc parallel-moves in triangles in $t(\splittri)$. Hence, the assertion follows.
\end{proof}


\begin{thebibliography}{GHKK18}




\bibitem[AK17]{AK}
D. G. L. Allegretti and H. K. Kim,
{\em A duality map for quantum cluster varieties from surfaces},
Adv. Math. \textbf{306} (2017), 1164--1208.




\bibitem[DM21]{DM}
B. Davison and T. Mandel,
{\em Strong positivity for quantum theta bases of quantum cluster algebras},
Invent. math. \textbf{226}, 725–843 (2021).


\bibitem[DS24]{DS20I}
D. C. Douglas and Z. Sun,
{\em Tropical {F}ock-{G}oncharov coordinates for $SL_3$-webs on surfaces {I}: construction},
Forum Math. Sigma \textbf{12} (2024), Paper No. e5.

\bibitem[DS25]{DS20II}
D. C. Douglas and Z. Sun,
{\em Tropical {F}ock-{G}oncharov coordinates for $SL_3$-webs on surfaces {II}: naturality},
Algebr. Comb. \textbf{8} (2025), no. 1, 101-156.

\bibitem[FTF23]{FTF}
M Farajzadeh-Tehrani and C. Frohman,
{\em On compactifications of the $SL(2,\bC)$ character varieties of punctured surfaces},
arXiv:2305.12306.

\bibitem[FG06]{FG06} 
V. V. Fock and A. B. Goncharov, 
{\em Moduli spaces of local systems and higher Teichm\"uller theory},
Publ. Math. Inst. Hautes \'Etudes Sci., \textbf{103} (2006), 1--211.


\bibitem[FG07]{FG07}
V. V. Fock and A. B. Goncharov, 
{\em Dual Teichm\"uller and lamination spaces},
Handbook of Teichm\"uller theory,  Vol. I, 647--684; IRMA Lect. Math. Theor. Phys., \textbf{11}, Eur. Math. Soc., Z\"urich, 2007. 


\bibitem[FG09]{FG09}
V. V. Fock and A. B. Goncharov, 
{\em Cluster ensembles, quantization and the dilogarithm},
Ann. Sci. \'Ec. Norm. Sup\'er., \textbf{42} (2009), 865--930.

\bibitem[FG16]{FG16}
V. V. Fock and A. B. Goncharov, 
{\em Cluster {P}oisson varieties at infinity},
Selecta Math. (N.S.) \textbf{22} (2016), 2569--2589



\bibitem[FST08]{FST}
S. Fomin, M. Shapiro and D. Thurston,
{\em Cluster algebras and triangulated surfaces. {I}. Cluster complexes},
Acta Math. \textbf{201} (2008), 83--146.

\bibitem[FS22]{FS22}
C. Frohman and A. S. Sikora,
{\em $SU(3)$-skein algebras and webs on surfaces},
Math. Z. \textbf{300} (2022), 33--56.



\bibitem[GHKK18]{GHKK}
M. Gross, P. Hacking, S. Keel, and M. Kontsevich,
{\em Canonical bases for cluster algebras},
J. Amer. Math. Soc. \textbf{31} (2018), 497--608. 



\bibitem[GS19]{GS19}
A. B. Goncharov and L. Shen,
{\em Quantum geometry of moduli spaces of local systems and representation theory},
arXiv:1904.10491v4.

\bibitem[Ish19]{Ish19}
T. Ishibashi,
{\em On a Nielsen--Thurston classification theory for cluster modular groups},
Annales de l'Institut Fourier, \textbf{69} (2019), 515--560. 

\bibitem[IO23]{IO20}
T. Ishibashi and Hironori Oya,
{\em Wilson lines and their Laurent positivity}, 
Math. Z. \textbf{305}, 34 (2023).

\bibitem[IIO21]{IIO21}
R. Inoue, T. Ishibashi and H. Oya,
{\em Cluster realizations of Weyl groups and higher Teichmüller theory}, 
Sel. Math. New Ser. \textbf{27}, 37 (2021).


\bibitem[IK20]{IK20}
T. Ishibashi and S. Kano,
{\em A characterization of pseudo-Anosov mapping classes via tropical cluster transformations},
preprint, arXiv:2010.05214.

\bibitem[IK21]{IK19}
T. Ishibashi and S. Kano,
{\em Algebraic entropy of sign-stable mutation loops},
Geom. Dedicata \textbf{214}, 79--118 (2021).

\bibitem[IK25]{IK22}
T. Ishibashi and S. Kano,
{\em Unbounded $\mathfrak{sl}_3$-laminations and their shear coordinates},
Algebr. Geom. Topol. \textbf{25} (2025) 1433--1500.

\bibitem[IK25]{IK25}
T. Ishibashi and S. Kano,
{\em Unbounded $\fsl_3$-laminations around punctures},
Math. Z. 310, \textbf{88} (2025).

\bibitem[IK23]{IKar}
T. Ishibashi and H. Karuo,
{\em Quantum duality maps, skein algebras and their ensemble compatibility},
Comm. Math. Phys. \textbf{405} (2024), Paper No. 244.

\bibitem[ISY]{ISY}
T. Ishibashi, Z. Sun and W. Yuasa,
{\em Naturality of intersection coordinates for bounded $\fsp_4$-laminations and the duality map},
in preparation.

\bibitem[IY23]{IYsl3}
T. Ishibashi and W. Yuasa,
{\em Skein and cluster algebras of unpunctured surfaces for $\mathfrak{sl}_3$},
 Math. Z. \textbf{303}, 72 (2023)

\bibitem[IY25]{IYsp4}
T. Ishibashi and W. Yuasa,
{\em Skein and cluster algebras of unpunctured surfaces for $\mathfrak{sp}_4$},
Advances in Mathematics, \textbf{465} (2025), 110149.


\bibitem[Kim21]{Kim21}
H. K. Kim,
{\em $SL_3$-laminations as bases for $PGL_3$ cluster varieties for surfaces},
arXiv:2011.14765; To appear in Memoirs of the AMS.

\bibitem[Kup96]{Kuperberg}
G. Kuperberg,
{\em Spiders for rank $2$ {L}ie groups},
Comm. Math. Phys. \textbf{180} (1996), 109--151.

\bibitem[Le16]{Le16}
I. Le,
{\em Higher laminations and affine buildings},
Geom. Topol. \textbf{20} (2016), 1673--1735. 

\bibitem[Le19]{Le19}
I. Le,
 {\em Cluster structure on higher Teichm\"uller spaces for classical groups},
Forum Math. Sigma \textbf{7} (2019), e13, 165 pp.

\bibitem[MQ23]{MQ} 
T. Mandel and F. Qin, 
{\em Bracelets bases are theta bases}, 
arXiv:2301.11101.

\bibitem[Mul16]{Muller16} G. Muller,
{\em Skein and cluster algebras of marked surfaces},
Quantum Topol. \textbf{7} (2016), no. 3, 435--503. 

\bibitem[MSW13]{MSW}
G. Musiker, R. Schiffler and L. Williams,
{\em Bases for cluster algebras from surfaces},
Compos. Math. \textbf{149} (2013), 217--263.


\bibitem[PP93]{PP93}
A. Papadopoulos and R. C. Penner,
{\em The Weil--Petersson symplectic structure at Thurston's boundary},
Trans. Amer. Math. Soc. \textbf{335} (1993), 891--904. 


\bibitem[PS00]{PS00}
J. Przytycki and A. S. Sikora,
{\em On skein algebras and  $Sl_2(\mathbb{C})$-character varieties},
Topology \textbf{39} (2000), no.~1, 115--148.

\bibitem[Qin23]{Qin21}
F. Qin,
{\em Cluster algebras and their bases},
Representations of algebras and related structures, 335--369.
EMS Ser. Congr. Rep. EMS Press, Berlin, 2023.

\bibitem[SSWa]{SSWa}
Linhui Shen, Zhe Sun and Daping Weng, 
{\em The punctured $\operatorname{SL}_3$ skein algebra and the quantization of $\mathcal{A}_{\operatorname{SL}_3,\hat{S}}$ moduli space}, in preparation.


\bibitem[SSW25]{SSW25}
L. Shen, Z. Sun and D. Weng,
{\em Intersections of Dual $SL_3$-Webs},
Trans. Am. Math. Soc. \textbf{378} (2025), No. 8, 5513--5549. 

\bibitem[Thu]{Thu} W. P. Thurston, 
{\em The geometry and topology of three-manifolds}, 
Mimeographed notes, Princeton University (1976).

\bibitem[Thu14]{Thu14}
D. P. Thurston,
{\em Positive basis for surface skein algebras},
Proc. Natl. Acad. Sci. USA, \textbf{111} (2014), no.~27, 9725--9732.

\end{thebibliography}
\end{document}